\newtheorem{theorem}{Theorem}[section]
\newtheorem{lemma}[theorem]{Lemma}
\newtheorem{proposition}[theorem]{Proposition}
\newtheorem*{thm-P7C4C5-free-decomp}{Theorem~\ref{thm-P7C4C5-free-decomp}}
\newtheorem*{thm-decomp-P7C4C5-free-with-C7}{Theorem~\ref{thm-decomp-P7C4C5-free-with-C7}}
\newtheorem*{thm-P7C4C5C7-free-contains-Theta-decomp}{Theorem~\ref{thm-P7C4C5C7-free-contains-Theta-decomp}}
\newtheorem*{thm-P7C4C5C7Theta33-free-decomp}{Theorem~\ref{thm-P7C4C5C7Theta33-free-decomp}}
\numberwithin{figure}{section}
\begin{document}

\title{The class of $(P_7,C_4,C_5)$-free graphs: decomposition, algorithms, and $\chi$-boundedness}

\author{Kathie Cameron \thanks{Department of Mathematics, Wilfrid Laurier University,
Waterloo, ON, Canada, N2L 3C5. Email: \texttt{kcameron@wlu.ca}. Research supported by the Natural Sciences and
Engineering Research Council of Canada (NSERC) grant RGPIN-2016-06517.}  \and Shenwei Huang \thanks{Department of Mathematics, Wilfrid Laurier University,
Waterloo, ON, Canada, N2L 3C5. Email: \texttt{dynamichuang@gmail.com}. Research supported by the Natural Sciences and
Engineering Research Council of Canada (NSERC) grant RGPIN-2016-06517.} \and Irena Penev\thanks{School of Computing, University of Leeds, Leeds LS2 9JT, UK. Email: \texttt{i.penev@leeds.ac.uk}. Partially supported by EPSRC grant EP/N0196660/1.} \and Vaidy Sivaraman \thanks{Department of Mathematics, University of Central Florida, Orlando, FL 32816, USA. Email: \texttt{vaidysivaraman@gmail.com}. Partially supported by the European Research Council under the European Union's Seventh Framework Programme (FP7/2007-2013) / ERC grant agreement 339109.}  }

\maketitle

\begin{abstract}
As usual, $P_n$ ($n \geq 1$) denotes the path on $n$ vertices, and $C_n$ ($n \geq 3$) denotes the cycle on $n$ vertices. For a family $\mathcal{H}$ of graphs, we say that a graph $G$ is {\em $\mathcal{H}$-free} if no induced subgraph of $G$ is isomorphic to any graph in $\mathcal{H}$. We present a decomposition theorem for the class of $(P_7,C_4,C_5)$-free graphs; in fact, we give a complete structural characterization of $(P_7,C_4,C_5)$-free graphs that do not admit a clique-cutset. We use this decomposition theorem to show that the class of $(P_7,C_4,C_5)$-free graphs is $\chi$-bounded by a linear function (more precisely, every $(P_7,C_4,C_5)$-free graph $G$ satisfies $\chi(G) \leq \frac{3}{2} \omega(G)$). We also use the decomposition theorem to construct an $O(n^3)$ algorithm for the minimum coloring problem, an $O(n^2m)$ algorithm for the maximum weight stable set problem, and an $O(n^3)$ algorithm for the maximum weight clique problem for this class, where $n$ denotes the number of vertices and $m$ the number of edges of the input graph.
\end{abstract}

\section{Introduction} \label{sec:intro}

In this paper, all graphs are finite and simple. Furthermore, unless stated otherwise, all graphs are nonnull. 

Given graphs $G$ and $H$, we say that $G$ is {\em $H$-free} if no induced subgraph of $G$ is isomoprhic to $H$. Given a graph $G$ and a family $\mathcal{H}$ of graphs, we say that $G$ is {\em $\mathcal{H}$-free} if $G$ is $H$-free for all $H \in \mathcal{H}$.

As usual, given a positive integer $n$, we denote the path on $n$ vertices by $P_n$, and we denote the complete graph on $n$ vertices by $K_n$. For an integer $n \geq 3$, $C_n$ is the cycle on $n$ vertices.

A {\em clique} in a graph $G$ is a (possibly empty) set of pairwise adjacent vertices of $G$, and a {\em stable set} in $G$ is a (possibly empty) set of pairwise nonadjacent vertices of $G$. The {\em clique number} of $G$, denoted by $\omega(G)$, is the maximum size of a clique in $G$, and the {\em stability number} of $G$, denoted by $\alpha(G)$, is the maximum size of a stable set in $G$.
A {\em $q$-coloring} of $G$ is a function $c:V(G)\longrightarrow \{ 1, \ldots ,q\}$, such that $c(u)\neq c(v)$ for every edge $uv$ of $G$.
The {\em chromatic number} of a graph $G$, denoted by $\chi (G)$, is the minimum number $q$ for which there exists a $q$-coloring of $G$.

In this paper, we give a decomposition theorem for $(P_7,C_4,C_5)$-free graphs (see Theorem \ref{thm-P7C4C5-free-decomp}). In fact, we give a full structural description of $(P_7,C_4,C_5)$-free graphs that do not admit a clique-cutset. (We remark that this is not quite a full structure theorem for the class of $(P_7,C_4,C_5)$-free graphs. This is because $P_7$ admits a clique-cutset, and so the operation of ``gluing along a clique,'' the operation that ``reverses'' the clique-cutset decomposition, is not class-preserving.) We use this decomposition theorem to construct an $O(n^3)$ algorithm for the minimum coloring problem, an $O(n^2m)$ algorithm for the maximum weight stable set problem, and an $O(n^3)$ algorithm for the maximum weight clique problem for this class, where $n$ denotes the number of vertices and $m$ the number of edges of the input graph. We also use it to prove that every $(P_7,C_4,C_5)$-free graph $G$ satisfies $\chi(G) \leq \lfloor \frac{3}{2} \omega(G) \rfloor$. 

Minimum coloring is NP-hard for $(C_4,C_5)$-free graphs, and even 3-coloring is NP-complete on this class~\cite{KKTW}.  Huang~\cite{Shenwei} proved that 4-coloring $P_7$-free graphs is NP-complete. In~\cite{4K1C4C5FreeColoring}, the authors show that there is a polynomial-time algorithm for coloring $(4K_1,C_4,C_5)$-free graphs, a subclass of the class of $(P_7,C_4,C_5)$-free graphs. We remark that the algorithm from~\cite{4K1C4C5FreeColoring} relies 
on algorithms for coloring graphs of bounded clique-width (\cite{CouMak2000, KobRot2003}).

The maximum weight stable set problem is NP-hard for $(C_4,C_5)$-free graphs; its complexity is unknown for $P_7$-free graphs but it can be solved in polynomial-time for $P_5$-free graphs~\cite{LVV}. 

Any $C_4$-free graph has $O(n^2)$ maximal cliques~\cite{Alekseev-C4free, Farber}. Furthermore, if a graph $G$ has $K$ maximal cliques, they can all be found in $O(Kn^3)$ time~\cite{MakinoUno, TIAS}. Thus, all maximal cliques of a $C_4$-free graph can be found in $O(n^5)$ time, and it follows that a maximum weight clique of a $C_4$-free graph can be found in $O(n^5)$ time. As mentioned above, we show that a maximum weight clique of a $(P_7,C_4,C_5)$-free graph can be found in $O(n^3)$ time. 

A class of graphs is {\em hereditary} if it is closed under isomorphism and induced subgraphs; it is not hard to see that a class $\mathcal{G}$ is hereditary if and only if there exists a family $\mathcal{H}$ such that $\mathcal{G}$ is precisely the class of $\mathcal{H}$-free graphs. A hereditary class $\mathcal{G}$ is {\em $\chi$-bounded} if there exists a function $f:\mathbb{N} \rightarrow \mathbb{N}$ such that every graph $G \in \mathcal{G}$ satisfies $\chi(G) \leq f(\omega(G))$. $\chi$-Bounded classes were introduced by Gy\'arf\'as~\cite{Gyarfas} in the 1980s as a generalization of perfection (a graph $G$ is {\em perfect} if all its induced subgraphs $H$ satisfy $\chi(H) = \omega(H)$; clearly, the class of perfect graphs is the maximal hereditary class $\chi$-bounded by the identity function). As mentioned above, we proved that all $(P_7,C_4,C_5)$-free graphs $G$ satisfy $\chi(G) \leq \lfloor \frac{3}{2} \omega(G) \rfloor$; thus, the class of $(P_7,C_4,C_5)$-free graphs is $\chi$-bounded by the function $f(n) = \lfloor \frac{3}{2}n \rfloor$. Gy\'arf\'as~\cite{Gyarfas} showed that for all positive integers $n$, the class of $P_n$-free graphs is $\chi$-bounded. It is well known that $P_4$-free graphs are perfect. On the other hand, for $n \geq 5$, the best $\chi$-bounding function known for the class of $P_n$-free graphs is exponential: it was shown in~\cite{PnFreeChiBound} that every $P_n$-free graph $G$ satisfies $\chi(G) \leq (n-2)^{\omega(G)-1}$. On the other hand, since there exist graphs of arbitrarily large girth and chromatic number~\cite{ErdosGirthChi}, the class of $(C_4,C_5)$-free graphs is not $\chi$-bounded. Finally, Gaspers and Huang~\cite{GH} showed that every $(P_6, C_4)$-free graph $G$ satisfies $\chi(G) \leq \lfloor \frac{3}{2} \omega(G) \rfloor$, and  Chudnovsky and Sivaraman~\cite{CS} proved that for another incomparable class, $(P_5,C_5)$-free graphs, $\chi(G) \leq 2^{\omega(G)-1}$.



\section{Terminology and notation} \label{sec:terminology}

A graph is {\em bipartite} if its vertex set can be partitioned into two stable sets. A graph is {\em cobipartite} if its vertex set can be partitioned into two cliques. Thus, a graph is cobipartite if it is the complement of a bipartite graph.

Let $G$ be a graph. Given a vertex $x \in V(G)$ and a set $Y \subseteq V(G) \setminus \{x\}$, we say that $x$ is {\em complete} (resp.\ {\em anticomplete}) to $Y$ in $G$ if $x$ is adjacent (resp.\ nonadjacent) to every vertex in $Y$; $x$ is {\em mixed} on $Y$ if $x$ is neither complete nor anticomplete to $Y$, that is, if $x$ has both a neighbor and a nonneighbor in $Y$. Given disjoint sets $X,Y \subseteq V(G)$, we say that $X$ is {\em complete} (resp.\ {\em anticomplete}) to $Y$ if every vertex in $X$ is complete (resp.\ anticomplete) to $Y$.

A {\em homogeneous set} in a graph $G$ is a nonempty set $X \subseteq V(G)$ such that no vertex in $V(G) \setminus X$ is mixed on $X$. A homogeneous set $X$ of $G$ is {\em proper} if $2 \leq |X| \leq |V(G)|-1$.

For a graph $G$ and a vertex $v \in V(G)$, the set of all neighbors of $v$ in $G$ is denoted by $N_G(v)$, and we set $N_G[v] = \{v\} \cup N_G(v)$. Given a set $S \subseteq V(G)$, we denote by $N_G(S)$ the set of all vertices in $V(G) \setminus S$ that have a neighbor in $S$, and we set $N_G[S] = S \cup N_G(S)$. If $H$ is an induced subgraph of $G$, we sometimes write $N_G(H)$ and $N_G[H]$ instead of $N_G(V(H))$ and $N_G[V(H)]$, respectively. We say that a vertex $x \in V(G)$ is {\em dominating} in $G$ provided that $N_G[x] = V(G)$, and we say that a set $S \subseteq V(G)$ is {\em dominating} in $G$ provided that $N_G[S] = V(G)$. An induced subgraph $H$ of a graph $G$ is said to be {\em dominating} in $G$ provided that $V(H)$ is dominating in $G$. Given distinct vertices $u,v \in V(G)$, we say that $u$ {\em dominates} $v$ in $G$, or that $v$ is {\em dominated} by $u$ in $G$, provided that $N_G[v] \subseteq N_G[u]$.

Given a graph $G$ and a nonempty nonempty set $S \subseteq V(G)$, we denote by $G[S]$ the subgraph of $G$ induced by $S$; for vertices $v_1,\dots,v_t \in V(G)$, we sometimes write $G[v_1,\dots,v_t]$ instead of $G[\{v_1,\dots,v_t\}]$.

The complement of a graph $G$ is denoted by $\overline{G}$. A graph is {\em anticonnected} if its complement is connected. An {\em anticomponent} of a graph $G$ is a maximal anticonnected induced subgraph of $H$ of $G$. (Equivalently, $H$ is an {\em anticomponent} of $G$ provided that $\overline{H}$ is a component of $\overline{G}$.) Clearly, the vertex sets of the anticomponents of $G$ are pairwise disjoint and complete to each other. A {\em trivial} anticomponent of a graph $G$ is one that has just one vertex; a {\em nontrivial} anticomponent of $G$ is one that has at least two vertices. Note that every trivial anticomponent of a graph $G$ is dominating in $G$.

A {\em cutset} in a graph $G$ is a (possibly empty) set $C \subsetneqq V(G)$ such that $G \setminus C$ is disconnected. A {\em clique-cutset} of $G$ is a (possibly empty) clique $C$ such that $C$ is a cutset of $G$. A {\em cut-partition} of $G$ is a partition $(A,B,C)$ of $V(G)$ such that $A$ and $B$ are nonempty and anticomplete to each other ($C$ may possibly be empty). A {\em clique-cut-partition} of $G$ is a cut-partition $(A,B,C)$ of $G$ such that $C$ is a (possibly empty) clique. Note that if $(A,B,C)$ is a cut-partition (resp.\ clique-cut-partition) of $G$, then $C$ is a cutset (resp.\ clique-cutset) of $G$; conversely, every cutset (resp.\ clique-cutset) of $G$ gives rise to at least one cut-partition (resp.\ clique-cut-partition) of $G$.

For an integer $k \geq 4$, a {\em $k$-hole} is an induced cycle of length $k$. A {\em hole} is a $k$-hole for some $k \geq 4$. A hole is {\em odd} (resp.\ {\em even}) if its length is odd (resp.\ even).

A graph is {\em chordal} if it contains no holes. It is well known that if a graph is chordal, then either it is complete or it admits a clique-cutset~\cite{Dirac61}. A vertex $v$ in a graph $G$ is {\em simplicial} if $N_G(v)$ is a (possibly empty) clique. A {\em simplicial elimination ordering} of a graph $G$ is an ordering $v_1,\dots,v_n$ of its vertices such that for all $i \in \{1,\dots,n\}$, $v_i$ is simplicial in $G[v_i,\dots,v_n]$. It is well known that a graph is chordal if and only if there is a simplicial elimination ordering for it~\cite{FulkersonGrossSimplicialElimOrd}. To simplify notation, for a graph $G$ and sets $Z_1,\dots,Z_t \subseteq V(G)$, we say that $Z_1,\dots,Z_t$ is a {\em simplicial elimination ordering} of $G$ provided that for any ordering $Z_1 = \{z_1^1,\dots,z_{|Z_1|}^1\},\dots,Z_t = \{z_1^t,\dots,z_{|Z_t|}^t\}$ of the sets $Z_1,\dots,Z_t$, we have that $z_1^1,\dots,z_{|Z_1|}^1,\dots,z_1^t,\dots,z_{|Z_t|}^t$ is a simplicial elimination ordering of $G$ (note that this implies that the sets $Z_1,\dots,Z_t$ are pairwise disjoint, and that their union is $V(G)$).

Given graphs $G$ and $H$, we say that $G$ is {\em obtained by blowing up each vertex of $H$ to a nonempty clique} provided that there exists a partition $\{X_v\}_{v \in V(H)}$ of $V(G)$ into nonempty cliques such that for all distinct $u,v \in V(H)$, if $uv \in E(H)$, then $X_u$ is complete to $X_v$ in $G$, and if $uv \notin E(H)$, then $X_u$ is anticomplete to $X_v$ in $G$.

\section{A decomposition theorem for $(P_7,C_4,C_5)$-free graphs} \label{sec:decomp}

The main result of this section is a theorem that characterizes $(P_7,C_4,C_5)$-free graphs that do not admit a clique-cutset. We state this theorem below, but we note that we have not yet defined all the terms that appear it.

\begin{theorem} \label{thm-P7C4C5-free-decomp} Let $G$ be a graph. Then the following are equivalent:
\begin{itemize}
\item $G$ is a $(P_7,C_4,C_5)$-free graph that does not admit a clique-cutset;
\item either $G$ is a complete graph, or $G$ contains exactly one nontrivial anticomponent, and this anticomponent is either a 7-bracelet, a thickened emerald, a lantern, a 6-wreath, or a 6-crown.
\end{itemize}
\end{theorem}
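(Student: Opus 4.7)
The plan is to prove the two implications separately. For the reverse direction, I would verify directly from the definitions (given later in the paper) that each of the 7-bracelet, thickened emerald, lantern, 6-wreath, and 6-crown is $(P_7, C_4, C_5)$-free and admits no clique-cutset; complete graphs trivially share these properties. The passage from a nontrivial anticomponent of one of these types to the whole graph $G$ uses the fact that the additional vertices are trivial anticomponents, hence dominating: since no vertex of an induced $P_7$, $C_4$, or $C_5$ is adjacent to every other vertex of that subgraph, dominating vertices cannot participate in any such obstruction; and any clique-cutset of $G$ must contain all dominating vertices of $G$ and intersect $V(H)$ in a clique-cutset of the nontrivial anticomponent $H$, so the absence of a clique-cutset in $H$ transfers to $G$.

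For the forward direction, let $G$ be a $(P_7, C_4, C_5)$-free graph without a clique-cutset, and assume that $G$ is not complete. First, I would show that $G$ has exactly one nontrivial anticomponent: if $H_1$ and $H_2$ were two, picking nonadjacent $u_i, v_i \in V(H_i)$ for $i = 1, 2$ would force $\{u_1, v_1, u_2, v_2\}$ to induce a $C_4$ in $G$, since $V(H_1)$ is complete to $V(H_2)$. Let $H$ denote this unique nontrivial anticomponent. The transfer argument from the reverse direction adapts to show that $H$ itself is $(P_7, C_4, C_5)$-free and admits no clique-cutset; in particular, $H$ is connected.

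Since $H$ is non-complete (being anticonnected with at least two vertices, it contains a non-edge) and has no clique-cutset, $H$ is not chordal, and therefore contains a hole. Any hole of length at least $8$ contains an induced $P_7$ (delete one vertex of the cycle), so every hole in $H$ has length $6$ or $7$. The remaining analysis splits on which of these occur: if $H$ contains an induced $C_7$, I would invoke Theorem~\ref{thm-decomp-P7C4C5-free-with-C7} to conclude that $H$ is a 7-bracelet. Otherwise $H$ is $(P_7, C_4, C_5, C_7)$-free and must contain an induced $C_6$; a further case split on whether $H$ contains some induced $\Theta$ graph allows me to apply Theorem~\ref{thm-P7C4C5C7-free-contains-Theta-decomp} in one subcase and Theorem~\ref{thm-P7C4C5C7Theta33-free-decomp} in the other, which between them produce precisely the thickened emerald, lantern, 6-wreath, and 6-crown possibilities.

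The main obstacle lies in the three referenced decomposition theorems, which carry out the bulk of the structural work: extracting the rigid global form of $H$ from local forbidden-subgraph information. The theorem above then functions mainly as the glue that stitches these subcases, together with the observation about anticomponents, into a single unified statement about $(P_7, C_4, C_5)$-free graphs.
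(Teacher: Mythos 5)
Your proof takes essentially the same route as the paper: the theorem is obtained by the obvious three-way case split (does $G$ contain a $7$-hole? if not, does it contain $\Theta_3^3$?) reducing to the three subsidiary decomposition theorems, and the paper indeed presents the theorem as an immediate corollary of those three. One small factual slip: Theorem~\ref{thm-decomp-P7C4C5-free-with-C7} already delivers both the $7$-bracelet \emph{and} the thickened emerald outcomes in the $C_7$-present case, whereas you attribute the thickened emerald to the $C_7$-free branch; and Theorems~\ref{thm-P7C4C5C7-free-contains-Theta-decomp} and~\ref{thm-P7C4C5C7Theta33-free-decomp} produce only lanterns and, respectively, $6$-wreaths/$6$-crowns. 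Also, several of your intermediate steps (passing to the unique nontrivial anticomponent $H$, invoking Dirac's theorem to find a hole, bounding hole lengths to $6$ or $7$, establishing a $C_6$) are sound but superfluous here: the three cited theorems are phrased directly about $G$ and already embed the anticomponent argument (via Lemma~\ref{lemma-one-anticomp}) and the hole analysis inside their own proofs, so the stitching you describe collapses to the simple case split.
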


To make sense of Theorem~\ref{thm-P7C4C5-free-decomp}, we must define ``7-bracelets,'' ``thickened emeralds,'' ``lanterns,'' ``6-wreaths,'' and ``6-crowns.'' We first define these terms, and after that, we turn to the proof of Theorem~\ref{thm-P7C4C5-free-decomp}.

Let $B$ be a graph, let $\{A_i\}_{i \in \mathbb{Z}_7}$ be a partition of $V(B)$, and let $i^* \in \mathbb{Z}_7$. We say that $B$ is a {\em 7-bracelet} with {\em good pair} $(\{A_i\}_{i \in \mathbb{Z}_7},i^*)$ provided that the following hold (see Figure~\ref{fig:7bracelet}):
\begin{enumerate}[(I)]
\item for all $i \in \mathbb{Z}_7$, $A_i$ is a nonempty clique, complete to $A_{i-1} \cup A_{i+1}$ and anticomplete to $A_{i-3} \cup A_{i+3}$;
\item for all $i \in \mathbb{Z}_7$, there exists a partition of $A_i$ into three (possibly empty) sets, call them $A_i^*,A_i^+,A_i^-$, such that all the following hold:
\begin{enumerate}[(a)]
\item $A_i^*$ is anticomplete to $A_{i-2} \cup A_{i+2}$,
\item $A_i^+$ is anticomplete to $A_{i-2}$, and every vertex in $A_i^+$ has a neighbor in $A_{i+2}$,
\item $A_i^-$ is anticomplete to $A_{i+2}$, and every vertex in $A_i^-$ has a neighbor in $A_{i-2}$,\footnote{Note that (II.a), (II.b), and (II.c) together imply that the partition $(A_i^*,A_i^+,A_i^-)$ of $A_i$ is unique, and furthermore, that every vertex in $A_i$ is anticomplete to at least one of $A_{i-2},A_{i+2}$.}
\item if $A_i^+ \neq \emptyset$, then $A_i^+$ can be ordered as $A_i^+ = \{a_1^{i^+},\dots,a_{|A_i^+|}^{i^+}\}$ so that $N_B[a_{|A_i^+|}^{i^+}] \subseteq \dots \subseteq N_B[a_1^{i^+}]$,
\item if $A_i^- \neq \emptyset$, then $A_i^-$ can be ordered as $A_i^- = \{a_1^{i^-},\dots,a_{|A_i^-|}^{i^-}\}$ so that $N_B[a_{|A_i^-|}^{i^-}] \subseteq \dots \subseteq N_B[a_1^{i^-}]$,
\item either $A_i^* \neq \emptyset$, or $A_i^+$ is not complete to $A_{i+2}$, or $A_i^-$ is not complete to $A_{i-2}$,\footnote{Equivalently: some vertex in $A_i$ has a nonneighbor both in $A_{i-2}$ and in $A_{i+2}$.}
\end{enumerate}
\item $A_{i^*-3} = A_{i^*-3}^*$ and $A_{i^*+3} = A_{i^*+3}^*$;\footnote{Equivalently: $A_{i^*-3}^+ = A_{i^*-3}^- = A_{i^*+3}^+ = A_{i^*+3}^- = \emptyset$. In other words, for all $i \in \{i^*-3,i^*+3\}$, $A_i$ is anticomplete to $A_{i-2} \cup A_{i+2}$.}
\item $A_{i^*-2} = A_{i^*-2}^* \cup A_{i^*-2}^+$ and $A_{i^*+2} = A_{i^*+2}^* \cup A_{i^*+2}^-$;\footnote{Equivalently: $A_{i^*-2}^- = A_{i^*+2}^+ = \emptyset$. In other words, $A_{i^*-2}$ is anticomplete to $A_{i^*+3}$, and $A_{i^*+2}$ is anticomplete to $A_{i^*-3}$.}
\item $A_{i^*-1} = A_{i^*-1}^* \cup A_{i^*-1}^+$ and $A_{i^*+1} = A_{i^*+1}^* \cup A_{i^*+1}^-$.\footnote{Equivalently: $A_{i^*-1}^- = A_{i^*+1}^+ = \emptyset$. In other words, $A_{i^*-1}$ is anticomplete to $A_{i^*-3}$, and $A_{i^*+1}$ is anticomplete to $A_{i^*+3}$.}
\end{enumerate}

A graph $B$ is said to be a {\em 7-bracelet} provided that there exists a partition $\{A_i\}_{i \in \mathbb{Z}_7}$ of $V(B)$ and an index $i^* \in \mathbb{Z}_7$ such that $B$ is a 7-bracelet with good pair $(\{A_i\}_{i \in \mathbb{Z}_7},i^*)$. Furthermore, given a 7-bracelet $B$, we say that $\{A_i\}_{i \in \mathbb{Z}_7}$ is a {\em good partition} of $B$ provided that there exists some index $i^* \in \mathbb{Z}_7$ such that $B$ is a 7-bracelet with good pair $(\{A_i\}_{i \in \mathbb{Z}_7},i^*)$.\footnote{Note that the index $i^*$ need not be unique. In fact, $i^*$ is unique if any only if there exists some $i \in \mathbb{Z}_7$ such that $A_i^+,A_i^-$ are both nonempty (in this case, $i^* = i$). Note that if $A_i^+ = A_i^- = \emptyset$ for all $i \in \mathbb{Z}_7$, then $i^* \in \mathbb{Z}_7$ can be chosen arbitrarily.} Note that $C_7$ is a 7-bracelet.

\begin{figure}
\begin{center}
\includegraphics[scale=0.6]{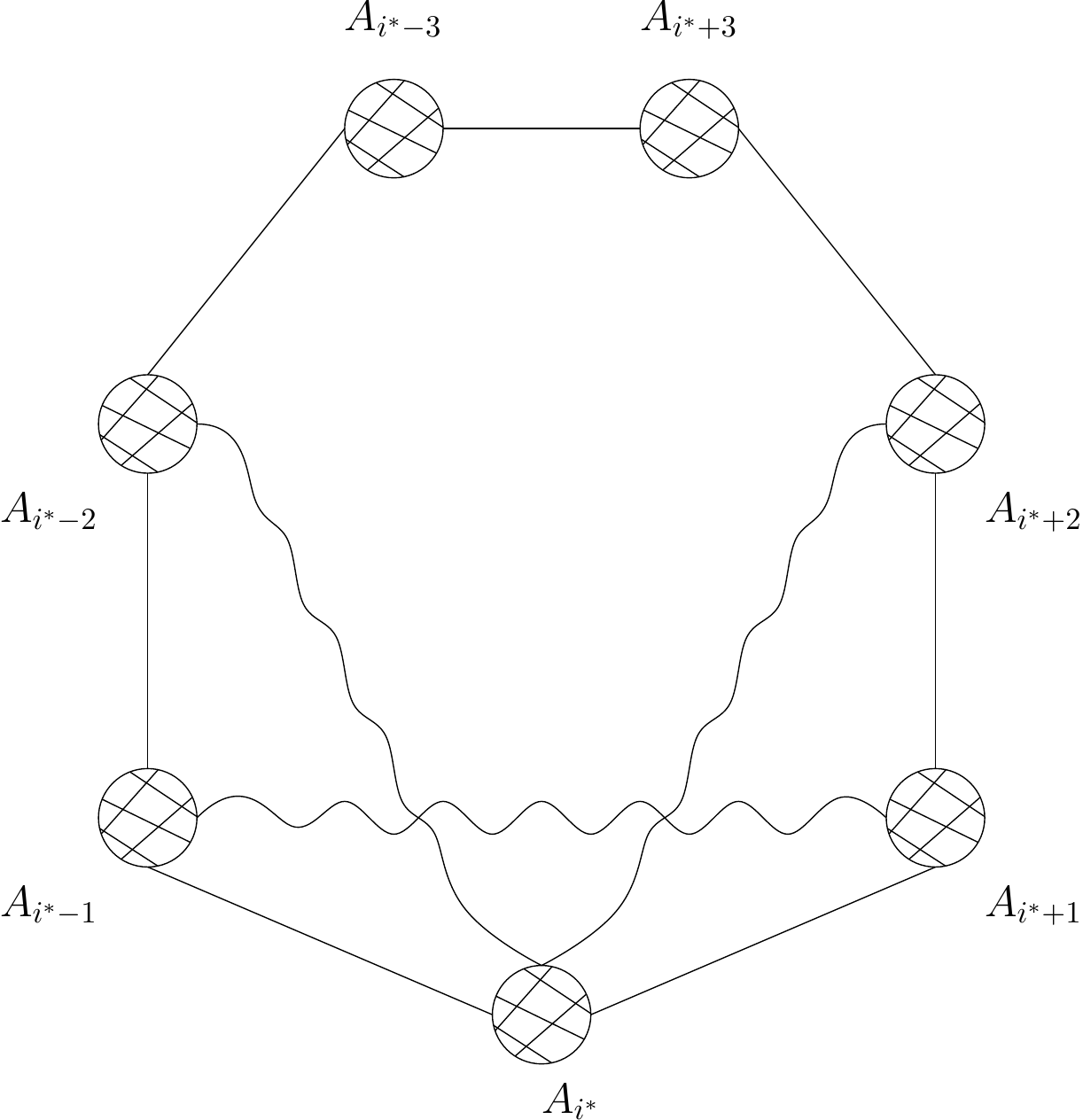}
\end{center}
\caption{7-bracelet $B$ with good pair $(\{A_i\}_{i \in \mathbb{Z}_7},i^*)$. A shaded disk represents a nonempty clique. A straight line between two cliques indicates that the two cliques are complete to each other. A wavy line between two cliques indicates that there may be edges between the two cliques (furthermore, such edges must obey the axioms from the definition of a 7-bracelet). The absence of a line (straight or wavy) between two cliques indicates that the two cliques are anticomplete to each other.} \label{fig:7bracelet}
\end{figure}

\begin{figure}
\begin{center}
\includegraphics[scale=0.6]{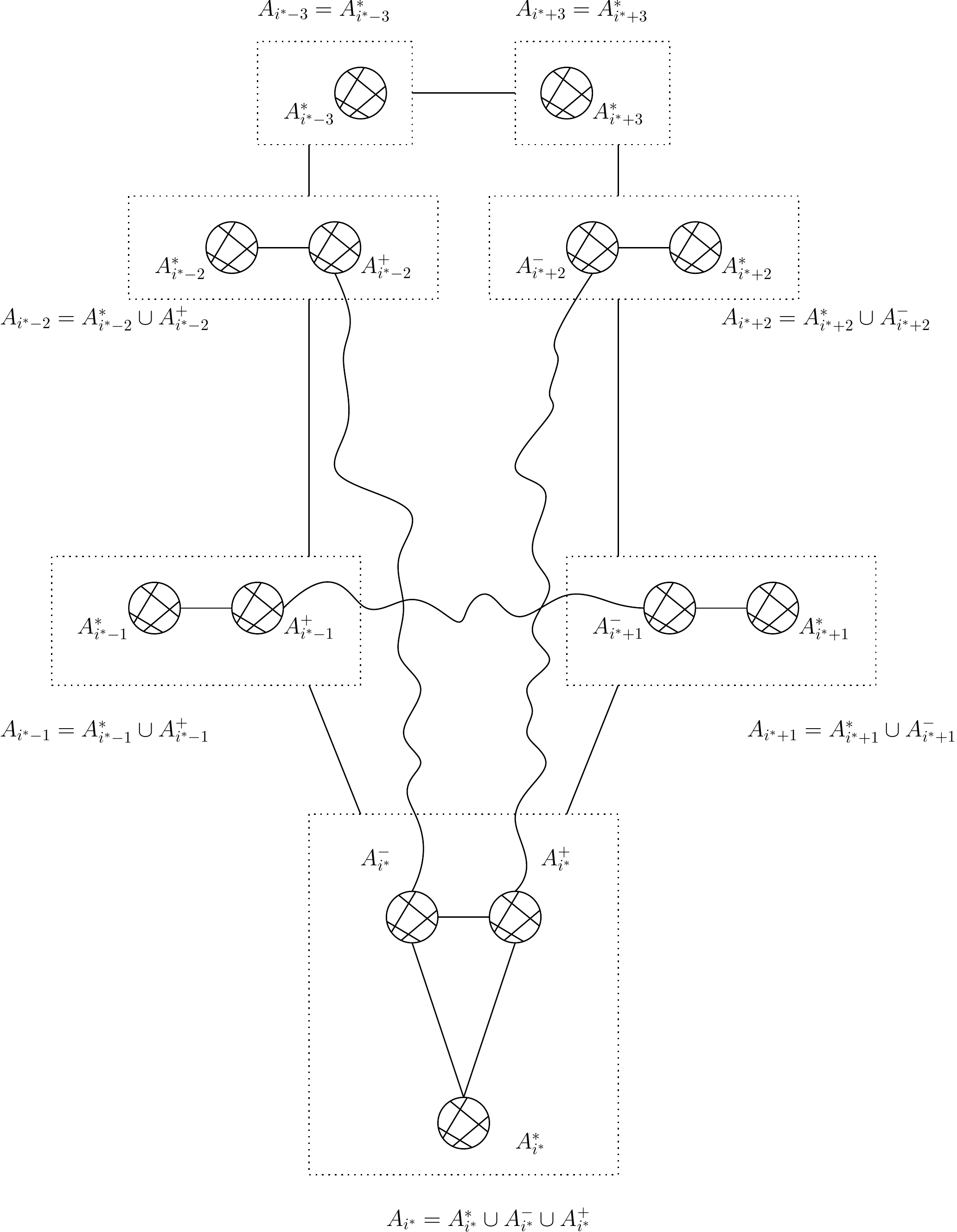}
\end{center}
\caption{7-Bracelet $B$ with good pair $(\{A_i\}_{i \in \mathbb{Z}_7},i^*)$.} \label{fig:7bracelet-details}
\end{figure}

We give a more detailed representation of a 7-bracelet in Figure~\ref{fig:7bracelet-details}. Let us explain this figure. Dotted rectangles in Figure~\ref{fig:7bracelet-details} represent nonempty cliques, and shaded circles represent (possibly empty) cliques; each dotted rectangle represents the union of the cliques represented by the shaded circles inside that rectangle. For each $i \in \mathbb{Z}_7$, $A_i = A_i^* \cup A_i^+ \cup A_i^-$; those $A_i^+$'s and $A_i^-$'s that are not represented in the picture are empty. A straight line between two rectangles (resp. two shaded circles) indicates that the cliques represented by the two rectangles (resp. shaded circles) are complete to each other. If there is no straight line between two rectangles, but there is a wavy line between two shaded circles in those rectangles, that indicates that all edges between the cliques represented by the two rectangles are in fact between the cliques represented by those two shaded circles (furthermore, such edges must obey the axioms from the definition of a 7-bracelet). If there is no straight line between two rectangles, and there is no wavy line between any two shaded circles in those rectangles, then the cliques represented by the two rectangles are anticomplete to each other.

\begin{figure}
\begin{center}
\includegraphics[scale=0.6]{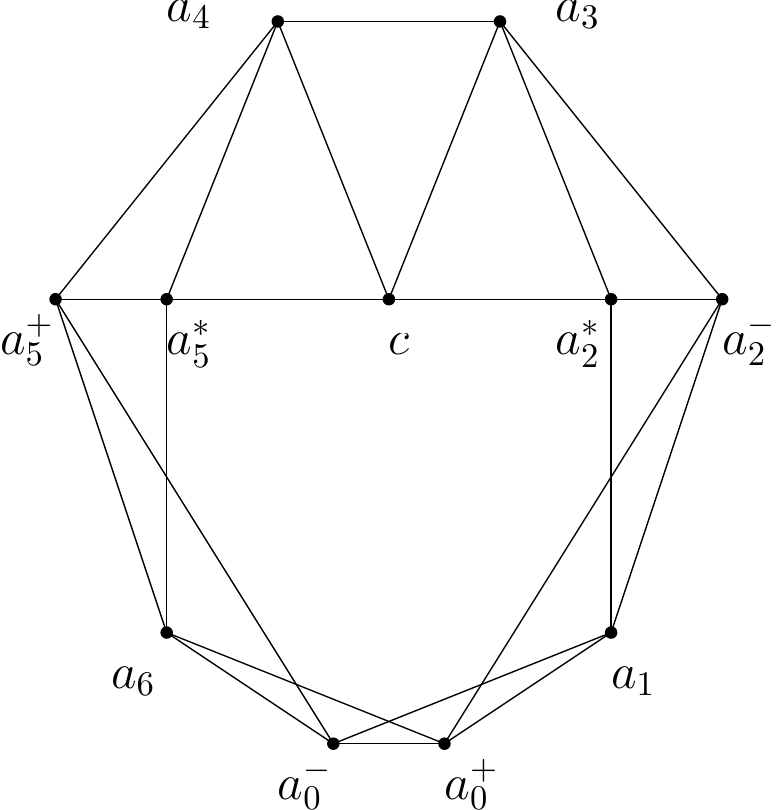}
\end{center}
\caption{The emerald.} \label{fig:emerald}
\end{figure}

The {\em emerald} is the graph represented in Figure~\ref{fig:emerald}.

Let $B$ be a graph, let $C,A_0,\dots,A_6$ (with indices in $\mathbb{Z}_7$) be a partition of $V(B)$ into nonempty cliques, and let $i^* \in \mathbb{Z}_7$. We say that $B$ is a {\em thickened emerald} with {\em good triple} $(\{A_i\}_{i \in \mathbb{Z}_7},C,i^*)$ provided that all the following hold (see Figure~\ref{fig:thickened-emerald}):
\begin{itemize}
\item for all $i \in \mathbb{Z}_7$, $A_i$ is complete to $A_{i-1} \cup A_{i+1}$ and anticomplete to $A_{i-3} \cup A_{i+3}$;
\item for all $i \in \{i^*-3,i^*-1,i^*+1,i^*+3\}$, $A_i$ is anticomplete to $A_{i-2} \cup A_{i+2}$;
\item there exist nonempty, pairwise disjoint cliques $A_{i^*}^-,A_{i^*}^+,A_{i^*+2}^*,A_{i^*+2}^-,A_{i^*-2}^*,A_{i^*-2}^+$ such that all the following hold:
\begin{itemize}
\item $A_{i^*} = A_{i^*}^- \cup A_{i^*}^+$,
\item $A_{i^*+2} = A_{i^*+2}^* \cup A_{i^*+2}^-$,
\item $A_{i^*-2} = A_{i^*-2}^* \cup A_{i^*-2}^+$,
\item $A_{i^*}^-$ is complete to $A_{i^*-2}^+$ and anticomplete to $A_{i^*-2}^* \cup A_{i^*+2}$,
\item $A_{i^*}^+$ is complete to $A_{i^*+2}^-$ and anticomplete to $A_{i^*+2}^* \cup A_{i^*-2}$,
\item $C$ is complete to $A_{i^*+2}^* \cup A_{i^*+3} \cup A_{i^*-3} \cup A_{i^*-2}^*$ and anticomplete to $A_{i^*-2}^+ \cup A_{i^*-1} \cup A_{i^*} \cup A_{i^*+1} \cup A_{i^*+2}^-$.
\end{itemize}
\end{itemize}
Furthermore, for notational purposes, we set $A_{i^*}^* = A_{i^*+2}^+ = A_{i^*-2}^- = \emptyset$, and for all $i \in \{i^*-3,i^*-1,i^*+1,i^*+3\}$, we set $A_i^* = A_i$ and $A_i^- = A_i^+ = \emptyset$.

A {\em thickened emerald} is any graph $B$ for which there exists a partition $C,A_0,A_1,\dots,A_6$ (with indices in $\mathbb{Z}_7$) of $V(B)$ and an index $i^* \in \mathbb{Z}_7$ such that $B$ is a thickened emerald with good triple $(\{A_i\}_{i \in \mathbb{Z}_7},C,i^*)$. Note that a graph is a thickened emerald if and only if it can be obtained from the emerald by blowing up each vertex to a nonempty clique.

\begin{figure}
\begin{center}
\includegraphics[scale=0.6]{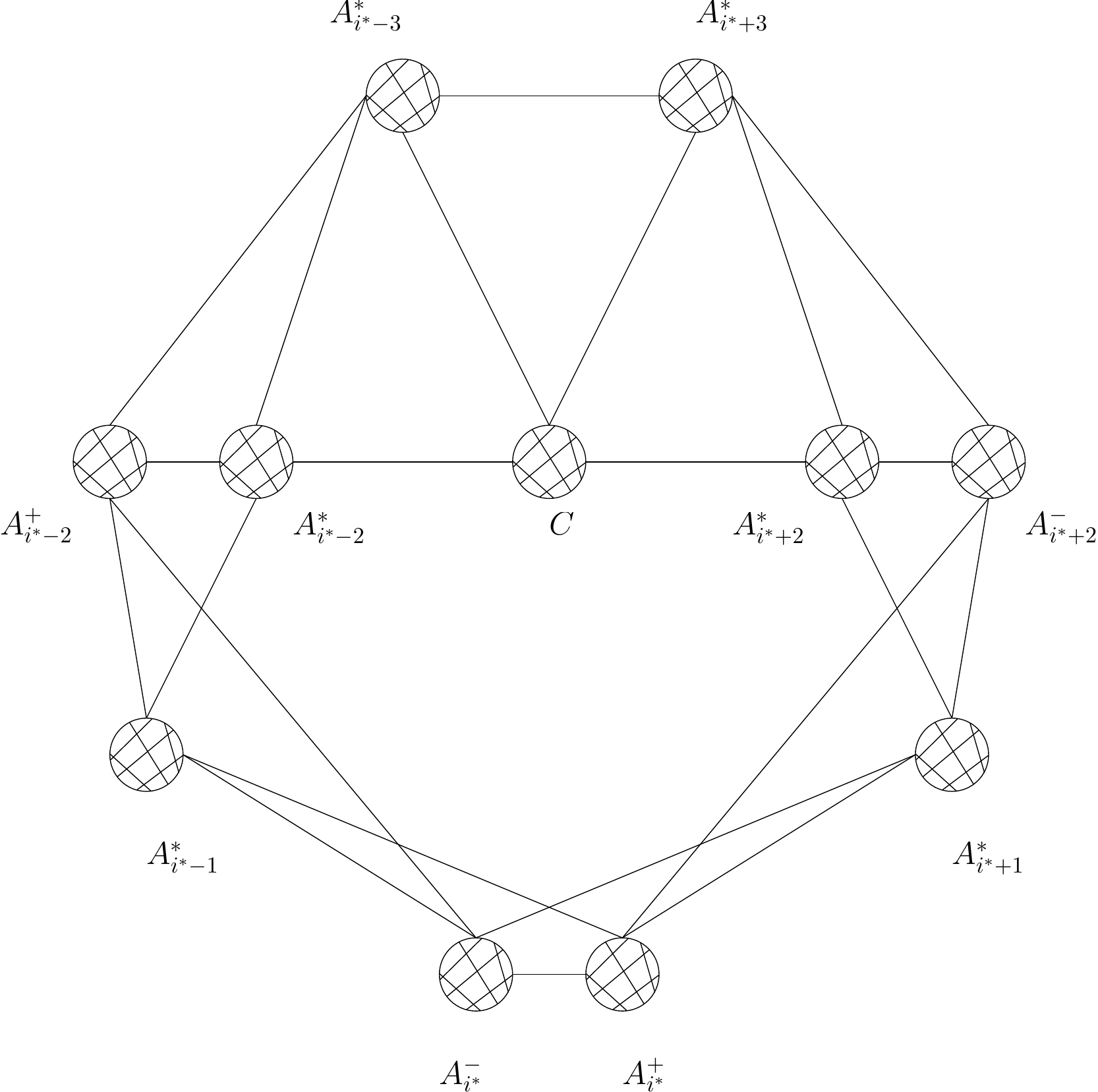}
\end{center}
\caption{Thickened emerald $B$ with good triple $(\{A_i\}_{i \in \mathbb{Z}_7},C,i^*)$. For all $i \in \mathbb{Z}_7$, $A_i = A_i^* \cup A_i^- \cup A_i^+$ (those $A_i^-$'s and $A_i^+$'s that are not represented in the figure are empty). A shaded disk represents a nonempty clique. A straight line between two cliques indicates that the two cliques are complete to each other. The absence of a line between two cliques indicates that the two cliques are anticomplete to each other.} \label{fig:thickened-emerald}
\end{figure}

\begin{lemma} \label{lemma-7-bracelet-in-thickened-emerald} Let $B$ be a thickened emerald with good triple $(\{A_i\}_{i \in \mathbb{Z}_7},C,i^*)$. Then $B \setminus C$ is a 7-bracelet with good pair $(\{A_i\}_{i \in \mathbb{Z}_7},i^*)$.
\end{lemma}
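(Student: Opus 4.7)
The plan is to verify axioms (I)--(V) of the $7$-bracelet definition for $B \setminus C$, equipped with the partition $\{A_i\}_{i \in \mathbb{Z}_7}$ and the index $i^*$ inherited from the thickened emerald. For the refinement of each $A_i$ into $A_i^* \cup A_i^+ \cup A_i^-$ required by axiom (II), I would take the six sets supplied by the thickened-emerald definition, together with the conventions $A_{i^*}^* = A_{i^*+2}^+ = A_{i^*-2}^- = \emptyset$ and $A_i^* = A_i$, $A_i^+ = A_i^- = \emptyset$ for $i \in \{i^*-3,i^*-1,i^*+1,i^*+3\}$ --- which are precisely the defaults already fixed in the definition of a thickened emerald.

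The easy axioms come first. Axiom (I) is a verbatim restatement of the first bullet of the thickened-emerald definition, and it involves no edges to $C$. Axioms (III), (IV), and (V) merely record that certain $A_i^\pm$ vanish in our refinement, which is true by construction.

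The bulk of the work is axiom (II), handled by cases on $i$. For $i \in \{i^*-3,i^*-1,i^*+1,i^*+3\}$, since $A_i^+ = A_i^- = \emptyset$, only (II.a) and (II.f) have content, and both follow immediately from the second bullet of the thickened-emerald definition together with the fact that $A_i^* = A_i$ is a nonempty clique. For $i = i^*$, the bullets describing $A_{i^*}^+$ and $A_{i^*}^-$ yield (II.a)--(II.c): $A_{i^*}^+$ is anticomplete to $A_{i^*-2}$ and complete to the \emph{nonempty} set $A_{i^*+2}^-$, and symmetrically for $A_{i^*}^-$. The vertices in $A_{i^*}^+$ are mutual twins in $B \setminus C$ (all are complete to $A_{i^*-1}$, $A_{i^*+1}$, $A_{i^*+2}^-$, and the rest of $A_{i^*}$, and anticomplete to everything else), so the nested-closed-neighborhood chain in (II.d) collapses to equality for any ordering; the same holds for (II.e) applied to $A_{i^*}^-$. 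Axiom (II.f) holds at $i = i^*$ because $A_{i^*+2}^* \neq \emptyset$ is anticomplete to $A_{i^*}^+$, so $A_{i^*}^+$ is not complete to $A_{i^*+2}$. The remaining cases $i \in \{i^*-2,i^*+2\}$ are analogous: for $i = i^*+2$, I would use the $\mathbb{Z}_7$-identity $A_{(i^*+2)+2} = A_{i^*-3}$ together with the second bullet of the thickened-emerald definition applied to $i = i^*-3$ to get that $A_{i^*+2}$, and hence $A_{i^*+2}^*$ and $A_{i^*+2}^-$, is anticomplete to $A_{i^*-3}$; the remaining halves of (II.a) and (II.c) come from the anticompleteness and completeness relations between $A_{i^*}^\pm$ and $A_{i^*+2}^*, A_{i^*+2}^-$; and (II.f) follows from $A_{i^*+2}^* \neq \emptyset$.

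There is no genuine obstacle here; the lemma is essentially a bookkeeping check that the refinement of each $A_i$ provided by the thickened-emerald data is already a valid $7$-bracelet refinement. The only point worth flagging is that (II.d)--(II.e) hold in the strongest possible form: within each nonempty $A_{i^*}^\pm$ or $A_{i^*\pm 2}^\mp$, the vertices are mutual twins in $B \setminus C$, so the nested-neighborhood chain collapses to equality and any ordering works.
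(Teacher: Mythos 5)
Your proof is correct, and it takes the same approach as the paper, which dispenses with the lemma as ``immediate from the appropriate definitions''; your writeup simply spells out the bookkeeping that the paper leaves to the reader.

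A few small points worth flagging, all harmless. First, for $i = i^*$ you say the bullets ``yield (II.a)--(II.c),'' but (II.a) is actually vacuous there since $A_{i^*}^* = \emptyset$; you correct this implicitly since your stated evidence concerns only $A_{i^*}^\pm$, but making the vacuity explicit would be cleaner. Second, in axiom (I) you should also note that each $A_i$ is a nonempty clique, which comes from the requirement that $C, A_0, \dots, A_6$ be a partition of $V(B)$ into nonempty cliques; this is trivial but it is part of (I). Third, your twin observation is the one genuine wrinkle: the ordering conditions (II.d)--(II.e) refer to closed neighborhoods in the ambient graph, and it matters that the ambient graph here is $B \setminus C$ rather than $B$ (where vertices in $A_{i^*+2}^*$ would have $C$ in their neighborhood while those in $A_{i^*+2}^-$ would not). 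You correctly note that all vertices of $A_{i^*}^+$ (and likewise each of $A_{i^*}^-$, $A_{i^*+2}^-$, $A_{i^*-2}^+$) become mutual twins once $C$ is deleted, so the nested-neighborhood chain holds with every inclusion an equality; this is exactly why the lemma is stated about $B \setminus C$ and not $B$.
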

\begin{proof}
This is immediate from the appropriate definitions.
\end{proof}

For an integer $r \geq 3$, an {\em $r$-lantern} is a graph $R$ whose vertex set can be partitioned into nonempty cliques $A,B_1,\dots,B_r,C_1,\dots,C_r,D$ such that all the following hold:
\begin{itemize}
\item $A$ is anticomplete to $D$;
\item $A$ is complete to $\bigcup_{i=1}^r B_i$ and anticomplete to $\bigcup_{i=1}^r C_i$;
\item $D$ is complete to $\bigcup_{i=1}^r C_i$ and anticomplete to $\bigcup_{i=1}^r B_i$;
\item $B_1$ and $C_1$ can be ordered as $B_1 = \{b_1^1,\dots,b_{|B_1|}^1\}$ and $C_1 = \{c_1^1,\dots,c_{|C_1|}^1\}$ so that $N_R[b_{|B_1|}^1] \cap C_1 \subseteq \dots \subseteq N_R[b_1^1] \cap C_1 = C_1$ and $N_R[c_{|C_1|}^1] \cap B_1 \subseteq \dots \subseteq N_R[c_1^1] \cap B_1 = B_1$;\footnote{Note that this implies that $b_1^1$ is complete to $C_1$, and that $c_1^1$ is complete to $B_1$ (in particular, $b_1^1$ and $c_1^1$ are adjacent). Consequently, every vertex in $B_1$ has a neighbor (namely, $c_1^1$) in $C_1$, and every vertex in $C_1$ has a neighbor (namely, $b_1^1$) in $B_1$.}
\item for all $i \in \{2,\dots,r\}$, $B_i$ is complete to $C_i$;
\item for all distinct $i,j \in \{1,\dots,r\}$, $B_i \cup C_i$ is anticomplete to $B_j \cup C_j$.
\end{itemize}
Under these circumstances, we also say that $(A,B_1,\dots,B_r,C_1,\dots,C_r,D)$ is a {\em good partition} for the $r$-lantern $R$ (see Figure~\ref{fig:Lantern}).

A graph $R$ is a {\em lantern} if there exists some integer $r \geq 3$ such that $R$ is an $r$-lantern.

\begin{figure}
\begin{center}
\includegraphics[scale=0.6]{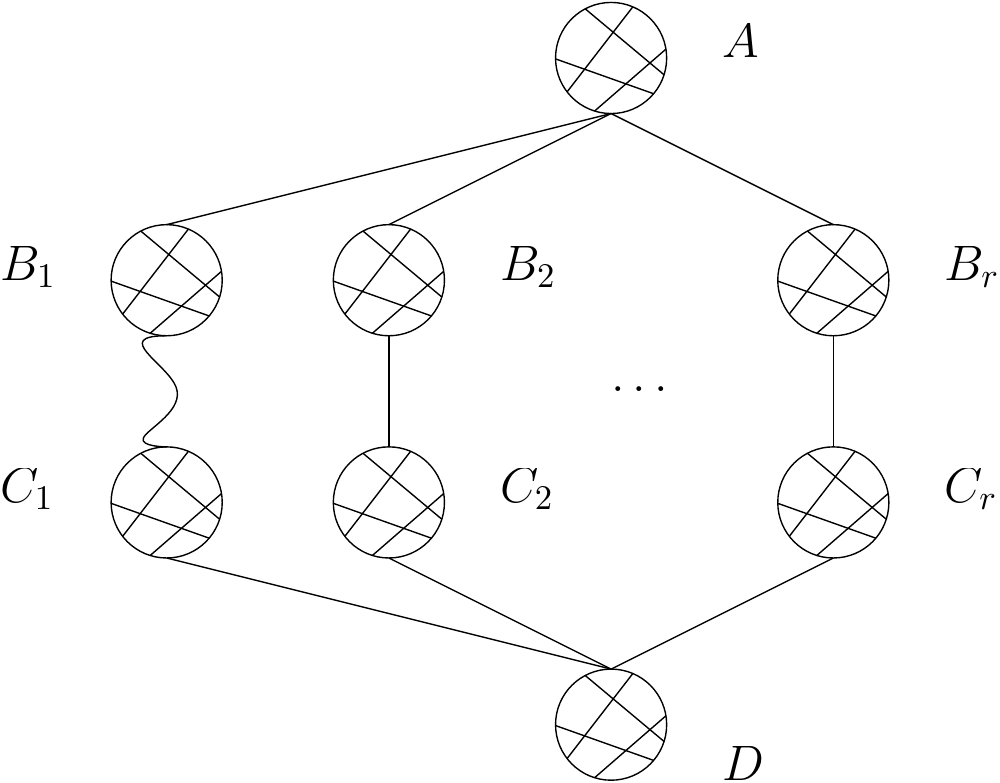}
\end{center}
\caption{$r$-Lantern, $r \geq 3$, with good partition $(A,B_1,\dots,B_r,C_1,\dots,C_r,D)$. A shaded disk represents a nonempty clique. A straight line between two cliques indicates that the two cliques are complete to each other. A wavy line between two cliques indicates that there may be edges between the two cliques (furthermore, such edges must obey the axioms from the definition of an $r$-lantern). The absence of a line between two cliques indicates that the two cliques are anticomplete to each other.} \label{fig:Lantern}
\end{figure}

\begin{figure}
\begin{center}
\includegraphics[scale=0.6]{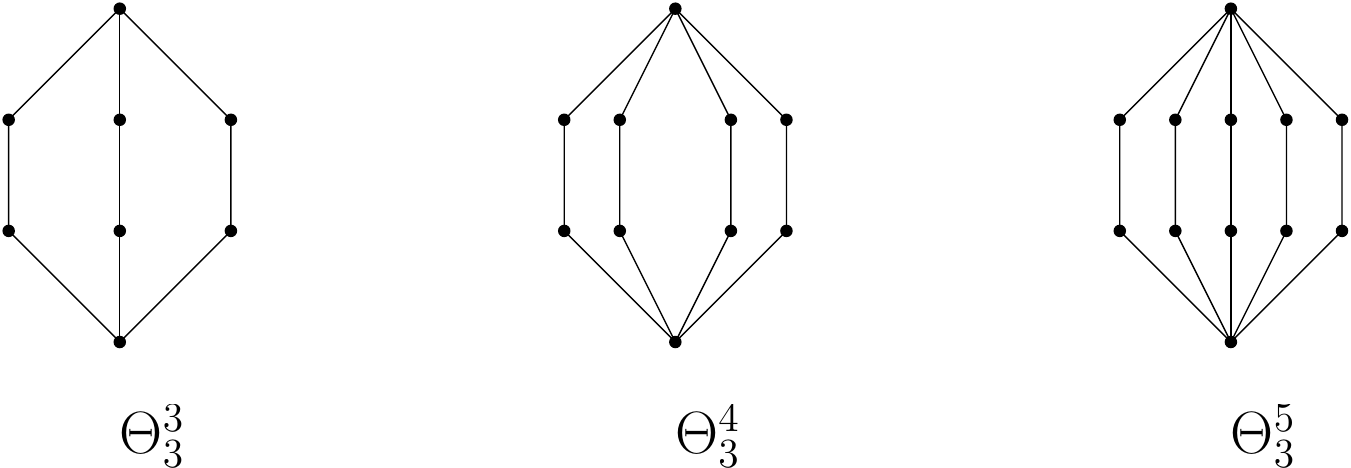}
\end{center}
\caption{Graph $\Theta_3^r$ for $r = 3,4,5$.} \label{fig:Theta3r}
\end{figure}

For an integer $r \geq 3$, $\Theta_3^r$ is the graph that consists of $r$ internally disjoint induced three-edge paths that meet at their endpoints (see Figure~\ref{fig:Theta3r}).

Note that for all integers $r \geq 3$, every $r$-lantern contains an induced $\Theta_3^r$ (and consequently, an induced $\Theta_3^3$ as well), and $\Theta_3^r$ is an $r$-lantern.

A {\em 6-wreath} is a graph $R$ whose vertex set can be partitioned into six nonempty sets, say $X_0,\dots,X_5$ (with indices understood to be in $\mathbb{Z}_6$), that can be ordered as $X_0 = \{u_1^0,\dots,u_{|X_0|}^0\},\dots,X_5 = \{u_1^5,\dots,u_{|X_5|}^5\}$ so that both the following hold:
\begin{itemize}
\item for all $i \in \mathbb{Z}_6$, $X_i \subseteq N_R[u_{|X_i|}^i] \subseteq \dots \subseteq N_R[u_1^i] = X_{i-1} \cup X_i \cup X_{i+1}$;\footnote{Note that this implies that $X_0,\dots,X_5$ are nonempty cliques, and that for all $i \in \mathbb{Z}_6$, $X_i$ is anticomplete to $X_{i+2} \cup X_{i+3} \cup X_{i+4}$.}
\item $X_0$ is complete to $X_1$, $X_2$ is complete to $X_3$, and $X_4$ is complete to $X_5$.
\end{itemize}
Under these circumstances, we say that $(X_0,\dots,X_5)$ is a {\em good partition} of the 6-wreath $R$ (see Figure~\ref{fig:6Wreath}).

\begin{figure}
\begin{center}
\includegraphics[scale=0.6]{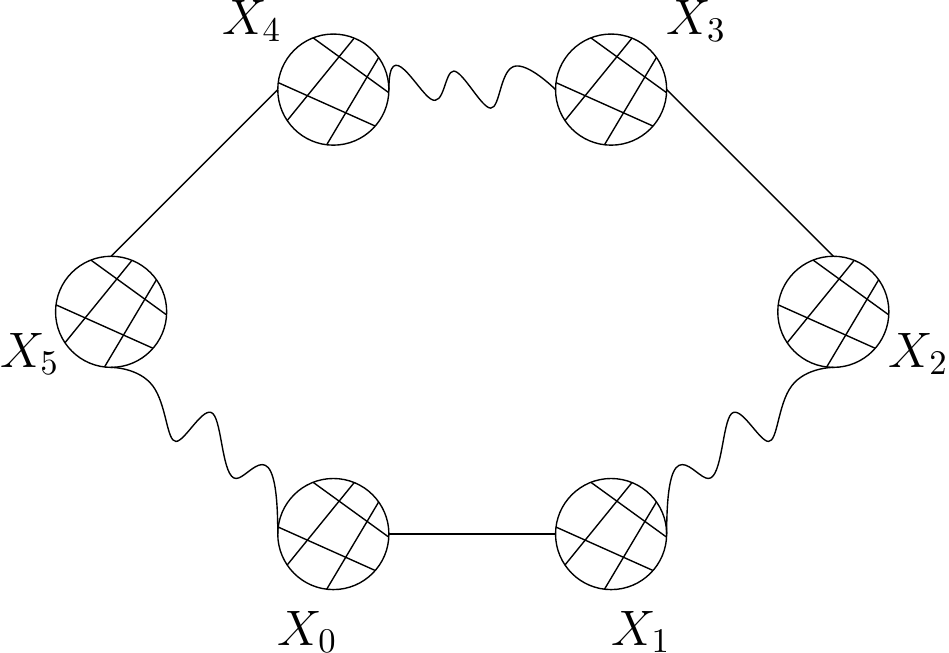}
\end{center}
\caption{6-Wreath with good partition $(X_0,\dots,X_5)$. A shaded disk represents a nonempty clique. A straight line between two cliques indicates that the two cliques are complete to each other. A wavy line between two cliques indicates that there are edges between the two cliques (furthermore, such edges must obey the axioms from the definition of a 6-wreath). The absence of a line (straight or wavy) between two cliques indicates that the two cliques are anticomplete to each other.} \label{fig:6Wreath}
\end{figure}

\begin{figure}
\begin{center}
\includegraphics[scale=0.6]{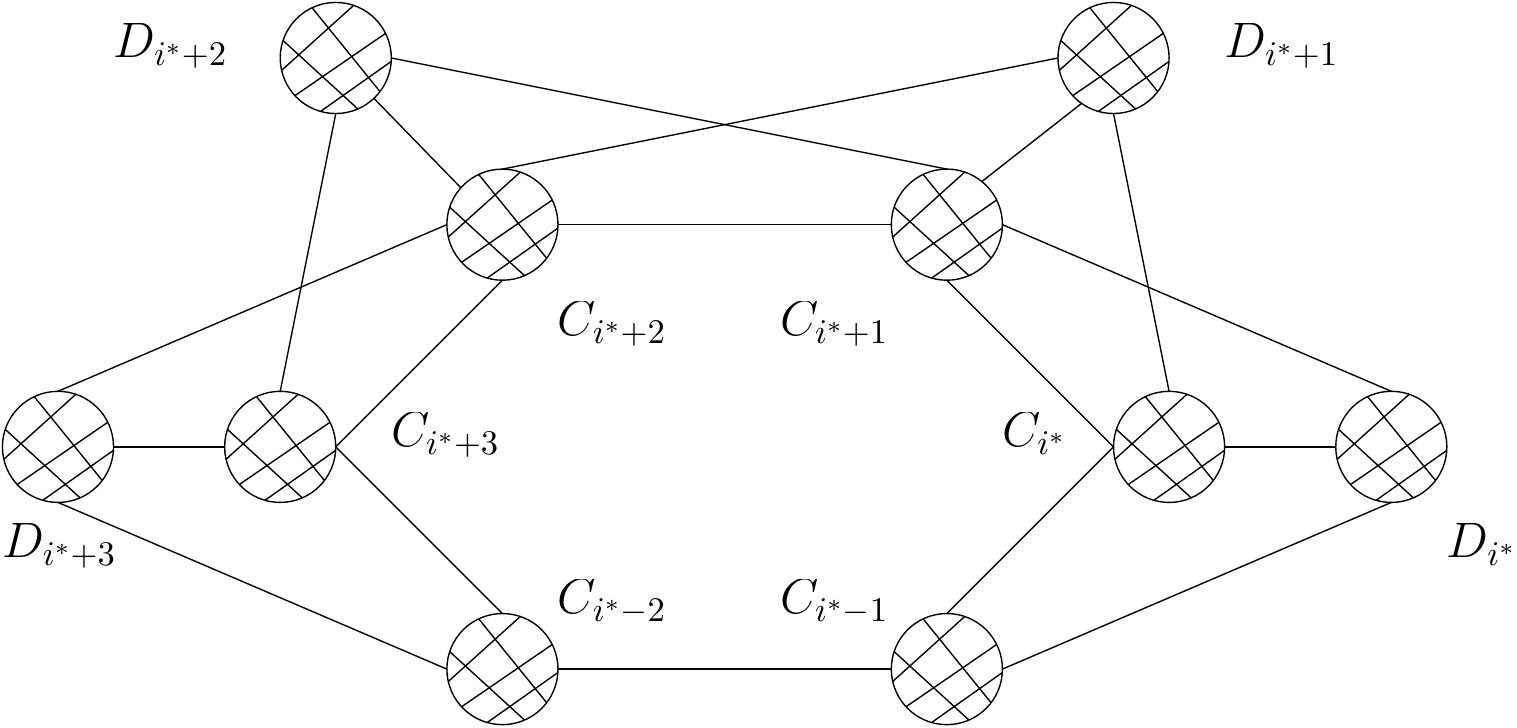}
\end{center}
\caption{6-Crown with good triple $(\{C_i\}_{i \in \mathbb{Z}_6},\{D_i\}_{i \in \mathbb{Z}_6},i^*)$. A shaded disk represents a (possibly empty) clique. Clique $D_{i^*}$ may be empty or nonempty, and the other cliques represented by shaded disks are all nonempty. Cliques $D_{i^*-1},D_{i^*-2}$ are empty (and not represented in the figure). A straight line between two cliques indicates that the two cliques are complete to each other. The absence of a line between two cliques indicates that the two cliques are anticomplete to each other.} \label{fig:6Crown}
\end{figure}

\begin{figure}
\begin{center}
\includegraphics[scale=0.6]{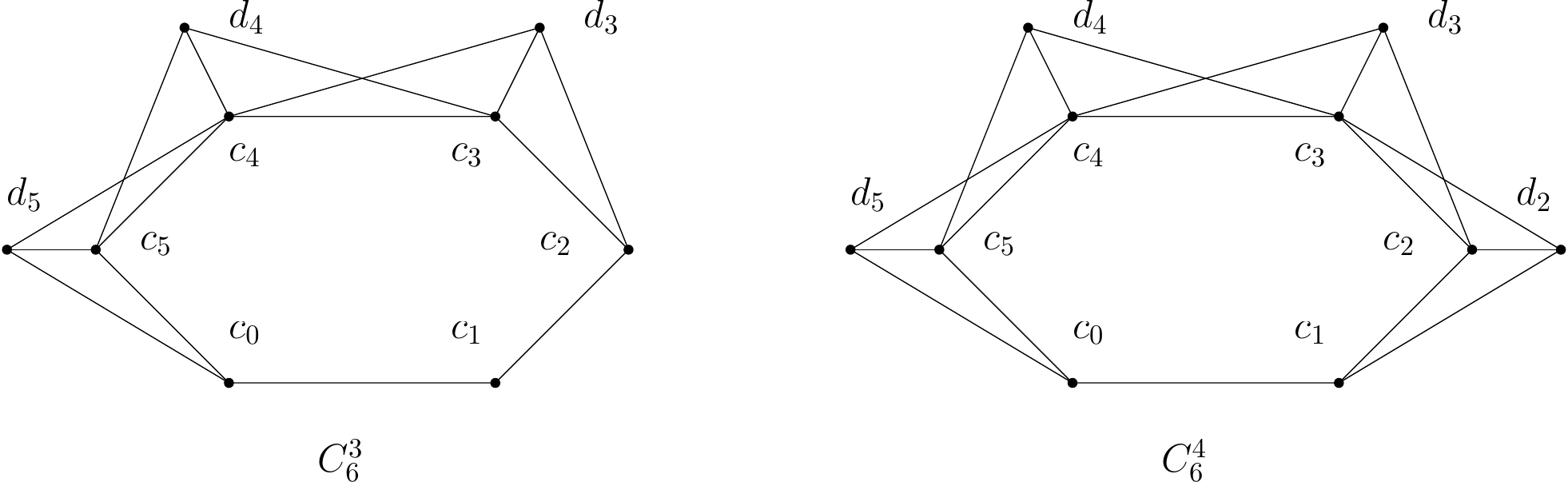}
\end{center}
\caption{Graphs $C_6^3$ and $C_6^4$.} \label{fig:C63C64}
\end{figure}

Let $R$ be a graph, let $C_1,\dots,C_6,D_1,\dots,D_6$ (with indices in $\mathbb{Z}_6$) be a partition of $V(R)$ into (possibly empty) cliques, and let $i^* \in \mathbb{Z}_6$. We say that $R$ is a {\em 6-crown} with {\em good triple} $(\{C_i\}_{i \in \mathbb{Z}_6},\{D_i\}_{i \in \mathbb{Z}_6},i^*)$ provided that all the following hold (see Figure~\ref{fig:6Crown}):
\begin{itemize}
\item $C_0,\dots,C_5$ are nonempty;
\item $D_{i^*-2},D_{i^*-1}$ are empty, and $D_{i^*+1},D_{i^*+2},D_{i^*+3}$ are nonempty ($D_{i^*}$ may be empty or nonempty);
\item for all $i \in \mathbb{Z}_6$, $C_i$ is complete to $C_{i-1} \cup C_{i+1}$ and anticomplete to $C_{i+2} \cup C_{i+3} \cup C_{i+4}$;
\item for all $i \in \mathbb{Z}_6$, $D_i$ is complete to $C_{i-1} \cup C_i \cup C_{i+1}$ and anticomplete to $C_{i+2} \cup C_{i+3} \cup C_{i+4}$;
\item $D_0,\dots,D_5$ are pairwise anticomplete to each other.
\end{itemize}

A {\em 6-crown} is any graph $R$ for which there exists a partition $C_1,\dots,C_6,D_1,\dots,D_6$ of $V(R)$ and an index $i^* \in \mathbb{Z}_6$ such that $R$ is a 6-crown with good triple $(\{C_i\}_{i \in \mathbb{Z}_6},\{D_i\}_{i \in \mathbb{Z}_6},i^*)$.

Let $C_6^3$ and $C_6^4$ be the graphs represented in Figure~\ref{fig:C63C64}. Note that a graph $R$ is a 6-crown if and only if it can be obtained from one of $C_6^3,C_6^4$ by blowing up each vertex to a nonempty clique.

\begin{lemma} Every 7-bracelent, thickened emerald, lantern, 6-wreath, and 6-crown is anticonnected.
\end{lemma}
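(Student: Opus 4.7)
The strategy is uniform across all five cases: a graph is anticonnected iff its complement is connected, so in each case I plan to (a) identify, among the nonempty cliques that make up the structure, a family of pairwise anticomplete pairs whose non-edges form a ``skeleton'' in the complement that is connected across every clique of the decomposition, and then (b) show that each individual clique $K$ appearing in the decomposition has a \emph{nonempty} anticomplete set in $G$, which provides a common neighbor of all vertices of $K$ in $\overline{G}$ and thus internally joins the (otherwise independent) vertices of $K$. Together, (a) and (b) force $\overline{G}$ to be connected.

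For the 7-bracelet with good pair $(\{A_i\}_{i\in \mathbb{Z}_7},i^*)$, axiom (I) gives that each $A_i$ is nonempty and anticomplete to $A_{i-3}\cup A_{i+3}$. Because $\gcd(3,7)=1$, the ``step-3'' cyclic sequence $A_0,A_3,A_6,A_2,A_5,A_1,A_4$ visits every $A_i$, so the seven cliques lie in a common component of $\overline{B}$; and each $A_i$ is internally joined in $\overline{B}$ through any vertex of the nonempty set $A_{i+3}$. For the thickened emerald, I would apply Lemma~\ref{lemma-7-bracelet-in-thickened-emerald} to conclude that $B\setminus C$ is a 7-bracelet (hence anticonnected by the previous paragraph), and then use that $C$ is anticomplete in $B$ to the nonempty set $A_{i^*-1}$, so in $\overline{B}$ every vertex of $C$ is adjacent to every vertex of $A_{i^*-1}$, which both joins $C$ to the rest of $\overline{B}$ and joins $C$ internally.

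For the $r$-lantern with good partition $(A,B_1,\dots,B_r,C_1,\dots,C_r,D)$, the relevant non-edges are: $A$ is anticomplete to $D\cup C_1\cup\dots\cup C_r$; $D$ is anticomplete to $B_1\cup\dots\cup B_r$; and $B_i\cup C_i$ is anticomplete to $B_j\cup C_j$ for $i\neq j$. These non-edges make $\overline{R}$ connected across parts, and each part has a nonempty anticomplete set providing internal joining in $\overline{R}$ (namely, $C_1$ for $A$, $B_1$ for $D$, $D$ for each $B_i$, $A$ for each $C_i$). For the 6-wreath, I would use that each $X_i$ is nonempty and anticomplete to $X_{i+2}\cup X_{i+3}\cup X_{i+4}$ in $G$; thus $X_i$ is complete to $X_{i+3}$ in $\overline{R}$, which joins all six cliques in $\overline{R}$ and also joins each $X_i$ internally through the nonempty $X_{i+3}$. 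For the 6-crown, the same relations among $C_0,\dots,C_5$ link these six nonempty cliques in $\overline{R}$ and join each $C_i$ internally via $C_{i+3}$; and each nonempty $D_i$ is anticomplete in $R$ to $C_{i+3}$ (which is nonempty), so $D_i$ attaches to the skeleton and is internally joined through $C_{i+3}$.

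The only real obstacle is bookkeeping: in each case I must verify that the anticomplete set I invoke is \emph{actually nonempty} under the hypotheses of the definition. This is immediate for the 7-bracelet, the thickened emerald, the $r$-lantern, and the 6-wreath (all parts of the partition are nonempty by definition). For the 6-crown, some $D_j$'s may be empty, so I have to take care to attach each nonempty $D_i$ via a $C_j$ and not via another $D_j$; this is harmless because $C_0,\dots,C_5$ are always nonempty. No technical innovation beyond careful reading of the definitions is needed, and the argument should be short.
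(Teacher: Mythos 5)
Your approach is the natural one — check that the complement is connected by (a) showing the parts of the partition are linked by non-edges and (b) showing each clique in the partition, being an independent set in the complement, has a nonempty common anticomplete set to glue it together — and since the paper disposes of this lemma with the single phrase ``routine checking,'' your write-up is a genuine expansion rather than a deviation. The 7-bracelet, thickened emerald, and lantern cases are all fine as written.

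There is, however, a small but genuine slip in the 6-wreath (and hence 6-crown) case. You write that ``$X_i$ is complete to $X_{i+3}$ in $\overline{R}$, which joins all six cliques.'' That single relation does not join all six: since $\gcd(3,6)=3$, the edges $X_i \,\text{---}\, X_{i+3}$ in $\overline{R}$ produce only the three disjoint pairs $\{X_0,X_3\}$, $\{X_1,X_4\}$, $\{X_2,X_5\}$. (This is precisely the phenomenon that makes the 7-bracelet work via $\gcd(3,7)=1$ but fails here.) The fix is immediate and uses information you already quoted: each $X_i$ is in fact anticomplete to $X_{i+2}\cup X_{i+3}\cup X_{i+4}$, so in $\overline{R}$ you also have $X_i$ complete to $X_{i+2}$, and together the $+2$ and $+3$ steps generate $\mathbb{Z}_6$, making the skeleton connected. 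The same correction applies verbatim to the $C_0,\dots,C_5$ in the 6-crown; the attachment of each nonempty $D_i$ via the (always nonempty) $C_{i+3}$ is fine as you have it.
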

\begin{proof}
This can be seen by routine checking.
\end{proof}

Theorem~\ref{thm-P7C4C5-free-decomp} (stated at the beginning of this section) is an immediate corollary of Theorems~\ref{thm-decomp-P7C4C5-free-with-C7},~\ref{thm-P7C4C5C7-free-contains-Theta-decomp}, and~\ref{thm-P7C4C5C7Theta33-free-decomp}, stated below (and proven later in this section).

\begin{theorem} \label{thm-decomp-P7C4C5-free-with-C7} Let $G$ be a graph. Then the following two statements are equivalent:
\begin{enumerate}[(i)]
\item $G$ is a $(P_7,C_4,C_5)$-free graph that contains a 7-hole and does not admit a clique-cutset;
\item $G$ contains exactly one nontrivial anticomponent, and this anticomponent is either a 7-bracelet or a thickened emerald.
\end{enumerate}
\end{theorem}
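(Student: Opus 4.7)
The plan is to prove both directions of the equivalence, with a common first step reducing to the unique nontrivial anticomponent $H$ of $G$. If $G$ had two nontrivial anticomponents $H_1,H_2$, a non-edge $a_jb_j$ in each $H_j$ would give an induced $C_4$ on $\{a_1,b_1,a_2,b_2\}$ because distinct anticomponents are complete to each other; so there is at most one, and since every 7-hole is anticonnected, it lies entirely in one such $H$. The remaining anticomponents are trivial, so their vertices are universal in $G$; since none of $P_7,C_4,C_5$, or any hole contains a universal vertex, $G$ inherits the three freeness conditions and the 7-hole from $H$, and conversely. Moreover, every clique-cutset of $G$ must contain all universal vertices (otherwise a missing universal vertex would reconnect $G$ after removing the putative cutset), so $G$ has a clique-cutset if and only if $H$ does. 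The theorem thus reduces to: $H$ is $(P_7,C_4,C_5)$-free with a 7-hole and no clique-cutset iff $H$ is a 7-bracelet or a thickened emerald.

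For the direction $(ii) \Rightarrow (i)$, I would verify the three properties of $H$ directly from the defining axioms, handling 7-bracelets first and using Lemma~\ref{lemma-7-bracelet-in-thickened-emerald} to pass to the thickened-emerald case. An induced 7-hole is produced by choosing one vertex $a_i \in A_i$ per class with the aid of (II.f), ensuring $a_i \not\sim a_{i+2}$ throughout and avoiding $C$ in the thickened-emerald case. The absence of induced $P_7,C_4,C_5$ is a bounded case analysis indexed by the partition classes hosting the vertices of the putative forbidden subgraph; the complete/anticomplete pattern together with (II.a)--(II.e) eliminates every configuration. The absence of a clique-cutset combines anticonnectedness (the unnamed lemma stated just before the theorem) with the nested orderings (II.d)--(II.e): any clique in $H$ lies within a bounded union of consecutive classes, and the ``top'' representatives $a_1^{i^\pm}$ maintain cyclic connectivity outside that clique.

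For the direction $(i) \Rightarrow (ii)$, fix an induced 7-hole $v_0 v_1 \cdots v_6$ (indices in $\mathbb{Z}_7$) in $H$, and for each $u \in V(H) \setminus \{v_0,\ldots,v_6\}$ analyze $S(u) := N_H(u) \cap \{v_0,\ldots,v_6\}$. $C_5$-freeness forbids $u$ from being adjacent to two hole-vertices at cyclic distance exactly $3$ (either arc would close an induced $C_5$), and $C_4$-freeness forces any $u$ adjacent to two hole-vertices at cyclic distance $2$ to be adjacent to the vertex between them. This leaves only a short menu of admissible $S(u)$, each centered on some index $i \in \mathbb{Z}_7$. I would define $A_i$ to be the set of vertices centered on $v_i$, and in the thickened-emerald case collect the vertices with the ``symmetric long'' attachment pattern into the central clique $C$. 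The cyclic axiom (I), the decomposition $A_i = A_i^* \cup A_i^+ \cup A_i^-$ of (II.a)--(II.c), and the corresponding thickened-emerald axioms then follow from further $C_4, C_5$ checks on short configurations straddling pairs of classes.

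The hard step will be establishing the nested-neighborhood chains (II.d)--(II.e) together with the existence of a single index $i^*$ satisfying (III)--(VI). For the first, given $a, a' \in A_i^+$ with incomparable closed neighborhoods, the witnesses from the symmetric difference must live in $A_{i+2}$; I would then splice these witnesses together with carefully chosen vertices in $A_{i+3}, A_{i-3}, A_{i-2}$ to produce an induced $P_7$, and verifying that the resulting 7-vertex path is induced (with no chord forced by the cyclic structure) is the delicate bookkeeping where $P_7$-freeness is used essentially. For the second, I would argue that simultaneous nonemptiness of both $A_i^+$ and $A_i^-$ at two positions $i$ with incompatible parities would allow one to isolate a clique-cutset separating two arcs of the 7-cycle, contradicting the hypothesis; this forces a unique $i^*$ (up to the ambiguity noted in the paper). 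Finally, the 7-bracelet and thickened-emerald cases are distinguished by whether $H$ contains a vertex with the full ``symmetric long'' hole-attachment used to form $C$, and in each case the remaining axioms are immediate from the construction.
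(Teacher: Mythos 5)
Your reduction to the unique nontrivial anticomponent is correct and parallels the paper's Lemma~\ref{lemma-one-anticomp}, and your sketch of $(ii)\Rightarrow(i)$ would essentially reprove Lemma~\ref{lemma-7BTE-P7C4C5-free}. However, the plan for $(i)\Rightarrow(ii)$ has a genuine gap.

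Your attachment analysis of $N_H(u)\cap V(\text{hole})$ reproduces Lemma~\ref{lemma-7-hole-attachment-in-P7C4C5-free}, but one admissible pattern is the \emph{empty} one: $u$ may be anticomplete to the fixed 7-hole. Your construction of the $A_i$'s (and of $C$) only sorts vertices that touch the hole, and you never say what happens to far vertices. The paper deals with this via a two-step mechanism that is absent from your proposal: first, choose the 7-hole $H^*$ to \emph{maximize} $|N_G[H^*]|$, apply the attachment classification inside the dominated subgraph $K=G[N_G[H^*]]$ (Lemma~\ref{lemma-dominating-7-hole-anticomp-7-bracelet-or-emerald}), and then show that if $V(K)\subsetneqq V(G)$, the neighborhood of each far component inside $K$ is a clique, yielding a clique-cutset and a contradiction. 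This is in fact the only place the no-clique-cutset hypothesis is used in the forward direction, and it is structurally essential.

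A related issue: the ``symmetric long'' vertices (pattern (d) of Lemma~\ref{lemma-7-hole-attachment-in-P7C4C5-free}) do \emph{not} all belong to the central clique $C$. In the 7-bracelet case no $C$ exists, and such a vertex is absorbed into some $A_j$ after re-partitioning. The paper handles this by first passing to an \emph{inclusion-wise maximal} induced 7-bracelet containing the twin classes $T_i$; Lemma~\ref{lemma-add-vertex-to-7-bracelet} then gives a clean dichotomy — adding a type-(d) vertex either yields a larger 7-bracelet (contradicting maximality, so it cannot happen once $B$ is maximal) or gives a thickened emerald. Without a maximality device, your direct construction of the $A_i$'s and $C$ from the raw attachment data would not correctly separate the two cases.

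Finally, your plan to deduce the uniqueness of $i^*$ from clique-cutset-freeness is misplaced. The paper establishes axioms (III)--(V) from $(C_4,C_5)$-freeness alone, via explicit short holes (claim~(3) of Lemma~\ref{lemma-quasi-7-bracelet-P7C4C5-free-is-7-bracelet}); it is not clear that ``two incompatible nonempty $A_i^\pm$'s'' would isolate a \emph{clique} cutset rather than a general cutset, so that argument would need substantial repair. Also, your claim that ``$C_5$-freeness forbids $u$ from being adjacent to two hole-vertices at cyclic distance exactly $3$'' is false as stated — a vertex complete to the hole (pattern (a)) is adjacent to many such pairs; what is forbidden is having $N(u)\cap V(\text{hole})$ be exactly such a pair.
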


\begin{theorem} \label{thm-P7C4C5C7-free-contains-Theta-decomp} Let $G$ be a graph. Then the following two statements are equivalent:
\begin{enumerate}[(i)]
\item $G$ is a $(P_7,C_4,C_5,C_7)$-free graph that contains an induced $\Theta_3^3$ and does not admit a clique-cutset;
\item $G$ contains exactly one nontrivial anticomponent, and this anticomponent is a lantern.
\end{enumerate}
\end{theorem}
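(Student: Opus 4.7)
The plan is to prove both directions; the direction (ii) $\Rightarrow$ (i) is essentially routine verification, while most of the work is in (i) $\Rightarrow$ (ii).

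For (ii) $\Rightarrow$ (i), I would verify directly using the lantern partition $(A, B_1, \ldots, B_r, C_1, \ldots, C_r, D)$ together with the dominating trivial-anticomponent vertices. That $G$ contains an induced $\Theta_3^3$ is immediate, since an $r$-lantern contains an induced $\Theta_3^r$ and $r \geq 3$. That $G$ is $(P_7, C_4, C_5, C_7)$-free is a short case check that exploits anticompleteness between the rungs $B_i \cup C_i$, cliqueness inside each rung, the nested (hence $2K_2$-free) attachment between $B_1$ and $C_1$, and the fact that every vertex of $A$ is adjacent to all of $\bigcup_i B_i$ and every vertex of $D$ to all of $\bigcup_i C_i$. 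That $G$ admits no clique-cutset follows because any clique meets at most one rung, so removing it leaves at least $r - 1 \geq 2$ parallel $A$--$D$ paths and hence keeps $G$ connected.

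For (i) $\Rightarrow$ (ii), I would first reduce to the anticonnected case. If $G$ had two nontrivial anticomponents $H_1, H_2$, then each would contain a non-edge, say $uu' \in \overline{H_1}$ and $vv' \in \overline{H_2}$; since $V(H_1)$ is complete to $V(H_2)$, the set $\{u, u', v, v'\}$ would induce a $C_4$ in $G$, contradicting $C_4$-freeness. Since $\Theta_3^3$ is anticonnected (its complement is easily checked to be connected), the unique nontrivial anticomponent $H$ of $G$ contains $\Theta_3^3$; the remaining trivial anticomponents are dominating vertices, and any clique-cutset of $H$ together with these dominating vertices would give a clique-cutset of $G$. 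Hence it suffices to prove that an anticonnected $(P_7, C_4, C_5, C_7)$-free graph $G$ with no clique-cutset that contains an induced $\Theta_3^3$ is itself a lantern.

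Now fix such a $G$, pick $r \geq 3$ maximal with $G$ containing an induced $\Theta_3^r$, and fix such a copy $\Theta$ with endpoints $a, d$ and three-edge paths $ab_ic_id$ for $i \in \{1, \ldots, r\}$. The central step, and the principal technical obstacle, is to show that every $v \in V(G) \setminus V(\Theta)$ has $N_G(v) \cap V(\Theta)$ of one of four types: $a$-type $\{a, b_1, \ldots, b_r\}$; $d$-type $\{d, c_1, \ldots, c_r\}$; $B_i$-type $\{a, b_i\}$ or $\{a, b_i, c_i\}$ for some $i$; or the symmetric $C_i$-type. Each non-conforming attachment is to be ruled out by one of three means: by exhibiting an explicit induced $P_7$, $C_4$, $C_5$, or $C_7$ inside $\Theta \cup \{v\}$ (for instance, a vertex adjacent only to $b_1$ in $\Theta$ gives the induced $P_7$ $v$-$b_1$-$a$-$b_2$-$c_2$-$d$-$c_3$); by contradicting the maximality of $r$ (an attachment producing an additional internally disjoint $a$-to-$d$ three-edge path would yield a $\Theta_3^{r+1}$); or by producing a clique-cutset of $G$, contradicting the hypothesis.

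Once the classification is complete, set $A = \{a\} \cup \{v : v \text{ is } a\text{-type}\}$, and define $D, B_i, C_i$ symmetrically. The cliqueness of these sets, the (anti)completeness relations required between them, and the claim that at most one index $i$ admits a non-complete $B_i$-$C_i$ attachment all follow from $C_4$- and $C_5$-freeness on small configurations plus a few uses of $P_7$-freeness: the key $P_7$ argument shows that two non-complete-pair indices $i \neq j$ with non-edges $b \not\sim c \in B_i \times C_i$ and $b' \not\sim c' \in B_j \times C_j$ would yield the induced $P_7$ $b$-$b_i$-$c_i$-$d$-$c_j$-$b_j$-$b'$. Relabel so that the unique non-complete index is $i = 1$; then the nested-neighborhood condition on $(B_1, C_1)$ is precisely $2K_2$-freeness of the bipartite attachment, which reduces to $C_4$-freeness since $B_1$ and $C_1$ are cliques. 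Non-emptiness of $A, D, B_i, C_i$ is immediate from $\{a, d, b_i, c_i\} \subseteq V(\Theta)$. These axioms together identify $G$ as a lantern, completing the proof.
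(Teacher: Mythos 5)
Your overall strategy is the same as the paper's: reduce to the anticonnected part, pick $r$ maximal with $\Theta_3^r$ induced, classify the attachments of the remaining vertices, and rule out bad configurations via small induced obstructions, maximality of $r$, or a clique-cutset. The genuine divergence is that you attach vertices directly to a bare copy $\Theta$ of $\Theta_3^r$, while the paper first grows $\Theta$ to a \emph{maximal induced thickened $\Theta_3^r$} $T$ and classifies attachments to $T$. That extra step buys a lot: maximality of $T$ as a thickened theta automatically kills the ``absorbable'' attachment patterns (your $a$-type and $\{a,b_i,c_i\}$-type vertices would contradict maximality of $T$, because they could be added to $A$ or to a rung), so the paper's surviving types are only the leaf and rung-mixed ones ($L_A,L_D,T_{B_i},R_{B_i},\ldots$), and many of the (anti)completeness relations you would have to verify by hand (e.g.\ that two $a$-type vertices are adjacent, or that every $a$-type vertex is adjacent to every $B_i$-type vertex) are built into the definitions. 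Your raw-$\Theta$ route can be pushed through, but the number of pairwise adjacency checks needed to certify the lantern axioms grows substantially, and the write-up ``a few uses of $P_7$-freeness'' considerably understates this.

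Two concrete issues in the sketch. First, the ``four types'' are not exhaustive as stated: a vertex complete to $V(\Theta)$ and a vertex anticomplete to $V(\Theta)$ are of neither type, and neither is eliminated by an obstruction inside $\Theta\cup\{v\}$ nor by a single-vertex maximality argument. Anticomplete vertices are killed by a clique-cutset argument, but that argument (as in the paper) already requires knowing that $L_A$ is anticomplete to $L_D$ and to the $C$-side attachments, that at most one rung carries mixed attachments, and that $U$ and $T_{B_1}$ are cliques; it cannot be run ``locally.'' Vertices complete to $\Theta$ are never a contradiction in the paper --- they end up being exactly the trivial anticomponents --- and in your anticonnected reduction they can only be ruled out \emph{after} establishing that they are complete to everything else, at which point they would be dominating and violate anticonnectedness. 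You should either list them as a fifth type that is deferred, or establish the required completeness relations first. Second, the displayed $P_7$ ``$b$--$b_i$--$c_i$--$d$--$c_j$--$b_j$--$b'$'' has a potential chord: the non-edge $b\nsim c$ involves some $c\in C_i$ that need not be the hub vertex $c_i$, and an $\{a,b_i,c_i\}$-type vertex $b$ \emph{is} adjacent to $c_i$. The paper's version of this argument chooses $b_i$ as a neighbor and $c_i$ as a non-neighbor of the witness inside the rung, which sidesteps the chord; your sketch should do the same.
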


\begin{theorem} \label{thm-P7C4C5C7Theta33-free-decomp} Let $G$ be a graph. Then the following two statements are equivalent:
\begin{enumerate}[(i)]
\item $G$ is a $(P_7,C_4,C_5,C_7,\Theta_3^3)$-free graph that does not admit a clique-cutset;
\item either $G$ is a complete graph, or $G$ contains exactly one nontrivial anticomponent, and this anticomponent is either a 6-wreath or a 6-crown.
\end{enumerate}
\end{theorem}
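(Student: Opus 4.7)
The direction (ii) $\Rightarrow$ (i) is a routine verification from the definitions of 6-wreath and 6-crown: both are $(P_7,C_4,C_5,C_7,\Theta_3^3)$-free and admit no clique-cutset, and since they are already anticonnected (by the preceding lemma), adjoining universal vertices as trivial anticomponents preserves these properties. For the nontrivial direction (i) $\Rightarrow$ (ii), suppose $G$ satisfies (i) and is not complete. Because $G$ is $C_4$-free, it has at most one nontrivial anticomponent: two such would supply non-edges $aa'$ and $bb'$ on opposite sides of the complete join between them, and then $\{a,b,a',b'\}$ would induce a $C_4$. Since $G$ is not complete, $G$ has exactly one nontrivial anticomponent $G'$. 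Moreover, any clique-cutset of $G'$, enlarged by the trivial anticomponents of $G$, is a clique-cutset of $G$; so $G'$ has no clique-cutset and inherits every forbidden induced subgraph. It therefore suffices to show that an anticonnected $(P_7,C_4,C_5,C_7,\Theta_3^3)$-free graph $G'$ on at least two vertices with no clique-cutset is a 6-wreath or a 6-crown.

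The first step is to observe that every hole in $G'$ has length exactly $6$: lengths $4$, $5$, $7$ are forbidden directly, and for every $k \geq 8$, $C_k$ contains an induced $P_7$. If $G'$ has no hole at all, it is chordal and, being clique-cutset-free, complete\textemdash{}contradicting the anticonnectedness of $G'$. Fix therefore a 6-hole $H = h_0h_1\cdots h_5$ in $G'$. A short $C_4$/$C_5$-free case-check shows that for every $v \in V(G')\setminus V(H)$, the attachment $N_{G'}(v)\cap V(H)$ is one of $\varnothing$, $\{h_i\}$, $\{h_i,h_{i+1}\}$, $\{h_{i-1},h_i,h_{i+1}\}$. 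A standard extension argument from $P_7$-freeness combined with the no-clique-cutset hypothesis (extending a supposedly ``far'' vertex via a shortest path to $H$ and then reading off an induced $P_7$ along the hole) rules out the empty attachment, so $V(H)$ dominates $G'$.

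I would next define $C_i := \{h_i\}\cup\{v:N_{G'}(v)\cap V(H)=\{h_{i-1},h_i,h_{i+1}\}\}$ and denote by $E_i$ (resp.\ $S_i$) the sets of vertices with attachment $\{h_i,h_{i+1}\}$ (resp.\ $\{h_i\}$). Routine $C_4$/$C_5$-freeness arguments show that each $C_i$ is a clique, complete to $C_{i\pm1}$ and anticomplete to $C_{i+2}\cup C_{i+3}\cup C_{i+4}$; this is the ``core'' common to both target structures. $\Theta_3^3$-freeness then forces the positions of the $E_i$'s and $S_i$'s to obey a very rigid pattern: for example, if $S_i$ and $S_{i+3}$ were simultaneously nonempty, then the two arcs of $H$ between $h_i$ and $h_{i+3}$, together with a third path routed through an $S_i$-vertex, an intermediate vertex from some $C_j$ (supplied by the no-clique-cutset hypothesis), and an $S_{i+3}$-vertex, would form an induced $\Theta_3^3$. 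Analogous arguments control which $E_j$'s can coexist. The proof then splits into two cases: if only the $C_i$'s and three-consecutive-attachment vertices occur, the extras assemble into the $D_i$'s of a 6-crown, the no-clique-cutset hypothesis forcing the prescribed pattern of three consecutive nonempty $D_i$'s; otherwise the $E_i$'s merge with adjacent $C_i$'s into the $X_i$'s of a 6-wreath, the three alternating complete pairs coming precisely from the $E_i$'s that are present.

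The main obstacle will be verifying the fine-grained axioms of both target structures\textemdash{}in particular, the nested-by-inclusion neighborhood ordering of each $X_i$ in the 6-wreath definition and the precise emptiness pattern of the $D_i$'s in the 6-crown definition. Both ultimately reduce to the uniform principle that any two vertices whose neighborhoods into a third slot are incomparable produce an induced $C_4$, $C_5$, $C_7$, or $P_7$; and whenever a proposed emptiness would split $G'$ with a clique, the no-clique-cutset hypothesis yields a contradiction. This combinatorial bookkeeping, though lengthy, parallels the arguments for the 7-bracelet/thickened-emerald case in Theorem~\ref{thm-decomp-P7C4C5-free-with-C7} and the lantern case in Theorem~\ref{thm-P7C4C5C7-free-contains-Theta-decomp}.
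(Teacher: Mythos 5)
Your proposal takes a genuinely different route from the paper's. The paper proves this theorem by importing machinery from \cite{CliqueCutsetsBeyondChordalGraphs}: Lemma~\ref{lemma-P6C4C5C7Theta33-free-in-GUT} observes that every $(P_7,C_4,C_5,C_7,\Theta_3^3)$-free graph lies in $\mathcal{G}_{\text{UT}}$ (the class of (3PC, proper wheel)-free graphs), and Theorem~\ref{decomp-thm-GUT} then shows that a clique-cutset-free graph in $\mathcal{G}_{\text{UT}}$ belongs to $\mathcal{B}_{\text{UT}}$; the $(C_4,C_5)$-free and $P_7$-free hypotheses force the conclusion that $G$ is either complete or has a $P_7$-free $6$-ring as its unique nontrivial anticomponent, and Lemma~\ref{lemma-char-P7free6ring} converts that ring into a $6$-wreath or a $6$-crown. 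You instead try to build the target structure directly from a fixed $6$-hole and an attachment analysis, which in effect re-derives a substantial special case of the Boncompagni--Penev--Vu\v{s}kovi\'c decomposition from scratch. That is possible in principle, but it is a much longer undertaking than your sketch indicates, and the sketch contains a real gap.

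The gap is in the attachment classification. You assert that a $C_4$/$C_5$-free case-check shows that every $v\in V(G')\setminus V(H)$ attaches to $H$ as one of $\varnothing$, $\{h_i\}$, $\{h_i,h_{i+1}\}$, or $\{h_{i-1},h_i,h_{i+1}\}$. This omits the attachment $N_{G'}(v)\cap V(H)=V(H)$: a vertex complete to $V(H)$ gives the universal wheel $W_6$, which contains no induced $C_4$, $C_5$, $C_7$, $P_7$, or $\Theta_3^3$ (it has only seven vertices, its unique hole has length six, and its hub has degree six), so nothing in your forbidden list excludes it. Since $G'$ is anticonnected, such a $v$ would still have a non-neighbor somewhere in $G'$, so it cannot be discarded as a universal vertex either. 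In the paper's argument this configuration never arises because a $k$-ring is universal-wheel-free (Theorem~\ref{ring-in-GT}); a direct proof along your lines would need an explicit step, using the no-clique-cutset hypothesis, ruling out a vertex of the nontrivial anticomponent complete to $V(H)$, and that step is absent. More broadly, the ``standard extension argument'' for dominance is not immediate when the nearest attached vertex has a three-consecutive attachment (the shortest path then does not extend to an induced $P_7$ along the hole), and the final verification of the nested-neighborhood axioms for the $X_i$'s and the prescribed emptiness pattern for the $D_i$'s are each substantial case analyses for which the sketch gives only an outline.
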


Our goal in the remainder of this section is to prove Theorems~\ref{thm-decomp-P7C4C5-free-with-C7},~\ref{thm-P7C4C5C7-free-contains-Theta-decomp}, and~\ref{thm-P7C4C5C7Theta33-free-decomp}. We first prove a simple lemma (Lemma~\ref{lemma-one-anticomp}), which we will use several times in this section. In subsection~\ref{subsec:thm-decomp-P7C4C5-free-with-C7}, we prove Theorem~\ref{thm-decomp-P7C4C5-free-with-C7}, in subsection~\ref{subsec:thm-P7C4C5C7-free-contains-Theta-decomp}, we prove Theorem~\ref{thm-P7C4C5C7-free-contains-Theta-decomp}, and in subsection~\ref{subsec:thm-P7C4C5C7Theta33-free-decomp}, we prove Theorem~\ref{thm-P7C4C5C7Theta33-free-decomp}. As we already pointed out, Theorem~\ref{thm-P7C4C5-free-decomp} follows immediately from Theorems~\ref{thm-decomp-P7C4C5-free-with-C7},~\ref{thm-P7C4C5C7-free-contains-Theta-decomp}, and~\ref{thm-P7C4C5C7Theta33-free-decomp}.

\begin{lemma} \label{lemma-one-anticomp} Let $G$ be a graph that has exactly one nontrivial anticomponent, call it $B$. Then all the following hold:
\begin{itemize}
\item $\alpha(G) = \alpha(B)$;
\item for every graph $H$ such that $H$ does not contain a dominating vertex, $G$ is $H$-free if and only if $B$ is $H$-free;
\item $G$ admits a clique-cutset if and only if $B$ admits a clique-cutset.
\end{itemize}
\end{lemma}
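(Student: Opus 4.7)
My plan is to exploit the fact that $B$ has a very specific relationship to the rest of $G$. I would set $D = V(G) \setminus V(B)$, so $D$ is the union of the vertex sets of the trivial anticomponents of $G$. The standing observation I would extract at the start is: since distinct anticomponents are pairwise complete, $D$ is a clique, $D$ is complete to $V(B)$, and every vertex of $D$ is dominating in $G$. If $D = \emptyset$ then $G = B$ and all three bullets are immediate, so I may assume $D \neq \emptyset$.

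For the first bullet, I would observe that any stable set $S$ of $G$ meets the clique $D$ in at most one vertex; and if $S \cap D \neq \emptyset$, then because that vertex is complete to $V(B)$ we must have $S \subseteq D$, hence $|S| = 1 \leq \alpha(B)$ (using $\alpha(B) \geq 1$, since $B$ is nontrivial). Otherwise $S \subseteq V(B)$ and $|S| \leq \alpha(B)$. The reverse inequality is immediate since $B$ is an induced subgraph of $G$.

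For the second bullet, the direction ``$G$ is $H$-free $\Rightarrow$ $B$ is $H$-free'' is trivial. For the converse, assuming $B$ is $H$-free, I would suppose for contradiction that $G[S] \cong H$ for some $S \subseteq V(G)$. If some $v \in S$ were in $D$, then $v$ would be complete to $S \setminus \{v\}$ in $G$, so the image of $v$ in $H$ would be a dominating vertex of $H$, contradicting the hypothesis on $H$. Hence $S \subseteq V(B)$, contradicting that $B$ is $H$-free.

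The main step, and the only one I would treat with any care, is the third bullet. The easy direction is: if $K$ is a clique-cutset of $B$, then $K \cup D$ is a clique (being the union of two cliques that are complete to each other), and $G \setminus (K \cup D) = B \setminus K$ is disconnected, so $K \cup D$ is a clique-cutset of $G$. For the converse, let $K$ be a clique-cutset of $G$; the crux is showing $D \subseteq K$. If some $v \in D$ lay in $V(G) \setminus K$, then $v$ would be complete to $(V(G) \setminus K) \setminus \{v\}$, forcing $G \setminus K$ to be connected and contradicting the cutset property. Once $D \subseteq K$, the set $K \setminus D$ is a clique in $B$, and $G \setminus K = B \setminus (K \setminus D)$ is disconnected, so $K \setminus D$ is a clique-cutset of $B$.
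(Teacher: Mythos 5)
Your proposal is correct and follows essentially the same approach as the paper's proof: in both, one sets $D = V(G) \setminus V(B)$ (called $U$ in the paper), observes it is a clique complete to $V(B)$, and then argues each bullet by noting that vertices of $D$ are dominating, which forces them to play no role in stable sets, induced copies of $H$, or components of $G$ minus a cutset. The minor cosmetic differences (e.g.\ phrasing the stability argument vertex-by-vertex rather than via $\alpha(G) = \max\{\alpha(B),\alpha(G[D])\}$) do not alter the substance.
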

\begin{proof}
Let $U = V(G) \setminus V(B)$; then $U$ is a (possibly empty) clique, complete to $V(B)$ in $G$. If $U = \emptyset$, then $G = B$, and the result is immediate. So assume that $U \neq \emptyset$.

First, since $V(B)$ and $U$ are complete to each other in $G$, it is clear that $\alpha(G) = \max\{\alpha(B),\alpha(G[U])\}$. Since $B$ is a nontrivial anticomponent of $G$, we see that $\alpha(B) \geq 2$, and since $U$ is a nonempty clique in $G$, we have that $\alpha(G[U]) = 1$. Thus, $\alpha(G) = \alpha(B)$.

Next, let $H$ be a graph that does not contain a dominating vertex. Clearly, if $G$ is $H$-free, then so is $B$. For the converse, suppose that $G$ is not $H$-free, and let $S \subseteq V(G)$ be such that $G[S] \cong H$. Clearly, every vertex in $S \cap U$ is a dominating vertex of $G[S]$; since $H$ does not contain a dominating vertex, it follows that $S \cap U = \emptyset$. Thus, $S \subseteq V(B)$, and it follows that $B$ is not $H$-free.

It remains to show that $G$ admits a clique-cutset if and only if $B$ does. If $S$ is a clique-cutset of $B$, then clearly, $S \cup U$ is a clique-cutset of $G$. Assume now that $S$ is a clique-cutset of $G$. Clearly, every vertex in $U \setminus S$ is a dominating vertex of $G \setminus S$; since $G \setminus S$ is disconnected, it follows that $U \subseteq S$. But then $G \setminus S = B \setminus (S \setminus U)$, and we deduce that $S \setminus U$ is a clique-cutset of $B$. This completes the argument.
\end{proof}

\subsection{Proof of Theorem~\ref{thm-decomp-P7C4C5-free-with-C7}} \label{subsec:thm-decomp-P7C4C5-free-with-C7}

\begin{lemma} \label{lemma-7-bracelet-no-parallel-2-skips} Let $B$ be a 7-bracelet with good partition $\{A_i\}_{i \in \mathbb{Z}_7}$. Then all the following hold:
\begin{itemize}
\item for all $i \in \mathbb{Z}_7$, $A_{i-1}^+ \neq \emptyset$ if and only if $A_{i+1}^- \neq \emptyset$;
\item for all $i \in \mathbb{Z}_7$, if $A_i^+ \neq \emptyset$, then $A_{i+3}^+ = A_{i-3}^+ = A_{i-2}^- = A_{i-1}^- = \emptyset$;
\item for all $i \in \mathbb{Z}_7$, if $A_i^- \neq \emptyset$, then $A_{i+1}^+ = A_{i+2}^+ = A_{i+3}^- = A_{i-3}^- = \emptyset$.
\end{itemize}
\end{lemma}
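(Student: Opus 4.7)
Proof proposal:

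My plan is to prove the three items separately, with the first being a direct consequence of the axioms of a 7-bracelet and the remaining two being case analyses based on the index $i^*$ guaranteed by the definition of a good partition.

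For the first item, fix $i \in \mathbb{Z}_7$ and set $j = i-1$; I will show the stronger claim that $A_j^+ \neq \emptyset$ if and only if $A_{j+2}^- \neq \emptyset$, which with the substitution $j = i-1$ yields the item. If $a \in A_j^+$, then by axiom (II.b) $a$ has a neighbor $b \in A_{j+2}$; since $A_{j+2}^*$ and $A_{j+2}^+$ are both anticomplete to $A_{(j+2)-2} = A_j$ by axioms (II.a) and (II.b), the vertex $b$ must lie in $A_{j+2}^-$. Symmetrically, if $b \in A_{j+2}^-$, then axiom (II.c) gives $b$ a neighbor in $A_j$, and axioms (II.a) and (II.c) force that neighbor into $A_j^+$.

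For items 2 and 3, let $i^* \in \mathbb{Z}_7$ be such that $(\{A_i\}_{i \in \mathbb{Z}_7},i^*)$ is a good pair, and extract from conditions (III), (IV), (V), (VI) the following inclusions:
\begin{equation*}
\{i \in \mathbb{Z}_7 : A_i^+ \neq \emptyset\} \subseteq \{i^*-2,\ i^*-1,\ i^*\}, \qquad \{i \in \mathbb{Z}_7 : A_i^- \neq \emptyset\} \subseteq \{i^*,\ i^*+1,\ i^*+2\}.
\end{equation*}
Indeed, condition (III) kills both $A_{i^*\pm 3}^+$ and $A_{i^*\pm 3}^-$, condition (IV) kills $A_{i^*-2}^-$ and $A_{i^*+2}^+$, and condition (VI) kills $A_{i^*-1}^-$ and $A_{i^*+1}^+$, which together leave only the indices listed above. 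Now items 2 and 3 reduce to checking that certain translates of the index sets $\{i^*-2,i^*-1,i^*\}$ and $\{i^*,i^*+1,i^*+2\}$ are disjoint in $\mathbb{Z}_7$: for item 2, assuming $i \in \{i^*-2,i^*-1,i^*\}$, I verify that $i \pm 3$ avoids $\{i^*-2,i^*-1,i^*\}$ and that $i-2, i-1$ both avoid $\{i^*,i^*+1,i^*+2\}$; item 3 is analogous (and may alternatively be derived from item 2 by swapping the roles of $+$ and $-$ and reversing the orientation of $\mathbb{Z}_7$).

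I do not expect a genuine obstacle here: once the axioms of a good pair are unpacked, everything follows by small arithmetic in $\mathbb{Z}_7$. The only thing worth being careful about is that in item 1 the equivalence must be derived from the axioms themselves rather than from the bounds coming from $i^*$, since both $A_{i-1}^+$ and $A_{i+1}^-$ are being tested for emptiness and the index $i^*$ is not uniquely determined when all such sets are empty.
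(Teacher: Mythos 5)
Your proof is correct and takes the same approach as the paper, which simply states that the lemma ``follows from the definition of a 7-bracelet''; you have just spelled out the details of unpacking the axioms. One small slip: the conditions in the definition of a 7-bracelet are numbered (I)--(V), so your references to ``(VI)'' should read ``(V)'' (the content you attribute to it --- forcing $A_{i^*-1}^- = A_{i^*+1}^+ = \emptyset$ --- is exactly condition (V)).
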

\begin{proof}
This follows from the definition of a 7-bracelet.
\end{proof}

\begin{lemma} \label{lemma-7BTE-P7C4C5-free} Let $B$ be either a 7-bracelet of a thickened emerald. Then $\alpha(B) = 3$, $B$ is $(P_7,C_4,C_5)$-free, and $B$ does not admit a clique-cutset.
\end{lemma}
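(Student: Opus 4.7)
The plan is to prove each of the three conclusions first for 7-bracelets, and then, via Lemma~\ref{lemma-7-bracelet-in-thickened-emerald}, lift them to thickened emeralds with a short additional argument that absorbs the clique $C$.

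For a 7-bracelet $B$ with good partition $\{A_i\}_{i\in\mathbb{Z}_7}$, the upper bound $\alpha(B)\leq 3$ is immediate: each $A_i$ is a clique with $A_i$ complete to $A_{i\pm 1}$, so any stable set picks at most one vertex per $A_i$ and occupies an independent set of indices in $C_7$. For the matching lower bound I apply the footnote to axiom (II.f) at $i=0$ to get $a_0\in A_0$ with nonneighbors $x\in A_2$, $y\in A_5$; since $A_2$ and $A_5$ lie at cyclic distance $3$ and are anticomplete, $\{a_0,x,y\}$ is stable. Then $P_7$-freeness is free from $\alpha(P_7)=4>3=\alpha(B)$. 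For $C_4$- and $C_5$-freeness I would enumerate the cyclic index patterns of the cycle vertices (each edge forces consecutive indices at cyclic distance $\leq 2$, each nonedge forces distance $2$ or $3$) and use the refined partition $A_j=A_j^*\cup A_j^+\cup A_j^-$ from axioms (II.a)--(II.c) to force a candidate vertex into two incompatible parts, yielding a contradiction in each case. For absence of a clique-cutset, any clique of $B$ lies in some $A_{i-1}\cup A_i\cup A_{i+1}$ (distance-$3$ indices are anticomplete); removing such a clique $K$ leaves the long arc $A_{i+2}\cup A_{i+3}\cup A_{i-3}\cup A_{i-2}$ connected, residual vertices of $A_{i\pm 1}\setminus K$ attach to it via consecutive complete edges, and stranding a vertex of $A_i\setminus K$ would force $A_{i-1}\cup A_{i+1}\subseteq K$; but axiom (II.f) at index $i-1$ produces a nonedge between $A_{i-1}$ and $A_{i+1}$, contradicting that $K$ is a clique.

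For a thickened emerald $B$ with good triple $(\{A_i\},C,i^*)$, Lemma~\ref{lemma-7-bracelet-in-thickened-emerald} gives a 7-bracelet $B\setminus C$ to which the above applies. The bound $\alpha(B)\geq 3$ is inherited. For the upper bound, a stable set $S$ has $|S\cap C|\leq 1$; if $c\in S\cap C$ then $S\setminus\{c\}$ avoids $N(c)$ and so lies in $A_{i^*-2}^+\cup A_{i^*-1}\cup A_{i^*}\cup A_{i^*+1}\cup A_{i^*+2}^-$. The only $C_7$-independent triple of indices in this range is $\{i^*-2,i^*,i^*+2\}$, and the prescribed complete edges between $A_{i^*-2}^+$ and $A_{i^*}^-$, and between $A_{i^*}^+$ and $A_{i^*+2}^-$, force the chosen vertex of $A_{i^*}$ to lie in both $A_{i^*}^+$ and $A_{i^*}^-$, a contradiction. $P_7$-freeness follows from $\alpha(B)=3$, and $C_4$/$C_5$-freeness reduces to the bracelet case once one observes that any such cycle must meet $C$; the short list of configurations left by the complete/anticomplete pattern between $C$ and the $A$-parts is then ruled out directly. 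For absence of a clique-cutset I would split on whether $C\subseteq K$: if so, $K\setminus C$ is a clique of $B\setminus C$ and the bracelet result gives connectivity of $B\setminus K=(B\setminus C)\setminus(K\setminus C)$; if not, $K\cap(B\setminus C)$ sits in at most three consecutive bracelet indices but $c\in C\setminus K$ has complete edges to the nonempty sets $A_{i^*-3},A_{i^*-2}^*,A_{i^*+2}^*,A_{i^*+3}$ spanning the four consecutive indices $i^*\pm 2,i^*\pm 3$, so at least one of these sets is disjoint from $K$ and gives $c$ a neighbor in $B\setminus K$, attaching $c$ (and via $C$ all of $C\setminus K$) to the connected bracelet remainder of $B\setminus K$.

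The main obstacle I anticipate is the $C_4$/$C_5$-freeness casework for the 7-bracelet: each individual case closes in one line once one spots the forced $A^+/A^-$-contradiction, but several symmetric cyclic index patterns (parameterized by the cyclic distances among the cycle's vertices) need to be enumerated, and throughout one must carefully invoke the uniqueness of the partition $A_j=A_j^*\cup A_j^+\cup A_j^-$ from the footnote to (II.c).
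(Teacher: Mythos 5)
Your treatment of $\alpha(B)=3$ and of the clique-cutset claim is sound and close in spirit to the paper's. Your upper bound $\alpha(B)\le\alpha(C_7)=3$ for the bracelet (each $A_i$ is a clique, consecutive ones are complete, so the indices hit by a stable set form an independent set in $C_7$) is in fact cleaner than the paper's covering of $V(B)\setminus C$ by the four cliques $A_0,A_1\cup A_2,A_3\cup A_4,A_5\cup A_6$. Your clique-cutset argument for the bracelet matches the paper's; for the thickened emerald the paper avoids your $C\subseteq K$ vs.\ $C\not\subseteq K$ split with a single observation (the vertices of $\bigcup_i A_i$ complete to $C$ do not form a clique, so one survives outside $K$ and attaches $C\setminus K$ to the bracelet remainder), but your two-case version also goes through. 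One small slip in the emerald $\alpha$-bound: having forced the third vertex of the putative size-$4$ stable set into $A_{i^*}$, you write that the complete edges force it to lie ``in both $A_{i^*}^+$ and $A_{i^*}^-$''; what they actually force is that it can lie in \emph{neither}, which is the contradiction since $A_{i^*}=A_{i^*}^-\cup A_{i^*}^+$.

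The genuine gap is the $(C_4,C_5)$-freeness, which is where the paper spends most of its effort and which you leave as a plan rather than a proof. For the 7-bracelet you explicitly defer the cyclic-pattern enumeration; this is nontrivial (the paper needs about half a page, first ruling out $|V(H)\cap A_i|\ge 2$, then whittling down the remaining index patterns, and in the final 5-hole configuration has to invoke Lemma~\ref{lemma-7-bracelet-no-parallel-2-skips}). For the thickened emerald with a short hole $H$ meeting $C$, your ``short list of configurations ruled out directly'' is also not carried out, and here the paper has a shortcut you did not find: it exhibits a simplicial elimination ordering of $B\setminus A_{i^*}$, so that graph is chordal and hence $H$ must meet $A_{i^*}$; but $C$ and $A_{i^*}$ are anticomplete and share no neighbor in $B$, so a vertex of $C\cap V(H)$ and a vertex of $A_{i^*}\cap V(H)$ would be at distance at least three, impossible in a 4- or 5-hole. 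As written, the proposal establishes the easier parts of the lemma but not its technical core; you should either finish the enumeration or use a device of this kind.
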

\begin{proof}
If $B$ is a 7-bracelet, then let $(\{A_i\}_{i \in \mathbb{Z}_7},i^*)$ be a good pair for it, and set $C = \emptyset$. If $B$ is a thickened emerald, then let $(\{A_i\}_{i \in \mathbb{Z}_7},C,i^*)$ be a good triple for it. By Lemma~\ref{lemma-7-bracelet-in-thickened-emerald}, $B \setminus C$ is a 7-bracelet with good pair $(\{A_i\}_{i \in \mathbb{Z}_7},i^*)$.

\begin{quote}
{\em (1) $\alpha(B) = 3$.}
\end{quote}
\begin{proof}[Proof of (1)]
By symmetry, we may assume that $i^* = 0$.

Clearly, for any $a_1 \in A_1$, $a_3 \in A_3$, and $a_5 \in A_5$, $\{a_1,a_3,a_5\}$ is a stable set in $B$; since $A_i \neq \emptyset$ for all $i \in \mathbb{Z}_7$, it follows that $\alpha(B) \geq 3$.

It remains to show that $\alpha(B) \leq 3$. Suppose otherwise, and let $S$ be a stable set of size four in $B$. Suppose first that $S \cap C \neq \emptyset$. Then $C \neq \emptyset$, and it follows that $B$ is a thickened emerald with good triple $(\{A_i\}_{i \in \mathbb{Z}_7},C,0)$. Further, since $C$ is a clique and $S$ a stable set, we see that $|C \cap S| = 1$. Since $C$ is complete to $A_2^* \cup A_3 \cup A_4 \cup A_5^*$, we see that $S \cap (A_2^* \cup A_3 \cup A_4 \cup A_5^*) = \emptyset$; consequently, $S \subseteq C \cup A_5^+ \cup A_6 \cup A_0 \cup A_1 \cup A_2^-$. But this is impossible because $S$ is a stable set of size four, and $C \cup A_5^+ \cup A_6 \cup A_0 \cup A_1 \cup A_2^-$ is the union of three cliques (namely, $C$, $A_5^+ \cup A_6 \cup A_0^-$, and $A_0^+ \cup A_1 \cup A_2^-$).

From now on, we assume that $S \cap C = \emptyset$. Since $B \setminus C$ is a 7-bracelet with good pair $(\{A_i\}_{i \in \mathbb{Z}_7},0)$, we see that $V(B) \setminus C$ can be partitioned into four cliques, namely $A_0$, $A_1 \cup A_2$, $A_3 \cup A_4$, and $A_5 \cup A_6$. Since $S$ is a stable set of size four, we see that $S$ intersects each of these four cliques in exactly one vertex. Let $a_0 \in S \cap A_0$ and $a_3 \in S \cap (A_3 \cup A_4)$; by symmetry, we may assume that $a_3 \in S \cap A_3$. But now $a_0$ is complete to $A_1$, and $a_3$ is complete to $A_2$, and so since $S$ is a stable set, we deduce that $S \cap (A_1 \cup A_2) = \emptyset$, a contradiction. This proves (1).
\end{proof}

\begin{quote}
{\em (2) $B$ is $(P_7,C_4,C_5)$-free.}
\end{quote}
\begin{proof}[Proof of (2)]
Since $\alpha(P_7) = 4$, (1) implies that $B$ is $P_7$-free. It remains to show that $B$ is $(C_4,C_5)$-free. Let $k \in \{4,5\}$, and let $H = x_0,x_1,\dots,x_{k-1},x_0$ be a $k$-hole in $B$.

Suppose first that $V(H) \cap C \neq \emptyset$. Then $C \neq \emptyset$, and $B$ is a thickened emerald with good triple $(\{A_i\}_{i \in \mathbb{Z}_7},C,i^*)$. Note that $A_{i^*-1} \cup A_{i^*+1},A_{i^*-2}^+ \cup A_{i^*+2}^-,A_{i^*-2}^* \cup A_{i^*+2}^*,A_{i^*-3} \cup A_{i^*+3} \cup C$ is a simplicial elimination ordering of $B \setminus A_{i^*}$, and so (by~\cite{FulkersonGrossSimplicialElimOrd}) $B \setminus A_{i^*}$ is chordal. It follows that $V(H) \cap A_{i^*} \neq \emptyset$. Fix $c \in V(H) \cap C$ and $a \in V(H) \cap A_{i^*}$; since $C$ is anticomplete to $A_{i^*}$, we see that $a$ is nonadjacent to $c$. Since $H$ is a hole of length four or five, we see that $c$ and $a$ have a common neighbor in $H$, and therefore in $B$ as well. But no vertex in $B$ has a neighbor both in $A_{i^*}$ and in $C$, a contradiction.

From now on, we assume that $V(H) \cap C = \emptyset$. Thus, $H$ is a hole in $B \setminus C$, and we know that $B \setminus C$ is a 7-bracelet with good pair $(\{A_i\}_{i \in \mathbb{Z}_7},i^*)$.

Let us first show that $|V(H) \cap A_i| \leq 1$ for all $i \in \mathbb{Z}_7$. Suppose otherwise; by symmetry, we may assume that $|V(H) \cap A_0| \geq 2$. Since $A_6 \cup A_0$ and $A_0 \cup A_1$ are cliques, and since $H$ is triangle-free, this implies that $|V(H) \cap A_0| = 2$, that the two vertices in $V(H) \cap A_0$ are adjacent (by symmetry, we may assume that $V(H) \cap A_0 = \{x_0,x_1\}$), and that $V(H) \cap A_6 = V(H) \cap A_1 = \emptyset$. Since $A_0$ is anticomplete to $A_3 \cup A_4$, and since $x_0x_{k-1},x_1x_2 \in E(B)$, we now deduce that $x_2,x_{k-1} \in A_2 \cup A_5$. By symmetry, we may assume that either $x_2,x_{k-1} \in A_2$, or that $x_2 \in A_2$ and $x_{k-1} \in A_5$.

Suppose first that $x_2,x_{k-1} \in A_2$. Since $A_2$ is a clique, we have that $x_2x_{k-1} \in E(B)$, and it follows that $k = 4$ (and so $x_3 \in A_2$). Now, since $x_0 \in A_0$ is adjacent to $x_3 \in A_2$, we have that $x_0 \in A_0^+$. Similarly, since $x_1 \in A_0$ is adjacent to $x_2 \in A_2$, we have that $x_1 \in A_0^+$. It now follows from the definition of a 7-bracelet that one of $x_0,x_1$ dominates the other in $B \setminus C$, and consequently, in $H$ as well. But this is impossible since $H$ is a hole, and no vertex in a hole dominates any other vertex in that hole.

Suppose now that $x_2 \in A_2$ and $x_{k-1} \in A_5$. Then $x_0 \in A_0^-$ and $x_1 \in A_0^+$; in particular, $A_0^-$ and $A_0^+$ are both nonempty, and it follows that $i^* = 0$. Since $A_2$ is anticomplete to $A_5$, we see that $x_2x_{k-1} \notin E(B)$, and we deduce that $k = 5$. It now follows that $a_3$ is adjacent both to $x_2 \in A_2$ and to $x_4 \in A_5$. But this is impossible since $i^* = 0$, and so no vertex in $V(B) \setminus C$ has a neighbor both in $A_2$ and in $A_5$.

We have now shown that $|V(H) \cap A_i| \leq 1$ for all $i \in \mathbb{Z}_7$. Since $4 \leq |V(H)| \leq 5$, we deduce that there exists some $i \in \mathbb{Z}_7$ such that $|V(H) \cap A_i| = 1$ and $V(H) \cap A_{i+1} = \emptyset$; by symmetry, we may assume that $|V(H) \cap A_0| = 1$ and $V(H) \cap A_1 = \emptyset$. Again by symmetry, we may assume that $V(H) \cap A_0 = \{x_0\}$. It then follows that $x_1,x_{k-1} \in A_2 \cup A_5 \cup A_6$. Furthermore, we know that $x_0$ is anticomplete to at least one of $A_2,A_5$, and consequently, either $x_1,x_{k-1} \in A_2 \cup A_6$ or $x_1,x_{k-1} \in A_5 \cup A_6$. However, the latter is impossible because $x_1x_{k-1} \notin E(H)$, and $A_5 \cup A_6$ is a clique. Thus, $x_1,x_{k-1} \in A_2 \cup A_6$. Since $|V(H) \cap A_i| \leq 1$ for all $i \in \mathbb{Z}_7$, we may now assume by symmetry that $x_1 \in A_2$ and $x_{k-1} \in A_6$. (Thus, $V(H) \cap A_2 = \{x_1\}$ and $V(H) \cap A_6 = \{x_{k-1}\}$.)

Suppose that $k = 4$ (thus, $x_3 \in A_6$). Then $x_2$ is adjacent both to $x_1 \in A_2$ and to $x_3 \in A_6$. By the definition of a 7-bracelet, we know that for all $i \in \mathbb{Z}_7$, every vertex in $A_i$ is anticomplete to $A_{i-3} \cup A_{i+3}$ and to at least one of $A_{i-2}$ and $A_{i+2}$. Since $x_2$ has a neighbor both in $A_2$ and in $A_6$, we deduce that $x_2 \in A_0 \cup A_1$. But this is impossible because $V(H) \cap A_0 = \{x_0\}$ and $V(H) \cap A_1 = \emptyset$.

Thus, $k = 5$. Furthermore, we have that $V(H) \cap A_0 = \{x_0\}$, $V(H) \cap A_1 = \emptyset$, $V(H) \cap A_2 = \{x_1\}$, and $V(H) \cap A_6 = \{x_4\}$. Consequently, $x_2,x_3 \in A_3 \cup A_4 \cup A_5$. By the definition of a 7-bracelet, for all $i \in \mathbb{Z}_7$, every vertex in $A_i$ is anticomplete to $A_{i-3} \cup A_{i+3}$ and to at least one of $A_{i-2},A_{i+2}$. Since $x_1 \in A_2$ is adjacent to $x_0 \in A_0$, we deduce that $x_1$ is anticomplete to $A_4 \cup A_5 \cup A_6$. Since $x_1$ is adjacent to $x_2$, and $x_2 \in A_3 \cup A_4 \cup A_5$, we deduce that $x_2 \in A_3$. Since $|V(H) \cap A_i| \leq 1$ for all $i \in \mathbb{Z}_7$, it follows that $V(H) \cap A_3 = \{x_2\}$; consequently, $x_3 \in A_4 \cup A_5$. Since $x_3$ is adjacent to $x_2 \in A_3$ and to $x_4 \in A_6$, we deduce that $x_4 \in A_4^+ \cup A_5^-$, and in particular, $A_4^+ \cup A_5^- \neq \emptyset$. Lemma~\ref{lemma-7-bracelet-no-parallel-2-skips} now implies that $A_0^+ = \emptyset$. But this is impossible since $x_0 \in A_0$ is adjacent to $x_1 \in A_2$, and so $x_0 \in A_0^+$. This proves (2).
\end{proof}

\begin{quote}
{\em (3) $B$ does not admit a clique-cutset.}
\end{quote}
\begin{proof}[Proof of (3)]
Let $S$ be a clique in $V(B)$; we must show that $B \setminus S$ is connected. To simplify notation, set $A = \bigcup_{i \in \mathbb{Z}_7} A_i$. Since $S$ is a clique, we see that there exists some $j \in \mathbb{Z}_7$ such that $S \subseteq C \cup A_{j-1} \cup A_j \cup A_{j+1}$. (In particular, $A \setminus S \neq \emptyset$.) Now, $C$ is either empty or a homogeneous set in $B$, and  the set of vertices in $A$ that are complete to $C$ is not a clique; since $S$ is a clique, we see that some vertex in $A \setminus S$ is complete to $C \setminus S$. It remains to show that $B[A \setminus S]$ is connected. Now, we know that for all $i \in \mathbb{Z}_7$, $A_i$ is a nonempty clique, complete to $A_{i-1} \cup A_{i+1}$. Since $S \subseteq C \cup A_{j-1} \cup A_j \cup A_{j+1}$, this implies that $B[A \setminus (S \cup A_j)]$ is connected. It remains to show that every vertex in $A_j \setminus S$ has a neighbor in $A \setminus (A_j \cup S)$. We know that $A_j$ is complete to $A_{j-1} \cup A_{j+1}$, and so if $A_{j-1} \setminus S \neq \emptyset$ or $A_{j+1} \setminus S \neq \emptyset$, then we are done. Thus, we may assume that $A_{j-1} \cup A_{j+1} \subseteq S$. But this implies that $A_{j-1} \cup A_{j+1}$ is a clique, contrary to the fact that for all $i \in \mathbb{Z}_7$, some vertex in $A_i$ has a nonneighbor both in $A_{i-2}$ and in $A_{i+2}$. This proves that $B \setminus S$ is indeed connected, and it follows that $B$ does not admit a clique-cutset. This proves (3).
\end{proof}

This completes the argument.
\end{proof}

\begin{lemma} \label{lemma-7-hole-attachment-in-P7C4C5-free} Let $G$ be a $(P_7,C_4,C_5)$-free graph, and let $H = x_0,\dots,x_6,x_0$ (with indices in $\mathbb{Z}_7$) be a 7-hole in $G$. Then for all $x \in V(G) \setminus V(H)$, exactly one of the following holds:
\begin{enumerate}[(a)]
\item \label{ref-attachemt-complete} $x$ is complete to $V(H)$;
\item \label{ref-attachemt-anticomplete} $x$ is anticomplete to $V(H)$;
\item \label{ref-attachemt-twin} there exists some $i \in \mathbb{Z}_7$ such that $N_G(x) \cap V(H) = \{x_{i-1},x_i,x_{i+1}\}$;
\item \label{ref-attachemt-four} there exists some $i \in \mathbb{Z}_7$ such that $N_G(x) \cap V(H) = \{x_{i+2},x_{i+3},x_{i-3},x_{i-2}\}$.
\end{enumerate}
\end{lemma}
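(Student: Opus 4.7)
The plan is to set $N := N_G(x) \cap V(H)$ and classify $N$ by a case analysis on $|N|$, viewing $N$ as a subset of the cyclically ordered 7-hole. I would first extract two local rules from the $C_4$- and $C_5$-freeness of $G$: \textbf{(R1)} if $x_i, x_{i+2} \in N$ and $x_{i+1} \notin N$, then $\{x, x_i, x_{i+1}, x_{i+2}\}$ induces a $C_4$; \textbf{(R2)} if $x_i, x_{i+3} \in N$ and $x_{i+1}, x_{i+2} \notin N$, then $\{x, x_i, x_{i+1}, x_{i+2}, x_{i+3}\}$ induces a $C_5$. Equivalently, any two members of $N$ at cyclic $H$-distance $2$ force the intermediate $H$-vertex into $N$, and any two at cyclic $H$-distance $3$ force at least one of the two intermediate vertices into $N$.

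Next, I would eliminate $|N| \in \{1, 2, 5, 6\}$. For $|N| = 1$ with $N = \{x_i\}$, the sequence $x, x_i, x_{i-1}, x_{i-2}, x_{i-3}, x_{i-4}, x_{i-5}$ is an induced $P_7$: the only $H$-chord candidate between the endpoints of the $H$-arc used is $x_i x_{i-5}$, which is absent because $x_i$ and $x_{i-5}$ are at $H$-distance $2$; and $x$ has no neighbor among $x_{i-1},\dots,x_{i-5}$ since $N = \{x_i\}$. For $|N| = 2$, rules (R1) and (R2) collapse $N$ to two $H$-adjacent vertices, say $N = \{x_i, x_{i+1}\}$; the same path $x, x_i, x_{i-1}, \dots, x_{i-5}$ is again an induced $P_7$, since $x_{i+1}$ never appears on it. For $|N| = 6$ with $N = V(H) \setminus \{x_j\}$, the set $\{x, x_{j-1}, x_j, x_{j+1}\}$ induces a $C_4$. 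For $|N| = 5$, if the two missing vertices are $H$-non-adjacent, the $|N|=6$ argument applies at either missing vertex; if they are consecutive, say $x_i, x_{i+1} \notin N$, then $\{x, x_{i-1}, x_i, x_{i+1}, x_{i+2}\}$ induces a $C_5$.

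The remaining cases are finished by gap-counting on $H$. Trivially $|N| = 0$ gives (b) and $|N| = 7$ gives (a). For $|N| = 3$, the cyclic gap sequence is a partition of $7$ into three positive parts, and among $(1,1,5), (1,2,4), (1,3,3), (2,2,3)$ only $(1,1,5)$ is compatible with (R1) and (R2); this places $N = \{x_{i-1}, x_i, x_{i+1}\}$ and yields (c). For $|N| = 4$, among the partitions $(1,1,1,4), (1,1,2,3), (1,2,2,2)$ only $(1,1,1,4)$ survives (R1); the resulting four consecutive vertices can be rewritten as $\{x_{i+2}, x_{i+3}, x_{i-3}, x_{i-2}\}$, giving (d). Mutual exclusivity of (a)–(d) is immediate since they produce $|N| = 7, 0, 3, 4$ respectively. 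The main obstacle in the argument is the $|N| = 2$ case: one must first use (R1) and (R2) to force the two neighbors to be $H$-adjacent, and then verify that the seven-vertex path going away from the adjacent pair genuinely induces a $P_7$, which requires a careful check that its far endpoint $x_{i-5} = x_{i+2}$ is non-adjacent to both $x$ and $x_i$. Everything else is mechanical bookkeeping on gap partitions of $7$.
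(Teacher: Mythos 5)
Your proof is correct, and it takes a genuinely different route from the paper's. The paper argues navigationally: it assumes $x$ is mixed on $V(H)$, fixes a consecutive pair $x_0, x_1$ with $xx_0 \in E$, $xx_1 \notin E$, then walks around the hole, using $P_7$-freeness to force a neighbor in $\{x_2,\dots,x_5\}$, $C_4$/$C_5$-freeness to rule out $x_2$ and $x_3$, and so on, until the only surviving shapes are $\{x_5,x_6,x_0\}$ and $\{x_4,x_5,x_6,x_0\}$. Your argument instead classifies $N = N_G(x) \cap V(H)$ by its cardinality, packages the $C_4$- and $C_5$-freeness into the gap rules (R1) and (R2), and finishes each size class either by gap-partition arithmetic (for $|N| \in \{3,4\}$) or by a direct forbidden-subgraph construction (for $|N| \in \{1,2,5,6\}$). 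Both arguments are elementary and of comparable length. Your version is more systematic — the case structure is forced by $|N|$, the gap-partition enumeration for $|N| = 3,4$ is mechanical to verify, and mutual exclusivity of (a)--(d) falls out for free from the sizes $7,0,3,4$. The paper's version is a touch shorter and avoids the separate treatment of $|N| = 1,2,5,6$, but the case split it uses is less visible from the statement alone. One small remark: in your $|N|=1$ and $|N|=2$ cases, the six $H$-vertices on the long arc are automatically pairwise non-adjacent (except for consecutive ones) simply because $H$ is a hole, so the phrase about ``the only $H$-chord candidate'' is more caution than is strictly needed; what actually requires a check is only $x$'s adjacency to the arc, which is correctly disposed of.
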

\begin{proof}
Fix $x \in V(G) \setminus V(H)$. We may assume that $x$ is mixed on $V(H)$, for otherwise (\ref{ref-attachemt-complete}) or (\ref{ref-attachemt-anticomplete}) holds, and we are done. Then there exists some $i \in \mathbb{Z}_7$ such that $x$ is adjacent to $x_i$ and nonadjacent to $x_{i+1}$; by symmetry, we may assume that $xx_0 \in E(G)$ and $xx_1 \notin E(G)$. Now, if $N_G(x) \cap V(H) \subseteq \{x_0,x_6\}$, then since $xx_0 \in E(G)$, we have that $x,x_0,x_1,\dots,x_5$ is an induced $P_7$ in $G$, a contradiction. Thus, $N_G(x) \cap V(H) \not\subseteq \{x_0,x_6\}$. Since $xx_1 \notin E(G)$, we deduce that $x$ has a neighbor in $\{x_2,x_3,x_4,x_5\}$. Note that $xx_2 \notin E(G)$, for otherwise, $x,x_0,x_1,x_2,x$ would be a 4-hole in $G$, a contradiction. Further, $xx_3 \notin E(G)$, for otherwise, $x,x_0,x_1,x_2,x_3,x$ would be a 5-hole in $G$, a contradiction.

We have now shown that $x$ is adjacent to $x_0$, is anticomplete to $\{x_1,x_2,x_3\}$, and has a neighbor in $\{x_4,x_5\}$. Suppose that $xx_5 \notin E(G)$. Then $xx_4 \in E(G)$. But now if $xx_6 \in E(G)$, then $x,x_4,x_5,x_6,x$ is a 4-hole in $G$, and if $xx_6 \notin E(G)$, then $x,x_4,x_5,x_6,x_0,x$ is a 5-hole in $G$, a contradiction in either case. This proves that $xx_5 \in E(G)$. Then $xx_6 \in E(G)$, for otherwise, $x,x_5,x_6,x_0,x$ would be a 4-hole in $G$, a contradiction. We now have that $x$ is complete to $\{x_5,x_6,x_0\}$ and anticomplete to $\{x_1,x_2,x_3\}$ in $G$, and so either $N_G(x) \cap V(H) = \{x_4,x_5,x_6,x_0\}$ or $N_G(x) \cap V(H) = \{x_5,x_6,x_0\}$. In the former case, (\ref{ref-attachemt-four}) holds with $i = 2$, and in the latter case, (\ref{ref-attachemt-twin}) holds with $i = 6$. This completes the argument.
\end{proof}

\begin{lemma} \label{lemma-7-bracelet-7-hole} Let $B$ be a 7-bracelet with good pair $(\{A_i\}_{i \in \mathbb{Z}_7},i^*)$. Then $B$ contains a 7-hole. Furthermore, every 7-hole $H$ in $B$ satisfies the following:
\begin{itemize}
\item $|V(H) \cap A_i| = 1$ for all $i \in \mathbb{Z}_7$,
\item $H$ is of the form $H = x_0,x_1,\dots,x_6,x_0$ (with indices in $\mathbb{Z}_7$), where for all $i \in \mathbb{Z}_7$, $x_i$ is the unique vertex of $V(H) \cap A_i$.
\end{itemize}
\end{lemma}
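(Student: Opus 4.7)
The lemma has two parts; my plan is to first produce a 7-hole and then show that every 7-hole $H$ in $B$ meets each $A_j$ in exactly one vertex with the claimed cyclic ordering.

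For existence, fix a good pair $(\{A_i\}_{i \in \mathbb{Z}_7}, i^*)$ and by rotational symmetry assume $i^* = 0$. Axioms (III)--(V) pin down the strata: $A_3 = A_3^*$, $A_4 = A_4^*$, and $A_1^+ = A_2^+ = A_5^- = A_6^- = \emptyset$. A short check against (II.a)--(II.c) shows that among the seven distance-$2$ pairs of parts, only $(A_0, A_2)$, $(A_0, A_5)$, and $(A_1, A_6)$ can carry any edges; the other four pairs are anticomplete. Apply axiom (II.f) to $A_0$ to pick $x_0 \in A_0$ with nonneighbors $x_2 \in A_2$ and $x_5 \in A_5$, and apply (II.f) to $A_1$ (noting that $A_1$ is already anticomplete to $A_3 = A_3^*$, so the only content is the existence of a nonneighbor in $A_6$) to pick $x_1 \in A_1$ with a nonneighbor $x_6 \in A_6$. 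Choose $x_3 \in A_3$ and $x_4 \in A_4$ arbitrarily. Axiom (I) then supplies all $x_i x_{i+1}$ edges and all $x_i x_{i+3}$ nonedges, and all distance-$2$ nonedges hold either automatically or by the three enforced choices, so $x_0 x_1 \cdots x_6 x_0$ is a 7-hole.

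Now let $H$ be any 7-hole in $B$. Since $|V(H)| = 7 = |\mathbb{Z}_7|$, proving $|V(H) \cap A_j| \leq 1$ for every $j$ forces equality; the cyclic order $H = x_0 x_1 \cdots x_6 x_0$ with $x_j$ the unique element of $V(H) \cap A_j$ then follows at once from (I), which makes $x_{j-1}$ and $x_{j+1}$ the two $H$-neighbors of $x_j$. Suppose for contradiction that some $A_j$ contains two hole-vertices $y, y'$. Since $A_j$ is a clique and $H$ is triangle-free, $y$ and $y'$ are consecutive on $H$; let $z, z'$ be their remaining $H$-neighbors, so $z \sim y, z \not\sim y', z' \sim y', z' \not\sim y$. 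Two uniform arguments handle most subcases. (a) If $y$ or $y'$ lies in $A_j^*$, the anticompleteness of $A_j^*$ to $A_{j \pm 2}$ forces the corresponding $z$ or $z'$ into $A_{j-1} \cup A_{j+1}$, which by (I) is complete to $A_j$, so $z \sim y'$ (or $z' \sim y$), a contradiction. (b) If $y$ and $y'$ both lie in the same ordered stratum $A_j^+$ or $A_j^-$, the nested closed neighborhoods from (II.d)/(II.e) give, after relabelling, $N_B[y'] \subseteq N_B[y]$, so $z' \in N_B(y') \subseteq N_B[y] \setminus \{y'\}$ forces $z' \sim y$, a contradiction. For $j \neq i^*$ axioms (III)--(V) imply $A_j$ has at most one ordered stratum, so (a) and (b) together settle every such $j$.

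The main obstacle is the remaining subcase: $j = i^*$ with $y, y'$ split between $A_{i^*}^+$ and $A_{i^*}^-$. Take $y \in A_{i^*}^+$ and $y' \in A_{i^*}^-$. Since $A_{i^*}^+ \not\sim A_{i^*-2}$ and $A_{i^*}^- \not\sim A_{i^*+2}$, and (a) rules out $A_{i^*-1} \cup A_{i^*+1}$, we must have $z \in A_{i^*+2}$ and $z' \in A_{i^*-2}$. I then trace one more step around $H$. Using $|V(H) \cap A_k| \leq 1$ for $k \neq i^*$ (just established), together with (I) ruling out $A_{i^*+1}$ (complete to $y$) and the case hypothesis $V(H) \cap A_{i^*} = \{y, y'\}$ ruling out $A_{i^*}$, the hole-neighbor of $z$ other than $y$ is forced into $A_{i^*+3}$; symmetrically, the hole-neighbor of $z'$ other than $y'$ lies in $A_{i^*-3}$. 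The seventh hole-vertex must be adjacent to one vertex each in $A_{i^*+3}$ and $A_{i^*-3}$; intersecting the neighborhoods of these two parts in $B$ yields $A_{i^*+3} \cup A_{i^*-3}$, but by (I) either placement makes the seventh vertex complete to $A_{i^*+2}$ or $A_{i^*-2}$ and hence adjacent to $z$ or $z'$, creating a chord in $H$. This contradiction finishes the proof.
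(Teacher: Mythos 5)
Your proof is correct, but the uniqueness half takes a genuinely different and noticeably heavier route than the paper's.

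For existence you do essentially what the paper does: after rotating to $i^* = 0$, invoke (II.f) at $A_0$ and at one of $A_1, A_6$, fill in $A_3, A_4$ arbitrarily, and check the seven distance-$2$ nonedges using (III)--(V). The paper's version is slightly terser (it simply asserts that nonadjacent $a_{i^*-1} \in A_{i^*-1}$, $a_{i^*+1} \in A_{i^*+1}$ exist, which is exactly what your application of (II.f) to $A_1$ makes explicit), so the small additional detail you supply is a fair improvement in rigor.

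For the bound $|V(H) \cap A_i| \leq 1$, the paper uses a short global counting argument that needs only axiom (I): if $|V(H) \cap A_0| \geq 2$, then because $A_6 \cup A_0$ and $A_0 \cup A_1$ are cliques and $H$ is triangle-free, $|V(H) \cap A_0| = 2$ and $V(H) \cap A_1 = V(H) \cap A_6 = \emptyset$; the remaining five hole-vertices would have to sit inside the two cliques $A_2 \cup A_3$ and $A_4 \cup A_5$, each of which can hold at most two triangle-free vertices, giving $5 \leq 4$, a contradiction. You instead run a stratum-by-stratum case analysis using (II)--(V): case (a) when a duplicate vertex lies in $A_j^*$, case (b) when both duplicates lie in the same ordered stratum (via the nested-neighborhood axioms (II.d)/(II.e)), and a lengthy ``trace around the hole'' for the split case $y \in A_{i^*}^+, y' \in A_{i^*}^-$, which forces a chord between the seventh vertex and $z$ or $z'$. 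I checked the delicate steps (in particular that $z \in A_{i^*+2}$, $w \in A_{i^*+3}$, and that the seventh vertex must land in $A_{i^*\pm3}$) and they all go through, so the argument is sound. What the paper's approach buys is brevity, a self-contained use of only axiom (I), and (as a consequence) applicability to the weaker ``quasi-7-bracelet'' structure that appears elsewhere in the paper. What your approach buys is a closer look at how the strata of a 7-bracelet interact with an arbitrary 7-hole, though at the cost of an argument roughly three times as long, with several subcases that each require one to keep track of distances along $H$ and uses of (III)--(V) that the paper avoids entirely.
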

\begin{proof}
Let us first show that $B$ contains a $7$-hole. Let $a_{i^*} \in A_{i^*}$ be such that $a_{i^*}$ has a nonneighbor both in $A_{i^*-2}$ and in $A_{i^*+2}$; fix $a_{i^*-2} \in A_{i^*-2}$ and $a_{i^*+2} \in A_{i^*+2}$ such that $a_{i^*}a_{i^*-2},a_{i^*}a_{i^*+2} \notin E(B)$. Further, fix nonadjacent vertices $a_{i^*-1} \in A_{i^*-1}$ and $a_{i^*+1} \in A_{i^*+1}$. Finally, fix arbitrary $a_{i^*-3} \in A_{i^*-3}$ and $a_{i^*+3} \in A_{i^*+3}$. Then $a_0,a_1,\dots,a_6,a_0$ is a 7-hole in $B$.

Now, let $H$ be a 7-hole in $B$. We must show that $H$ satisfies the following:
\begin{enumerate}[(i)]
\item $|V(H) \cap A_i| = 1$ for all $i \in \mathbb{Z}_7$;
\item $H$ is of the form $H = x_0,x_1,\dots,x_6,x_0$ (with indices in $\mathbb{Z}_7$), where for all $i \in \mathbb{Z}_7$, $x_i$ is the unique vertex of $V(H) \cap A_i$.
\end{enumerate}
It suffices to show that $|V(H) \cap A_i| \leq 1$ for all $i \in \mathbb{Z}_7$, for the fact that $|V(H)| = 7$ will then immediately imply (i), and this, together with the fact that $A_i$ is complete to $A_{i+1}$ for all $i \in \mathbb{Z}_7$, will immediately imply (ii).

Suppose that there exists some $i \in \mathbb{Z}_7$ such that $|V(H) \cap A_i| \geq 2$; by symmetry, we may assume that $|V(H) \cap A_0| \geq 2$. Since $A_6 \cup A_0$ and $A_0 \cup A_1$ are cliques, and since $H$ is triangle-free, it follows that $|V(H) \cap A_0| = 2$ and $V(H) \cap A_1 = V(H) \cap A_6 = \emptyset$. Consequently, $|V(H) \cap (A_2 \cup A_3 \cup A_4 \cup A_5)| = 5$. But this implies that either $|V(H) \cap (A_2 \cup A_3)| \geq 3$ or $|V(H) \cap (A_4 \cup A_5)| \geq 3$, which is impossible because $A_2 \cup A_3$ and $A_4 \cup A_5$ are cliques, and $H$ is triangle-free. This completes the argument.
\end{proof}

Let $B$ be a graph, and let $\{A_i\}_{i \in \mathbb{Z}_7}$ be a partition of $V(B)$. We say that $B$ is a {\em quasi-7-bracelet} with {\em good partition} $\{A_i\}_{i \in \mathbb{Z}_7}$ provided the following hold:
\begin{itemize}
\item for all $i \in \mathbb{Z}_7$, $A_i$ is a nonempty clique, complete to $A_{i-1} \cup A_{i+1}$ and anticomplete to $A_{i-3} \cup A_{i+3}$;
\item for all $i \in \mathbb{Z}_7$, some vertex in $A_i$ has a nonneighbor both in $A_{i-2}$ and in $A_{i+2}$.
\end{itemize}

A graph $B$ is said to be a {\em quasi-7-bracelet} provided that there exists a partition $\{A_i\}_{i \in \mathbb{Z}_7}$ of $V(B)$ such that $B$ is a quasi-7-bracelet with good partition $\{A_i\}_{i \in \mathbb{Z}_7}$.

\begin{lemma} Let $B$ be a 7-bracelet with good pair $(\{A_i\}_{i \in \mathbb{Z}_7},i^*)$. Then $B$ is a quasi-7-bracelet with good partition $\{A_i\}_{i \in \mathbb{Z}_7}$.
\end{lemma}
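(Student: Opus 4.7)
The plan is to verify the two conditions in the definition of a quasi-7-bracelet directly from the 7-bracelet axioms. The first bullet (each $A_i$ is a nonempty clique, complete to $A_{i-1}\cup A_{i+1}$ and anticomplete to $A_{i-3}\cup A_{i+3}$) is verbatim axiom (I) in the definition of a 7-bracelet, so it holds for free. All the work therefore concerns the second bullet: for every $i\in\mathbb{Z}_7$, some vertex of $A_i$ has a nonneighbor in $A_{i-2}$ and also a nonneighbor in $A_{i+2}$.

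Fix $i\in\mathbb{Z}_7$ and invoke axiom (II), which gives the partition $A_i=A_i^*\cup A_i^+\cup A_i^-$ with properties (II.a)--(II.f). Axiom (II.f) yields three cases, any one of which is sufficient. Case (i): $A_i^*\neq\emptyset$. By (II.a), any $v\in A_i^*$ is anticomplete to $A_{i-2}\cup A_{i+2}$; since $A_{i-2}$ and $A_{i+2}$ are nonempty by (I), $v$ has a nonneighbor in each of them. Case (ii): $A_i^+$ is not complete to $A_{i+2}$. Pick $v\in A_i^+$ with a nonneighbor in $A_{i+2}$; by (II.b), the same $v$ is anticomplete to $A_{i-2}$, which is nonempty by (I), so $v$ has a nonneighbor in $A_{i-2}$ too. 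Case (iii): $A_i^-$ is not complete to $A_{i-2}$. This is symmetric to Case (ii), using (II.c) in place of (II.b).

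Since the conclusion of the second bullet follows in each of the three cases of (II.f), and $i$ was arbitrary, $B$ is a quasi-7-bracelet with good partition $\{A_i\}_{i\in\mathbb{Z}_7}$, as claimed. There is no real obstacle here: the statement is essentially a repackaging of axiom (II.f) together with (II.a)--(II.c), as already noted in the footnote accompanying (II.f). The proof is a short case analysis with no further machinery required.
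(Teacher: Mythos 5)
Your proof is correct and is exactly the case analysis that the paper leaves implicit behind its one-line justification ``This is immediate from the appropriate definitions.'' Nothing further to add.
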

\begin{proof}
This is immediate from the appropriate definitions.
\end{proof}

\begin{lemma} \label{lemma-quasi-7-bracelet-P7C4C5-free-is-7-bracelet} Let $B$ be a quasi-7-bracelet with good partition $\{A_i\}_{i \in \mathbb{Z}_7}$, and assume that $B$ is $(P_7,C_4,C_5)$-free. Then there exists some index $i^* \in \mathbb{Z}_7$ such that $B$ is a 7-bracelet with good pair $(\{A_i\}_{i \in \mathbb{Z}_7},i^*)$.
\end{lemma}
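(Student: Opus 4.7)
The plan is to upgrade the quasi-7-bracelet axioms to the full 7-bracelet axioms in stages, exploiting each of $C_4$-, $C_5$-, and $P_7$-freeness. The clique/completeness/anticompleteness structure at cyclic distances $0, 1, 3$ (axiom (I)) is already built into the quasi-7-bracelet definition, so I only need to prove (II)--(VI) for some choice of $i^* \in \mathbb{Z}_7$.

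I would first establish axioms (II.a)--(II.c): no vertex $a \in A_i$ has a neighbor in both $A_{i-2}$ and $A_{i+2}$. If $u \in A_{i-2}$ and $v \in A_{i+2}$ are such neighbors, I look for a vertex $x \in A_{i+3}$ adjacent to $u$, or a vertex $y \in A_{i-3}$ adjacent to $v$; either yields an induced $C_4$ via $A_{i\pm 2}$--$A_{i\pm 3}$ completeness. Failing both, any $p \in A_{i-3}$ and $q \in A_{i+3}$ make $a, u, p, q, v$ an induced $C_5$, using that $A_{i-3}$ and $A_{i+3}$ are cyclically adjacent. This partitions each $A_i$ uniquely into $A_i^*, A_i^+, A_i^-$; (II.f) is exactly the defining condition of a quasi-7-bracelet. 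For (II.d) and (II.e), any two vertices $a, a' \in A_i^+$ have closed neighborhoods that can differ only in $A_{i+2}$; if the differences were incomparable, a witness from each side together with $a, a'$ would form an induced $C_4$. So neighborhoods in $A_i^+$ (and symmetrically in $A_i^-$) are nested.

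The heart of the argument is the following $C_5$-based claim: letting $T_+ = \{j \in \mathbb{Z}_7 : A_j^+ \neq \emptyset\}$, no two elements of $T_+$ are at cyclic distance $3$. Indeed, if $a \in A_j^+$ has a neighbor $v \in A_{j+2}$ and $b \in A_{j+3}^+$ has a neighbor $w \in A_{j+5} = A_{j-2}$, then for any $c \in A_{j-1}$ (nonempty) the five-cycle $c, a, v, b, w, c$ is induced: the five cycle edges use only distance-$1$ completeness, while the five required non-edges follow from distance-$3$ anticompleteness and from the fact that $a \in A_j^+$ has no neighbor in $A_{j-2}$. Since the distance-$3$ graph on $\mathbb{Z}_7$ is a $7$-cycle (as $\gcd(3,7) = 1$), $T_+$ is an independent set in $C_7$, hence $|T_+| \leq 3$, with equality forcing $T_+$ to be three cyclically consecutive residues.

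Finally, I use the equivalence $A_j^+ \neq \emptyset \Leftrightarrow A_{j+2}^- \neq \emptyset$ (immediate from (II.a)--(II.c)) and set $T = T_+ \cup (T_+ + 2)$. A short case analysis over the five possible shapes of $T_+$ (empty, singleton, two consecutive, two at distance $2$, three consecutive) shows that $T$ is always contained in at most five cyclically consecutive residues, so there exist two consecutive indices $k-1, k \in \mathbb{Z}_7 \setminus T$. Setting $i^* := k + 3$ makes both $i^* - 3$ and $i^* + 3$ lie outside $T$, which gives axiom (III). Axioms (IV), (V), (VI) are then immediate: each asserts the emptiness of some $A_{i^* + \ell}^{\pm}$ which, via the $A_j^+ \leftrightarrow A_{j+2}^-$ equivalence, reduces to an emptiness already guaranteed by (III). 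The main obstacle is the $C_5$ of the preceding paragraph, where the trick is to place $c$ in $A_{j-1}$ so that it is automatically adjacent to both $a$ and $w$, closing the cycle without introducing unwanted edges.
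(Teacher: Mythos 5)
Your proof is correct and follows essentially the same route as the paper's: establish the unique partition $A_i = A_i^* \cup A_i^+ \cup A_i^-$ via $C_4$- and $C_5$-freeness, prove the neighborhood nesting via $C_4$-freeness, and then rule out two indices at cyclic distance $3$ both having nonempty $A_j^+$ via a $C_5$, after which a finite case check produces $i^*$. The one genuine stylistic improvement is your packaging of the final case analysis: where the paper proves a three-part claim (3) and then splits on whether some $A_{i^*}^+,A_{i^*}^-$ are simultaneously nonempty, you observe that $T_+$ is an independent set in the distance-$3$ circulant on $\mathbb{Z}_7$ (which is a $7$-cycle since $\gcd(3,7)=1$), so $|T_+|\le 3$ with size $3$ forcing three consecutive residues, and then $T = T_+ \cup (T_+ + 2)$ always misses two consecutive indices; this collapses the paper's case-bashing into one clean statement, and the reduction of (IV)–(V) to (III) via $A_j^+\neq\emptyset \Leftrightarrow A_{j+2}^-\neq\emptyset$ is exactly right. (Minor slip: the definition has axioms (I)–(V), not (I)–(VI).)
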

\begin{proof}
We must show that there exists some $i^* \in \mathbb{Z}_7$ such that $B$, $\{A_i\}_{i \in \mathbb{Z}_7}$, and $i^*$ satisfy axioms (I)-(V) from the definition of a 7-bracelet. The fact that (I) holds follows from the definition of a quasi-7-bracelet. Our next goal is to prove (II).

\begin{quote}
\emph{(1) For all $i \in \mathbb{Z}_7$, every vertex in $A_i$ is anticomplete to at least one of $A_{i-2},A_{i+2}$.}
\end{quote}
\begin{proof}[Proof of (1)]
Suppose otherwise. By symmetry, we may assume that there exist some $a_0 \in A_0$, $a_2 \in A_2$, and $a_5 \in A_5$ such that $a_0a_2,a_0a_5 \in E(B)$. Suppose first that $a_2$ is adjacent to some $a_4 \in A_4$. Then $a_0,a_2,a_4,a_5,a_0$ is a 4-hole in $B$, a contradiction. Thus, $a_2$ is anticomplete to $A_4$, and similarly, $a_5$ is anticomplete to $A_3$. Now, fix arbitrary $a_3 \in A_3$ and $a_4 \in A_4$. Then $a_0,a_2,a_3,a_4,a_5,a_0$ is a 5-hole in $B$, a contradiction. This proves (1).
\end{proof}

For all $i \in \mathbb{Z}_7$:
\begin{itemize}
\item let $A_i^*$ be the set of all vertices in $A_i$ that are anticomplete to $A_{i-2} \cup A_{i+2}$;
\item let $A_i^+$ be the set of all vertices in $A_i$ that have a neighbor in $A_{i+2}$ and are anticomplete to $A_{i-2}$;
\item let $A_i^-$ be the set of all vertices in $A_i$ that have a neighbor in $A_{i-2}$ and are anticomplete to $A_{i+2}$.
\end{itemize}
For all $i \in \mathbb{Z}_7$, $A_i^*,A_i^+,A_i^-$ are disjoint (by construction), and by (1), their union is $A_i$. By construction, (II.a), (II.b), and (II.c) hold. Furthermore, the definition of a quasi-7-bracelet implies that (II.f) holds.

\begin{quote}
\emph{(2) For all $i \in \mathbb{Z}_7$, the following hold:
\begin{itemize}
\item if $A_i^+ \neq \emptyset$, then $A_i^+$ can be ordered as $A_i^+ = \{a_1^{i^+},\dots,a_{|A_i^+|}^{i^+}\}$ so that $N_B[a_{|A_i^+|}^{i^+}] \subseteq \dots \subseteq N_B[a_1^{i^+}]$;
\item if $A_i^- \neq \emptyset$, then $A_i^-$ can be ordered as $A_i^- = \{a_1^{i^-},\dots,a_{|A_i^-|}^{i^-}\}$ so that $N_B[a_{|A_i^-|}^{i^-}] \subseteq \dots \subseteq N_B[a_1^{i^-}]$.
\end{itemize}}
\end{quote}
\begin{proof}[Proof of (2)]
Fix $i \in \mathbb{Z}_7$. By symmetry, it suffices to prove the first statement. If $|A_i^+| \leq 1$, then this is immediate. So assume that $|A_i^+| \geq 2$. Clearly, it suffices to show that for any two distinct vertices in $A_i^+$, one of the two vertices dominates the other in $B$. Fix distinct $a_i,a_i' \in A_i^+$; we must show that one of $a_i,a_i'$ dominates the other. By the definition of a quasi-7-bracelet, we have that $A_{i-1} \cup A_i \cup A_{i+1} \subseteq N_B[a_i] \cap N_B[a_i']$, and that both $a_i,a_i'$ are anticomplete to $A_{i-3} \cup A_{i+3}$; furthermore, by the construction of $A_i^+$, we have that $a_i,a_i'$ are anticomplete to $A_{i-2}$. Thus, it suffices to show that one of $N_B[a_i] \cap A_{i+2}$ and $N_B[a_i'] \cap A_{i+2}$ is included in the other. Suppose otherwise, and fix $a_{i+2},a_{i+2}' \in A_{i+2}$ such that $a_ia_{i+2},a_i'a_{i+2}' \in E(B)$ and $a_ia_{i+2}',a_i'a_{i+2} \notin E(B)$. Since $A_i,A_{i+2}$ are cliques, it now follows that $a_i,a_{i+2},a_{i+2}',a_i',a_i$ is a 4-hole in $B$, a contradiction. This proves (2).
\end{proof}

By (2), (II.d) and (II.e) hold. We now deduce that (II) holds.

\begin{quote}
\emph{(3) All the following hold:
\begin{enumerate}[(i)]
\item for all $i \in \mathbb{Z}_7$, $A_{i-1}^+ \neq \emptyset$ if and only if $A_{i+1}^- \neq \emptyset$;
\item for all $i \in \mathbb{Z}_7$, if $A_i^+ \neq \emptyset$, then $A_{i+3}^+ = A_{i-3}^+ = A_{i-2}^- = A_{i-1}^- = \emptyset$;
\item for all $i \in \mathbb{Z}_7$, if $A_i^- \neq \emptyset$, then $A_{i+1}^+ = A_{i+2}^+ = A_{i+3}^- = A_{i-3}^- = \emptyset$.
\end{enumerate}}
\end{quote}
\begin{proof}[Proof of (3)]
The fact that (i) holds is immediate from the construction. It remains to prove (ii) and (iii). We prove (ii); the proof of (iii) is completely analogous. Fix $i \in \mathbb{Z}_7$, and assume that $A_i^+ \neq \emptyset$ (and consequently, $A_{i+2}^- \neq \emptyset$); we must show that $A_{i+3}^+ = A_{i-3}^+ = A_{i-2}^- = A_{i-1}^- = \emptyset$. Using the fact that $A_i^+ \neq \emptyset$, we fix adjacent vertices $a_i \in A_i^+$ and $a_{i+2} \in A_{i+2}^-$.

Suppose that $A_{i+3}^+ \neq \emptyset$, and fix adjacent $a_{i+3} \in A_{i+3}^+$ and $a_{i-2} \in A_{i-2}^-$. Fix an arbitrary vertex $a_{i-1} \in A_{i-1}$. Then $a_i,a_{i+2},a_{i+3},a_{i-2},a_{i-1},a_i$ is a 5-hole in $B$ a contradiction. Thus, $A_{i+3}^+ = \emptyset$, and consequently, $A_{i-2}^- = \emptyset$.

Suppose now that $A_{i-3}^+ \neq \emptyset$, and fix adjacent $a_{i-3} \in A_{i-3}^+$ and $a_{i-1} \in A_{i-1}^-$. Fix an arbitrary vertex $a_{i+3} \in A_{i+3}$. Then $a_i,a_{i+2},a_{i+3},a_{i-3},a_{i-1},a_i$ is a 5-hole in $B$, a contradiction. Thus, $A_{i-3}^+ = \emptyset$, and consequently, $A_{i-1}^- = \emptyset$. This proves (3).
\end{proof}

It remains to show that there exists some $i^* \in \mathbb{Z}_7$ such that (III), (IV), and (V) hold.

Suppose first that there exists some $i^* \in \mathbb{Z}_7$ such that $A_{i^*}^+,A_{i^*}^- \neq \emptyset$. The fact that (III), (IV), and (V) hold now readily follows from (3).

From now on, we assume that for all $i \in \mathbb{Z}_7$, at least one of $A_i^+,A_i^-$ is empty. If $A_i^+ = A_i^- = \emptyset$ for all $i \in \mathbb{Z}_7$, then we pick an arbitrary $i^* \in \mathbb{Z}_7$, and we observe that (III), (IV), and (V) hold. So from now on, we may assume by symmetry that $A_6^+ \cup A_6^- \neq \emptyset$. Now, we know that at least one of $A_6^+,A_6^-$ is empty, and so by symmetry, we may assume that $A_6^+ \neq \emptyset$ and $A_6^- = \emptyset$; consequently, $A_1^- \neq \emptyset$, $A_1^+ = \emptyset$, and $A_4^+ = \emptyset$. By (3), with $i = 6$, we now have that $A_2^+ = A_3^+ = A_4^- = A_5^- = \emptyset$. Furthermore, since $A_1^+ = \emptyset$, we have that $A_3^- = \emptyset$. We now set $i^* = 0$, and we observe that (III), (IV), and (V) hold. This completes the argument.
\end{proof}

\begin{lemma} \label{lemma-7-hole-twins-7-bracelet} Let $G$ be a $(P_7,C_4,C_5)$-free graph, and let $H = x_0,x_1,\dots,x_6,x_0$ (with indices in $\mathbb{Z}_7$) be a 7-hole in $G$. For all $i \in \mathbb{Z}_7$, set $A_i = \{x \in V(G) \mid N_G[x] \cap V(H) = \{x_{i-1},x_i,x_{i+1}\}\}$. Set $B = G[\bigcup_{i \in \mathbb{Z}_7} A_i]$. Then $B$ is a 7-bracelet, and $\{A_i\}_{i \in \mathbb{Z}_7}$ is a good partition for it.
\end{lemma}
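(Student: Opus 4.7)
The plan is to reduce everything to Lemma~\ref{lemma-quasi-7-bracelet-P7C4C5-free-is-7-bracelet}: I will verify that $B$ is a quasi-7-bracelet with good partition $\{A_i\}_{i \in \mathbb{Z}_7}$, and, since $B$ is an induced subgraph of $G$ and hence $(P_7,C_4,C_5)$-free, that lemma produces the required $i^* \in \mathbb{Z}_7$ upgrading this to a good pair.

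That $\{A_i\}_{i \in \mathbb{Z}_7}$ partitions $V(B)$ is essentially by construction: the sets are pairwise disjoint because the equality $\{x_{i-1},x_i,x_{i+1}\} = \{x_{j-1},x_j,x_{j+1}\}$ forces $i = j$, each $A_i$ is nonempty since $x_i \in A_i$, and $V(B) = \bigcup_{i} A_i$ by definition. Three of the four remaining axioms then collapse to routine forbidden-subgraph arguments in $G$. For the clique condition, two nonadjacent $x,y \in A_i$ would induce a $C_4$ on $\{x, x_{i-1}, y, x_{i+1}\}$, since the diagonal $x_{i-1}x_{i+1}$ is absent in the hole. For anticompleteness of $A_i$ to $A_{i+3}$ (which, modulo $7$, also covers $A_{i-3}$), an edge $xy$ with $x \in A_i$ and $y \in A_{i+3}$ yields an induced $C_4$ on $\{x, x_{i+1}, x_{i+2}, y\}$; the two potential chords $xx_{i+2}$ and $x_{i+1}y$ are excluded by the attachment types defining $A_i$ and $A_{i+3}$, so $C_4$-freeness rules this out. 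The nondegeneracy requirement is witnessed directly by $x_i \in A_i$, which is nonadjacent in the hole to $x_{i-2} \in A_{i-2}$ and to $x_{i+2} \in A_{i+2}$.

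The main obstacle is showing that $A_i$ is complete to $A_{i+1}$. Given nonadjacent $x \in A_i$ and $y \in A_{i+1}$, I plan to exhibit the induced $P_7$
\[
y,\; x_{i+2},\; x_{i+3},\; x_{i+4},\; x_{i+5},\; x_{i+6},\; x
\]
in $G$. The six consecutive edges are either edges of $H$ or come from $y \sim x_{i+2}$ (forced by $y \in A_{i+1}$) and $x \sim x_{i+6} = x_{i-1}$ (forced by $x \in A_i$). Every nonedge splits into three types: hole pairs at cyclic distance at least two (nonadjacent since $H$ is a hole), pairs joining $x$ or $y$ to a hole vertex outside the prescribed attachments $\{x_{i-1},x_i,x_{i+1}\}$ and $\{x_i,x_{i+1},x_{i+2}\}$, and the single assumed nonedge $xy$. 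Distinctness of the seven listed vertices needs a brief note but follows since the assumption $xy \notin E(G)$ prevents $(x,y) = (x_i,x_{i+1})$. This $P_7$ contradicts $P_7$-freeness, completing the quasi-7-bracelet verification and hence, via Lemma~\ref{lemma-quasi-7-bracelet-P7C4C5-free-is-7-bracelet}, the proof.
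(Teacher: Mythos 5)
Your proof is correct and takes essentially the same approach as the paper: reduce to Lemma~\ref{lemma-quasi-7-bracelet-P7C4C5-free-is-7-bracelet} by verifying the quasi-7-bracelet axioms directly, using the same $C_4$ arguments for the clique and anticompleteness conditions and the same seven-vertex path for completeness of $A_i$ to $A_{i+1}$. (One small remark: the note that $xy \notin E(G)$ prevents $(x,y)=(x_i,x_{i+1})$ is not actually needed for distinctness of the seven listed vertices---they would remain distinct even in that case---but it is harmless.)
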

\begin{proof}
In view of Lemma~\ref{lemma-quasi-7-bracelet-P7C4C5-free-is-7-bracelet}, it suffices to show that $B$ is a quasi-7-bracelet with good partition $\{A_i\}_{i \in \mathbb{Z}_7}$.

\begin{quote}
\emph{(1) For all $i \in \mathbb{Z}_7$, $x_i \in A_i$ and $A_i$ is a nonempty clique.}
\end{quote}
\begin{proof}[Proof of (1)]
Fix $i \in \mathbb{Z}_7$. By construction, $x_i \in A_i$, and consequently, $A_i \neq \emptyset$. Suppose that $A_i$ is not a clique, and fix distinct, nonadjacent vertices $a_i,a_i' \in A_i$. Then $a_i,x_{i+1},a_i',x_{i-1},a_i$ is a 4-hole in $G$, a contradiction. This proves (1).
\end{proof}

\begin{quote}
\emph{(2) For all $i \in \mathbb{Z}_7$, $A_i$ is complete to $A_{i-1} \cup A_{i+1}$ and anticomplete to $A_{i-3} \cup A_{i+3}$.}
\end{quote}
\begin{proof}[Proof of (2)]
By symmetry, it suffices to prove that $A_0$ is complete to $A_6 \cup A_1$ and anticomplete to $A_3 \cup A_4$.

Suppose that $A_0$ is not complete to $A_6 \cup A_1$. By symmetry, we may assume that some $a_0 \in A_0$ is nonadjacent to some $a_1 \in A_1$. But then $a_1,x_2,x_3,x_4,x_5,x_6,a_0$ is an induced $P_7$ in $G$, a contradiction. Thus, $A_0$ is complete to $A_6 \cup A_1$.

Suppose that $A_0$ is not anticomplete to $A_3 \cup A_4$. By symmetry, we may assume that some $a_0 \in A_0$ and $a_3 \in A_3$ are adjacent. But then $a_0,x_1,x_2,a_3,a_0$ is a 4-hole in $G$, a contradiction. Thus, $A_0$ is anticomplete to $A_3 \cup A_4$. This proves (2).
\end{proof}

Finally, note that for all $i \in \mathbb{Z}_7$, the vertex $x_i \in A_i$ is nonadjacent both to $x_{i-2} \in A_{i-2}$ and to $x_{i+2} \in A_{i+2}$. It now follows that $B$ is a quasi-7-bracelet with good partition $\{A_i\}_{i \in \mathbb{Z}_7}$, and we are done.
\end{proof}

\begin{lemma} \label{lemma-add-vertex-to-7-bracelet} Let $G$ be a $(P_7,C_4,C_5)$-free graph, let $B$ be an induced 7-bracelet in $G$, let $\{A_i\}_{i \in \mathbb{Z}_7}$ be a good partition of the 7-bracelet $B$, and let $H = x_0,x_1,\dots,x_6,x_0$ be a 7-hole in $B$ such that $x_i \in A_i$ for all $i \in \mathbb{Z}_7$. Let $c \in V(G) \setminus V(B)$ and $\ell \in \mathbb{Z}_7$, and assume that $N_G(c) \cap V(H) = \{x_{\ell+2},x_{\ell+3},x_{\ell-3},x_{\ell-2}\}$. Set $B_c = G[V(B) \cup \{c\}]$. Then either $B_c$ is a 7-bracelet, or $B_c$ is a thickened emerald with good triple $(\{A_i\}_{i \in \mathbb{Z}_7},\{c\},\ell)$.
\end{lemma}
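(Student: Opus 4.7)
The plan is to set $\ell = 0$ using the cyclic symmetry of the 7-bracelet definition, so that $N_G(c) \cap V(H) = \{x_2, x_3, x_4, x_5\}$. The approach is first to pin down $c$'s adjacency to $V(B)$ as tightly as the $(P_7,C_4,C_5)$-free condition allows, and then to split into cases based on the resulting configuration, showing that $B_c$ is either a thickened emerald with the good triple asserted in the statement, or admits a good partition witnessing that it is a 7-bracelet.

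For the neighborhood analysis, I would prove in succession: (i) $c$ is anticomplete to $A_0 \cup A_1 \cup A_6$; (ii) $c$ is complete to $A_3 \cup A_4$; (iii) $c$ is anticomplete to $A_2^- \cup A_5^+$; and (iv) respecting the chain orderings (II.d) and (II.e) of $A_2^+$ and $A_5^-$ in $B$, $c$ is complete to $A_2^+ \cup A_5^-$. Each of these statements is proved by assuming the opposite adjacency for a vertex $a$ of the relevant subset and exhibiting a forbidden induced $C_4$, $C_5$, or $P_7$ in $G$ built from $c$, $a$, and a short arc of $H$; for instance, $a \not\sim c$ with $a \in A_3$ yields the $C_4$ on $\{a, x_2, c, x_4\}$, and $a \sim c$ with $a \in A_2^-$ yields the $C_5$ through $c, a, a_0, x_6, x_5$, where $a_0 \in A_0$ is a neighbor of $a$ witnessing $a \in A_2^-$. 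The arguments for $A_0$, $A_1$, $A_6$ proceed by further sub-casing on the $*$/$+$/$-$ type of the relevant vertex and exhibiting a $C_4$ or $C_5$ accordingly.

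After this analysis, $c$'s behavior on $B$ is determined except on $A_2^* \cup A_5^*$. I would then split into two cases. In the \emph{thickened-emerald case}, $c$ is complete to $A_2^* \cup A_5^*$, and the structural constraints on $B$ (combined with Lemma~\ref{lemma-7-bracelet-no-parallel-2-skips}) force $A_2^+ = A_5^- = \emptyset$, $A_0^* = \emptyset$, $A_i^+ = A_i^- = \emptyset$ for all $i \in \{1,3,4,6\}$, and $A_0^-, A_0^+, A_2^-, A_5^+$ all nonempty; a direct verification against the definition then shows that $B_c$ is a thickened emerald with good triple $(\{A_i\}_{i \in \mathbb{Z}_7}, \{c\}, 0)$. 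In the \emph{complementary case}, I would exhibit an explicit good partition of $V(B_c)$ showing that $B_c$ is a 7-bracelet: the construction absorbs $c$ into $A_3 \cup \{c\}$ (as a new vertex with a neighbor in $A_5$ and none in $A_1$, with new good-pair index $4$) or into $A_4 \cup \{c\}$ (with new good-pair index $3$), leaving all other parts unchanged. The choice between these is dictated by the specific condition of the thickened-emerald case that fails.

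The main obstacle is the case analysis for the 7-bracelet outcome, which requires tracking several sub-cases and verifying axioms (I)--(V) for the proposed partition of $B_c$ in each. The delicate points are the chain-ordering axioms (II.d) and (II.e) for the enlarged part: $c$ must be inserted at the correct position in the chain, which is justified by comparing $c$'s closed neighborhood in $B_c$ to those of the existing chain elements, using the detailed adjacency information gathered in the first step. Lemma~\ref{lemma-7-bracelet-no-parallel-2-skips} is used repeatedly to rule out combinations of nonempty $\pm$ subsets that would preclude the chosen 7-bracelet structure.
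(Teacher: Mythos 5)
Your opening moves match the paper's — fix $\ell = 0$, show $c$ is anticomplete to $A_0 \cup A_1 \cup A_6$, complete to $A_3 \cup A_4$, and anticomplete to $A_2^- \cup A_5^+$ (the paper phrases this last one as ``$A_0$ is anticomplete to $N_G(c) \cap (A_2 \cup A_5)$,'' which is equivalent). After that, however, your proposal diverges from the paper and contains genuine gaps.

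First, your dichotomy is not the right one. You split on whether $c$ is complete to $A_2^* \cup A_5^*$, and in the affirmative case you claim this ``forces $A_0^* = \emptyset$, \dots, and $A_0^-, A_0^+, A_2^-, A_5^+$ all nonempty.'' This implication is false. Take a 7-bracelet $B$ in which $A_0 = A_0^*$ (every vertex of $A_0$ anticomplete to both $A_2$ and $A_5$), $A_2 = A_2^*$, $A_5 = A_5^*$, and let $c$ be complete to $A_2 \cup A_3 \cup A_4 \cup A_5$ and anticomplete to $A_6 \cup A_0 \cup A_1$: then $c$ is complete to $A_2^* \cup A_5^*$, yet $A_0^* \neq \emptyset$, and $B_c$ is a 7-bracelet (just put $c$ in $A_3$), not a thickened emerald. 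The paper instead dichotomizes on whether $A_0$ is complete to at least one of $A_2 \setminus N_G(c)$ and $A_5 \setminus N_G(c)$, which, together with the fact that every vertex of $A_0$ is complete to at least one of these two sets (the paper's sub-claim (3), proved via a $P_7$), does correctly force the thickened-emerald structure in the negative sub-case and tells you exactly how to repartition in the positive sub-case.

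Second, your 7-bracelet construction — ``absorb $c$ into $A_3 \cup \{c\}$ or $A_4 \cup \{c\}$, leaving all other parts unchanged'' — does not work. If $c$ is moved into $A_3$, the new $A_3$ must be complete to $A_2$, but $c$ may have nonneighbors in $A_2^*$; if $c$ is moved into $A_4$, the new $A_4$ must be complete to $A_5$, but $c$ is anticomplete to $A_5^+$ (your own claim (iii)), which may be nonempty. The paper's construction is essential and nontrivial: when $A_0$ is complete to $A_2 \setminus N_G(c)$, it sets $A_1' = A_1 \cup (A_2 \setminus N_G(c))$, $A_2' = A_2 \cap N_G(c)$, $A_3' = A_3 \cup \{c\}$, so that $c$ really is complete to the new $A_2'$. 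Verifying this is a quasi-7-bracelet (and then invoking Lemma~\ref{lemma-quasi-7-bracelet-P7C4C5-free-is-7-bracelet}) also lets the paper avoid re-checking the nested-neighborhood axioms (II.d)--(II.e), which you propose to verify by hand — that would be considerably more delicate.

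Finally, your claim (iv), that $c$ must be complete to $A_2^+ \cup A_5^-$, is not justified in your sketch; the natural candidate configurations built from $c$, a vertex of $A_2^+$, its $A_4$-neighbor, and an arc of $H$ all turn out to be 6-holes or chorded cycles, not forbidden subgraphs. Whether the claim is even true requires a more careful argument than what you outline, and the paper avoids needing it entirely.
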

\begin{proof}
For all $i \in \mathbb{Z}_7$, let $A_i^*,A_i^+,A_i^-$ be as in the definition of a 7-bracelet. By symmetry, we may assume that $\ell = 0$, so that $N_G(c) \cap V(H) = \{x_2,x_3,x_4,x_5\}$. We must show that either $B_c$ is a 7-bracelet, or $B_c$ is a thickened emerald with good triple $(\{A_i\}_{i \in \mathbb{Z}_7},\{c\},0)$.

\begin{quote}
\emph{(1) Vertex $c$ is complete to $A_3 \cup A_4$ and anticomplete to $A_6 \cup A_0 \cup A_1$.}
\end{quote}
\begin{proof}[Proof of (1)]
If $c$ has a nonneighbor $a_3 \in A_3$, then $c,x_2,a_3,x_4,c$ is a 4-hole in $G$, a contradiction. Thus, $c$ is complete to $A_3$, and similarly, $c$ is complete to $A_4$.

Suppose that $c$ has a neighbor $a_0 \in A_0$. Then $a_0x_2 \in E(G)$, for otherwise, $c,a_0,x_1,x_2,c$ is a 4-hole in $G$, a contradiction. Similarly, $a_0x_5 \in E(G)$, for otherwise, $c,a_0,x_6,x_5,c$ is a 4-hole in $G$, a contradiction. But now $a_0 \in A_0$ is adjacent both to $x_2 \in A_2$ and to $x_5 \in A_5$, contrary to the fact that (by the definition of a 7-bracelet) every vertex in $A_0$ is anticomplete to at least one of $A_2,A_5$. This proves that $c$ is anticomplete to $A_0$.

Suppose that $c$ has a neighbor $a_1 \in A_1$. But then if $a_1x_6 \in E(G)$, then $c,a_1,x_6,x_5,c$ is a 4-hole in $G$, and if $a_1x_6 \notin E(G)$, then $c,a_1,x_0,x_6,x_5,c$ is a 5-hole in $G$, a contradiction in either case. Thus, $c$ is anticomplete to $A_1$, and similarly, $c$ is anticomplete to $A_2$. This proves (1).
\end{proof}

\begin{quote}
\emph{(2) $A_0$ is anticomplete to $N_G(c) \cap (A_2 \cup A_5)$.}
\end{quote}
\begin{proof}[Proof of (2)]
Suppose otherwise. By symmetry, we may assume that some $a_0 \in A_0$ and $a_2 \in N_G(c) \cap A_2$ are adjacent. By the definition of a 7-bracelet, every vertex in $A_0$ is anticomplete to at least one of $A_2,A_5$. Since $a_0 \in A_0$ is adjacent to $a_2 \in A_2$, we deduce that $a_0$ is anticomplete to $A_5$, and in particular, $a_0x_5 \notin E(G)$. But now $c,x_5,x_6,a_0,a_2,c$ is a 5-hole in $G$, a contradiction. This proves (2).
\end{proof}

\begin{quote}
\emph{(3) Every vertex in $A_0$ is complete to at least one of the sets $A_2 \setminus N_G(c)$ and $A_5 \setminus N_G(c)$.}
\end{quote}
\begin{proof}[Proof of (3)]
Suppose otherwise, and fix $a_0 \in A_0$, $a_2 \in A_2 \setminus N_G(c)$, and $a_5 \in A_5 \setminus N_G(c)$ such that $a_0a_2,a_0a_5 \notin E(G)$. By (1), $ca_0 \notin E(G)$. But now $c,x_3,a_2,x_1,a_0,x_6,a_5$ is an induced $P_7$ in $G$, a contradiction. This proves (3).
\end{proof}

\begin{quote}
\emph{(4) $A_1$ is anticomplete to $A_6$.}
\end{quote}
\begin{proof}[Proof of (4)]
Suppose otherwise, and fix adjacent vertices $a_1 \in A_1$ and $a_6 \in A_6$. By (1), $c$ is anticomplete to $\{a_1,a_6\}$. But now $c,x_5,a_6,a_1,x_2,c$ is a 5-hole in $G$, a contradiction. This proves (4).
\end{proof}

\begin{quote}
\emph{(5) If $A_0$ is complete to at least one of the sets $A_2 \setminus N_G(c)$ and $A_5 \setminus N_G(c)$, then $B_c$ is a 7-bracelet.}
\end{quote}
\begin{proof}[Proof of (5)]
Suppose that $A_0$ is complete to at least one of the sets $A_2 \setminus N_G(c)$ and $A_5 \setminus N_G(c)$; by symmetry, we may assume that $A_0$ is complete to $A_2 \setminus N_G(c)$. Now, set $A_1' = A_1 \cup (A_2 \setminus N_G(c))$, $A_2' = A_2 \cap N_G(c)$, $A_3' = A_3 \cup \{c\}$, and $A_i' = A_i$ for all $i \in \{4,5,6,0\}$. Note that $A_i' \neq \emptyset$ for all $i \in \mathbb{Z}_7$. (Indeed, since $cx_2 \in E(G)$, we see that $x_2 \in A_2'$, and for all $i \in \mathbb{Z}_7 \setminus \{2\}$, we have that $A_i' \supseteq A_i \neq \emptyset$.) Furthermore, by construction, $\{A_i\}_{i \in \mathbb{Z}_7}$ is a partition of $V(B_c)$.

Now, our goal is to show that $B_c$ is a 7-bracelet with good partition $\{A_i'\}_{i \in \mathbb{Z}_7}$. By Lemma~\ref{lemma-quasi-7-bracelet-P7C4C5-free-is-7-bracelet}, it suffices to show that $B_c$ is a quasi-7-bracelet with good partition $\{A_i'\}_{i \in \mathbb{Z}_7}$. For this, we must prove the following:
\begin{enumerate}[(i)]
\item for all $i \in \mathbb{Z}_7$, $A_i'$ is a nonempty clique, complete to $A_{i-1}' \cup A_{i+1}'$ and anticomplete to $A_{i-3}' \cup A_{i+3}'$;
\item for all $i \in \mathbb{Z}_7$, some vertex in $A_i'$ has a nonneighbor both in $A_{i-2}'$ and in $A_{i+2}'$.
\end{enumerate}

We have already shown that $A_i' \neq \emptyset$ for all $i \in \mathbb{Z}_7$. Next, since $A_i$ is a clique for all $i \in \mathbb{Z}_7$, since (by (1)) $c$ is complete to $A_3$, and since $A_1$ is complete to $A_2$, we see that $A_i'$ is a clique for all $i \in \mathbb{Z}_7$.

Next, we claim that for all $i \in \mathbb{Z}_7$, $A_i'$ is complete to $A_{i-1}' \cup A_{i+1}'$; clearly, it suffices to show that for all $i \in \mathbb{Z}_7$, $A_i'$ is complete to $A_{i+1}'$. Since $A_i$ is complete to $A_{i+1}$ for all $i \in \mathbb{Z}_7$, and since (by (1)) $c$ is complete to $A_3 \cup A_4$, it follows immediately from the construction that $A_i'$ is complete to $A_{i+1}'$ for all $i \in \mathbb{Z}_7 \setminus \{0,1\}$. Next, since $A_1$ is complete to $A_2$, and since $A_2$ is a clique, we see that $A_1'$ is complete to $A_2'$. Finally, since $A_0$ is complete to $A_1$ and to $A_2 \setminus N_G(c)$, we see that $A_0'$ is complete to $A_1'$. This proves that $A_i'$ is complete to $A_{i+1}'$ for all $i \in \mathbb{Z}_7$; consequently, $A_i'$ is complete to $A_{i-1}' \cup A_{i+1}'$ for all $i \in \mathbb{Z}_7$.

Further, we must show that $A_i'$ is anticomplete to $A_{i-3}' \cup A_{i+3}'$ for all $i \in \mathbb{Z}_7$; clearly, it suffices to show that for all $i \in \mathbb{Z}_7$, $A_i'$ is anticomplete to $A_{i+3}'$. Since $A_i$ is anticomplete to $A_{i-3} \cup A_{i+3}$ for all $i \in \mathbb{Z}_7$, and since (by (1)) $c$ is anticomplete to $A_6 \cup A_0 \cup A_1$, we readily see that $A_i'$ is anticomplete to $A_{i+3}'$ for all $i \in \mathbb{Z}_7 \setminus \{1\}$. It remains to show that $A_1'$ is anticomplete to $A_4'$. By construction, $A_1' = A_1 \cup (A_2 \setminus N_G(c))$ and $A_4' = A_4$; since $A_1$ is anticomplete to $A_4$, we need only show that $A_2 \setminus N_G(c)$ is anticomplete to $A_4$. By the definition of a 7-bracelet, we know that every vertex in $A_2$ is anticomplete to at least one of $A_0,A_4$; since $A_0 \neq \emptyset$ is complete to $A_2 \setminus N_G(c)$, it follows that $A_2 \setminus N_G(c)$ is anticomplete to $A_4$. Thus, $A_1'$ is anticomplete to $A_4'$. It follows that $A_i'$ is anticomplete to $A_{i-3}' \cup A_{i+3}'$ for all $i \in \mathbb{Z}_7$. This proves (i).

It remains to prove (ii). By construction, for all $i \in \mathbb{Z}_7$, we have that $x_i \in A_i'$ (for $i = 2$, we use the fact that $cx_2 \in E(G)$), and $x_i$ is anticomplete to $\{x_{i-2},x_{i+2}\}$. This proves (ii).

Thus, $B_c$ is a quasi-7-bracelet, and so by Lemma~\ref{lemma-quasi-7-bracelet-P7C4C5-free-is-7-bracelet}, $B_c$ is a 7-bracelet. This proves (5).
\end{proof}

\begin{quote}
\emph{(6) If $A_0$ is complete neither to $A_2 \setminus N_G(c)$ nor to $A_5 \setminus N_G(c)$, then $B_c$ is a thickened emerald with good triple $(\{A_i\}_{i \in \mathbb{Z}_7},\{c\},0)$.}
\end{quote}
\begin{proof}[Proof of (6)]
Assume that $A_0$ is complete neither to $A_2 \setminus N_G(c)$ nor to $A_5 \setminus N_G(c)$. In particular, $A_2 \setminus N_G(c)$ and $A_5 \setminus N_G(c)$ are both nonempty. Now, we know that $\{c\},A_0,\dots,A_6$ are nonempty cliques, and that they form a partition of $V(B_c)$. Thus, to show that $B_c$ is a thickened emerald with good triple $(\{A_i\}_{i \in \mathbb{Z}_7},\{c\},0)$, we need only prove the following:
\begin{enumerate}[(i)]
\item \label{ref-emerald-from-7-bracelet-Ai-comp-anticomp} for all $i \in \mathbb{Z}_7$, $A_i$ is complete to $A_{i-1} \cup A_{i+1}$ and anticomplete to $A_{i-3} \cup A_{i+3}$;
\item \label{ref-emerald-from-7-bracelet-i-*-only} for all $i \in \{1,3,4,6\}$, $A_i$ is anticomplete to $A_{i-2} \cup A_{i+2}$;
\item \label{ref-emerald-from-7-bracelet-nonempty} $A_0^-,A_0^+,A_2^*,A_2^-,A_5^*,A_5^+ \neq \emptyset$;
\item \label{ref-emerald-from-7-bracelet-i-0-pm} $A_0 = A_0^- \cup A_0^+$;
\item \label{ref-emerald-from-7-bracelet-i-2} $A_2 = A_2^* \cup A_2^-$;
\item \label{ref-emerald-from-7-bracelet-i-5} $A_5 = A_5^* \cup A_5^+$;
\item \label{ref-emerald-from-7-bracelet-i-0m-comp-anticomp} $A_0^-$ is complete to $A_5^+$ and anticomplete to $A_5^* \cup A_2$;
\item \label{ref-emerald-from-7-bracelet-i-0p-comp-anticomp} $A_0^+$ is complete to $A_2^-$ and anticomplete to $A_2^* \cup A_5$;
\item \label{ref-emerald-from-7-bracelet-C} $c$ is complete to $A_2^* \cup A_3 \cup A_4 \cup A_5^*$ and anticomplete to $A_5^+ \cup A_6 \cup A_0 \cup A_1 \cup A_2^-$.
\end{enumerate}

Statement (\ref{ref-emerald-from-7-bracelet-Ai-comp-anticomp}) follows from the fact that $\{A_i\}_{i \in \mathbb{Z}_7}$ is a good partition of the 7-bracelet $B$.

Next, we prove (\ref{ref-emerald-from-7-bracelet-i-0-pm}). By (3), every vertex in $A_0$ is complete to at least one of the sets $A_2 \setminus N_G(c)$ and $A_5 \setminus N_G(c)$. Since $A_2 \setminus N_G(c)$ and $A_5 \setminus N_G(c)$ are both nonempty, it follows that $A_0^* = \emptyset$, and consequently, $A_0 = A_0^- \cup A_0^+$. This proves (\ref{ref-emerald-from-7-bracelet-i-0-pm}).

Next, we prove that $A_0^-,A_0^+ \neq \emptyset$. By hypothesis, $A_0$ is complete neither to $A_2 \setminus N_G(c)$ nor to $A_5 \setminus N_G(c)$. Fix $a_0^-,a_0^+ \in A_0$ such that $a_0^-$ is not complete to $A_2 \setminus N_G(c)$, and $a_0^+$ is not complete to $A_5 \setminus N_G(c)$. Then (by (3)) $a_0^-$ is complete to $A_5 \setminus N_G(c)$, and $a_0^+$ is complete to $A_2 \setminus N_G(c)$. Since $A_2 \setminus N_G(c)$ and $A_5 \setminus N_G(c)$ are both nonempty, it follows that $a_0^- \in A_0^-$ and $a_0^+ \in A_0^+$, and in particular, $A_0^-,A_0^+ \neq \emptyset$.

Since $A_0^-,A_0^+ \neq \emptyset$, Lemma~\ref{lemma-7-bracelet-no-parallel-2-skips} implies that $A_2^-,A_5^+ \neq \emptyset$, that $A_3^+ = A_4^+ = A_5^- = A_6^- = \emptyset$, and that $A_1^+ = A_2^+ = A_3^- = A_4^- = \emptyset$. Furthermore, by (4), we have that $A_1^- = A_6^+ = \emptyset$. It now follows that $A_1 = A_1^*$, $A_2 = A_2^* \cup A_2^-$, $A_3 = A_3^*$, $A_4 = A_4^*$, $A_5 = A_5^* \cup A_5^+$, and $A_6 = A_6^*$. Thus proves (\ref{ref-emerald-from-7-bracelet-i-*-only}), (\ref{ref-emerald-from-7-bracelet-i-2}), and (\ref{ref-emerald-from-7-bracelet-i-5}).

We now prove (\ref{ref-emerald-from-7-bracelet-nonempty}). We have already shown that $A_0^-,A_0^+,A_2^-,A_5^+ \neq \emptyset$; it remains to show that $A_2^*,A_5^* \neq \emptyset$. We claim that $x_2 \in A_2^*$ and $x_5 \in A_5^*$. By hypothesis, $x_2 \in N_G(c) \cap A_2$ and $x_5 \in N_G(c) \cap A_5$, and so by (2), $x_2,x_5$ are anticomplete to $A_0$. This implies that $x_2 \in A_2^* \cup A_2^+$ and $x_5 \in A_5^* \cup A_5^-$. But we already showed that $A_2^+ = A_5^- = \emptyset$. Thus, $x_2 \in A_2^*$ and $x_5 \in A_5^*$, and in particular, $A_2^*,A_5^* \neq \emptyset$. This proves (\ref{ref-emerald-from-7-bracelet-nonempty}).

Next, we prove (\ref{ref-emerald-from-7-bracelet-C}). By (1), $c$ is complete to $A_3 \cup A_4$ and anticomplete to $A_6 \cup A_0 \cup A_1$. Further, it follows from (2) that $c$ is anticomplete to $A_5^+ \cup A_2^-$. It remains to show that $c$ is complete to $A_2^* \cup A_5^*$. Suppose otherwise; by symmetry, we may assume that $c$ has a nonneighbor $a_2^* \in A_2^*$. Then $a_2^* \in A_2 \setminus N_G(c)$. By hypothesis, there exists some $a_0 \in A_0$ such that $a_0$ is not complete to $A_5 \setminus N_G(c)$; by (3), it follows that $a_0$ is complete to $A_2 \setminus N_G(c)$, and in particular, $a_0a_2^* \in E(G)$. But this is impossible since $a_2^* \in A_2^*$, and $A_2^*$ is anticomplete to $A_0$. This proves (\ref{ref-emerald-from-7-bracelet-C}).

It remains to prove (\ref{ref-emerald-from-7-bracelet-i-0m-comp-anticomp}) and (\ref{ref-emerald-from-7-bracelet-i-0p-comp-anticomp}). Since $\{A_i\}_{i \in \mathbb{Z}_7}$ is a good partition of the 7-bracelet $B$, we have that $A_0^-$ is anticomplete to $A_5^* \cup A_2$, and that $A_0^+$ is anticomplete to $A_2^* \cup A_5$. It remains to show that $A_0^-$ is complete to $A_5^+$, and that $A_0^+$ is complete to $A_2^-$; by symmetry, it suffices to show that $A_0^+$ is complete to $A_2^-$. Suppose otherwise, and fix nonadjacent $a_0^+ \in A_0^+$ and $a_2^- \in A_2^-$. By (\ref{ref-emerald-from-7-bracelet-C}), $a_2^- \in A_2 \setminus N_G(c)$, and so since $a_0^+$ is not complete to $A_2 \setminus N_G(c)$, (3) implies that $a_0^+$ is complete to $A_5 \setminus N_G(c)$. Since $A_0^+$ is anticomplete to $A_5$, it follows that $A_5 \setminus N_G(c) = \emptyset$. But this contradicts our assumption that $A_5 \setminus N_G(c) \neq \emptyset$. Thus, (\ref{ref-emerald-from-7-bracelet-i-0m-comp-anticomp}) and (\ref{ref-emerald-from-7-bracelet-i-0p-comp-anticomp}) hold.

This proves (6).
\end{proof}

By (5) and (6), we have that either $B_c$ is a 7-bracelet, or $B_c$ is a thickened emerald with good triple $(\{A_i\}_{i \in \mathbb{Z}_7},\{c\},0)$. This completes the argument.
\end{proof}

\begin{lemma} \label{lemma-dominating-7-hole-anticomp-7-bracelet-or-emerald} Let $G$ be a $(P_7,C_4,C_5)$-free graph, and assume that $G$ contains a dominating 7-hole. Then $G$ contains exactly one nontrivial anticomponent, and this anticomponent is either a 7-bacelet or a thickened emerald.
\end{lemma}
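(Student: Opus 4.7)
The plan is to classify the vertices of $V(G) \setminus V(H)$ via Lemma~\ref{lemma-7-hole-attachment-in-P7C4C5-free}. Since $H$ is dominating, case (b) of that lemma (anticomplete to $V(H)$) is ruled out, so every vertex of $V(G) \setminus V(H)$ falls into case (a), (c), or (d). For each $i \in \mathbb{Z}_7$, I would define $A_i = \{x \in V(G) \mid N_G[x] \cap V(H) = \{x_{i-1}, x_i, x_{i+1}\}\}$ (so $x_i \in A_i$), let $K$ be the set of vertices in $V(G) \setminus V(H)$ that are complete to $V(H)$, and let $D$ be the set of type-(d) vertices. Then $V(G)$ is the disjoint union of $\bigcup_{i \in \mathbb{Z}_7} A_i$, $K$, and $D$.

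Next, I would show that every vertex of $K$ is a dominating vertex of $G$. Given $v \in K$ and $u \in V(G) \setminus (K \cup \{v\})$, the key step is to exhibit two non-adjacent vertices of $V(H)$ that are common neighbors of both $v$ and $u$: for $u \in A_j$, take $x_{j-1}$ and $x_{j+1}$; for $u \in D$ with parameter $\ell$, take $x_{\ell-2}$ and $x_{\ell+2}$; for $u \in K \setminus \{v\}$, take any two non-adjacent vertices of $V(H)$, such as $x_0$ and $x_2$. In each case, non-adjacency of $v$ and $u$ would produce a $C_4$, contradicting $C_4$-freeness; hence $vu \in E(G)$. Thus $K$ consists of pairwise-adjacent dominating vertices, contributing $|K|$ trivial anticomponents to $G$.

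By Lemma~\ref{lemma-7-hole-twins-7-bracelet}, the graph $B_0 := G[\bigcup_{i \in \mathbb{Z}_7} A_i]$ is a 7-bracelet with good partition $\{A_i\}_{i \in \mathbb{Z}_7}$. If $D = \emptyset$, then $G \setminus K = B_0$, which is anticonnected by the earlier lemma, and this is the unique nontrivial anticomponent of $G$, completing the proof. Otherwise, the plan is to add the vertices of $D$ to $B_0$ one at a time, applying Lemma~\ref{lemma-add-vertex-to-7-bracelet} at each step: whenever the current structure is a 7-bracelet, Lemma~\ref{lemma-add-vertex-to-7-bracelet} directly yields either a larger 7-bracelet or a thickened emerald whose $C$-clique contains the newly added vertex; when the current structure is already a thickened emerald, the next $D$-vertex must be analyzed with respect to both the existing $C$-clique and the $A_i^{\ast/\pm}$-partition.

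The main obstacle is precisely this iteration, which reduces to controlling the pairwise interactions between multiple $D$-vertices. A case analysis (analogous in flavor to the proof of Lemma~\ref{lemma-add-vertex-to-7-bracelet}) shows that two $D$-vertices $c_1, c_2$ with parameters $\ell_1, \ell_2$ differing by $2$ or $3$ modulo $7$ force a $C_4$ or $C_5$ in $G$, so only the residues $\ell_2 - \ell_1 \in \{0, \pm 1\} \pmod 7$ survive. Careful bookkeeping in the surviving cases, together with the forbidden-subgraph analysis of Lemma~\ref{lemma-add-vertex-to-7-bracelet}, shows that after all of $D$ has been absorbed, $G[\bigcup_i A_i \cup D]$ is either a 7-bracelet or a thickened emerald, and is therefore anticonnected. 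Combined with the dominating vertices of $K$, this proves that $G$ contains exactly one nontrivial anticomponent, namely $G[\bigcup_i A_i \cup D]$.
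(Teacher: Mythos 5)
Your classification of $V(G)\setminus V(H)$ via Lemma~\ref{lemma-7-hole-attachment-in-P7C4C5-free} and your argument that every vertex complete to $V(H)$ is dominating in $G$ are both correct, as is the identification of $G[\bigcup_i A_i]$ as a 7-bracelet via Lemma~\ref{lemma-7-hole-twins-7-bracelet}. However, the remaining part of the proof, which you yourself flag as ``the main obstacle,'' is where essentially all of the difficulty of the lemma lies, and your proposal does not actually carry it out. Two concrete problems. First, Lemma~\ref{lemma-add-vertex-to-7-bracelet} only applies to adding a vertex to a \emph{7-bracelet}; once your iteration produces a thickened emerald, you would need a new lemma analysing what happens when a further type-(d) vertex is adjoined to a thickened emerald, and no such lemma exists in the paper or in your sketch. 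Second, the claim that ``residues $\ell_2-\ell_1\in\{0,\pm1\}\pmod 7$ survive'' is asserted without proof, and even if it were true it is strictly weaker than what is needed: the correct conclusion is that \emph{all} type-(d) vertices share a single parameter $\ell$. Moreover, the $C_4/C_5$ argument you gesture at does not obviously close the $\ell_2-\ell_1=2$, $c_1c_2\notin E(G)$ case just from the attachment to $H$ --- the configuration $c_1,x_2,x_1,x_0,c_2,x_4,c_1$ is merely an induced $C_6$, which is allowed.

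The paper's proof avoids the iteration problem entirely by a different ordering of steps: it first fixes a \emph{maximal} induced 7-bracelet $B$ containing $\bigcup_i T_i$ (not just $G[\bigcup_i T_i]$), so that Lemma~\ref{lemma-add-vertex-to-7-bracelet} applied to any remaining non-dominated vertex $c$ is forced into the thickened-emerald branch (the 7-bracelet branch would contradict maximality). The emerald branch forces $A_{\ell}^+,A_{\ell}^-\neq\emptyset$ for the parameter $\ell$ of $c$, and since a 7-bracelet has at most one index with both of these nonempty, all such vertices share a common $\ell$; a short $C_4$ argument then shows they form a clique, and the full structure is a thickened emerald. If you want to rescue your incremental approach, the cleanest fix is to adopt this maximality trick at the outset --- first enlarge the 7-bracelet greedily as far as possible, and only then consider the leftover vertices --- rather than interleaving 7-bracelet extension with emerald formation.
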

\begin{proof}
Let $H = x_0,x_1,\dots,x_6,x_0$ be a dominating 7-hole in $G$. For all $i \in \mathbb{Z}_7$, let $T_i = \{x \in V(G) \mid N_G[x] \cap V(H) = \{x_{i-1},x_i,x_{i+1}\}\}$. (Note that $x_i \in T_i$ for all $i \in \mathbb{Z}_7$.) By Lemma~\ref{lemma-7-hole-twins-7-bracelet}, $G[\bigcup_{i \in \mathbb{Z}_7} T_i]$ is a 7-bracelet, and $\{T_i\}_{i \in \mathbb{Z}_7}$ is a good partition for it. Let $B$ be a maximal induced 7-bracelet in $G$ such that $\bigcup_{i \in \mathbb{Z}_7} T_i \subseteq V(B)$, and let $\{A_i\}_{i \in \mathbb{Z}_7}$ be good partition of the 7-bracelet $B$. Now, $H$ is a 7-hole in the 7-bracelet $B$. By Lemma~\ref{lemma-7-bracelet-7-hole}, and by symmetry, we may assume that $x_i \in A_i$ for all $i \in \mathbb{Z}_7$. Let $U$ be the set of all vertices in $V(G) \setminus V(B)$ that are complete to $V(H)$.

\begin{quote}
\emph{(1) $U$ is a (possibly empty) clique, complete to $V(B)$.}
\end{quote}
\begin{proof}[Proof of (1)]
Suppose that $U$ is not a clique, and fix distinct, nonadjacent vertices $u_1,u_2 \in U$. But now $x_0,u_1,x_2,u_2,x_0$ is a 4-hole in $G$, a contradiction. Thus, $U$ is a clique.

It remains to show that $U$ is complete to $V(B)$. Suppose otherwise. By symmetry, we may assume that some $u \in U$ and $a_0 \in A_0$ are nonadjacent. But now $u,x_6,a_0,x_1,u$ is a 4-hole in $G$, a contradiction. Thus, $U$ is complete to $V(B)$. This proves (1).
\end{proof}

Set $K = G[V(B) \cup U]$. Clearly, the 7-bracelet $B$ is anticonnected, and so (1) implies that $K$ has exactly one nontrivial anticomponent, and that this anticomponent is the 7-bracelet $B$. If $K = G$, then we are done. So assume that $V(K) \subsetneqq V(G)$. Set $C = V(G) \setminus V(K)$, and note that $C \neq \emptyset$. Set $B_C = G \setminus U$. (Thus, $B_C = [V(B) \cup C]$.) Our goal is to show that $B_C$ is a thickened emerald.

\begin{quote}
\emph{(2) For all $c \in C$, there exists some $i \in \mathbb{Z}_7$ such that $N_G(c) \cap V(H) = \{x_{i+2},x_{i+3},x_{i-3},x_{i-2}\}$.}
\end{quote}
\begin{proof}[Proof of (2)]
Fix $c \in C$. Since $H$ is a dominating hole in $G$, we know that $c$ is not anticomplete to $V(H)$. Since $c \notin U$, we know that $c$ is not complete to $V(H)$. Since $c \notin V(B)$, and since $\bigcup_{i \in \mathbb{Z}_7} T_i \subseteq V(B)$, we see that $c \notin \bigcup_{i \in \mathbb{Z}_7} T_i$. Lemma~\ref{lemma-7-hole-attachment-in-P7C4C5-free} now implies that there exists some $i \in \mathbb{Z}_7$ such that $N_G(c) \cap V(H) = \{x_{i+2},x_{i+3},x_{i-3},x_{i-2}\}$. This proves (2).
\end{proof}

\begin{quote}
\emph{(3) For all $c \in C$, there exists some $\ell \in \mathbb{Z}_7$ such that the following hold:
\begin{itemize}
\item $c$ is complete to $\{x_{\ell+2},x_{\ell+3},x_{\ell-3},x_{\ell-2}\}$ and anticomplete to $\{x_{\ell-1},x_{\ell},x_{\ell+1}\}$;
\item $G[V(B) \cup \{c\}]$ is a thickened emerald with good triple $(\{A_i\}_{i \in \mathbb{Z}_7},\{c\},\ell)$.
\end{itemize} }
\end{quote}
\begin{proof}[Proof of (3)]
Fix $c \in C$. By (2), and by symmetry, we may assume that $N_G(c) \cap V(H) = \{x_2,x_3,x_4,x_5\}$. Since $B$ is a maximal 7-bracelet in $G$, we know that $G[V(B) \cup \{c\}]$ is not a 7-bracelet. Lemma~\ref{lemma-add-vertex-to-7-bracelet} now implies that $G[V(B) \cup \{c\}]$ is a thickened emerald with good triple $(\{A_i\}_{i \in \mathbb{Z}_7},\{c\},0)$. This proves (3).
\end{proof}

\begin{quote}
\emph{(4) There exists some $\ell \in \mathbb{Z}_7$ such that $C$ is complete to $\{x_{\ell+2},x_{\ell+3},x_{\ell-3},x_{\ell-2}\}$ and anticomplete to $\{x_{\ell-1},x_{\ell},x_{\ell+1}\}$.}
\end{quote}
\begin{proof}[Proof of (4)]
Suppose otherwise. In view of (3), this implies that there exist distinct $c_1,c_2 \in C$ and distinct $\ell_1,\ell_2 \in \mathbb{Z}_7$ such that for each $j \in \{1,2\}$, $G[V(B) \cup \{c_j\}]$ is a thickened emerald with good triple $(\{A_i\}_{i \in \mathbb{Z}_7},\{c_j\},\ell_j)$. By the definition of a thickened emerald, this implies that $A_{\ell_1}^-,A_{\ell_1}^+,A_{\ell_2}^-,A_{\ell_2}^+ \neq \emptyset$. But this is impossible since by the definition of a 7-bracelet, there exists at most one index $i \in \mathbb{Z}_7$ such that $A_i^-,A_i^+ \neq \emptyset$. This proves (4).
\end{proof}

By (4), and by symmetry, we may assume that $C$ is complete to $\{x_2,x_3,x_4,x_5\}$ and anticomplete to $\{x_6,x_0,x_1\}$.

\begin{quote}
\emph{(5) $C$ is a nonempty clique.}
\end{quote}
\begin{proof}[Proof of (5)]
By assumption, $C$ is nonempty. Suppose that $C$ is not a clique, and fix distinct, nonadjacent vertices $c_1,c_2 \in C$. Then $c_1,x_2,c_2,x_4,c_1$ is a 4-hole in $G$, a contradiction. This proves (5).
\end{proof}

\begin{quote}
\emph{(6) $B_C$ is a thickened emerald with good triple $(\{A_i\}_{i \in \mathbb{Z}_7},C,0)$.}
\end{quote}
\begin{proof}[Proof (6)]
Since $C$ is complete to $\{x_2,x_3,x_4,x_5\}$ and anticomplete to $\{x_6,x_0,x_1\}$, (3) implies that for all $c \in C$, $G[V(B) \cup \{c\}]$ is a thickened emerald with good triple $(\{A_i\}_{i \in \mathbb{Z}_7},\{c\},0)$. By (5), $C$ is a nonempty clique, and it readily follows that $B_C$ is a thickened emerald with good triple $(\{A_i\}_{i \in \mathbb{Z}_7},C,0)$. This proves (6).
\end{proof}

\begin{quote}
\emph{(7) $U$ is complete to $V(B_C)$.}
\end{quote}
\begin{proof}[Proof of (7)]
By construction, $V(B_C) = V(B) \cup C$, and by (1), $U$ is complete to $V(B)$. It remains to show that $U$ is complete to $C$. Suppose otherwise, and fix nonadjacent vertices $u \in U$ and $c \in C$. But now $u,x_2,c,x_4,u$ is a 4-hole in $G$, a contradiction. This proves (7).
\end{proof}

Clearly, every thickened emerald is anticonnected. By construction, $V(G) = V(B_C) \cup U$ and $V(B_C) \cap U = \emptyset$; (1), (6), and (7) now imply that $B_C$ is the only nontrivial anticomponent of $G$, and that $B_C$ is a thickened emerald. This completes the argument.
\end{proof}

We are now ready to prove Theorem~\ref{thm-decomp-P7C4C5-free-with-C7}, restated below for the reader's convenience.

\begin{thm-decomp-P7C4C5-free-with-C7} Let $G$ be a graph. Then the following two statements are equivalent:
\begin{enumerate}[(i)]
\item \label{ref-contains-C7-no-clique-cut-P7C4C5-free} $G$ is a $(P_7,C_4,C_5)$-free graph that contains a 7-hole and does not admit a clique-cutset;
\item \label{ref-contains-C7-no-clique-cut-7-bracelet-emerald} $G$ contains exactly one nontrivial anticomponent, and this anticomponent is either a 7-bracelet or a thickened emerald.
\end{enumerate}
\end{thm-decomp-P7C4C5-free-with-C7}
\begin{proof}
Suppose first that $G$ satisfies (\ref{ref-contains-C7-no-clique-cut-7-bracelet-emerald}). By Lemma~\ref{lemma-7-bracelet-in-thickened-emerald}, every thickened emerald contains an induced 7-bracelet, and by Lemma~\ref{lemma-7-bracelet-7-hole}, every 7-bracelet contains a 7-hole; it follows that $G$ contains a 7-hole. Further, by Lemmas~\ref{lemma-one-anticomp} and~\ref{lemma-7BTE-P7C4C5-free}, $G$ is $(P_7,C_4,C_5)$-free and does not admit a clique-cutset. Thus, $G$ satisfies (\ref{ref-contains-C7-no-clique-cut-P7C4C5-free}).

Suppose now that $G$ satisfies (\ref{ref-contains-C7-no-clique-cut-P7C4C5-free}). Let $H^*$ be a 7-hole in $G$, chosen so that $|N_G[H^*]|$ is maximum. Set $K = G[N_G[H^*]]$. Clearly, $H^*$ is a dominating 7-hole in $K$, and so Lemma~\ref{lemma-dominating-7-hole-anticomp-7-bracelet-or-emerald} implies that $K$ contains exactly one nontrivial anticomponent (call it $B$), and this anticomponent is either a 7-bracelet or a thickened emerald. Set $U = V(G) \setminus V(B)$; then $U$ is a (possibly empty) clique, complete to $V(B)$ in $G$. If $K = G$, then we are done. So assume that $V(K) \subsetneqq V(G)$, and set $R = V(G) \setminus V(K)$. (Thus, $R \neq \emptyset$.)

If $B$ is a 7-bracelet, then let $(\{A_i\}_{i \in \mathbb{Z}_7},i^*)$ be a good pair for it, for all $i \in \mathbb{Z}_7$, let $A_i^*,A_i^+,A_i^-$ be as in the definition of a 7-bracelet, and set $C = \emptyset$. On the other hand, if $B$ is a thickened emerald, then let $(\{A_i\},i^*,C)$ be a good triple for it, and for all $i \in \mathbb{Z}_7$, let $A_i^*,A_i^+,A_i^-$ be as in the definition of a thickened emerald. By symmetry, we may assume that $i^* = 0$. Note that $C$ is a (possibly empty) clique, and that (by Lemma~\ref{lemma-7-bracelet-in-thickened-emerald}) $B \setminus C$ is a 7-bracelet with good pair $(\{A_i\}_{i \in \mathbb{Z}_7},i^*)$. By symmetry, we may assume that $i^* = 0$.

By the definition of a 7-bracelet, for all $i \in \mathbb{Z}_7$, some vertex in $A_i$ has a nonneighbor both in $A_{i-2}$ and in $A_{i+2}$. Let $x_0 \in A_0$ be such that $x_0$ has a nonneighbor both in $A_2$ and in $A_5$. If $A_2^* \neq \emptyset$, then let $x_2 \in A_2^*$, and otherwise, let $x_2 \in A_2$ be any nonneighbor of $x_0$. Similarly, if $A_5^* \neq \emptyset$, then let $x_5 \in A_5^*$, and otherwise, let $x_5 \in A_5$ be any nonneighbor of $x_0$. Fix nonadjacent $x_1 \in A_1$ and $x_6 \in A_6$. Finally, fix any $x_3 \in A_3$ and $x_4 \in A_4$. Set $H = G[x_0,x_1,\dots,x_6]$. Since $B \setminus C$ is a 7-bracelet with good pair $(\{A_i\}_{i \in \mathbb{Z}_7},0)$, we see that $H = x_0,x_1,\dots,x_6$ is a 7-hole. Clearly, $H$ is a dominating hole of $B$, and since $U$ is complete to $V(B)$, we see that $V(H)$ is complete to $U$. Thus, $V(K) \subseteq N_G[H]$. Now, by construction, $K = G[N_G[H^*]]$, and so by the choice of $H^*$, we have that $N_G[H] = V(K)$; consequently, $R$ is anticomplete to $H$.

\begin{quote}
\emph{(1) $C$ is complete to $\{x_2,x_3,x_4,x_5\}$ and anticomplete to $\{x_6,x_0,x_1\}$.}
\end{quote}
\begin{proof}[Proof of (1)]
If $C = \emptyset$, then this is immediate. So assume that $C \neq \emptyset$. Then $B$ is a thickened emerald with good triple $(\{A_i\}_{i \in \mathbb{Z}_7},C,0)$. In particular, $A_2^*,A_5^* \neq \emptyset$, and so $x_2 \in A_2^*$ and $x_5 \in A_5^*$. Furthermore, $C$ is complete to $A_2^* \cup A_3 \cup A_4 \cup A_5^*$ and anticomplete to $A_6 \cup A_0 \cup A_1$. Since $x_i \in A_i$ for all $i \in \mathbb{Z}_7$, and since $x_2 \in A_2^*$ and $x_5 \in A_5^*$, we see that $C$ is complete to $\{x_2,x_3,x_4,x_5\}$ and anticomplete to $\{x_6,x_0,x_1\}$. This proves (1).
\end{proof}

Let $D$ be the vertex set of a component of $G[R]$. Our goal is to show that $N_G(R) \cap V(K)$ is a clique; this is enough because (since $K$ is not a complete graph) it implies that $N_G(R) \cap V(K)$ is a clique-cutset of $G$, contrary to the fact that $G$ satisfies (i) and therefore does not admit a clique-cutset. If $D$ is anticomplete to $V(K)$, then we are done. So assume that $D$ is not anticomplete to $V(K)$.

\begin{quote}
\emph{(2) All vertices in $D$ have exactly the same neighbors in $C$.}
\end{quote}
\begin{proof}[Proof of (2)]
Suppose otherwise. Since $G[D]$ is connected, it follows that there exist adjacent vertices $d,d' \in D$ such that $N_G(d) \cap C \neq N_G(d') \cap C$. By symmetry, we may assume that there exists a vertex $c \in C$ such that $cd \in E(G)$ and $cd' \notin E(G)$. Now, recall that $R$ is anticomplete to $V(H)$; consequently, $d,d'$ are anticomplete to $V(H)$. But now by (1), we have that $d',d,c,x_5,x_6,x_0,x_1$ is an induced $P_7$ in $G$, a contradiction. This proves (2).
\end{proof}

\begin{quote}
\emph{(3) All vertices in $D$ have exactly the same neighbors in $V(B)$.}
\end{quote}
\begin{proof}[Proof of (3)]
In view of (2), it suffices to show that all vertices in $D$ have exactly the same neighbors in $\bigcup_{i \in \mathbb{Z}_7} A_i$. Suppose otherwise. Since $G[D]$ is connected, there exist adjacent vertices $d,d' \in D$ and an index $i \in \mathbb{Z}_7$ such that $N_G(d) \cap A_i \neq N_G(d') \cap A_i$; by symmetry, we may assume that there exists some $a_i \in A_i$ such that $da_i \in E(G)$ and $d'a_i \notin E(G)$. Recall that $R$ is anticomplete to $V(H)$; consequently, $d,d'$ are anticomplete to $V(H)$. Now, we know that $a_i \in A_i$ is anticomplete to at least one of $A_{i-2},A_{i+2}$, and in particular, $a_i$ is nonadjacent to at least one of $x_{i-2},x_{i+2}$. But if $a_ix_{i+2} \notin E(G)$, then $d',d,a_i,x_{i+1},x_{i+2},x_{i+3},x_{i-3}$ is an induced $P_7$ in $G$, and if $a_ix_{i-2} \notin E(G)$, then $d',d,a_i,x_{i-1},x_{i-2},x_{i-3},x_{i+3}$ is an induced $P_7$ in $G$, a contradiction in either case. This proves (3).
\end{proof}

\begin{quote}
\emph{(4) For all $d \in D$, $N_G(d) \cap V(B)$ is a clique.}
\end{quote}
\begin{proof}[Proof of (4)]
Suppose otherwise, and fix some $d \in D$ such that $N_G(d) \cap V(B)$ is not a clique. Fix nonadjacent $y_1,y_2 \in N_G(d) \cap V(B)$. Since $R$ is anticomplete to $V(H)$, we see that $y_1,y_2 \notin E(H)$. Now, set $Y = V(H) \cup \{y_1,y_2\}$. Note that every vertex in $V(B) \setminus V(H)$ has at least three neighbors in $V(H)$, and so we readily deduce that the distance between $y_1,y_2$ in $G[Y]$ is at most three. Let $P$ be a minimum-length induced path between $y_1,y_2$ in $G[Y]$. Then $P$ is of length at most three, and since $y_1y_2 \notin E(G)$, $P$ is of length at least two. Furthermore, every interior vertex of $P$ belongs to $V(H)$; consequently, $d$ is anticomplete to the interior of $P$. But now $V(P) \cup \{d\}$ induces a hole of length four or five in $G$, a contradiction. This proves (4).
\end{proof}

We are now ready to show that $N_G(D) \cap V(K)$ is a clique. First, $V(K) = V(B) \cup U$, and $U$ is a clique, complete to $V(B)$. Thus, it suffices to show that $N_G(D) \cap V(B)$ is a clique. Fix $d_0 \in D$. By (4), $N_G(d_0) \cap V(G_B)$ is a clique. By (3), $N_G(d_0) \cap V(B) = N_G(D) \cap V(B)$. Thus, $N_G(D) \cap V(B)$ is a clique. This completes the argument.
\end{proof}

\subsection{Proof of Theorem~\ref{thm-P7C4C5C7-free-contains-Theta-decomp}} \label{subsec:thm-P7C4C5C7-free-contains-Theta-decomp}

\begin{lemma} \label{lemma-lantern-P7C4C5-free} Let $R$ be a lantern. Then $R$ is $P_7$-free, every hole in $R$ is of length six, and $R$ does not admit a clique-cutset.
\end{lemma}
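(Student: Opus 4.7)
My plan is to work with three structural features of a lantern $R$ with good partition $(A,B_1,\dots,B_r,C_1,\dots,C_r,D)$. Setting $X:=A\cup\bigcup_iB_i$ and $Y:=D\cup\bigcup_iC_i$, these are: (i) each of $A$, $D$, every $A\cup B_i$, every $D\cup C_i$, and every $B_i\cup C_i$ with $i\ge 2$ is a clique; (ii) the only edges between $X$ and $Y$ are of the form $bc$ with $b\in B_i$ and $c\in C_i$ for the same $i$; (iii) distinct branches $B_i\cup C_i$ and $B_j\cup C_j$ are mutually anticomplete. A direct consequence of (i) together with the fact that $A$ is complete to every $B_j$ is that every induced path contained in $X$ has at most three vertices, the nontrivial case being a path $b_i,a,b_j$ with $a\in A$ and $i\ne j$; the symmetric statement holds in $Y$.

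To prove that $R$ is $P_7$-free, I would assume for contradiction that $P=p_0\ldots p_6$ is an induced $P_7$ in $R$, and partition $V(P)$ into its maximal consecutive blocks lying entirely in $X$ or entirely in $Y$. Each block has at most three vertices by the preceding remark; by (ii) consecutive blocks alternate between $X$ and $Y$ and are joined by a single branch edge; and every block not at an end of $P$ has both of its path-endpoints in $\bigcup_iB_i$ or $\bigcup_iC_i$, according to its side. A short case analysis over the possible block patterns summing to $7$---namely $(3,3,1)$, $(3,1,3)$, $(3,2,2)$, $(2,3,2)$, $(2,2,2,1)$, and their symmetric variants---shows that in every case a chord is forced. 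The typical forced chord has the form $ab$ with $a\in A$ appearing inside one block and $b\in B_k$ appearing inside another, non-adjacent block, which is automatically an edge because $A$ is complete to $B_k$.

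For the hole statement, I would first exhibit the $6$-hole $b_i,c_i,d,c_j,b_j,a$ (with $i\ne j$ and any representatives) and verify directly from (i)--(iii) that it has no chord. Conversely, let $H$ be any hole of $R$. Holes contained in a single $A\cup B_i\cup C_i\cup D$ are ruled out because that induced subgraph is chordal: it is the union of the cliques $A\cup B_i$ and $C_i\cup D$ joined only by edges between $B_i$ and $C_i$, which is trivially chordal for $i\ge 2$, and for $i=1$ any purported $4$-hole inside $B_1\cup C_1$ contradicts the nested ordering $N_R[b_{|B_1|}^1]\cap C_1\subseteq\dots\subseteq N_R[b_1^1]\cap C_1$. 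Hence $H$ uses at least two branches, and now the same block decomposition and chord obstructions as in the $P_7$ argument force $H$ to consist of exactly two blocks, one in $X$ and one in $Y$ each of size three, yielding the shape of the exhibited $6$-hole, so $|V(H)|=6$.

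For the clique-cutset statement, let $S$ be any clique of $R$. Since $A$ is anticomplete to $D$ and distinct branches are anticomplete, $S$ is contained in one of $A\cup B_i$, $B_j\cup C_j$, or $D\cup C_k$; by the symmetry that swaps $A$ with $D$ and each $B_i$ with $C_i$, we may assume one of the first two cases. If $S\subseteq A\cup B_i$, then $S$ avoids every $C_k$; in particular $c_1^1\in C_1\setminus S$, and since $c_1^1$ is complete to $B_1$, every vertex of $B_1\setminus S$ attaches to $D$ through $c_1^1$. If $S\subseteq B_j\cup C_j$, then $A$ and $D$ are intact and every branch $k\ne j$ is intact. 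In either case $D\cup\bigcup_k(C_k\setminus S)$ is connected, every $B_k\setminus S$ attaches to it (through $C_k$ for $k\ge 2$, and through $c_1^1$ for $k=1$), and every vertex of $A\setminus S$ attaches via any surviving $B_k$. Thus $R\setminus S$ is connected and $S$ is not a clique-cutset. The main obstacle throughout the proof is handling branch $1$: its nested structure must be used both to rule out $4$-holes inside $B_1\cup C_1$ and to guarantee the $c_1^1$-based connectivity step in the clique-cutset argument.
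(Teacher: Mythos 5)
Your proof takes a genuinely different route from the paper's in the $P_7$ and hole parts, but as outlined the case analysis has a gap. The paper proves $P_7$-freeness by first observing that $R\setminus(A\cup D)$ is a disjoint union of cobipartite graphs (so $V(P)$ must meet $A$ or $D$), and then counting: once $p_j\in A$, the vertex $p_j$ is complete to $A\cup\bigcup_i B_i$, so $|V(P)\cap(A\cup\bigcup_i B_i)|\le 3$; a similar (slightly more delicate) bound is obtained on the $Y$-side whether or not $V(P)$ meets $D$. For the hole bound, the paper exhibits explicit simplicial elimination orderings showing that $R\setminus A$ and $R\setminus D$ are chordal, forces every hole to meet both $A$ and $D$, and then counts. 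Your block decomposition is a plausible alternative, and your clique-cutset argument is essentially the same as the paper's.

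The gap is that the block case analysis is neither exhaustive as listed nor actually short without a structural fact you omit: every \emph{internal} block (one not at an end of $P$) must have exactly three vertices, and in fact must have the form $b,a,b'$ with $a\in A$, $b\in B_i$, $b'\in B_j$, $i\ne j$ (or the $Y$-analogue). An internal $X$-block of size one or two lies entirely in a single $B_i$, because both of its block-endpoints are in $\bigcup_k B_k$ and any two adjacent such vertices lie in the same branch; but then the two branch edges at its boundary both land in the clique $C_i$, and those two $C_i$-vertices lie in distinct $Y$-blocks, hence are non-consecutive in $P$, producing a chord. With this observation, patterns such as $(3,1,3)$, $(3,2,2)$, $(2,2,2,1)$ on your list never arise, any pattern with four or more blocks is excluded (two internal blocks already use six vertices, leaving one vertex for at least two end blocks), and the only admissible patterns are $(3,3,1)$, $(2,3,2)$, $(1,3,3)$. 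Even so, your ``typical'' chord (an $A$--$B_k$ edge) does not settle $(2,3,2)$ when both end blocks avoid $A$: there you need that the two branches used have distinct indices, so at least one index is $\ge 2$, and then $B_k$--$C_k$ completeness gives the chord. Finally, the same internal-block-size-three observation is exactly what your hole argument needs but does not state: in a cyclic decomposition every block is internal, so every block is a size-three block through $A$ or $D$, and two distinct $X$-blocks would put two non-consecutive vertices of the clique $A$ on the hole, a chord; this is what forces two blocks and length six.
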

\begin{proof}
Let $(A,B_1,\dots,B_r,C_1,\dots,C_r,D)$, $r \geq 3$, be a good partition of the lantern $R$.

\begin{quote}
{(1) $R$ does not admit a clique-cutset.}
\end{quote}
\begin{proof}[Proof of (1)]
Let $S$ be a clique of $R$; we must show that $R \setminus S$ is connected. Since $S$ is a clique, and since $A$ is anticomplete to $D$, we know that $S$ intersects at most one of $A$ and $D$; by symmetry, we may assume that $S \cap A = \emptyset$. Then there exists some $i \in \{1,\dots,r\}$ such that $S \subseteq B_i \cup C_i \cup D$ (in fact, either $S \subseteq B_i \cup C_i$ or $S \subseteq C_i \cup D$). By symmetry, we may assume that $i \in \{1,2\}$. Now, clearly, $R \setminus (B_i \cup C_i \cup D)$ is connected. Further, $B_i \setminus S$ is complete to $A$, and $D \setminus S$ is complete to $C_3$, and so $R \setminus (S \cup C_i)$ is connected. But $S$ intersects at most one of $B_i$ and $D$, and we know that every vertex in $C_i$ has a neighbor in $B_i$, and that $C_i$ is complete to $D$; thus, every vertex in $C_i$ has a neighbor in $(B_i \cup D) \setminus S$, and we deduce that $R \setminus S$ is connected. This proves (1).
\end{proof}

\begin{quote}
{\em (2) $R$ is $P_7$-free.}
\end{quote}
\begin{proof}[Proof of (2)]
Suppose otherwise, and let $P = x_1,x_2,\dots,x_7$ be an induced $P_7$ in $R$.

Note that $R \setminus (A \cup D)$ is the disjoint union of cobipartite graphs; since $P_7$ is connected and not cobipartite, we see that $V(P)$ intersects at least one of $A$ and $D$. By symmetry, we may assume that $V(P) \cap A \neq \emptyset$. Since $A$ is complete to $\bigcup_{i=1}^r B_i$, and since $P$ is a path (and therefore every vertex in $P$ is of degree one or two), we deduce that there exist at most two indices $i \in \{1,\dots,r\}$ such that $V(P)$ intersects $B_i$. Furthermore, there exist at most two indices $i \in \{1,\dots,r\}$ such that $V(P)$ intersects $C_i$, for otherwise, either $P$ would be disconnected (if $V(P) \cap D = \emptyset$), or some vertex in $P$ would be of degree at least three (if $V(P) \cap D \neq \emptyset$), a contradiction in either case.

Now, our goal is to show that $|V(P) \cap (A \cup \bigcup_{i=1}^r B_i)| \leq 3$ and $|V(P) \cap (D \cup \bigcup_{i=1}^r C_i)| \leq 3$; this will contradict the fact that $|V(P)| = 7$.

First, since $A$ is a homogeneous set in $R$, and since $P$ does not admit a proper homogeneous set, we see that $|V(P) \cap A| \leq 1$; since $V(P) \cap A \neq \emptyset$, we deduce that $|V(P) \cap A| = 1$. Next, since $B_1,\dots,B_r$ are cliques, complete to $A$, and since $P$ is triangle-free, we see that $|V(P) \cap B_i| \leq 1$ for all $i \in \{1,\dots,r\}$. Since there are at most two indices $i \in \{1,\dots,r\}$ such that $V(P) \cap B_i \neq \emptyset$, we deduce that $|V(P) \cap (A \cup \bigcup_{i=1}^r B_i))| \leq 3$.

It remains to show that $|V(P) \cap (D \cup \bigcup_{i=1}^r C_i)| \leq 3$. If $V(P) \cap D \neq \emptyset$, then the argument is completely analogous to the one establishing that $|V(P) \cap (A \cup \bigcup_{i=1}^r B_i))| \leq 3$. So assume that $V(P) \cap D = \emptyset$; it now suffices to show that $|V(P) \cap (\bigcup_{i=1}^r C_i)| \leq 3$. Since $C_1,\dots,C_r$ are cliques, and since $P$ is triangle-free, we see that $|V(P) \cap C_i| \leq 2$ for all $i \in \{1,\dots,r\}$. Since there are at most two indices $i \in \{1,\dots,r\}$ such that $V(P) \cap C_i \neq \emptyset$, it now suffices to show that there is at most one index $i \in \{1,\dots,r\}$ such that $|V(P) \cap C_i| = 2$. Suppose otherwise. Then there exists some index $i \in \{2,\dots,r\}$ such that $|V(P) \cap C_i| = 2$; by symmetry, we may assume that $|V(P) \cap C_2| = 2$. But now if $V(P) \cap B_2 \neq \emptyset$, then $P$ contains a triangle, and otherwise, $P$ is disconnected, a contradiction in either case. This proves (2).
\end{proof}

\begin{quote}
{\em (3) $R \setminus A$ and $R \setminus D$ are both chordal.}
\end{quote}
\begin{proof}[Proof of (3)]
By symmetry, it suffices to show that $R \setminus A$ is chordal. Using the definition of a lantern, we set $B_1 = \{b_1^1,\dots,b_{|B_1|}^1\}$ and $C_1 = \{c_1^1,\dots,c_{|C_1|}^1\}$ so that $N_R[b_{|B_1|}^1] \cap C_1 \subseteq \dots \subseteq N_R[b_1^1] \cap C_1 = C_1$ and $N_R[c_{|C_1|}^1] \cap B_1 \subseteq \dots \subseteq N_R[c_1^1] \cap B_1 = B_1$. Set $B = \bigcup_{i=2}^r B_i$ and $C = \bigcup_{i=2}^r C_i$. Set $D = \{d_1,\dots,d_{|D|}\}$, and $B = \{b_1,\dots,b_{|B|}\}$ and $C = \{c_1,\dots,c_{|C|}\}$. Then the following is a simplicial elimination ordering for $R \setminus A$: $b_{|B_1|}^1,\dots,b_1^1,b_1,\dots,b_{|B|},c_1^1,\dots,c_{|C_1|}^1,c_1,\dots,c_{|C|},d_1,\dots,d_{|D|}$. Thus (by~\cite{FulkersonGrossSimplicialElimOrd}) $R \setminus A$ is chordal. This proves (3).
\end{proof}

\begin{quote}
{\em (4) All holes in $R$ are of length six.}
\end{quote}
\begin{proof}[Proof of (4)]
Let $k \geq 4$, and let $H = x_0,x_1,\dots,x_{k-1},x_0$ (with indices in $\mathbb{Z}_k$) be a hole in $R$. We claim that $k = 6$. First, by (3), $H$ intersects both $A$ and $D$. Clearly, the distance in $R$ between any vertex in $A$ and any vertex in $D$ is three, and in $C_4$ and $C_5$, the distance between any two vertices is at most two. Thus, $k \geq 6$. Now, $A$ and $D$ are homogeneous sets in $R$, and holes of length greater than four do not admit a proper homogeneous set; it follows that $|V(H) \cap A| = |V(H) \cap D| = 1$. Since $A$ is complete to $\bigcup_{i=1}^r B_i$, since every vertex of $H$ is of degree two in $H$, and since $V(H) \cap A \neq \emptyset$, we see that $|V(H) \cap (\bigcup_{i=1}^r B_i)| \leq 2$; similarly, $|V(H) \cap (\bigcup_{i=1}^r C_i)| \leq 2$. Thus, $k = |V(H)| = |V(H) \cap A|+|V(H) \cap D|+|V(H) \cap (\bigcup_{i=1}^r B_i)|+|V(H) \cap (\bigcup_{i=1}^r C_i)| \leq 1+1+2+2 = 6$. But we already showed that $k \geq 6$, and so $k = 6$. This proves (4).
\end{proof}

We are now done by (1), (2), and (4).
\end{proof}

A {\em thickened $\Theta_3^r$} is any graph obtained from $\Theta_3^r$ by blowing up each vertex to a nonempty clique of arbitrary size. In other words, a {\em thickened $\Theta_3^r$} ($r \geq 3$) is a graph $T$ whose vertex set can be partitioned into nonempty cliques $A,B_1,\dots,B_r,C_1,\dots,C_r,D$ such that all the following hold:
\begin{itemize}
\item $A$ is anticomplete to $D$;
\item $A$ is complete to $\bigcup_{i=1}^r B_i$ and anticomplete to $\bigcup_{i=1}^r C_i$;
\item $D$ is complete to $\bigcup_{i=1}^r C_i$ and anticomplete to $\bigcup_{i=1}^r B_i$;
\item for all $i \in \{1,\dots,r\}$, $B_i$ is complete to $C_i$;
\item for all distinct $i,j \in \{1,\dots,r\}$, $B_i \cup C_i$ is anticomplete to $B_j \cup C_j$.
\end{itemize}
Under these circumstances, we say that $(A,B_1,\dots,B_r,C_1,\dots,C_r,D)$ is a {\em good partition} for the thickened $\Theta_3^r$ $T$ (see Figure~\ref{fig:ThickenedTheta}).

Note that for an integer $r \geq 3$, $\Theta_3^r$ is a thickened $\Theta_3^r$, and every thickened $\Theta_3^r$ contains an induced $\Theta_3^r$. Furthermore, every thickened $\Theta_3^r$ is an $r$-lantern, and every $r$-lantern contains an induced $\Theta_3^r$.

\begin{figure}
\begin{center}
\includegraphics[scale=0.6]{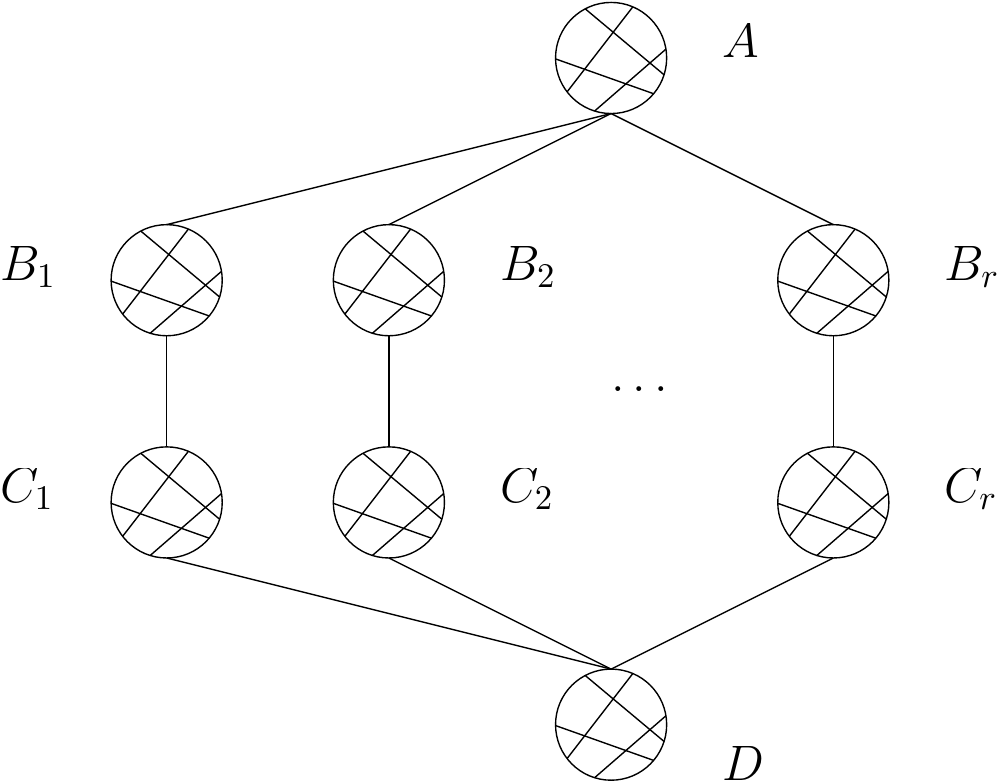}
\end{center}
\caption{Thickened $\Theta_3^r$, $r \geq 3$, with good partition $(A,B_1,\dots,B_r,C_1,\dots,C_r,D)$. A shaded disk represents a nonempty clique. A straight line between two cliques indicates that the two cliques are complete to each other. The absence of a line between two cliques indicates that the two cliques are anticomplete to each other.} \label{fig:ThickenedTheta}
\end{figure}

For a graph $G$, an integer $r \geq 3$, an induced subgraph $T$ of $G$ such that $T$ is a thickened $\Theta_3^r$ with good partition $\mathcal{P} = (A,B_1,\dots,B_r,C_1,\dots,C_r,D)$:
\begin{itemize}
\item $U^{G,T,\mathcal{P}}$ is the set of all vetices in $V(G) \setminus V(T)$ that are complete to $V(T)$;
\item $X^{G,T,\mathcal{P}}$ is the set of all vertices in $V(G) \setminus V(T)$ that are mixed on $V(T)$;
\item $Z^{G,T,\mathcal{P}}$ is the set of all vertices in $V(G) \setminus V(T)$ that are anticomplete to $V(T)$;
\item for all $i \in \{1,\dots,r\}$,
\begin{itemize}
\item $T_{B_i}^{G,T,\mathcal{P}}$ is the set of all vertices in $X^{G,T,\mathcal{P}}$ that are complete to $A \cup B_i$, mixed on $C_i$, and anticomplete to $V(T) \setminus (A \cup B_i \cup C_i)$,
\item $T_{C_i}^{G,T,\mathcal{P}}$ is the set of all vertices in $X^{G,T,\mathcal{P}}$ that are complete to $D \cup C_i$, mixed on $B_i$, and anticomplete to $V(T) \setminus (D \cup C_i \cup B_i)$,
\item $R_{B_i}^{G,T,\mathcal{P}}$ is the set of all vertices in $X^{G,T,\mathcal{P}}$ that are complete to $A$, have a neighbor in $B_i$, and are anticomplete to $V(T) \setminus (A \cup B_i)$,
\item $R_{C_i}^{G,T,\mathcal{P}}$ is the set of all vertices in $X^{G,T,\mathcal{P}}$ that are complete to $D$, have a neighbor in $C_i$, and are anticomplete to $V(T) \setminus (D \cup C_i)$.
\end{itemize}
\item $L_A^{G,T,\mathcal{P}}$ is the set of all vertices in $X$ that have a neighbor in $A$ and are anticomplete to $V(T) \setminus A$;
\item $L_D^{G,T,\mathcal{P}}$ is the set of all vertices in $X$ that have a neighbor in $D$ and are anticomplete to $V(T) \setminus D$.
\end{itemize}
When $G$, $T$, and $\mathcal{P}$ are clear from the context, we suppress the superscripts, and we write simply $U,X,Z,T_{B_i},T_{C_i},R_{B_i},R_{C_i},L_A,L_D$ instead of $U^{G,T,\mathcal{P}},X^{G,T,\mathcal{P}},Z^{G,T,\mathcal{P}},T_{B_i}^{G,T,\mathcal{P}},T_{C_i}^{G,T,\mathcal{P}},R_{B_i}^{G,T,\mathcal{P}},R_{C_i}^{G,T,\mathcal{P}},L_A^{G,T,\mathcal{P}},L_D^{G,T,\mathcal{P}}$, respectively. Clearly, $V(G) = V(T) \cup U \cup X \cup Z$, with $V(T)$, $U$, $X$, and $Z$ pairwise disjoint. Furthermore, it is clear that the sets $L_A,L_D,T_{B_1},\dots,T_{B_r},R_{B_1},\dots,R_{B_r},T_{C_1},\dots,T_{C_r},R_{C_1},\dots,R_{C_r}$ are pairwise disjoint. In what follows, we will repeatedly use the following two facts (which follow immediately from the construction):
\begin{itemize}
\item for all $i \in \{1,\dots,r\}$, every vertex in $T_{B_i} \cup R_{B_i}$ is complete to $A$, has a neighbor in $B_i$ and a nonneighbor in $C_i$, and is anticomplete to $V(T) \setminus (A \cup B_i \cup C_i)$;
\item for all $i \in \{1,\dots,r\}$, every vertex in $T_{C_i} \cup R_{C_i}$ is complete to $D$, has a neighbor in $C_i$ and a nonneighbor in $B_i$, and is anticomplete to $V(T) \setminus (D \cup C_i \cup B_i)$.
\end{itemize}
By construction, we have that $L_A \cup L_D \cup \bigcup_{i=1}^r (T_{B_i} \cup R_{B_i} \cup T_{C_i} \cup R_{C_i}) \subseteq X$. In Lemma~\ref{lemma-attachment-to-thickened-theta}, we show that if $G$ is $(P_7,C_4,C_5,C_7)$-free, and $T$ is an inclusion-wise maximal induced thickened $\Theta_3^r$ in $G$, then equality holds. In Lemma~\ref{lemma-anticomp-in-X}, we describe some additional properties of the sets defined above in the case when $G$ is $(P_7,C_4,C_5,C_7)$-free. Finally, we use Lemmas~\ref{lemma-one-anticomp},~\ref{lemma-lantern-P7C4C5-free},~\ref{lemma-attachment-to-thickened-theta}, and~\ref{lemma-anticomp-in-X} to prove Theorem~\ref{thm-P7C4C5C7-free-contains-Theta-decomp}.

\begin{lemma} \label{lemma-attachment-to-thickened-theta} Let $G$ be a $(P_7,C_4,C_5,C_7)$-free graph, and let $r \geq 3$ be an integer such that $G$ contains an induced $\Theta_3^r$. Let $T$ be an inclusion-wise maximal induced thickened $\Theta_3^r$ in $G$, and let $(A,B_1,\dots,B_r,C_1,\dots,C_r,D)$ be a good partition for $G$. Then $X = L_A \cup L_D \cup \bigcup_{i=1}^r (T_{B_i} \cup R_{B_i} \cup T_{C_i} \cup R_{C_i})$.
\end{lemma}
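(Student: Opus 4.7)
The plan is to fix an arbitrary vertex $x \in X$ and show that its adjacencies on $V(T)$ must conform to one of the six listed patterns. Since $x$ is, by the definition of $X$, mixed on $V(T)$, the argument is driven by a case analysis on $N_G(x) \cap V(T)$ in which each bad configuration is ruled out either by displaying a forbidden induced subgraph ($P_7$, $C_4$, $C_5$, or $C_7$), or by invoking the maximality of $T$ to enlarge the thickened $\Theta_3^r$ structure, giving a contradiction.

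First, we show that $x$ has a neighbor in $A \cup D$ but not in both. If $x$ had no neighbor in $A \cup D$, mixedness would give a neighbor $y$ of $x$ in some $B_i \cup C_i$; using $r \ge 3$, one picks distinct indices $j, k$ different from $i$ and constructs an induced $P_7$ by extending a path from $y$ across the opposite rung(s) of $T$ through vertices of $B_j, C_j, D, C_k$, after a short sub-case analysis to guarantee that all potential chords at $x$ are non-edges. If $x$ had neighbors in both $A$ and $D$, combining them with paths through $B_i$ and $C_i$ produces either a $C_4$, a $C_5$, or a $C_7$. By symmetry we may assume from now on that $x$ has a neighbor in $A$ and none in $D$. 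A similar $P_7$-argument (starting at a putative nonneighbor $a' \in A$ of $x$, running across the opposite rung of $T$, and ending at an outside neighbor $y$ of $x$; if no such $y$ exists, then $x \in L_A$ and we are done) forces $x$ to be complete to $A$.

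The crux is the single-rung property: the neighbors of $x$ in $V(T) \setminus A$ lie in $B_i \cup C_i$ for a single index $i$. If $x$ had neighbors in $B_i \cup C_i$ and in $B_j \cup C_j$ with $i \ne j$, an exhaustive sub-case analysis (depending on which of $B_i, B_j, C_i, C_j$ meet $N_G(x)$ and on whether $x$ is complete to them) produces either an induced $P_7$---using a free third rung index $k$, which exists because $r \ge 3$---or one of $C_4, C_5, C_7$; the $C_7$-freeness hypothesis is indispensable in the sub-cases where $x$ straddles two different $C$-cliques of $T$. Once the single-rung property is established, the $4$-cycle $x$-$a$-$b$-$c$-$x$ (with $a \in A \cap N_G(x)$, $b \in B_i \setminus N_G(x)$, $c \in C_i \cap N_G(x)$) shows that a neighbor of $x$ in $C_i$ forces $x$ to be complete to $B_i$, and similarly that $x$ cannot have a neighbor in $C_i$ without having one in $B_i$. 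The remaining cases are then: $x$ has no neighbor outside $A$ (so $x \in L_A$); $x$ has a neighbor in $B_i$ and none in $C_i$ (so $x \in R_{B_i}$); or $x$ is complete to $A \cup B_i$ and has a neighbor in $C_i$. In the last case $x$ cannot be complete to $C_i$, for then $B_i \cup \{x\}$ would strictly enlarge the thickened $\Theta_3^r$ $T$, contradicting its maximality; so $x$ is mixed on $C_i$ and $x \in T_{B_i}$. The $D$-side cases are symmetric. The main obstacle is the single-rung step, where the case-split is largest and the $C_7$-freeness hypothesis plays its essential role.
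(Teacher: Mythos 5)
Your strategy---case analysis on $N_G(x) \cap V(T)$, producing forbidden induced subgraphs or invoking the maximality of $T$---is the paper's approach, but two concrete points in your sketch do not hold up as stated. The main one is an ordering problem. You propose to prove that $x$ is complete to $A$ \emph{before} establishing the single-rung property, but your $P_7$ argument for completeness to $A$ implicitly needs $x$ to be anticomplete to the two alternate rungs the path runs through; otherwise those adjacencies are chords at $x$. That is exactly the single-rung property, so the argument as ordered is circular. The paper avoids this by proving, in order: $x$ is anticomplete to at least one of $A,D$; $x$ meets at most one $B_i$ and at most one $C_i$; $x$ meets at most one $B_i \cup C_i$; and only then that $x$ is complete to $A$ (respectively $D$). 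Relatedly, the specific $P_7$ you describe, with $a'$ and the outside neighbor $y$ as the two endpoints, is not induced: if $y \in B_i$ then $a'y \in E(G)$ since $A$ is complete to every $B_j$, and if $y \in C_i$ any vertex placed between $D$ and $y$ creates a chord to $D$. The paper's $P_7$ instead has $x$ as an endpoint with $b_1$ second and $a'$ third, e.g.\ $x,b_1,a',b_2,c_2,d,c_3$, and this requires having already ruled out neighbors of $x$ in rungs $2,3$ and in $D$.

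Your claim that ``the $C_7$-freeness hypothesis is indispensable in the sub-cases where $x$ straddles two different $C$-cliques'' is also incorrect. If $x$ has neighbors $c_i \in C_i$ and $c_j \in C_j$ with $i \neq j$, and (as one shows) $x$ is anticomplete to $D$, then $x$-$c_i$-$d$-$c_j$-$x$ is already a $C_4$ for any $d \in D$; if instead $x$ has a neighbor in $D$, the analogous $C_4$ argument forces $x$ to be complete to $D$ and the rest goes through with $C_4$, $C_5$, $P_7$ and maximality alone. In fact the paper's proof of this lemma never uses $C_7$-freeness; that hypothesis only matters in the companion Lemma~\ref{lemma-anticomp-in-X}, where one considers two distinct attaching vertices $x_1 \in T_{B_1} \cup R_{B_1}$ and $x_2 \in T_{B_2} \cup R_{B_2}$ and obtains the $7$-hole $x_1,b_1,c_1,d,c_2,b_2,x_2,x_1$ when $x_1x_2 \in E(G)$.
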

\begin{proof}
The fact that $L_A \cup L_D \cup \bigcup_{i=1}^r (T_{B_i} \cup R_{B_i} \cup T_{C_i} \cup R_{C_i}) \subseteq X$ is immediate from the construction. For the reverse inclusion, we fix some $x \in X$, and we show that $x \in L_A \cup L_D \cup \bigcup_{i=1}^r (T_{B_i} \cup R_{B_i} \cup T_{C_i} \cup R_{C_i})$.

\begin{quote}
{\em (1) Vertex $x$ is anticomplete to at least one of $A$ and $D$.}
\end{quote}
\begin{proof}[Proof of (1)]
Suppose otherwise, and fix $a \in A$ and $d \in D$ such that $xa,xd \in E(G)$. Our goal is to show that $x \in U$, contrary to the fact that $x \in X$.

First, we claim that $x$ is complete to $\bigcup_{i=1}^r (B_i \cup C_i)$. Suppose otherwise. By symmetry, we may assume that $x$ is nonadjacent to some $b_1' \in B_1$. Fix any $c_1 \in C_1$. But now if $xc_1 \in E(G)$, then $x,a,b_1',c_1,x$ is a 4-hole in $G$, and if $xc_1 \notin E(G)$, then $x,a,b_1',c_1,d,x$ is a 5-hole in $G$, a contradiction in either case. This proves that $x$ is indeed complete to $\bigcup_{i=1}^r (B_i \cup C_i)$.

Next, we claim that $x$ is complete to $A \cup D$. Suppose otherwise. By symmetry, we may assume that $x$ is nonadjacent to some $a' \in A$. Fix $b_1 \in B_1$ and $b_2 \in B_2$. By what we just showed, we have that $xb_1,xb_2 \in E(G)$, and it follows that $x,b_1,a',b_2,x$ is a 4-hole in $G$, a contradiction. Thus, $x$ is indeed complete to $A \cup D$.

We now have that $x$ is complete to $A \cup D \cup \bigcup_{i=1}^r (B_i \cup C_i) = V(T)$. Thus, $x \in U$, contrary to the fact that $x \in X$. This proves (1).
\end{proof}

\begin{quote}
{\em (2) Vertex $x$ has a neighbor in at most one of the sets $B_1,\dots,B_r$. Similarly, $x$ has a neighbor in at most one of the sets $C_1,\dots,C_r$.}
\end{quote}
\begin{proof}[Proof of (2)]
By symmetry, it suffices to prove the first statement. Suppose otherwise, that is, suppose that $x$ has a neighbor in at least two of $B_1,\dots,B_r$.

First, we claim that $x$ is complete to $A$. Suppose otherwise, and fix $a' \in A$ such that $xa' \notin E(G)$. Further, since $x$ has a neighbor in at least two of $B_1,\dots,B_r$, we may assume by symmetry that $x$ is adjacent to some $b_1 \in B_1$ and to some $b_2 \in B_2$. But now $x,b_1,a',b_2,x$ is a 4-hole in $G$, a contradiction. Thus, $x$ is complete to $A$, and by (1), it follows that $x$ is anticomplete to $D$.

Next, we claim that $x$ is anticomplete to $\bigcup_{i=1}^r C_i$. Suppose otherwise. By symmetry, we may assume that $x$ is adjacent to some $c_1 \in C_1$. By supposition, $x$ has a neighbor in at least two of $B_1,\dots,B_r$; consequently, $x$ has a neighbor in $\bigcup_{i=2}^r B_i$. By symmetry, we may now assume that $x$ is adjacent to some $b_2 \in B_2$. Fix $c_2 \in C_2$ and $d \in D$. Since $x$ is anticomplete to $D$, we know that $xd \notin E(G)$. But now if $xc_2 \in E(G)$, then $x,c_1,d,c_2,x$ is a 4-hole in $G$, and if $xc_2 \notin E(G)$, then $x,c_1,d,c_2,b_2,x$ is a 5-hole in $G$, a contradiction in either case. This proves that $x$ is anticomplete to $\bigcup_{i=1}^r C_i$.

We now have that $x$ is complete to $A$, has a neighbor in at least two of $B_1,\dots,B_r$, and is anticomplete to $D \cup \bigcup_{i=1}^r C_i$. If $x$ is complete to $\bigcup_{i=1}^r B_i$, then $G[V(T) \cup \{x\}]$ is a thickened $\Theta_3^r$ (we simply ``add'' $x$ to $A$), contrary to the maximality of $T$. Thus, $x$ is not complete to $\bigcup_{i=1}^r B_i$. But now we may assume by symmetry that $x$ is adjacent to some $b_1 \in B_1$ and $b_2 \in B_2$, and is nonadjacent to some $b_3' \in B_3$. Fix $c_2 \in C_2$, $c_3 \in C_3$, and $d \in D$. Then $b_1,x,b_2,c_2,d,c_3,b_3'$ is an induced $P_7$ in $G$, a contradiction. This proves (2).
\end{proof}

\begin{quote}
{\em (3) Vertex $x$ has a neighbor in at most one of the sets $B_1 \cup C_1,\dots,B_r \cup C_r$.}
\end{quote}
\begin{proof}[Proof of (3)]
Suppose otherwise. By (2), and by symmetry, we may assume that $x$ is adjacent to some $b_1 \in B_1$ and to some $c_2 \in C_2$, and that $x$ is anticomplete to $V(T) \setminus (B_1 \cup C_2)$. By (1), and by symmetry, we may assume that $x$ is anticomplete to $A$. Fix $a \in A$ and $b_2 \in B_2$. Then $x,b_1,a,b_2,c_2,x$ is a 5-hole in $G$, a contradiction. This proves (3).
\end{proof}

\begin{quote}
{\em (4) For all $i \in \{1,\dots,r\}$, if $x$ has a neighbor in $B_i$ and a nonneighbor in $C_i$, then $x$ is complete to $A$ and anticomplete to $V(T) \setminus (A \cup B_i \cup C_i)$. Similarly, for all $i \in \{1,\dots,r\}$, if $x$ has a neighbor in $C_i$ and a nonneighbor in $B_i$, then $x$ is complete to $D$ and anticomplete to $V(T) \setminus (D \cup C_i \cup B_i)$.}
\end{quote}
\begin{proof}[Proof of (4)]
By symmetry, it suffices to prove the first statement. In fact, by symmetry, it suffices to show that if $x$ has a neighbor in $B_1$ and a nonneighbor in $C_1$, then $x$ is complete to $A$ and anticomplete to $V(T) \setminus (A \cup B_1 \cup C_1)$. So assume that $x$ is adjacent to some $b_1 \in B_1$ and nonadjacent to some $c_1' \in C_1$. By (3), $x$ is anticomplete to $\bigcup_{i=2}^r (B_i \cup C_i)$. Further, $x$ is anticomplete to $D$, for otherwise, we fix some $d \in D$ such that $xd \in E(G)$, and we observe that $x,b_1,c_1',d,x$ is a 4-hole in $G$, a contradiction. We have now shown that $x$ is anticomplete to $D \cup \bigcup_{i=2}^r (B_i \cup C_i) = V(T) \setminus (A \cup B_1 \cup C_1)$. It remains to show that $x$ is complete to $A$. Suppose otherwise, and fix some $a' \in A$ such that $xa' \notin E(G)$. Further, fix $b_2 \in B_2$, $c_2 \in C_2$, $d \in D$, $c_3 \in C_3$. Now $x,b_1,a',b_2,c_2,d,c_3$ is an induced $P_7$ in $G$, a contradiction. This proves (4).
\end{proof}

\begin{quote}
{\em (5) For all $i \in \{1,\dots,r\}$, if $x$ has a neighbor both in $B_i$ and in $C_i$, then $x \in T_{B_i} \cup T_{C_i}$.}
\end{quote}
\begin{proof}[Proof of (5)]
By symmetry, it suffices to show that if $x$ has a neighbor both in $B_1$ and in $C_1$, then $x \in T_{B_1} \cup T_{C_1}$. So assume that $x$ has a neighbor both in $B_1$ and in $C_1$. By (3), $x$ is anticomplete to $\bigcup_{i=2}^r (B_i \cup C_i)$.

Suppose first that $x$ is complete to $B_1 \cup C_1$. By (1), $x$ is anticomplete to at least one of $A$ and $D$; by symmetry, we may assume that $x$ is anticomplete to $D$. Suppose first that $x$ has a nonneighbor $a' \in A$. Now, fix $b_1 \in B_1$, $b_2 \in B_2$, $c_2 \in C_2$, $c_3 \in C_3$, and $d \in D$. Then $x,b_1,a',b_2,c_2,d,c_3$ is an induced $P_7$ in $G$, a contradiction. Thus, $x$ is complete to $A$. But now $G[V(T) \cup \{x\}]$ contradicts the maximality of $T$ (we simply ``add'' $x$ to $B_1$).

Thus, $x$ is not complete to $B_1 \cup C_1$. If $x$ is mixed on both $B_1$ and $C_1$, then (4) implies that $x$ is complete to both $A$ and $D$, contrary to (1). We now have that $c$ has a neighbor both in $B_1$ and in $C_1$, that $x$ is not complete to $B_1 \cup C_1$, and that $x$ is mixed on at most one of $B_1$ and $C_1$. This implies that $x$ is complete to one of $B_1,C_1$ and is mixed on the other; by symmetry, we may assume that $x$ is complete to $B_1$ and mixed on $C_1$. Now (4) implies that $x$ is complete to $A$ and anticomplete to $V(T) \setminus (A \cup B_1 \cup C_1)$, and we deduce that $x \in T_{B_1}$. This proves (5).
\end{proof}

\begin{quote}
{\em (6) $x \in L_A \cup L_D \cup \bigcup_{i=1}^r (T_{B_i} \cup R_{B_i} \cup T_{C_i} \cup R_{C_i})$.}
\end{quote}
\begin{proof}[Proof of (6)]
Suppose first that $x$ is anticomplete to $\bigcup_{i=1}^r (B_i \cup C_i)$. By (1), $x$ is anticomplete to at least one of $A$ and $D$; by symmetry, we may assume that $x$ is anticomplete to $D$. Thus, $x$ is anticomplete to $V(T) \setminus A$, that is, all neighbors of $x$ in $V(T)$ belong to $A$. Since $x \in X$, we know that $x$ has at least one neighbor in $V(T)$, and we deduce that $x \in L_A$.

From now on, we assume that $x$ has a neighbor in $\bigcup_{i=1}^r (B_i \cup C_i)$. By symmetry, we may assume that $x$ has a neighbor in $B_1 \cup C_1$. If $x$ has a neighbor both in $B_1$ and in $C_1$, then by (5), $x \in T_{B_1} \cup T_{C_1}$, and we are done. By symmetry, we may now assume that $x$ has a neighbor $b_1 \in B_1$ and is anticomplete to $C_1$. We claim that $x \in R_{B_1}$.

By (3), $x$ is anticomplete to $\bigcup_{i=2}^r (B_i \cup C_i)$. Next, if $x$ is adjacent to some $d \in D$, then we fix some $c_1 \in C_1$, and we observe that $x,b_1,c_1,d,x$ is a 4-hole, a contradiction. Thus, $x$ is anticomplete to $D$.

We now have that $x$ has a neighbor in $B_1$ and is anticomplete to $V(T) \setminus (A \cup B_1)$. It remains to show that $x$ is complete to $A$. Suppose otherwise, and fix some $a' \in A$ such that $xa' \notin E(G)$. Fix some $b_2 \in B_2$, $c_2 \in C_2$, $d \in D$, and $c_3 \in C_3$. But now $x,b_1,a',b_2,c_2,d,c_3$ is an induced $P_7$ in $G$, a contradiction. Thus, $x$ is complete to $A$, and it follows that $x \in R_{B_1}$. This proves (6).
\end{proof}

By (6), we have that $X \subseteq L_A \cup L_D \cup \bigcup_{i=1}^r (T_{B_i} \cup R_{B_i} \cup T_{C_i} \cup R_{C_i})$, and we are done.
\end{proof}

\begin{lemma} \label{lemma-anticomp-in-X} Let $G$ be a $(P_7,C_4,C_5,C_7)$-free graph, and let $r \geq 3$ be an integer such that $G$ contains an induced $\Theta_3^r$. Let $T$ be an induced thickened $\Theta_3^r$ in $G$, and let $(A,B_1,\dots,B_r,C_1,\dots,C_r,D)$ be a good partition for $T$. Then all the following hold:
\begin{enumerate}[(a)]
\item \label{ref-anticomp-in-X-LA-anticomp-LD-or-Theta-r+1} either $L_A$ is anticomplete to $L_D$, or $G$ contains an induced $\Theta_3^{r+1}$.
\item \label{ref-anticomp-in-X-LA-anticomp-toTRCi} $L_A$ is anticomplete to $\bigcup_{i=1}^r (T_{C_i} \cup R_{C_i})$, and $L_D$ is anticomplete to $\bigcup_{i=1}^r (T_{B_i} \cup R_{B_i})$;
\item \label{ref-anticomp-in-X-TRBi-anticomp-TRCi} $\bigcup_{i=1}^r (T_{B_i} \cup R_{B_i})$ is anticomplete to $\bigcup_{i=1}^r (T_{C_i} \cup R_{C_i})$;
\item \label{ref-anticomp-in-X-at-most-one-TRBi} at most one of the sets $T_{B_1} \cup R_{B_1},\dots,T_{B_r} \cup R_{B_r}$ is nonempty, and at most one of the sets $T_{C_1} \cup R_{C_1},\dots,T_{C_r} \cup R_{C_r}$ is nonempty;
\item \label{ref-anticomp-in-X-at-most-one-TBiCi} at most one of the sets $T_{B_1} \cup T_{C_1},\dots,T_{B_r} \cup T_{C_r}$ is nonempty;
\item \label{ref-anticomp-in-X-TBi-clique} $T_{B_1},\dots,T_{B_r},T_{C_1},\dots,T_{C_r}$ are (possibly empty) cliques;
\item \label{ref-anticomp-in-X-U-clique-comp-to-TBi} $U$ is a (possibly empty) clique, complete to $\bigcup_{i=1}^r (T_{B_i} \cup T_{C_i})$;
\item \label{ref-anticomp-in-X-Z-anticomp-X} $Z$ is anticomplete to $L_A \cup L_D \cup \bigcup_{i=1}^r (T_{B_i} \cup R_{B_i} \cup T_{C_i} \cup R_{C_i})$.
\end{enumerate}
\end{lemma}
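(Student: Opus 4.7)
My plan is to prove each of (a)--(h) by exhibiting an explicit forbidden induced subgraph ($P_7$, $C_4$, $C_5$, or $C_7$) whenever the stated property fails, with the sole exception of (a), where the failure is used to build an induced $\Theta_3^{r+1}$. I fix $a \in A$ and $d \in D$ once and for all, and repeatedly use the mixedness clauses in the definitions of $T_{B_i}, T_{C_i}$ (and the anticompleteness clauses for $R_{B_i}, R_{C_i}$) to pick a neighbor or non-neighbor of a given $X$-vertex inside $B_i$ or $C_i$.

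Items (a), (b), (f), (g) are short. For (a), if $\ell_A \ell_D \in E(G)$ with $\ell_A \in L_A$ and $\ell_D \in L_D$, then $a, \ell_A, \ell_D, d$ is a new induced three-edge path from $a$ to $d$, internally anticomplete to the $r$ standard paths $a, b_i, c_i, d$, so that $\{a, d, b_1, c_1, \dots, b_r, c_r, \ell_A, \ell_D\}$ induces $\Theta_3^{r+1}$. For (b), an edge from $\ell_A \in L_A$ to $x \in T_{C_i} \cup R_{C_i}$ produces the induced $C_5$ given by $\ell_A, a, b_i, c_i, x$ for any $c_i \in C_i$ adjacent to $x$ and any $b_i \in B_i$ non-adjacent to $x$. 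For (f), non-adjacent $y, y' \in T_{B_i}$ admit either a common $C_i$-neighbor $c$ (giving the induced $C_4$ on $\{y, a, y', c\}$) or disjoint $C_i$-neighborhoods (giving an induced $C_5$ $y, c, c', y', a$, using that $C_i$ is a clique). For (g), non-adjacent $u, u' \in U$ give the induced $C_4$ $u, a, u', d$ (using $ad \notin E$), and a non-adjacent pair $u \in U$, $y \in T_{B_i}$ gives the induced $C_4$ $u, a, y, c$ for any $c \in C_i$ adjacent to $y$.

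The bulk of the work is in (c), (d), (e), and (h). For (c), given $y \in T_{B_i} \cup R_{B_i}$ and $z \in T_{C_j} \cup R_{C_j}$ with $yz \in E$: when $i \neq j$, the cycle $y, z, d, c_i, b_i^y$ is an induced $C_5$, where $b_i^y \in B_i$ is adjacent to $y$ and $c_i \in C_i$ is non-adjacent to $y$, and $z$ is anticomplete to $B_i \cup C_i$ since $j \neq i$. When $i = j$, a short case-split on whether $y$ and $z$ lie in $T$-type or $R$-type sets gives either the induced $C_4$ $y, b_i^n, c_i^n, z$ (choosing $b_i^n \in B_i$ non-adjacent to $z$ and $c_i^n \in C_i$ non-adjacent to $y$, using that $B_i$ is complete to $C_i$) or, in the $R$/$R$ subcase, the induced $C_5$ $y, a, b_i^n, c_i^n, z$. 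For (d), with $y \in T_{B_i} \cup R_{B_i}$ and $y' \in T_{B_{i'}} \cup R_{B_{i'}}$, $i \neq i'$, the sequence $y, b_i^y, c_i^n, d, c_{i'}^n, b_{i'}^{y'}, y'$ is an induced $P_7$ when $y y' \notin E$, and the sequence $y, y', b_{i'}^{y'}, c_{i'}^n, d, c_i^n, b_i^y$ is an induced $C_7$ when $y y' \in E$ (here it is crucial that $C_7$ is forbidden, not merely $P_7$). Item (e) reduces via (d) to the cross-case $y \in T_{B_i}$, $z \in T_{C_j}$ with $i \neq j$; (c) kills $yz \in E$, and for $yz \notin E$ the path $c_i^n, c_i^y, y, a, b_j^n, c_j^z, z$ is an induced $P_7$ (using that $c_i^n, c_i^y$ are distinct members of the clique $C_i$, so $c_i^n c_i^y \in E$). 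Finally, for (h), an edge from $w \in Z$ to $x$ in the stated union yields, on the ``$A$-side'' say, the induced $P_7$ $w, x, a, b_k, c_k, d, c_\ell$, where $a$ is a neighbor of $x$ in $A$ and $k, \ell \in \{1, \dots, r\}$ are distinct and (when $x$ is attached at a specific $B$-index $i$) both different from $i$; such $k, \ell$ exist because $r \geq 3$.

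The main obstacle is chord-avoidance in (c)--(e). The completeness of $B_i$ to $C_i$ inside every thickened-$\Theta_3^r$ copy, combined with $T_{B_i}$-vertices being complete to $A$ and $T_{C_j}$-vertices being complete to $D$, forces the pairs $(y, a)$, $(z, d)$, $(y, b_i^y)$, $(z, c_j^z)$ to appear consecutively in any induced path or cycle containing both members of the pair; the mixedness hypotheses are exactly what supply the non-neighbors $c_i^n$ and $b_j^n$ that close off the remaining potential chords, so the constructions are essentially forced once the configuration is fixed.
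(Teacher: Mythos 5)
Your overall strategy—exhibit an explicit $C_4$, $C_5$, $C_7$, $P_7$, or $\Theta_3^{r+1}$—is the same as the paper's, and parts (a), (b), (d), (e), (f), (g), (h) essentially match the paper's constructions (with minor harmless variations, e.g.\ your $P_7$ in (h) ends at $c_\ell$ with $\ell\notin\{i,k\}$ rather than the paper's $c_i$, and your $P_7$ in (e) is routed differently from the paper's $x,b_1,c_1,d,y,b_2,b_2'$; both are fine). Two remarks, the second of which is a genuine gap.

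First, a small imprecision: you say you ``fix $a\in A$ and $d\in D$ once and for all,'' but a vertex $\ell_A\in L_A$ (resp.\ $\ell_D\in L_D$) is only required to have \emph{some} neighbor in $A$ (resp.\ $D$), not to be complete to it. In (a) and (b) the vertices $a,d$ must therefore be chosen \emph{after} $\ell_A,\ell_D$ are fixed, as the paper does. This is cosmetic and clearly what you intended.

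Second, and more substantively, your treatment of (c) in the case $i=j$ does not go through as stated. Your induced $C_4$ on $y,b_i^n,c_i^n,z$ needs $yb_i^n,zc_i^n\in E(G)$ in addition to the two nonedges you specify. This is automatic when $y\in T_{B_i}$ and $z\in T_{C_i}$ (so $y$ is complete to $B_i$ and $z$ to $C_i$), but in the mixed case, say $y\in T_{B_i}$, $z\in R_{C_i}$, nothing guarantees the existence of a $c_i^n\in C_i$ that is simultaneously a nonneighbor of $y$ and a neighbor of $z$; every $C_i$-neighbor of $z$ could happen to be a $C_i$-neighbor of $y$. Your $R/R$ fallback $C_5$ on $y,a,b_i^n,c_i^n,z$ has a parallel problem: you need $b_i^n$ to be a nonneighbor of $y$, but a vertex $y\in R_{B_i}$ may well be complete to $B_i$ (the lemma does not assume $T$ is maximal, and even if it did, such a $y$ would still land in $R_{B_i}$ rather than being absorbed into $T$), in which case no such $b_i^n$ exists. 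The paper avoids these issues with a neighborhood-containment argument: either $N_G[y]\cap(B_i\cup C_i)$ and $N_G[z]\cap(B_i\cup C_i)$ are incomparable, which yields a $C_4$ using that $B_i\cup C_i$ is a clique, or one contains the other, which forces $z\in R_{C_i}$ (so $z$ misses all of $B_i$) and yields the $C_5$ $y,b_i,c_i,d,z,y$. You should adopt that containment dichotomy (or a variant of it) to close the $i=j$ case of (c); the rest of your proof stands.
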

\begin{proof}
We first prove (\ref{ref-anticomp-in-X-LA-anticomp-LD-or-Theta-r+1}). Suppose that $L_A$ is not anticomplete to $L_D$, and fix adjacent vertices $x \in L_A$ and $y \in L_D$. Fix $a \in A$ and $d \in D$ such that $xa,yd \in E(G)$. Further, for all $i \in \{1,\dots,r\}$, fix $b_i \in B_i$ and $c_i \in C_i$. Then $G[a,b_1,\dots,b_r,x,c_1,\dots,c_r,y,d]$ is a $\Theta_3^{r+1}$. This proves (\ref{ref-anticomp-in-X-LA-anticomp-LD-or-Theta-r+1}).

Next, we prove (\ref{ref-anticomp-in-X-LA-anticomp-toTRCi}). By symmetry, it suffices to show that $L_A$ is anticomplete to $T_{C_1} \cup R_{C_1}$. Suppose otheriwse, and fix adjacent vertices $a' \in L_A$ and $x \in T_{C_1} \cup R_{C_1}$. By construction, $a'$ has a neighbor $a \in A$ and is anticomplete to $V(T) \setminus A$, and $x$ is complete to $D$, and has a neighbor $c_1 \in C_1$ and a nonneighbor $b_1' \in B_1$. But now $a',a,b_1',c_1,x,a'$ is a 5-hole in $G$, a contradiction. This proves (\ref{ref-anticomp-in-X-LA-anticomp-toTRCi}).

We next prove (\ref{ref-anticomp-in-X-TRBi-anticomp-TRCi}). By symmetry, it suffices to show that $T_{B_1} \cup R_{B_1}$ is anticomplete to $T_{C_1} \cup R_{C_1} \cup T_{C_2} \cup R_{C_2}$.

We first show that $T_{B_1} \cup R_{B_1}$ is anticomplete to $T_{C_1} \cup R_{C_1}$. Suppose otherwise, and fix adjacent vertices $x \in T_{B_1} \cup R_{B_1}$ and $y \in T_{C_1} \cup R_{C_1}$. We claim that one of $N_G[x] \cap (B_1 \cup C_1)$ and $N_G[y] \cap (B_1 \cup C_1)$ includes the other. Suppose otherwise, and fix $v_x,v_y \in B_1 \cup C_1$ such that $xv_x,yv_y \in E(G)$ and $xv_y,yv_x \notin E(G)$. Since $B_1 \cup C_1$ is a clique, we now have that $x,v_x,v_y,y,x$ is a 4-hole in $G$, a contradiction. By symmetry, we may now assume that $N_G[y] \cap (B_1 \cup C_1) \subseteq N_G[x] \cap (B_1 \cup C_1)$. Since $x \in T_{B_1} \cup R_{B_1}$, we know that $x$ is adjacent to some $b_1 \in B_1$ and nonadjacent to some $c_1 \in C_1$. Since $N_G[y] \cap (B_1 \cup C_1) \subseteq N_G[x] \cap (B_1 \cup C_1)$, it follows that $yc_1 \notin E(G)$, and consequently, $y \notin T_{C_1}$. Thus, $y \in R_{C_1}$, and in particular, $yb_1 \notin E(G)$. We now fix $d \in D$, and we observe that $x,b_1,c_1,d,y,x$ is a 5-hole in $G$, a contradiction. This proves that $T_{B_1} \cup R_{B_1}$ is anticomplete to $T_{C_1} \cup R_{C_1}$.

We now show that $T_{B_1} \cup R_{B_1}$ is anticomplete to $T_{C_2} \cup R_{C_2}$. Suppose otherwise, and fix adjacent vertices $x \in T_{B_1} \cup R_{B_1}$ and $y \in T_{C_2} \cup R_{C_2}$. Fix $b_1 \in B_1$ and $c_1 \in C_1$ such that $xb_1 \in E(G)$ and $xc_1 \notin E(G)$. Fix $d \in D$. Now $x,b_1,c_1,d,y,x$ is a 5-hole in $G$, a contradiction. This proves (\ref{ref-anticomp-in-X-TRBi-anticomp-TRCi}).

Next, suppose that (\ref{ref-anticomp-in-X-at-most-one-TRBi}) is false. By symmetry, we may assume $T_{B_1} \cup R_{B_1}$ and $T_{B_2} \cup R_{B_2}$ are both nonempty. Fix $x_1 \in T_{B_1} \cup R_{B_1}$ and $x_2 \in T_{B_2} \cup R_{B_2}$. For each $i \in \{1,2\}$, fix $b_i \in B_i$ and $c_i \in C_i$ such that $x$ is adjacent to $b_i$ and nonadjacent to $c_i$. Further, fix $d \in D$; then $x_1,x_2$ are nonadjacent to $d$. But now if $x_1x_2 \in E(G)$, then $x_1,b_1,c_1,d,c_2,b_2,x_2,x_1$ is a 7-hole in $G$, and otherwise, $x_1,b_1,c_1,d,c_2,b_2,x_2$ is an induced $P_7$ in $G$, a contradiction in either case. This proves (\ref{ref-anticomp-in-X-at-most-one-TRBi}).

Next, suppose that (\ref{ref-anticomp-in-X-at-most-one-TBiCi}) is false. By (\ref{ref-anticomp-in-X-at-most-one-TRBi}), and by symmetry, we may assume that $T_{B_1}$ and $T_{C_2}$ are both nonempty. Fix $x \in T_{B_1}$ and $y \in T_{C_2}$. By (\ref{ref-anticomp-in-X-TRBi-anticomp-TRCi}), $xy \notin E(G)$. Since $x \in T_{B_1}$, we know that $x$ has a neighbor $b_1 \in B_1$ and a nonneighbor $c_1 \in C_1$. Since $y \in T_{C_2}$, we know that $y$ is mixed on $B_2$; fix $b_2,b_2' \in B_2$ such that $yb_2 \in E(G)$ and $yb_2' \notin E(G)$. Finally, fix $d \in D$. Now $x,b_1,c_1,d,y,b_2,b_2'$ is an induced $P_7$ in $G$, a contradiction. This proves (\ref{ref-anticomp-in-X-at-most-one-TBiCi}).

Next, we prove (\ref{ref-anticomp-in-X-TBi-clique}). By symmetry, it suffices to show that $T_{B_1}$ is a clique. Suppose otherwise, and fix nonadjacent $x,y \in T_{B_1}$. Suppose that neither of $N_G[x] \cap C_1$ and $N_G[y] \cap C_1$ is included in the other. Fix $c_x,c_y \in C_1$ such that $xc_x,yc_y \in E(G)$ and $xc_y,yc_x \notin E(G)$. Fix $a \in A$. Then $a,x,c_x,c_y,y,a$ is a 5-hole in $G$, a contradiction. By symmetry, we may now assume that $N_G[y] \cap C_1 \subseteq N_G[x] \cap C_1$. Since $y \in T_{B_1}$, we know that $y$ has a neighbor $c_1 \in C_1$; since $N_G[y] \cap C_1 \subseteq N_G[x] \cap C_1$, it follows that $xc_1 \in E(G)$. Fix $a \in A$. Now $a,x,c_1,y,a$ is a 4-hole in $G$, a contradiction. This proves (\ref{ref-anticomp-in-X-TBi-clique}).

We now prove (\ref{ref-anticomp-in-X-U-clique-comp-to-TBi}). Suppose that $U$ is not a clique, and fix nonadjacent vertices $u_1,u_2 \in U$. Fix $a \in A$ and $d \in D$. Then $a,u_1,d,u_2,a$ is a 4-hole in $G$, a contradiction. Thus, $U$ is a clique. It remains to show that $U$ is complete to $\bigcup_{i=1}^r (T_{B_i} \cup T_{C_i})$. Suppose otherwise. By symmetry, we may assume that some $u \in U$ and $x \in T_{B_1}$ are nonadjacent. Fix $a \in A$, and fix $c_1 \in C_1$ such that $xc_1 \in E(G)$. Then $a,x,c_1,u,a$ is a 4-hole in $G$, a contradiction.

It remains to prove (\ref{ref-anticomp-in-X-Z-anticomp-X}). Suppose that some $z \in Z$ has a neighbor in $L_A \cup L_D \cup \bigcup_{i=1}^r (T_{B_i} \cup R_{B_i} \cup T_{C_i} \cup R_{C_i})$. By symmetry, we may assume that $z$ is adjacent to some $x \in L_A \cup T_{B_1} \cup R_{B_1}$. Then $x$ has a neighbor $a \in A$, has a nonneighbor $c_1 \in C_1$, and is anticomplete to $V(T) \setminus (A \cup B_1 \cup C_1)$. Fix $b_2 \in B_2$, $c_2 \in C_2$, and $d \in D$. Now $z,x,a,b_2,c_2,d,c_1$ is an induced $P_7$ in $G$, a contradiction. This proves (\ref{ref-anticomp-in-X-Z-anticomp-X}).
\end{proof}

We are now ready to prove Theorem~\ref{thm-P7C4C5C7-free-contains-Theta-decomp}, restated below for the reader's convenience.

\begin{thm-P7C4C5C7-free-contains-Theta-decomp} Let $G$ be a graph. Then the following two statements are equivalent:
\begin{enumerate}[(i)]
\item $G$ is a $(P_7,C_4,C_5,C_7)$-free graph that contains an induced $\Theta_3^3$ and does not admit a clique-cutset;
\item $G$ contains exactly one nontrivial anticomponent, and this anticomponent is a lantern.
\end{enumerate}
\end{thm-P7C4C5C7-free-contains-Theta-decomp}
\begin{proof}
The fact that (ii) implies (i) follows from Lemmas~\ref{lemma-one-anticomp} and~\ref{lemma-lantern-P7C4C5-free}, and from the fact that every lantern contains an induced $\Theta_3^3$. It remains to show that (i) implies (ii). For this, we assume that $G$ satisfies (i), that is, that $(P_7,C_4,C_5,C_7)$-free, contains an induced $\Theta_3^3$, and does not admit a clique-cutset; we must show that $G$ contains exactly one nontrivial anticomponent, and that this anticomponent is a lantern.

Let $r \geq 3$ be maximal with the property that $G$ contains an induced $\Theta_3^r$. Let $T$ be an inclusion-wise maximal thickened $\Theta_3^r$ in $G$, and let $(A,B_1,\dots,B_r,C_1,\dots,C_r,D)$ be a good partition for $T$. By construction, we have that $V(G) = V(T) \cup U \cup X \cup Z$, and by Lemma~\ref{lemma-attachment-to-thickened-theta}, we have that $X = L_A \cup L_D \cup \bigcup_{i=1}^r (T_{B_i} \cup R_{B_i} \cup T_{C_i} \cup R_{C_i})$.

\begin{quote}
{\em (1) $Z \cup L_A \cup L_D \cup \bigcup_{i=1}^r (R_{B_i} \cup R_{C_i}) = \emptyset$.}
\end{quote}
\begin{proof}[Proof of (1)]
Suppose otherwise; our goal is to show that $G$ admits a clique-cutset, contrary to the fact that $G$ satisfies (i). Since $Z \cup L_A \cup L_D \cup \bigcup_{i=1}^r (R_{B_i} \cup R_{C_i}) \neq \emptyset$, we see that at least one of $Z \cup L_A \cup \bigcup_{i=1}^r R_{B_i}$ and $Z \cup L_D \cup \bigcup_{i=1}^r R_{C_i}$ is nonempty. By symmetry, we may assume that $Z \cup L_A \cup \bigcup_{i=1}^r R_{B_i} \neq \emptyset$. By Lemma~\ref{lemma-anticomp-in-X}~(\ref{ref-anticomp-in-X-at-most-one-TRBi}), we know that at most one of the sets $T_{B_1} \cup R_{B_1},\dots,T_{B_r} \cup R_{B_r}$ is nonempty; by symmetry, we may assume that $\bigcup_{i=2}^r (T_{B_r} \cup R_{B_r}) = \emptyset$ (the set $T_{B_1} \cup R_{B_1}$ might or might not be empty). Our goal is to show that $U \cup A \cup B_1 \cup T_{B_1}$ is a clique-cutset of $G$. The fact that $U \cup A \cup B_1 \cup T_{B_1}$ is a clique follows from the construction, and from Lemma~\ref{lemma-anticomp-in-X}~(\ref{ref-anticomp-in-X-TBi-clique}) and~(\ref{ref-anticomp-in-X-U-clique-comp-to-TBi}). It remains to show that $U \cup A \cup B_1 \cup T_{B_1}$ is a cutset of $G$.

Clearly, it suffices to show that $Z \cup L_A \cup R_{B_1}$ is anticomplete to $V(T) \setminus (A \cup B_1)$ and to $X \setminus (L_A \cup T_{B_1} \cup R_{B_1})$. The former is immediate from the construction. For the latter, recall that $X = L_A \cup L_D \cup \bigcup_{i=1}^r (T_{B_i} \cup R_{B_i} \cup T_{C_i} \cup R_{C_i})$ and that $\bigcup_{i=2}^r (T_{B_i} \cup R_{B_i}) = \emptyset$. Thus, it suffices to show that $Z \cup L_A \cup R_{B_1}$ is anticomplete to $L_D \cup \bigcup_{i=1}^r (T_{C_i} \cup R_{C_i})$. The fact that $L_A$ is anticomplete to $L_D$ follows from Lemma~\ref{lemma-anticomp-in-X}~(\ref{ref-anticomp-in-X-LA-anticomp-LD-or-Theta-r+1}), and from the maximality of $r$. The rest follows from Lemma~\ref{lemma-anticomp-in-X}~(\ref{ref-anticomp-in-X-LA-anticomp-toTRCi}),~(\ref{ref-anticomp-in-X-TRBi-anticomp-TRCi}), and~(\ref{ref-anticomp-in-X-Z-anticomp-X}). This proves (1).
\end{proof}

\begin{quote}
{\em (2) There exists some $i \in \{1,\dots,r\}$ such that $V(G) = V(T) \cup U \cup T_{B_i} \cup T_{C_i}$.}
\end{quote}
\begin{proof}[Proof of (2)]
Recall that $V(G) = V(T) \cup U \cup X \cup Z$ and $X = L_A \cup L_D \cup \bigcup_{i=1}^r (T_{B_i} \cup R_{B_i} \cup T_{C_i} \cup R_{C_i})$. It then follows from (1) that $V(G) = V(T) \cup U \cup \bigcup_{i=1}^r (T_{B_i} \cup T_{C_i})$. By Lemma~\ref{lemma-anticomp-in-X}~(\ref{ref-anticomp-in-X-at-most-one-TBiCi}), and by symmetry, we may assume that $\bigcup_{i=2}^r (T_{B_i} \cup T_{C_i}) = \emptyset$. But now $V(G) = V(T) \cup U \cup T_{B_1} \cup T_{C_1}$. This proves (2).
\end{proof}

By (2), and by symmetry, we may now assume that $V(G) = V(T) \cup U \cup T_{B_1} \cup T_{C_1}$. Set $H = G \setminus U$. By construction, and by Lemma~\ref{lemma-anticomp-in-X}~(\ref{ref-anticomp-in-X-U-clique-comp-to-TBi}), $U$ is a (possibly empty) clique, complete to $V(H)$. Furthermore, since $V(H) = V(T) \cup T_{B_1} \cup T_{C_1}$, it is easy to see that $H$ is anticonnected. Thus, $H$ is the only anticomponent of $G$. It remains to show that $H$ is a lantern. Set $B_1' = B_1 \cup T_{B_1}$ and $C_1' = C_1 \cup T_{C_1}$. Further, for all $i \in \{2,\dots,r\}$, set $B_i' = B_i$ and $C_i' = C_i$. We claim that $H$ is an $r$-lantern with good partition $(A,B_1',\dots,B_r',C_1',\dots,C_r',D)$. Lemma~\ref{lemma-anticomp-in-X}~(\ref{ref-anticomp-in-X-TBi-clique}) implies that $B_1'$ and $C_1'$ are cliques. It now suffices to show that $B_1'$ and $C_1'$ can be ordered as $B_1' = \{b_1^1,\dots,b_{|B_1'|}^1\}$ and $C_1' = \{c_1^1,\dots,c_{|C_1'|}^1\}$ so that $N_H[b_{|B_1'|}^1] \cap C_1' \subseteq \dots \subseteq N_H[b_1^1] \cap C_1' = C_1'$ and $N_H[c_{|C_1'|}^1] \cap B_1' \subseteq \dots \subseteq N_H[c_1^1] \cap B_1' = B_1'$ (all other axioms from the definition of an $r$-lantern are clearly satisfied).

\begin{quote}
{\em (3) For all distinct $x,y \in B_1'$, one of $N_H[x] \cap C_1'$ and $N_H[y] \cap C_1'$ includes the other. Similarly, for all distinct $x,y \in C_1'$, one of $N_H[x] \cap B_1'$ and $N_H[y] \cap B_1'$ includes the other.}
\end{quote}
\begin{proof}[Proof of (3)]
By symmetry, it suffices to prove the first statement. Suppose otherwise, and fix distinct $x,y \in B_1'$ such that neither one of $N_H[x] \cap C_1'$ and $N_H[y] \cap C_1'$ includes the other. Then there exist distinct vertices $c_x,c_y \in C_1'$ such that $xc_x,yc_y \in E(G)$ and $xc_y,yc_x \notin E(G)$. Since $B_1'$ and $C_1'$ are cliques, we know that $xy,c_xc_y \in E(G)$. But now $x,c_x,c_y,y,x$ is a 4-hole in $G$, a contradiction. This proves (3).
\end{proof}

It follows from (3) that $B_1'$ and $C_1'$ can be ordered as $B_1' = \{b_1^1,\dots,b_{|B_1'|}^1\}$ and $C_1' = \{c_1^1,\dots,c_{|C_1'|}^1\}$ so that $N_H[b_{|B_1'|}^1] \cap C_1' \subseteq \dots \subseteq N_H[b_1^1] \cap C_1'$ and $N_H[c_{|C_1'|}^1] \cap B_1' \subseteq \dots \subseteq N_H[c_1^1] \cap B_1'$. It remains to show that $b_1^1$ is complete to $C_1'$, and that $c_1^1$ is complete to $B_1'$. By symmetry, it suffices to prove the former. Suppose otherwise, and fix an index $j \in \{1,\dots,|C_1'|\}$ such that $b_1^1c_j^1 \notin E(G)$. Since $N_H[b_{|B_1'|}^1] \cap C_1' \subseteq \dots \subseteq N_H[b_1^1] \cap C_1'$, it follows that $c_j^1$ is anticomplete to $B_1'$. Since $B_1 \subseteq B_1'$, it follows that $c_j^1$ is anticomplete to $B_1$. But this is impossible since $c_j^1 \in C_1' = C_1 \cup T_{C_1}$, $C_1$ is complete to $B_1$, every vertex in $T_{C_1}$ has a neighbor in $B_1$, and $B_1 \neq \emptyset$. This completes the argument.
\end{proof}

\subsection{Proof of Theorem~\ref{thm-P7C4C5C7Theta33-free-decomp}} \label{subsec:thm-P7C4C5C7Theta33-free-decomp}

A {\em theta} is any subdivision of the complete bipartite graph $K_{2,3}$; in particular, $K_{2,3}$ is a theta. A {\em pyramid} is any subdivision of the complete graph $K_4$ in which one triangle remains unsubdivided, and of the remaining three edges, at least two edges are subdivided at least once. A {\em prism} is any subdivision of $\overline{C_6}$ in which the two triangles remain unsubdivided; in particular, $\overline{C_6}$ is a prism. A {\em three-path-configuration}, or {\em 3PC} for short, is any theta, pyramid, or prism (see Figure~\ref{fig:3PC}).

\begin{figure}
\begin{center}
\includegraphics[scale=0.6]{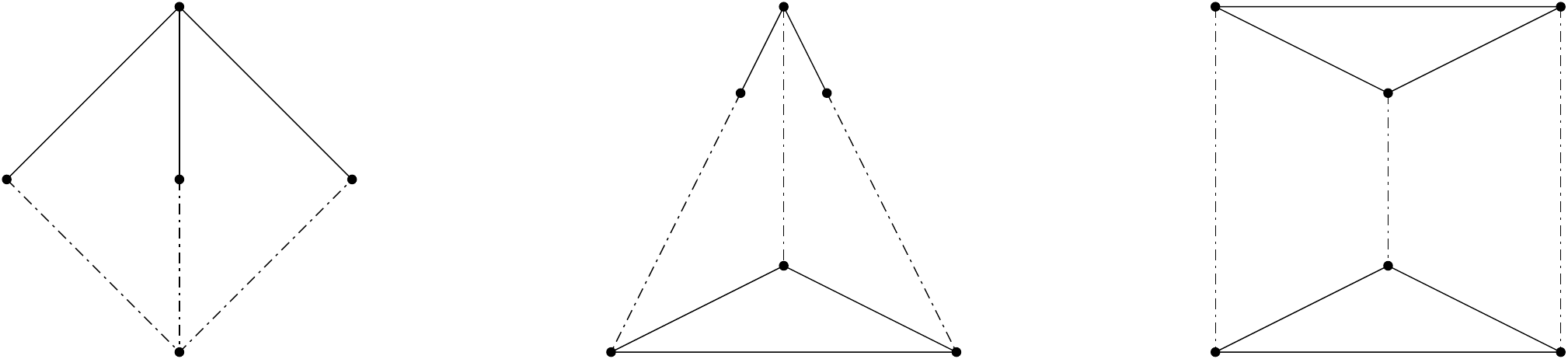}
\end{center}
\caption{Three types of 3PC: theta (left), pyramid (center), and prism (right). A solid line represents an edge, and a dashed line represents a path that has at least one edge.} \label{fig:3PC}
\end{figure}

Note that all holes in a $(P_7,C_4,C_5,C_7)$-free graph are of length six. It is easy to see that every theta other than $\Theta_3^3$ contains at least one hole of length other than six. On the other hand, $\Theta_3^3$ is a lantern, and so Lemma~\ref{lemma-lantern-P7C4C5-free} implies that $\Theta_3^3$ is $(P_7,C_4,C_5,C_7)$-free. Thus, $\Theta_3^3$ is the only $(P_7,C_4,C_5,C_7)$-free theta. It is also easy to see that the only prism in which all holes are of length six is the one obtained from $\overline{C_6}$ by subdividing each of the three edges that do not belong to any triangle exactly once; however, this prism contains an induced $P_7$ (see Figure~\ref{fig:PrismP7}). Finally, it is clear that every pyramid contains an odd hole, and consequently, no pyramid is $(P_7,C_4,C_5,C_7)$-free. It follows that $\Theta_3^3$ is the only $(P_7,C_4,C_5,C_7)$-free 3PC.

\begin{figure}
\begin{center}
\includegraphics[scale=0.6]{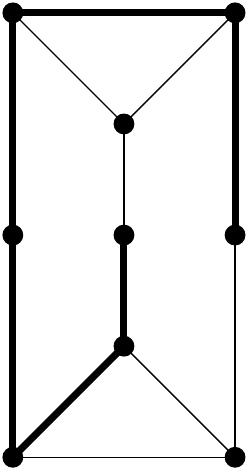}
\end{center}
\caption{The only prism in which all holes are of length six. An induced $P_7$ is shown with a bold line.} \label{fig:PrismP7}
\end{figure}

A {\em wheel} is a graph that consists of a hole and an additional vertex that has at least three neighbors in the hole. If this additional vertex is adjacent to all vertices of the hole, then the wheel is said to be a {\em universal wheel}; if the additional vertex is adjacent to three consecutive vertices of the hole, and to no other vertices of the hole, then the wheel is said to be a {\em twin wheel}. For $k \geq 4$, the universal wheel on $k+1$ vertices is denoted by $W_k$, and the twin wheel on $k+1$ vertices is denoted by $W_k^{\text{t}}$. A {\em proper wheel} is a wheel that is neither a universal wheel nor a twin wheel.

It is easy to see that the only wheels in which every hole is of length six are the wheels $W_6$ and $W_6^{\text{t}}$. Clearly, $W_6$ and $W_6^{\text{t}}$ are $P_7$-free, and we deduce that $W_6$ and $W_6^{\text{t}}$ are the only $(P_7,C_4,C_5,C_7)$-free wheels.

Let $\mathcal{G}_{\text{UT}}$ be the class of all (3PC, proper wheel)-free graphs.

\begin{lemma} \label{lemma-P6C4C5C7Theta33-free-in-GUT} Every $(P_7,C_4,C_5,C_7,\Theta_3^3)$-free graph is (3PC, proper wheel)-free. In other words, all (3PC, proper wheel)-free graphs belong to $\mathcal{G}_{\text{UT}}$.
\end{lemma}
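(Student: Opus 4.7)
The plan is to treat this lemma as an almost immediate corollary of the classification sketched in the two paragraphs preceding its statement. The work has really already been done there; I just need to assemble the pieces cleanly in two parts, one for 3PCs and one for proper wheels.

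For the 3PC part, I would start by recalling that in any $(P_7,C_4,C_5,C_7)$-free graph every hole has length six. Then I would argue case-by-case through the three types of 3PC. Any theta other than $\Theta_3^3$ has two of its three internal paths of different lengths, which forces an induced hole of length at least seven or exactly five; combined with $C_4$-freeness this rules out every theta except $\Theta_3^3$ itself. Every pyramid contains an odd hole (the three path lengths in the $K_4$-subdivision have a fixed parity sum), so pyramids are excluded by $(C_5,C_7)$-freeness together with the length bound coming from the $P_7$ exclusion. For prisms, the only way to avoid a hole of length not equal to six is to subdivide each of the three ``long'' edges of $\overline{C_6}$ exactly once, but that graph contains an induced $P_7$ (the one exhibited in Figure~\ref{fig:PrismP7}). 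Hence $\Theta_3^3$ is the unique $(P_7,C_4,C_5,C_7)$-free 3PC, and excluding $\Theta_3^3$ as well leaves no 3PC at all.

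For the proper wheel part, I would again use the all-holes-length-six observation. If $W$ is a wheel consisting of a hole $H$ of length $k$ and an extra vertex $v$ with at least three neighbors on $H$, then in particular $H$ must have length six, so $k=6$. Now I would enumerate the possible neighborhoods of $v$ on a six-hole: if $|N(v)\cap V(H)|\in\{3,4,5\}$ with the neighbors not forming three consecutive vertices, the induced subgraph on $V(H)\cup\{v\}$ produces either a $C_4$, a $C_5$, or a $C_7$; a direct check of the small number of cases shows that the only configurations surviving $(C_4,C_5,C_7)$-exclusion are the universal wheel $W_6$ and the twin wheel $W_6^{\mathrm{t}}$ (both of which are $P_7$-free). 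Since neither of these is a proper wheel, $G$ contains no proper wheel, completing the proof.

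The main ``obstacle'' is purely organizational rather than mathematical: since the preceding two paragraphs already spell out the classifications of $(P_7,C_4,C_5,C_7)$-free 3PCs and wheels, the proof is essentially a one-line appeal to those facts plus the extra hypothesis that $\Theta_3^3$ is forbidden. If I wanted to make the write-up fully self-contained I would need to justify the small case analyses sketched above (listing the patterns of three or more neighbors of $v$ on a $C_6$, and checking the hole lengths inside each candidate theta/pyramid/prism), but these are routine and no new idea is required.
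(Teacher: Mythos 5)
Your proposal is correct and takes essentially the same approach as the paper: the paper's proof of this lemma is a one-line appeal to the two classifications established in the paragraphs immediately preceding it (that $\Theta_3^3$ is the only $(P_7,C_4,C_5,C_7)$-free 3PC, and that $W_6$ and $W_6^{\text{t}}$ are the only $(P_7,C_4,C_5,C_7)$-free wheels), and your sketch just reproduces that reasoning. One tiny slip worth fixing if you expand the theta case: it is not true that every theta other than $\Theta_3^3$ has two internal paths of different lengths (a theta with all three paths of length, say, four, is a counterexample); the correct observation is simply that any theta with all three paths of length three is $\Theta_3^3$, and in every other theta some pair of paths sums to a length different from six, giving a hole of length other than six.
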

\begin{proof}
This follows from the fact that $\Theta_3^3$ is the only $(P_7,C_4,C_5,C_7)$-free 3PC, and from the fact that the universal wheel $W_6$ and the twin wheel $W_6^{\text{t}}$ are the only $(P_7,C_4,C_5,C_7)$-free wheels.
\end{proof}

A {\em ring} is a graph $R$ whose vertex set can be partitioned into $k \geq 4$ nonempty sets, say $X_0,\dots,X_{k-1}$ (with indices understood to be in $\mathbb{Z}_k$), such that for all $i \in \mathbb{Z}_k$, $X_i$ can be ordered as $X_i = \{u_1^i,\dots,u_{|X_i|}^i\}$ so that $X_i \subseteq N_R[u_{|X_i|}^i] \subseteq \dots \subseteq N_R[u_1^i] = X_{i-1} \cup X_i \cup X_{i+1}$.\footnote{Note that this implies that $X_0,\dots,X_{k-1}$ are cliques, as well as that $u_1^0,u_1^1,\dots,u_1^{k-1},u_1^0$ is a $k$-hole in $R$. It also implies that for all $i \in \mathbb{Z}_k$, $X_i$ is anticomplete to $V(R) \setminus (X_{i-1} \cup X_i \cup X_{i+1})$.} Under these circumstances, we say that the ring $R$ is of {\em length} $k$, as well as that $R$ is a {\em $k$-ring}. Furthermore, $(X_0,\dots,X_{k-1})$ is said to be a {\em good partition} of the ring $R$. A ring is {\em long} if it is of length at least five. In what follows, we will mostly need 6-rings (see Figure~\ref{fig:6Ring-partition}).

\begin{figure}
\begin{center}
\includegraphics[scale=0.6]{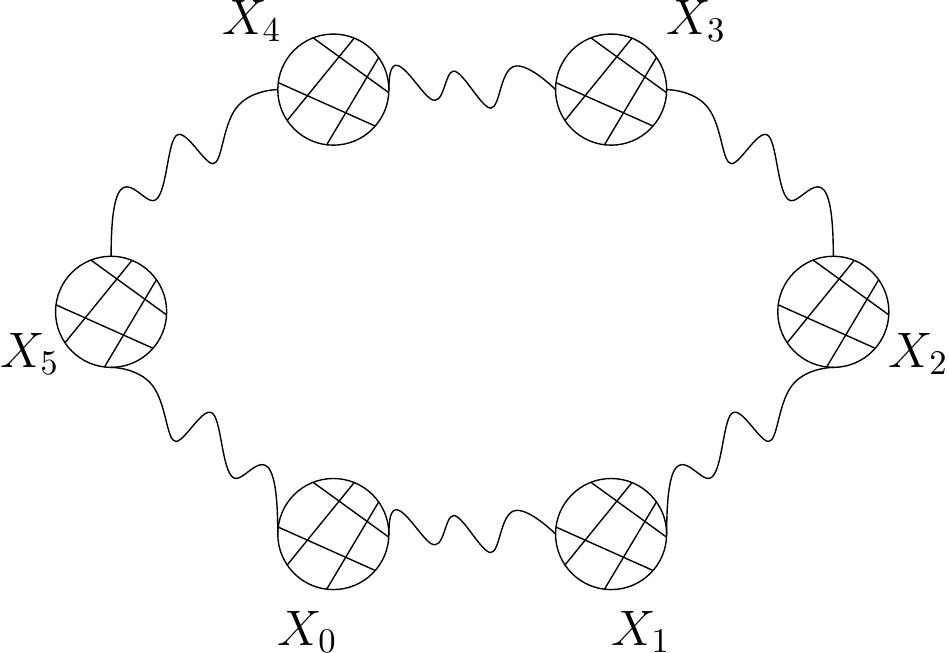}
\end{center}
\caption{6-Ring with good partition $(X_0,\dots,X_5)$. A shaded disk represents a nonempty clique. A wavy line between two cliques indicates that there are edges between the two cliques (furthermore, such edges must obey the axioms from the definition of a ring). The absence of a line between two cliques indicates that the two cliques are anticomplete to each other.} \label{fig:6Ring-partition}
\end{figure}

\begin{lemma} \label{lemma-6-ring-6-wreath-char} Let $R$ be a graph, and let $(X_0,\dots,X_5)$, with indices in $\mathbb{Z}_6$, be a partition of $V(R)$ into nonempty sets. Then the following are equivalent:
\begin{itemize}
\item $R$ is a 6-ring with good partition $(X_0,\dots,X_5)$, $X_0$ is complete to $X_1$, $X_2$ is complete to $X_3$, and $X_4$ is complete to $X_5$;
\item $R$ is a 6-wreath with good partition $(X_0,\dots,X_5)$.
\end{itemize}
\end{lemma}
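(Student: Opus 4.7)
The plan is to observe that the two conditions are essentially a direct transcription of the same axioms, so the proof reduces to an unpacking of the definitions and a side-by-side comparison. Concretely, the definition of a ring (specialized to $k = 6$) says that a partition $(X_0,\dots,X_5)$ of $V(R)$ is a good partition if and only if for each $i \in \mathbb{Z}_6$ the set $X_i$ admits an ordering $u_1^i,\dots,u_{|X_i|}^i$ satisfying the nested closed-neighborhood chain $X_i \subseteq N_R[u_{|X_i|}^i] \subseteq \dots \subseteq N_R[u_1^i] = X_{i-1} \cup X_i \cup X_{i+1}$. This is literally the first of the two bullets in the definition of a 6-wreath. Thus the only content beyond the ring axioms that the 6-wreath definition adds is the second bullet: $X_0$ complete to $X_1$, $X_2$ complete to $X_3$, and $X_4$ complete to $X_5$.

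For the forward direction, I would assume that $R$ is a 6-ring with good partition $(X_0,\dots,X_5)$ together with the three completeness conditions, and simply read off the two bullets of the 6-wreath definition: the nested neighborhood chain is supplied by the good-partition orderings of the ring, and the second bullet is the assumed completeness. Hence $(X_0,\dots,X_5)$ is a good partition of the 6-wreath $R$. For the reverse direction, I would assume $R$ is a 6-wreath with good partition $(X_0,\dots,X_5)$ and note that its first axiom is identical to the defining axiom of a 6-ring with the same partition, while its second axiom is exactly the list of three completeness conditions that appear in statement (1).

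There is essentially no obstacle here: both implications are a matter of matching one bullet to another. The only thing worth being careful about is that the same ordering of each $X_i$ is used to witness both the ring condition and the 6-wreath condition, which is automatic since the two definitions request the same nested-neighborhood chain. I would record the argument in one short paragraph citing the two definitions and asserting the equivalence, with no appeal to any auxiliary lemma.
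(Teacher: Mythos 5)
Your argument is correct and coincides with the paper's, which dispatches the lemma with the single line ``This is immediate from the appropriate definitions.''\ You have simply spelled out the definitional match that the paper treats as self-evident.
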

\begin{proof}
This is immediate from the appropriate definitions.
\end{proof}

For an integer $k \geq 4$, a {\em $k$-hyperhole} is any graph obtained from $C_k$ by blowing up each vertex to a nonempty clique. A {\em long hole} is a hole of length at least five. Note that for all $k \geq 4$, every $k$-hyperhole is a $k$-ring.

Let $\mathcal{B}_{\text{UT}}$ be the class of all graphs $G$ that satisfy at least one of the following:
\begin{itemize}
\item $G$ has exactly one nontrivial anticomponent, and this anticomponent is a long ring;
\item $G$ is (long hole, $K_{2,3}$, $\overline{C_6}$)-free;
\item $\alpha(G) = 2$, and every anticomponent of $G$ is either a 5-hyperhole or a $(C_5,\overline{C_6})$-free graph.
\end{itemize}

The following two results were proven in~\cite{CliqueCutsetsBeyondChordalGraphs} (Theorem~\ref{decomp-thm-GUT} below corresponds to Theorem~1.6 from~\cite{CliqueCutsetsBeyondChordalGraphs}, and Theorem~\ref{ring-in-GT} follows immediately from Lemma~2.4 from~\cite{CliqueCutsetsBeyondChordalGraphs}).

\begin{theorem} \label{decomp-thm-GUT}~\cite{CliqueCutsetsBeyondChordalGraphs} Every graph in $\mathcal{G}_{\text{UT}}$ either belongs to $\mathcal{B}_{\text{UT}}$ or admits a clique-cutset.
\end{theorem}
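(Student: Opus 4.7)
The plan is to prove the contrapositive: if $G \in \mathcal{G}_{\text{UT}}$ has no clique-cutset, then $G \in \mathcal{B}_{\text{UT}}$. A useful preliminary observation is that every graph in $\mathcal{G}_{\text{UT}}$ is automatically $(K_{2,3},\overline{C_6})$-free, because $K_{2,3}$ is a theta and $\overline{C_6}$ is a prism, and both are 3PCs. Consequently, if $G$ contains no long hole it already satisfies the second bullet of $\mathcal{B}_{\text{UT}}$ and we are done. So I assume $G$ contains a long hole $H = u_0 u_1 \cdots u_{k-1} u_0$ (indices in $\mathbb{Z}_k$, $k \geq 5$) and split on the number of nontrivial anticomponents of $G$.

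\medskip

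Suppose first that $G$ has at least two nontrivial anticomponents, and pick one, call it $B$, not containing $V(H)$. Two nonadjacent vertices of $B$ are both complete to $V(H)$, so three pairwise nonadjacent vertices of $V(H)$ would induce a $K_{2,3}$; hence $V(H)$ has independence number at most two, forcing $k = 5$. The same style of argument shows that every nontrivial anticomponent of $G$ has $\alpha \leq 2$, so $\alpha(G) = 2$. An attachment analysis of a single vertex to the induced $C_5$ inside an anticomponent, combined with the no-proper-wheel and no-clique-cutset hypotheses, shows that each anticomponent that contains a $C_5$ is in fact a 5-hyperhole (non-consecutive or non-twin attachments would create a proper wheel, while "loose" attachments would create a clique-cutset). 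The remaining anticomponents are $C_5$-free; automatic $\overline{C_6}$-freeness yields the third bullet of $\mathcal{B}_{\text{UT}}$.

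\medskip

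Now assume $G$ has exactly one nontrivial anticomponent $B$, with $V(H) \subseteq V(B)$; the trivial anticomponents form a clique complete to $V(B)$, so it suffices to show $B$ is a $k$-ring. Take $H$ to be a longest long hole in $B$. For every $v \in V(B)\setminus V(H)$, the (3PC, proper wheel)-free hypothesis forces $N_G(v)\cap V(H)$ to be one of: empty, a singleton, two consecutive vertices, three consecutive vertices (twin attachment), or all of $V(H)$ (universal attachment, excluded inside $B$ since $B$ is the unique nontrivial anticomponent). Any other configuration of neighbors on $H$ splits $H$ into two arcs producing a theta, pyramid, or prism, or else a proper wheel. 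For each $i\in\mathbb{Z}_k$ define $X_i$ to be the set of vertices of $B$ whose attachment to $H$ is exactly $\{u_{i-1},u_i,u_{i+1}\}$, together with $u_i$ itself. One proves in turn that: each $X_i$ is a clique (otherwise two nonadjacent vertices of $X_i$ together with $u_{i-1},u_{i+1}$ induce a $K_{2,3}$); no vertex of $B$ has attachment of type "empty," "singleton," or "edge" (by maximality of $H$ any such vertex would either extend $H$ to a longer long hole or be separated from $V(H)$ by a clique, contradicting our hypotheses); and the nested-neighborhood ordering required by the definition of a ring holds inside each $X_i$ and between consecutive $X_i$'s (any violation yields a 4-hole across the blow-ups, which combined with vertices of $H$ produces a theta or proper wheel). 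Piecing this together shows that $(X_0,\ldots,X_{k-1})$ is a good partition of $B$ as a $k$-ring, giving the first bullet of $\mathcal{B}_{\text{UT}}$.

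\medskip

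The main obstacle is the attachment analysis in the unique-anticomponent case. Classifying the possible single-vertex attachments to $H$ requires checking many configurations of non-consecutive neighbors (each typically yielding a theta, prism, pyramid, or proper wheel), and deriving the nested-neighborhood condition within each $X_i$ requires coupling a delicate pair-attachment argument with the 4-hole structure that appears when nesting fails. The no-clique-cutset hypothesis enters critically in two places: it eliminates loosely attached vertices (those with at most two non-twin neighbors on $H$, or none at all) and it closes up the ring structure by forcing every vertex of $B$ into some $X_i$.
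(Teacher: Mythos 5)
The paper does not actually prove this theorem: it is imported verbatim from~\cite{CliqueCutsetsBeyondChordalGraphs} (where it is Theorem~1.6), so there is no ``paper's own proof'' against which to compare your attempt. What you have written is a top-level sketch of the kind of argument one would expect for such a decomposition theorem, and the scaffolding is reasonable: since every graph in $\mathcal{G}_{\text{UT}}$ is automatically $(K_{2,3},\overline{C_6})$-free (these being a theta and a prism), the long-hole-free case is immediate, and otherwise one classifies attachments to a well-chosen long hole and case-splits on the anticomponent structure. But the heart of the matter---the part you yourself flag as ``the main obstacle''---is exactly what your write-up does not do. Showing that the loosely attached vertices (empty, singleton, or edge attachments, together with everything anticomplete to $H$) are cut off by a clique, and that the vertices in each $X_i$ obey the nested-neighborhood order, is where essentially all the work of the cited theorem lies; deferring it with ``one proves in turn'' and ``any violation yields a 4-hole \dots which produces a theta or proper wheel'' is a statement of intent, not a proof.

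Beyond being incomplete, some of the spelled-out details are wrong. Two nonadjacent vertices $x,y\in X_i$ together with $u_{i-1},u_i,u_{i+1}$ do \emph{not} induce $K_{2,3}$, since $u_{i-1}u_i$ and $u_iu_{i+1}$ are edges of $H$; the correct reason $X_i$ is a clique is that $u_{i-1},u_{i+1}$ together with $x$, $y$, and the $(k-2)$-edge path of $H$ from $u_{i-1}$ to $u_{i+1}$ avoiding $u_i$ form a theta. The claim that a universal attachment is ``excluded inside $B$ since $B$ is the unique nontrivial anticomponent'' is also not a valid deduction: a vertex of $B$ complete to $V(H)$ need not be complete to all of $B$, so it is not forced to lie in a trivial anticomponent; ruling out universal attachments requires a separate argument (ultimately, that long rings are universal-wheel-free, which is part of what is being proved). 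Finally, in the multi-anticomponent case the assertion ``each anticomponent that contains a $C_5$ is a 5-hyperhole'' is stated, not proved, and the brief reason you give does not dispose of the universal-attachment possibility there either.
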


\begin{theorem} \label{ring-in-GT}~\cite{CliqueCutsetsBeyondChordalGraphs} Let $k \geq 4$. Then every $k$-ring is (3PC, proper wheel, universal wheel)-free, and every hole in a $k$-ring is of length $k$.
\end{theorem}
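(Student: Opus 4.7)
The plan is to analyse both claims in tandem by exploiting the natural projection $\pi \colon V(R) \to \mathbb{Z}_k$ sending each $v \in X_i$ to $i$. Since edges of $R$ occur only within some $X_i$ or between consecutive $X_i, X_{i+1}$, adjacent vertices of $R$ have $\pi$-values differing by at most one modulo $k$; so any closed walk in $R$ lifts to a walk in $\mathbb{Z}$ with steps in $\{-1,0,+1\}$, carrying a well-defined winding number around $\mathbb{Z}_k$. I will also repeatedly use that inside each $X_i$ the closed neighborhoods of the vertices $u_1^i, \ldots, u_{|X_i|}^i$ are linearly ordered by inclusion.

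The technical core is the \emph{Key Lemma}: every hole $H$ of $R$ satisfies $|V(H) \cap X_i| \leq 1$ for all $i$. Since $X_i$ is a clique and $H$ is triangle-free, two hole-vertices in $X_i$ would have to be consecutive on $H$, say $h_j, h_{j+1}$, and then their other hole-neighbors $h_{j-1}, h_{j+2}$ must lie in $X_{i-1} \cup X_{i+1}$. Using the nested-neighborhood order on $X_i$, whichever of $N_R[h_j], N_R[h_{j+1}]$ contains the other forces one of $h_{j-1}h_{j+1}$ or $h_jh_{j+2}$ to be an edge, contradicting non-adjacency at distance two on the hole. The Key Lemma immediately bounds the hole length by $k$. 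For the matching lower bound, lift the walk $\pi(h_0),\ldots,\pi(h_{\ell-1})$ to $\mathbb{Z}$ with winding number $W$: if $W = 0$, the minimum-value vertex forces either a second hole-vertex in the same $X_i$ (contradicting the Key Lemma) or a chord (via two hole-vertices in the same neighbouring clique); if $|W| \geq 2$, intermediate-value forces the lifted walk to hit three integers congruent mod~$k$, putting three hole-vertices in one $X_i$. Thus $W = \pm 1$, and any stay-step or backward step would again put two hole-vertices in one $X_i$, so every step of the walk is forward and $\ell = k$.

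Knowing that every hole has length $k$ and meets each $X_i$ in exactly one vertex, the Truemper exclusions follow quickly. A wheel with center $c \in X_j$ has its neighbours on any hole $H$ contained in $(X_{j-1} \cup X_j \cup X_{j+1}) \cap V(H)$, a set of at most three vertices forming a consecutive arc of $H$; since $k \geq 4$ the vertex of $H$ in $X_{j+2}$ cannot neighbour $c$, ruling out universal wheels, and since the neighbors are at most three and consecutive, any wheel of $R$ is a twin wheel, so no proper wheel arises. For a theta with endpoints $u,v$ and three internally disjoint paths $P_1,P_2,P_3$, each pairwise union is a length-$k$ hole forcing $|P_1|=|P_2|=|P_3|=k/2$; since that hole meets every $X_\ell$ exactly once, the pair of paths partitions $\mathbb{Z}_k \setminus \{\pi(u),\pi(v)\}$ into the two arcs of $\mathbb{Z}_k$ between $\pi(u)$ and $\pi(v)$. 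With only two arcs available, two of the three paths must occupy the same arc; internal disjointness would then place two distinct vertices in some $X_\ell$ of their common hole, contradicting the Key Lemma.

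For a prism the same strategy applies with the three connecting paths playing the role of $P_1,P_2,P_3$: each pair closes through the two triangle edges into a length-$k$ hole, and the Key Lemma forces each triangle to occupy three consecutive cliques $X_{a-1}, X_a, X_{a+1}$; the arc-counting argument in $\mathbb{Z}_k$ then again shows that two of the three paths must share an arc and duplicate a vertex inside some $X_\ell$. The pyramid case is analogous, with the apex in place of one triangle. The main obstacle I anticipate is the bookkeeping in the prism/pyramid cases, since one must pin down where the triangle and apex vertices sit in $\mathbb{Z}_k$ and how the connecting paths attach before invoking the arc-collision argument; however, the guiding principle --- three internally disjoint paths cannot fit inside the two arcs of $\mathbb{Z}_k$ without duplicating a vertex inside some $X_i$ --- is exactly the one from the theta case, so the required work is largely parallel.
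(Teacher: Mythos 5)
The paper does not prove Theorem~\ref{ring-in-GT}; it cites it from~\cite{CliqueCutsetsBeyondChordalGraphs} (noting it follows from Lemma~2.4 there), so there is no in-paper argument to compare against, and your proposal must be judged on its own. On that basis it is essentially correct. Your Key Lemma ($|V(H)\cap X_i|\leq 1$ for every hole $H$, via the total order by inclusion of closed neighborhoods inside $X_i$) is sound, and so is the winding-number argument pinning the hole length to exactly $k$: the Key Lemma kills zero-steps, the minimum of the lift kills winding number $0$, the discrete intermediate value theorem kills $|W|\geq 2$, and $|W|=1$ together with $\ell\leq k$ forces $\ell=k$. The wheel argument (a center in $X_j$ has at most the three hole-vertices in $X_{j-1},X_j,X_{j+1}$ as neighbors, which are consecutive on $H$, so every wheel is a twin wheel and therefore neither proper nor universal) and the theta argument (the three holes $P_i\cup P_j$ each have length $k$, forcing $|E(P_i)|=k/2$ and a pigeonhole collision in one of the two arcs of $\mathbb{Z}_k\setminus\{\pi(u),\pi(v)\}$) are both correct.

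The one place you go astray, in the direction of wasted effort rather than error, is the prism and pyramid cases. Once the Key Lemma is established, you do not need any arc-counting: a prism or pyramid contains a triangle $\{a_1,a_2,a_3\}$ such that each pair $\{a_i,a_j\}$ lies on one of the three side holes, and the Key Lemma applied to those holes forces $a_1,a_2,a_3$ into three pairwise \emph{distinct} cliques $X_a,X_b,X_c$. Since $a_1,a_2,a_3$ are pairwise adjacent, $a,b,c$ must be pairwise consecutive in $\mathbb{Z}_k$, which is impossible for three distinct indices when $k\geq 4$. So the triangle cannot embed at all, and prisms and pyramids are excluded outright. Your sentence that ``the Key Lemma forces each triangle to occupy three consecutive cliques $X_{a-1},X_a,X_{a+1}$'' is already describing a contradiction — the vertices of $X_{a-1}$ and $X_{a+1}$ are anticomplete — so the subsequent arc-counting is reasoning about a configuration that cannot exist. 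That doesn't make the proof incorrect (vacuous cases are fine), but the anticipated ``bookkeeping'' for prisms and pyramids evaporates, and it is worth noticing that the triangle obstruction, not the arc argument, is what rules these out.
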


\begin{lemma} \label{lemma-ring-P7C4C5Theta33-free} Let $R$ be a 6-ring. Then $R$ is $(C_4,C_5,C_7,\Theta_3^3)$-free, and $R$ does not admit a clique-cutset.
\end{lemma}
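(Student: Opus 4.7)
The plan is to split the claim into the two asserted properties and handle each separately, using only the definition of a 6-ring together with Theorem~\ref{ring-in-GT}.

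For $(C_4,C_5,C_7,\Theta_3^3)$-freeness the argument is essentially a one-liner. Theorem~\ref{ring-in-GT} tells us that every hole in a 6-ring has length exactly $6$ (so $R$ contains no induced $C_4$, $C_5$, or $C_7$) and that every 6-ring is 3PC-free. Since $\Theta_3^3$ consists of three internally disjoint induced three-edge paths meeting at their endpoints, it is a subdivision of $K_{2,3}$, hence a theta, hence a 3PC; so the 3PC-freeness of $R$ immediately rules out an induced $\Theta_3^3$.

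For the absence of a clique-cutset, fix a good partition $(X_0,\dots,X_5)$ of $R$ and an arbitrary clique $S$ of $R$; I must show that $R\setminus S$ is connected. First observe that since $X_i$ is anticomplete to $X_{i+2}\cup X_{i+3}\cup X_{i+4}$ for every $i\in\mathbb{Z}_6$, the clique $S$ meets at most two of the parts $X_i$, and those two must be consecutive; in particular $S\subseteq X_i\cup X_{i+1}$ for some $i$, and by symmetry I may assume $S\subseteq X_0\cup X_1$. Next use the $6$-hole $H := u_1^0,u_1^1,u_1^2,u_1^3,u_1^4,u_1^5,u_1^0$ furnished by the definition of a 6-ring: since $S$ is a clique but any two vertices of $H$ at cyclic distance at least $2$ are nonadjacent in $R$, we have $|V(H)\cap S|\leq 2$, and if equality holds the two vertices are consecutive on $H$; either way, $V(H)\setminus S$ contains at least four consecutive vertices of $H$ and therefore induces a connected subgraph of $R\setminus S$.

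Finally I attach every remaining vertex of $V(R)\setminus S$ to $V(H)\setminus S$. For $v\in X_i\setminus S$ with $i\in\{2,3,4,5\}$, we have $v\in X_i\subseteq N_R[u_1^i]$ and $u_1^i\notin S$ (because $u_1^i\in X_i$ and $S\subseteq X_0\cup X_1$); for $v\in X_0\setminus S$, the identity $N_R[u_1^5]=X_4\cup X_5\cup X_0$ shows that $v$ is adjacent to $u_1^5\in V(H)\setminus S$; and symmetrically, for $v\in X_1\setminus S$, we have $v\in N_R[u_1^2]$ with $u_1^2\notin S$. Thus every vertex of $R\setminus S$ has a neighbor in the connected set $V(H)\setminus S$, so $R\setminus S$ is connected, proving that $R$ admits no clique-cutset. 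The only mildly delicate point is the bookkeeping over which $u_1^i$ may lie in $S$, but once $S$ has been localized to $X_0\cup X_1$ this becomes completely routine.
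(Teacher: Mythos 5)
Your proposal is correct and follows essentially the same strategy as the paper: the forbidden-induced-subgraph part is deduced directly from Theorem~\ref{ring-in-GT} in both cases, and for the clique-cutset part both proofs localize $S$ to $X_i\cup X_{i+1}$ and then exhibit a connected ``core'' of $R\setminus S$ to which every remaining vertex attaches (the paper takes $R\setminus(X_i\cup X_{i+1})$ as the core, whereas you take $V(H)\setminus S$ for the 6-hole $H$ on the $u_1^i$'s -- a cosmetic difference only).
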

\begin{proof}
Since $\Theta_3^3$ is a theta, and every theta is a 3PC, Theorem~\ref{ring-in-GT} immediately implies that $R$ is $(C_4,C_5,C_7,\Theta_3^3)$-free.

It remains to show that $R$ does not admit a clique-cutset. Let $(X_0,\dots,X_5)$, with indices in $\mathbb{Z}_6$, be a good partition of the 6-ring $R$. Let $S$ be a clique in $R$; we must show that $R \setminus S$ is connected. By the definition of a ring, we see that there exists some $i \in \mathbb{Z}_6$ such that $S \subseteq X_i \cup X_{i+1}$. Clearly, $R \setminus (X_i \cup X_{i+1})$ is connected, and every vertex in $X_i \cup X_{i+1}$ has a neighbor in $V(R) \setminus (X_i \cup X_{i+1})$; thus, $R \setminus S$ is indeed connected. This completes the argument.
\end{proof}

\begin{lemma} \label{lemma-char-P7free6ring} Let $R$ be a graph. Then the following are equivalent:
\begin{enumerate}[(a)]
\item $R$ is a $(P_7,C_4,C_5,C_7)$-free ring;
\item $R$ is a $P_7$-free 6-ring;
\item $R$ is either a 6-wreath or a 6-crown.
\end{enumerate}
\end{lemma}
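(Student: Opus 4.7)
The plan is to first handle $(a)\Leftrightarrow(b)$ by a direct application of Theorem~\ref{ring-in-GT}: every hole in a $k$-ring has length exactly $k$, and the definition of a ring exhibits an induced $k$-hole, so if $R$ is a $(P_7,C_4,C_5,C_7)$-free $k$-ring then $k\notin\{4,5,7\}$, and $k\ge 8$ is ruled out because seven consecutive vertices of a $k$-hole with $k\ge 8$ induce $P_7$. Thus $k=6$, and the converse follows immediately from Theorem~\ref{ring-in-GT}.

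For $(c)\Rightarrow(b)$ I will argue as follows. A 6-wreath is a 6-ring by Lemma~\ref{lemma-6-ring-6-wreath-char}, and its vertex set is partitioned into the three cliques $X_0\cup X_1$, $X_2\cup X_3$, $X_4\cup X_5$, so $\alpha(R)\le 3<\alpha(P_7)=4$ yields $P_7$-freeness. For a 6-crown with good triple $(\{C_i\},\{D_i\},i^*)$, I will set $X_i=C_i\cup D_i$ and list the vertices of $C_i$ before those of $D_i$; this meets the nested-neighborhood axiom because each $v\in C_i$ is complete to $X_{i-1}\cup X_{i+1}$ while each $v\in D_i$ has open neighborhood $C_{i-1}\cup C_{i+1}$ inside $X_{i-1}\cup X_{i+1}$ (as $D_i$ is anticomplete to $D_{i-1}\cup D_{i+1}$). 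Since any two distinct groups in $\{C_0,\dots,C_5,D_0,\dots,D_5\}$ are either complete or anticomplete, two consecutive vertices of a hypothetical induced $P_7$ cannot lie in the same group (the next path vertex would then sit in an adjacent group and be complete to both), forcing the seven $P_7$-vertices into seven distinct groups and thus projecting to an induced $P_7$ in $C_6^3$ or $C_6^4$; this will be excluded by an edge-count showing that every seven-vertex induced subgraph of $C_6^3$ or $C_6^4$ carries strictly more than six edges.

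The main direction is $(b)\Rightarrow(c)$. Given a $P_7$-free 6-ring $R$ with good partition $(X_0,\dots,X_5)$, I will define $C_i\subseteq X_i$ to be the initial segment of vertices complete to $X_{i-1}\cup X_{i+1}$ and $D_i=X_i\setminus C_i$, and introduce $S=\{i\in\mathbb{Z}_6:X_i\text{ is not complete to }X_{i+1}\}$. The aim is to show, using $P_7$-freeness, that $S$ either lies in one of $\{0,2,4\}$ or $\{1,3,5\}$ (in which case Lemma~\ref{lemma-6-ring-6-wreath-char} recognises $R$ as a 6-wreath) or else forms a block of $2$ or $3$ consecutive indices in $\mathbb{Z}_6$ (in which case $R$ is a 6-crown with $i^*$ chosen opposite the block). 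The hard part will be ruling out mixed patterns: the key observation is that if $\{j,j+1\}\subseteq S$ and some $\ell\in S$ lies outside $\{j-1,j,j+1,j+2\}$, then combining witness non-edges $u_ju_{j+1}$ and $u_\ell u_{\ell+1}$ with universal vertices $u_1^k$ from the intervening groups should produce an induced $P_7$ traversing the ring from one witness to the other, contradicting $P_7$-freeness. Once $S$ is confined to the crown pattern, additional applications of $P_7$-freeness will verify the remaining 6-crown axioms on the $D_i$'s (pairwise anticompleteness; each $D_i$ complete to $C_{i-1}\cup C_{i+1}$), which is where the bulk of the case work in the proof will sit.
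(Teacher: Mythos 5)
Your high-level outline tracks the paper's proof closely: $(a)\Leftrightarrow(b)$ via the ring-hole-length fact, $(c)\Rightarrow(b)$ via the three-clique cover for 6-wreaths and a blow-up/homogeneous-set reduction to the two base graphs for 6-crowns, and $(b)\Rightarrow(c)$ via the split into initial segments $C_i$ and tails $D_i$ and a claim of type ``if $X_i$ is not complete to $X_{i+1}$ then $X_{i+3}$ is complete to $X_{i+4}$'' (the paper's Claim~(3)), which forces the ``bad'' index set into either an independent pattern (6-wreath) or a short consecutive block (6-crown). That part is sound in spirit, though it is left as a sketch precisely where the paper does the bulk of the case work (Claims~(5)--(8)).

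However, the edge-count step you propose for $(c)\Rightarrow(b)$ is incorrect. You claim every seven-vertex induced subgraph of $C_6^3$ or $C_6^4$ has strictly more than six edges, but this fails for $C_6^4$. Using the paper's labelling (vertices $c_0,\dots,c_5,d_2,d_3,d_4,d_5$ with $d_j$ adjacent to $c_{j-1},c_j,c_{j+1}$), the set $\{c_0,c_1,c_2,d_2,d_3,d_4,d_5\}$ induces exactly six edges ($c_0c_1$, $c_1c_2$, $d_2c_1$, $d_2c_2$, $d_3c_2$, $d_5c_0$) and in fact $d_4$ is isolated in it. So edge count alone does not exclude the possibility of an induced $P_7$ in $C_6^4$; you would have to exploit the structure further (the paper does a direct case analysis, using that a $P_7$ must meet each clique of a fixed clique cover of $C_6^4$, which forces $d_3,d_4\in V(P)$, and then argues from there). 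The conclusion is of course still true, but your justification does not establish it.
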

\begin{proof}
We show that (a) and (b) are equivalent, and that (b) and (c) are equivalent.

By Lemma~\ref{lemma-ring-P7C4C5Theta33-free}, (b) implies (a). Let us show that (a) implies (b). The definition of a ring implies that, for every integer $k \geq 4$, every $k$-ring contains a $k$-hole. Furthermore, it is clear that every hole of length greater than seven contains an induced $P_7$. Thus, every $(P_7,C_4,C_5,C_7)$-free ring is of length six, and it follows that (a) implies (b). This proves that (a) and (b) are equivalent.

We next show that (c) implies (b).

\begin{quote}
{\em (1) Every 6-wreath is a $P_7$-free 6-ring.}
\end{quote}
\begin{proof}[Proof of (1)]
It is clear that every 6-wreath is a 6-ring. Further, note that if $(X_0,\dots,X_5)$ is a good partition of a 6-wreath, then $X_0 \cup X_1$, $X_2 \cup X_3$, and $X_4 \cup X_5$ are cliques; consequently, the vertex set of any 6-wreath can be partitioned into three cliques. Since $\alpha(P_7) = 4$, it follows that every 6-wreath is $P_7$-free. This proves (1).
\end{proof}

\begin{quote}
{\em (2) Every 6-crown is a $P_7$-free 6-ring.}
\end{quote}
\begin{proof}[Proof of (2)]
We use the notation from Figure~\ref{fig:C63C64}.

It is clear that $C_6^3$ and $C_6^4$ are 6-rings; it is also clear that any graph obtained by blowing up each vertex of a 6-ring to a nonempty clique is a 6-ring. Thus, every 6-crown is a 6-ring. It remains to show that 6-crowns are $P_7$-free. Since $P_7$ does not admit a proper homogeneous set, it is clear that any graph obtained by blowing up each vertex of a $P_7$-free graph to a nonempty clique is $P_7$-free. Thus, we need only show that $C_6^3$ and $C_6^4$ are $P_7$-free. Since $C_6^3$ is an induced subgraph of $C_6^4$, it in fact suffices to show that $C_6^4$ is $P_7$-free.

Suppose that $C_6^4$ is not $P_7$-free, and let $P$ be an induced $P_7$ in $C_6^4$. Clearly, $\{c_1,c_2,d_2\},\{d_3\},\{c_3,c_4,d_4\},\{c_5,d_6,c_0\}$ is a partition of $V(C_6^4)$ into four cliques; since $\alpha(P) = \alpha(P_7) = 4$, it follows that $V(P)$ intersects each of these four cliques, and in particular, $d_3 \in V(P)$. Similarly, $d_4 \in V(P)$. Let us show that $V(P) \cap \{c_3,c_4\} \neq \emptyset$. Suppose otherwise. Then $P$ is an induced $P_7$ in the 8-vertex graph $C_6^4 \setminus \{c_3,c_4\}$, and it follows that $P$ contains all but one of the vertices $c_0,c_1,c_2,c_5,d_2,d_3,d_4,d_5$. But this implies that either $c_0,c_5,d_5 \in V(P)$ or $c_1,c_2,d_2 \in V(P)$, neither of which is possible since $\{c_0,c_5,d_5\}$ and $\{c_1,c_2,d_2\}$ are both cliques of size three in $C_6^4$, and $P_7$ is triangle-free. Thus, $V(P) \cap \{c_3,c_4\} \neq \emptyset$.

By symmetry, we may assume that $c_3 \in V(P)$. We now have that $c_3,d_3,d_4 \in V(P)$. Since $c_3$ is complete to $\{d_3,d_4\}$, and since no vertex of $P$ is of degree greater than two in $P$, we see that $N_R[c_3] \cap V(P) = \{c_3,d_3,d_4\}$. But this implies that $c_2,d_2,c_4 \notin V(P)$. Since $C_6^4$ has ten vertices, while $P$ has seven, it follows that $V(P) = V(C_6^4) \setminus \{c_2,d_2,c_4\}$. But this is impossible since (for instance) $\{c_0,c_5,d_5\}$ is now a triangle in $P$, contrary to the fact that $P$ is triangle-free. This proves (2).
\end{proof}

By (1) and (2), we have that (c) implies (b). It remains to show that (b) implies (c). From now on, we assume that $R$ satisfies (b), that is, that $R$ is a $P_7$-free 6-ring. Let $(X_0,\dots,X_5)$ be a good partition of the 6-ring $R$. For all $i \in \mathbb{Z}_6$, set $t_i = |X_i|$ and $X_i = \{u_1^i,\dots,u_{t_i}^i\}$ so that $X_i \subseteq N_R[u_{t_i}^i] \subseteq \dots \subseteq N_R[u_1^i] = X_{i-1} \cup X_i \cup X_{i+1}$, as in the definition of a ring.

\begin{quote}
{\em (3) For all $i \in \mathbb{Z}_6$, either $X_i$ is complete to $X_{i+1}$, or $X_{i+3}$ is complete to $X_{i+4}$.}
\end{quote}
\begin{proof}[Proof of (3)]
Suppose otherwise. By symmetry, we may assume that $X_0$ is not complete to $X_1$, and that $X_3$ is not complete to $X_4$. Then $u_{t_0}^0u_{t_1}^1,u_{t_3}^3u_{t_4}^4 \notin E(R)$; note that this implies that $t_0,t_1,t_3,t_4 \geq 2$. But now $u_{t_0}^0,u_1^0,u_{t_1}^1,u_1^2,u_{t_3}^3,u_1^4,u_{t_4}^4$ is an induced $P_7$ in $R$, a contradiction. This proves (3).
\end{proof}

\begin{quote}
{\em (4) If $X_i$ is complete to at least one of $X_{i-1},X_{i+1}$ for all $i \in \mathbb{Z}_6$, then $R$ is a 6-wreath.}
\end{quote}
\begin{proof}[Proof of (4)]
Assume that $X_i$ is complete to at least one of $X_{i-1},X_{i+1}$ for all $i \in \mathbb{Z}_6$. If $X_i$ is complete to $X_{i-1} \cup X_{i+1}$ for all $i \in \mathbb{Z}_6$, then $R$ is a 6-hyperhole and therefore a 6-wreath. So assume that this is not the case. We may assume that $X_1$ is not complete to $X_2$ (we cyclically permute the $X_i$'s if necessary). Then by assumption, $X_0$ is complete to $X_1$, and $X_2$ is complete to $X_3$. Furthermore, by (3), $X_4$ is complete to $X_5$. Lemma~\ref{lemma-6-ring-6-wreath-char} now implies that $R$ is a 6-wreath with good partition $(X_0,\dots,X_5)$. This proves (4).
\end{proof}

Recall that $X_i \subseteq N_R[u_{t_i}^i] \subseteq \dots \subseteq N_R[u_1^i] = X_{i-1} \cup X_i \cup X_{i+1}$ for all $i \in \mathbb{Z}_6$. For all $i \in \mathbb{Z}_6$, let $s_i \in \{1,\dots,t_i\}$ be maximal with the property that $N_R[u_{s_i}^i] = X_{i-1} \cup X_i \cup X_{i+1}$. Further, for all $i \in \mathbb{Z}_6$, let $s_i^+ \in \{1,\dots,t_i\}$ be maximal with the property that $u_{s_i}^+$ is complete to $X_{i+1}$, and let $s_i^- \in \{1,\dots,t_i\}$ be maximal with the property that $u_{s_i}^-$ is complete to $X_{i-1}$. Clearly, $s_i = \min\{s_i^+,s_i^-\}$ for all $i \in \mathbb{Z}_6$. For all $i \in \mathbb{Z}_6$, set $C_i = \{u_1^i,\dots,u_{s_i}^i\}$ and $D_i = X_i \setminus C_i$. By construction, we have that for all $i \in \mathbb{Z}_6$, $u_1^i \in C_i$ (consequently, $C_i \neq \emptyset$), $C_i$ is complete to $X_{i-1} \cup X_{i+1}$, and every vertex in $D_i$ has a nonneighbor in $X_{i-1} \cup X_{i+1}$.\footnote{If $D_i = \emptyset$, then it is vacuously true that every vertex in $D_i$ has a nonneighbor in $X_{i-1} \cup X_{i+1}$.}

\begin{quote}
{\em (5) For all $i \in \mathbb{Z}_6$, if $X_i$ is complete neither to $X_{i-1}$ nor to $X_{i+1}$, then $s_i^+ = s_i^- = s_i < t_i$.}
\end{quote}
\begin{proof}[Proof of (5)]
By symmetry, it suffices to prove the statement for $i = 1$. So assume that $X_1$ is complete neither to $X_0$ nor to $X_2$. It is then clear that $s_1^+,s_1^- < t_1$, as well as that $u_{t_1}^1$ is anticomplete to $\{u_{t_0}^0,u_{t_2}^2\}$. Since $s_1 = \min\{s_1^+,s_1^-\}$, it only remains to show that $s_1^+ = s_1^-$. Suppose otherwise. By symmetry, we may assume that $s_1^+ > s_1^-$. Set $\ell = s_1^-+1$. Then $s_1^- < \ell \leq s_1^+$, and it follows that $u_{\ell}^1$ is complete to $X_2$, but is not complete to $X_0$ (consequently, $u_{\ell}^1u_{t_0}^0 \notin E(G)$). But now $u_{t_1}^1,u_{\ell}^1,u_{t_2}^2,u_1^3,u_1^4,u_1^5,u_{t_0}^0$ is an induced $P_7$ in $R$, a contradiction. This proves (5).
\end{proof}

\begin{quote}
{\em (6) For all $i \in \mathbb{Z}_6$, if $X_i$ is complete neither to $X_{i-1}$ nor to $X_{i+1}$, then $s_i < t_i$, $D_i \neq \emptyset$, and $D_i$ is a (not necessarily proper) homogeneous set in $R$.}
\end{quote}
\begin{proof}[Proof of (6)]
By symmetry, it suffices to prove the claim for $i = 1$. So assume that $X_1$ is complete neither to $X_0$ nor to $X_2$. By (5), we have that $s_1^+ = s_1^- = s_1 < t_1$. Note that this implies that $D_1 \neq \emptyset$, and that $D_1$ is anticomplete to $\{u_{t_0}^0,u_{t_2}^2\}$. Now, clearly, $\{u_1^1,\dots,u_{s_1}^1\} \cup \{u_1^0,u_1^2\}$ is complete to $D_1$, and $X_3 \cup X_4 \cup X_5 \{u_{t_0}^0,u_{t_2}^2\}$ is anticomplete to $D_1$. Thus, in order to show that $D_1$ is a homogeneous set in $R$, it suffices to show that no vertex in $(X_0 \setminus \{u_1^0,u_{t_0}^0\}) \cup (X_2 \setminus \{u_1^2,u_{t_2}^2\})$ is mixed on $D_1$. Suppose otherwise. By symmetry, we may assume that there exists some $\ell_2 \in \{2,\dots,t_2-1\}$ such that $u_{\ell_2}^2$ is mixed on $D_1$. Fix $\ell_1,\ell_1' \in \{s_1+1,\dots,t_1\}$ such that $u_{\ell_2}^2$ is adjacent to $u_{\ell_1}^1$ and nonadjacent to $u_{\ell_1'}^1$; clearly, $\ell_1 < \ell_1'$. But now $u_{\ell_1'}^1,u_{\ell_1}^1,u_{\ell_2}^2,u_1^3,u_1^4,u_1^5,u_{t_0}^0$ is an induced $P_7$ in $R$, a contradiction. This proves (6).
\end{proof}

\begin{quote}
{\em (7) For all $i \in \mathbb{Z}_6$, if $X_i$ is complete neither to $X_{i-1}$ nor to $X_{i+1}$, then $D_{i-1},D_i,D_{i+1} \neq \emptyset$, and $D_i$ is anticomplete to $D_{i-1} \cup D_{i+1}$.}
\end{quote}
\begin{proof}[Proof of (7)]
Fix $i \in \mathbb{Z}_6$, and assume that $X_i$ is complete neither to $X_{i-1}$ nor to $X_{i+1}$. Clearly then, $D_{i-1},D_i,D_{i+1} \neq \emptyset$. It remains to show that $D_i$ is anticomplete to $D_{i-1} \cup D_{i+1}$. Suppose otherwise. By symmetry, we may assume that some $d_i \in D_i$ and $d_{i+1} \in D_{i+1}$ are adjacent. By (6), $D_i$ is a homogeneous set in $R$; since $d_{i+1}$ has a neighbor in $D_i$, we now deduce that $d_{i+1}$ is complete to $D_i$. By construction, $C_i$ is complete to $X_{i+1}$; since $d_{i+1} \in X_{i+1}$, we see that $d_{i+1}$ is complete to $C_i$. Thus, $d_{i+1}$ is complete to $C_i \cup D_i = X_i$. Since $d_{i+1} \in D_{i+1}$, we deduce that $d_{i+1}$ is not complete to $X_{i+2}$. But now we have that $X_{i+1}$ is complete neither to $X_i$ nor to $X_{i+2}$, and so (6) implies that $D_{i+1}$ is a homogeneous set in $R$. Since $D_i$ and $D_{i+1}$ are disjoint homogeneous sets in $R$, and since $d_i \in D_i$ and $d_{i+1} \in D_{i+1}$ are adjacent, we deduce that $D_i$ is complete to $D_{i+1}$. But since $C_i$ is complete to $X_{i+1}$, and $C_{i+1}$ is complete to $X_i$, we now readily deduce that $X_i$ is complete to $X_{i+1}$, a contradiction. This proves (7).
\end{proof}

\begin{quote}
{\em (8) If there exists some $i \in \mathbb{Z}_6$ such that $X_i$ is complete neither to $X_{i-1}$ nor to $X_{i+1}$, then $R$ is a 6-crown.}
\end{quote}
\begin{proof}[Proof (8)]
By symmetry, we may assume that $X_4$ is complete neither to $X_3$ nor to $X_5$. By (7), we have that $D_3,D_4,D_5$ are nonempty and pairwise anticomplete to each other. By (3), $X_1$ is complete to $X_0 \cup X_2$. Further, by (3), either $X_2$ is complete to $X_3$, or $X_5$ is complete to $X_0$; by symmetry, we may assume that $X_5$ is complete to $X_0$ ($X_2$ may or may not be complete to $X_3$). Note that this implies that $D_0 = D_1 = \emptyset$. Our goal is to show that $R$ is a 6-crown with good triple $(\{C_i\}_{i \in \mathbb{Z}_6},\{D_i\}_{i \in \mathbb{Z}_6},2)$.

So far, we have shown that $D_3,D_4,D_5$ are nonempty and anticomplete to each other, and that $D_0 = D_1 = \emptyset$. We claim that $D_0,\dots,D_5$ are pairwise anticomplete to each other. Clearly, we need only show that $D_2$ is anticomplete to $D_i$ for all $i \in \mathbb{Z}_6 \setminus \{2\}$. Since $X_2$ is anticomplete to $X_0 \cup X_4 \cup X_5$ (and therefore to $D_0 \cup D_4 \cup D_5$ as well), and since $D_1 = \emptyset$, it only remains to show that $D_2$ is anticomplete to $D_3$. If $D_2 = \emptyset$, then this is immediate. So suppose that $D_2 \neq \emptyset$. Recall that $X_1$ is complete to $X_2$; since $D_2 \neq \emptyset$, we deduce that $X_2$ is not complete to $X_3$. But now $X_3$ is complete neither to $X_2$ nor to $X_4$, and so (7) implies that $D_3$ is anticomplete to $D_2$, which is what we needed. We have now established that $D_0,\dots,D_5$ are pairwise anticomplete to each other.

Next, by the definition of a 6-ring, $X_0,\dots,X_5$ are cliques; consequently, $D_i$ is complete to $C_i$ for all $i \in \mathbb{Z}_6$. Further, by construction, $C_i$ is complete to $X_{i-1} \cup X_{i+1}$ for all $i \in \mathbb{Z}_6$; consequently, $D_i$ is complete to $C_{i-1} \cup C_{i+1}$ for all $i \in \mathbb{Z}_6$. By the definition of a 6-ring, we have that $X_i$ is anticomplete to $X_{i+2} \cup X_{i+3} \cup X_{i+4}$ for all $i \in \mathbb{Z}_6$; consequently, $D_i$ is anticomplete to $C_{i+2} \cup C_{i+3} \cup C_{i+4}$ for all $i \in \mathbb{Z}_6$.

To show that $R$ is a 6-crown with good partition $(\{C_i\}_{i \in \mathbb{Z}_6},\{D_i\}_{i \in \mathbb{Z}_6},2)$, it now remains to show that $C_0,\dots,C_5$ are nonempty cliques, and that $C_i$ is complete to $C_{i-1} \cup C_{i+1}$ and anticomplete to $C_{i+2} \cup C_{i+3} \cup C_{i+4}$ for all $i \in \mathbb{Z}_6$. The fact that $C_0,\dots,C_5$ are cliques follows from the fact that, by the definition of a 6-ring, $X_0,\dots,X_5$ are cliques. Furthermore, it is clear that $u_1^i \in C_i$ for all $i \in \mathbb{Z}_6$, and so $C_0,\dots,C_5$ are all nonempty. By construction, $C_i$ is complete to $X_{i-1} \cup X_{i+1}$ (and consequently, to $C_{i-1} \cup C_{i+1}$) for all $i \in \mathbb{Z}_6$. Further, by the definition of a 6-ring, $X_i$ is anticomplete to $X_{i+2} \cup X_{i+3} \cup X_{i+4}$ for all $i \in \mathbb{Z}_6$; consequently, $C_i$ is anticomplete to $C_{i+2} \cup C_{i+3} \cup C_{i+4}$ for all $i \in \mathbb{Z}_6$. It now follows that $R$ is a 6-crown with good triple $(\{C_i\}_{i \in \mathbb{Z}_6},\{D_i\}_{i \in \mathbb{Z}_6},2)$. This proves (8).
\end{proof}

Clearly, (4) and (8) together imply that $R$ is either a 6-wreath or a 6-crown. Thus, (b) implies (c). This completes the argument.
\end{proof}

We are now ready to prove Theorem~\ref{thm-P7C4C5C7Theta33-free-decomp}, restated below for the reader's convenience.

\begin{thm-P7C4C5C7Theta33-free-decomp} Let $G$ be a graph. Then the following two statements are equivalent:
\begin{enumerate}[(i)]
\item $G$ is a $(P_7,C_4,C_5,C_7,\Theta_3^3)$-free graph that does not admit a clique-cutset;
\item either $G$ is a complete graph, or $G$ contains exactly one nontrivial anticomponent, and this anticomponent is either a 6-wreath or a 6-crown.
\end{enumerate}
\end{thm-P7C4C5C7Theta33-free-decomp}
\begin{proof}
Suppose first that $G$ satisfies (ii); we must show that $G$ satisfies (i). If $G$ is a complete graph, then this is immediate. So assume that $G$ contains exactly one nontrivial anticomponent, call it $R$, and that $R$ is either a 6-wreath or a 6-crown. By Lemma~\ref{lemma-char-P7free6ring}, $R$ is a $P_7$-free 6-ring. Since $R$ is a 6-ring, Lemma~\ref{lemma-ring-P7C4C5Theta33-free} implies that $R$ is $(C_4,C_5,C_7,\Theta_3^3)$-free and does not admit a clique-cutset. Thus, $R$ is $(P_7,C_4,C_5,C_7,\Theta_3^3)$-free and does not admit a clique-cutset. The fact that $G$ satisfies (i) now follows from Lemma~\ref{lemma-one-anticomp}.

Suppose now that $G$ satisfies (i); we must show that $G$ satisfies (ii). By Lemma~\ref{lemma-P6C4C5C7Theta33-free-in-GUT}, $G \in \mathcal{G}_{\text{UT}}$. Since $G$ satisfies (i), $G$ does not admit a clique-cutset. Theorem~\ref{decomp-thm-GUT} now implies that $G \in \mathcal{B}_{\text{UT}}$. Then by the definition of $\mathcal{B}_{\text{UT}}$, one of the following holds:
\begin{itemize}
\item[(a)] $G$ has exactly one nontrivial anticomponent, and this anticomponent is a long ring;
\item[(b)] $G$ is (long hole, $K_{2,3}$, $\overline{C_6}$)-free;
\item[(c)] $\alpha(G) = 2$, and every anticomponent of $G$ is either a 5-hyperhole or a $(C_5,\overline{C_6})$-free graph.
\end{itemize}
Suppose first that $G$ satisfies (a). Let $R$ be the unique nontrivial anticomponent of $G$; then $R$ is a long ring. Since $G$ (and therefore $R$ as well) is $(P_7,C_4,C_5,C_7)$-free, Lemma~\ref{lemma-char-P7free6ring} implies that $R$ is either a 6-wreath or a 6-crown, and in either case, $G$ satisfies (ii).

Suppose now that $G$ satisfies (b) or (c). Then either $G$ is long-hole-free, or $\alpha(G) = 2$. Since every hole of length greater than five contains a stable set of size three, we deduce that $G$ contains no holes of length greater than five. But by (i), $G$ is also $(C_4,C_5)$-free. Thus, $G$ contains no holes, that is, $G$ is chordal. By~\cite{Dirac61}, it follows that either $G$ is complete, or $G$ admits a clique-cutset. Since $G$ satisfies (i), the latter is impossible; thus, $G$ is complete, and it follows that $G$ satisfies (ii).
\end{proof}

\section{Algorithms and $\chi$-boundedness} \label{sec:alg}

In this section, we give polynomial-time algorithms for solving the following three discrete optimization problems for the class of $(P_7,C_4,C_5)$-free graphs: the minimum coloring problem, the maximum weight stable set problem, and the maximum weight clique problem. We also show that the class of $(P_7,C_4,C_5)$-free graphs is $\chi$-bounded by a linear function. 

Our algorithms will take the following general approach.

\begin{framed}
1. Decompose the input graph $G$ via clique-cutsets into subgraphs that do not admit clique-cutsets.
These subgraphs are called \emph{atoms}. \\

2. Find the solution for each atom. \\

3. Combine solutions for atoms along the clique-cutsets to obtain a solution for $G$.
\end{framed}

\subsection{Clique-cutset decomposition}

Let $G=(V,E)$ be a graph and $K\subseteq V$ a clique-cutset such that
$G\setminus K$ is a disjoint union of two subgraphs $H_1$ and $H_2$ of $G$.
We let $G_i=G[H_i\cup K]$ for $i=1,2$.
We say that $G$ is \emph{decomposed into} $G_1$ and $G_2$ \emph{via} $K$,
and call this a \emph{decomposition step}.
We then recursively decompose $G_1$ and $G_2$ via clique-cutsets until no clique-cutset exists.
This procedure can be represented by a rooted binary tree $T(G)$ where $G$ is the root and
the leaves are induced subgraphs of $G$ that do not admit clique-cutsets. These subgraphs are called
\emph{atoms} of $G$.  Tarjan~\cite{Ta85} showed that for any graph $G$, $T(G)$ can be found in
$O(nm)$ time. Moreover, in each decomposition step Tarjan's algorithm produces an atom,
and consequently $T(G)$ has at most $n-1$ leaves (or equivalently atoms).

Let $k\ge 1$ be a fixed integer.
Tarjan~\cite{Ta85} observed that  $G$ is $k$-colorable if and only if each atom of $G$ is $k$-colorable.
This implies that if one can solve chromatic number for atoms, then one can also solve the problem
for $G$. It is straightforward to check that once a $k$-coloring of each atom is found, then it takes
$O(n^2)$ time to combine these colorings to obtain a $k$-coloring of $G$.

In a slightly more complicated fashion,  Tarjan~\cite{Ta85} showed that once the maximum weight
stable set problem is solved for atoms, one can solve the problem for $G$.
Let $G=(V,E)$ be a graph with a weight function $w:V\rightarrow \mathbb{R}$. For a given subset $S\subseteq  V$,
we let $w(S)=\sum_{v\in S}w(v)$, and denote the maximum weight of a stable set of $G$ by $\alpha_w(G)$.
Suppose that $G$ is decomposed into $A$ and $B$  via a clique-cutset $S$, where $A$ is an atom.
We explain Tarjan's approach as follows. To compute a stable set of weight $\alpha_w(G)$, we do the following.
\begin{enumerate}[\bfseries (1)]
\item Compute a maximum weight  stable set $I'$ of $A\setminus S$.\label{itm:first}
\item For each vertex $v\in S$, compute a maximum weight  stable set $I_v$ of $A\setminus N[v]$.
\item Re-define the weight of $v\in S$ as $w'(v)=w(v)+w(I_v)-w(I')$.
\item Compute the maximum weight  stable set $I''$ of $B$ with respect to the new weight $w'$.
If $I''\cap S=\{v\}$, then let $I= I_v\cup I''$; otherwise let $I=I'\cup I''$.\label{itm:last}
\end{enumerate}
It is easy to see that $\alpha_w(G)=w(I)$. This divide-and-conquer approach can be applied top-down
on $T(G)$ to obtain a solution for $G$ by solving $O(n^2)$ subproblems on induced subgraphs of atoms,
since there are  $O(n)$ decomposition steps and  each step amounts to solving $O(n)$ subproblems as explained in
(\ref{itm:first})-(\ref{itm:last}).

Therefore, it suffices to explain below how to solve coloring and maximum weight stable set for atoms
of $(P_7,C_4,C_5)$-free graphs. In the following, we assume that $A$ is an atom.
The {\em join} of  two given graphs $G_1$ and $G_2$ is the graph obtained from the disjoint union of $G_1$
and $G_2$ by adding every possible edge between vertices in $G_1$ and vertices in $G_2$.
We only need the following  two decomposition theorems, which follow from
Theorem~\ref{thm-decomp-P7C4C5-free-with-C7}, and from Theorems~\ref{thm-P7C4C5C7-free-contains-Theta-decomp}
and~\ref{thm-P7C4C5C7Theta33-free-decomp}, respectively.
\begin{theorem}[Decomposition $C_7$]\label{thm:decomposition}
Let $G$ be a  $(P_7,C_4,C_5)$-free atom that contains a $C_7$.
Then $G$ is the join of a (possibly null) complete graph with either a thickened emerald (see Figure \ref{fig:emerald}) or
a $7$-bracelet (see Figure \ref{fig:7bracelet}).
\end{theorem}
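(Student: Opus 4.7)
The plan is to derive this theorem almost directly from Theorem~\ref{thm-decomp-P7C4C5-free-with-C7}. Indeed, the hypothesis that $G$ is a $(P_7,C_4,C_5)$-free atom containing a $C_7$ is exactly condition~(\ref{ref-contains-C7-no-clique-cut-P7C4C5-free}) of that theorem (since an atom is by definition a graph that does not admit a clique-cutset, and an induced $C_7$ is precisely a 7-hole). Applying Theorem~\ref{thm-decomp-P7C4C5-free-with-C7}, we obtain that $G$ contains exactly one nontrivial anticomponent $B$, and $B$ is either a 7-bracelet or a thickened emerald.

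Next, I would translate the anticomponent structure into the language of joins. Recall from Section~\ref{sec:terminology} that the vertex sets of the anticomponents of $G$ are pairwise disjoint and pairwise complete to each other. Let $U = V(G) \setminus V(B)$; since $B$ is the unique nontrivial anticomponent, every other anticomponent is a single vertex, and hence $U$ is precisely the union of the trivial anticomponents of $G$. The pairwise completeness of distinct anticomponents then gives two things at once: first, that any two distinct vertices of $U$ are adjacent, so $U$ induces a (possibly empty) clique in $G$; and second, that $U$ is complete to $V(B)$. Writing $K = G[U]$, this says exactly that $G$ is the join of the (possibly null) complete graph $K$ with $B$, where $B$ is a 7-bracelet or a thickened emerald, as required.

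There is no substantive obstacle here: all the real work is packaged in Theorem~\ref{thm-decomp-P7C4C5-free-with-C7}, and the only thing to verify is the elementary structural fact that when a graph has exactly one nontrivial anticomponent, the remaining vertices induce a clique complete to that anticomponent. I would write the proof as a short paragraph invoking Theorem~\ref{thm-decomp-P7C4C5-free-with-C7} and citing this elementary fact (implicit, for example, in the argument of Lemma~\ref{lemma-one-anticomp}).
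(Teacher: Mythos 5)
Your proof is correct and matches the paper's (implicit) intent: the paper simply states that Theorem~\ref{thm:decomposition} follows from Theorem~\ref{thm-decomp-P7C4C5-free-with-C7}, and your argument spells out exactly the routine translation between ``exactly one nontrivial anticomponent'' and ``join with a (possibly null) complete graph.'' Nothing further is needed.
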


\begin{theorem}[Decomposition $C_7$-free]\label{thm:decompositionC7-free}
Let $G$ be a $(P_7,C_4,C_5,C_7)$-free atom. Then
$G$ is either a complete graph, or the join of a (possibly null) complete graph with either a lantern, or a $6$-ring.
\end{theorem}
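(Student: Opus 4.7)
The plan is to split on whether $G$ contains an induced $\Theta_3^3$ and then invoke the two structure theorems from Section~\ref{sec:decomp}, together with the observation that ``having exactly one nontrivial anticomponent'' is equivalent to ``being the join of a (possibly empty) clique with that nontrivial anticomponent.'' The latter observation is essentially trivial: the anticomponents of any graph are pairwise complete to each other, and the union of all trivial anticomponents forms a clique complete to the (unique) nontrivial one, so the whole graph is exactly the join of this clique with the nontrivial anticomponent. I will state this observation briefly and then carry out the two cases.

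For the first case, suppose $G$ contains an induced $\Theta_3^3$. Since $G$ is a $(P_7,C_4,C_5,C_7)$-free atom, Theorem~\ref{thm-P7C4C5C7-free-contains-Theta-decomp} applies directly and yields that $G$ has exactly one nontrivial anticomponent, which is a lantern. By the observation above, $G$ is the join of a (possibly null) complete graph with a lantern, as required.

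For the second case, suppose $G$ is $\Theta_3^3$-free. Then $G$ is a $(P_7,C_4,C_5,C_7,\Theta_3^3)$-free atom, so Theorem~\ref{thm-P7C4C5C7Theta33-free-decomp} applies: either $G$ is complete (and we are done), or $G$ has exactly one nontrivial anticomponent $R$, which is a 6-wreath or a 6-crown. Lemma~\ref{lemma-char-P7free6ring} (equivalence of (b) and (c)) tells us that in either subcase $R$ is in particular a 6-ring, so by the observation $G$ is the join of a (possibly null) complete graph with a 6-ring.

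There is no real obstacle here: the theorem is essentially a repackaging of Theorems~\ref{thm-P7C4C5C7-free-contains-Theta-decomp} and~\ref{thm-P7C4C5C7Theta33-free-decomp} in the language of joins rather than anticomponents, together with the identification provided by Lemma~\ref{lemma-char-P7free6ring}. The only thing to be careful about is that the 6-ring description is genuinely sufficient for the subsequent algorithmic sections, which is why we prefer to state the conclusion in terms of 6-rings instead of explicitly listing 6-wreaths and 6-crowns.
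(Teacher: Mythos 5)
Your proof is correct and matches the paper's intended derivation: the paper states that Theorem~\ref{thm:decompositionC7-free} ``follows from Theorems~\ref{thm-P7C4C5C7-free-contains-Theta-decomp} and~\ref{thm-P7C4C5C7Theta33-free-decomp}'' without further elaboration, and your case split on the presence of an induced $\Theta_3^3$, together with the join/anticomponent translation and the observation (via Lemma~\ref{lemma-char-P7free6ring}, or directly from the definitions) that 6-wreaths and 6-crowns are 6-rings, is exactly the fleshed-out version of that derivation.
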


\subsection{Solving the maximum weight stable set problem} 

Let $A$ be a $(P_7,C_4,C_5)$-free atom. It then follows from
Theorems~\ref{thm:decomposition} and~\ref{thm:decompositionC7-free}
that $A$ is either a complete graph, or the join of a (possibly empty) complete graph $U$ with either a thickened emerald, or a $7$-bracelet, or a lantern or a $6$-ring.
Note that any vertex in $U$ is a universal vertex of $A$. Therefore, any stable set containing a vertex $u$ in $U$
must be $\{u\}$. It thus suffices to solve the problem for thickened emeralds, or $7$-bracelets, or lanterns or  $6$-rings.
We do so by the following unified approach developed in~\cite{BH07}.
Let $G$ be a graph with a weight function $w:V(G)\rightarrow \mathbb{R}$. 
Then the following holds:
\[\alpha_w(G)=\max(\{0\} \cup \{w(v)+\alpha_w(G-N[v]) \mid v\in V(G)\})\]
(Note that $\alpha_w(G) = 0$ if and only if $w(v) \leq 0$ for all $v \in V(G)$. In this case, $\emptyset$ is a maximum weight stable set.) Therefore, computing the maximum weight of a stable set in $G$ reduces to $n$ subproblems on certain induced subgraphs of $G$.
Luckily, the structure of the induced subgraphs $G-N[v]$ of the four special kinds of graphs under consideration
is simple and it allows the maximum weight stable set problem to be solved efficiently.
\begin{lemma}\label{lem:nonneighborhood}
Let $G$ be a thickened emerald, or a $7$-bracelet, or a lantern or a $6$-ring. For any vertex $v\in V(G)$,
$G-N[v]$ is a chordal graph.
\end{lemma}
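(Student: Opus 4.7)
My plan is to prove Lemma~\ref{lem:nonneighborhood} by case analysis on the type of $G$. Since each of the four graph families is $(P_7,C_4,C_5)$-free (by Lemmas~\ref{lemma-7BTE-P7C4C5-free}, \ref{lemma-lantern-P7C4C5-free}, and~\ref{lemma-ring-P7C4C5Theta33-free}), $G-N[v]$ inherits these properties, so any hole of $G-N[v]$ has length $6$ or $7$. The unifying idea is that each graph type is organized around a partition into cliques, and whenever $v$ lies in one such clique $K$ the closed neighborhood $N[v]$ swallows $K$ whole; it then suffices to show that every $6$- or $7$-hole of $G$ meets $K$ (or one of the neighboring cliques that is also forced inside $N[v]$).

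For the 6-ring case I use Theorem~\ref{ring-in-GT} to restrict holes to length $6$, and then argue via the nested-neighborhood axiom of a ring that a $6$-hole cannot contain two vertices in the same $X_i$: if $u,u'\in V(H)\cap X_i$ they must be adjacent in $H$, and WLOG $N_G[u']\subseteq N_G[u]$, which forces $u'$'s other hole-neighbor to be adjacent to $u$ and thus produce a chord. Hence every $6$-hole uses exactly one vertex per $X_i$, and $X_j\subseteq N[v]$ destroys it. The 7-bracelet case is analogous: Lemma~\ref{lemma-7-bracelet-7-hole} handles $7$-holes, and for $6$-holes the same nested argument, run separately inside each of $A_i^*$, $A_i^+$, $A_i^-$ (whose internal orderings are supplied by axioms (II.d)--(II.e)), eliminates every possibility except a pair with one vertex in $A_i^+$ and one in $A_i^-$. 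By the uniqueness clause in the definition of a good pair this forces $i=i^*$, and for $v\in A_{i^*}$ both such vertices already lie in $A_{i^*}\subseteq N[v]$; for $v\in A_j$ with $j\ne i^*$ every $6$-hole has at most one vertex per $A_i$, so it either uses $A_j$ itself or skips exactly $A_j$ and thus uses both $A_{j-1}$ and $A_{j+1}$, and in either case meets $A_{j-1}\cup A_j\cup A_{j+1}\subseteq N[v]$.

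For the thickened emerald I use Lemma~\ref{lemma-7-bracelet-in-thickened-emerald} to reduce holes avoiding $C$ to the 7-bracelet case; a hole meeting $C$ contains exactly one vertex $c\in C$ (vertices of $C$ are twins in $B$, so two of them in a hole would have identical neighborhoods and force a chord), and the two hole-neighbors of $c$ lie in $A_{i^*\pm 3}\cup A_{i^*\pm 2}^*$. A short case check on the location of $v$ shows that either $c\in N[v]$ (when $v\in C$), or $C\subseteq N[v]$ (when $v\in A_{i^*\pm 3}\cup A_{i^*\pm 2}^*$), or else $v$ is anticomplete to $C$ and the interior of the $P_5=H\setminus\{c\}$ is forced through $A_{j-1}\cup A_j\cup A_{j+1}\subseteq N[v]$. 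For the lantern, by symmetry it suffices to treat $v\in A$ and $v\in B_i$: in the former $R-N[v]\subseteq D\cup\bigcup_i C_i$ is the join of the clique $D$ with a disjoint union of cliques, which is manifestly chordal; in the latter I exhibit a simplicial elimination ordering that first peels off each pendant clique $B_j$ with $j\ne i$ (every vertex there has neighborhood $\subseteq B_j\cup C_j$, a clique), then uses the nested ordering inside $B_1$ and $C_1$ to peel their tails, leaving the same chordal join as before. The fiddliest step will be the $6$-hole analysis in the 7-bracelet, where Lemma~\ref{lemma-7-bracelet-no-parallel-2-skips} does the real work of preventing more than one $(A_i^+,A_i^-)$-pair from being simultaneously nonempty; the other cases are mechanical once the structural-clique philosophy is in place.
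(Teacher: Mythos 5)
The paper's own proof of Lemma~\ref{lem:nonneighborhood} is just ``This is routine to verify,'' so there is no argument in the paper to compare against; what you give is a genuine proof where the authors left a gap, and it is for the most part correct. Your high-level strategy --- holes in $G-N[v]$ have length $6$ or $7$ because each family is $(P_7,C_4,C_5)$-free, and a count of how many of the organizing cliques a hole must visit rules these out --- is sound and in the spirit of the explicit simplicial-elimination and hole-counting arguments the paper uses for related claims (for instance, the chordality of $B\setminus A_{i^*}$ established in the proof of Lemma~\ref{lemma-7BTE-P7C4C5-free}, and the ``$|V(H)\cap A_i|\le 1$'' analysis in Lemma~\ref{lemma-7-bracelet-7-hole}).

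One step needs repair. In the $7$-bracelet case with $v\in A_j$ and $j\ne i^*$ you write that ``every $6$-hole has at most one vertex per $A_i$,'' and conclude the hole either uses $A_j$ or skips exactly $A_j$. That claim is false as stated: a $6$-hole can contain one vertex of $A_{i^*}^+$ and one of $A_{i^*}^-$, and this possibility does not disappear merely because $v$ lies in some other $A_j$. (One can write down such a $6$-hole explicitly: $y_0\in A_{i^*}^+$, $y_1\in A_{i^*}^-$, $y_2\in A_{i^*-2}$ a neighbour of $y_1$, $y_3\in A_{i^*-3}$, $y_4\in A_{i^*+3}$, $y_5\in A_{i^*+2}$ a neighbour of $y_0$.) The conclusion is still correct, but you need the weaker and correct count: a $6$-hole visits at most two vertices in $A_{i^*}$ and at most one in any other $A_i$, hence visits at least five of the seven parts; it therefore cannot avoid all three of $A_{j-1},A_j,A_{j+1}\subseteq N[v]$. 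Equivalently, and perhaps more cleanly: $G-N[v]\subseteq A_{j+2}\cup A_{j+3}\cup A_{j-3}\cup A_{j-2}$, a union of only four parts, which can carry at most $2+1+1+1=5$ vertices of an induced hole under your per-part bounds, so neither a $6$-hole nor a $7$-hole can survive. The same arithmetic also tightens your thickened-emerald case, where the sketch of the ``$v$ anticomplete to $C$'' subcase is thin (you implicitly treat $H$ as a $6$-hole; $7$-holes through $C$ need a word as well). Finally, it is worth knowing that for thickened emeralds and $7$-bracelets a one-line alternative is available once Lemmas~\ref{lem:thickenedemeraldispca} and~\ref{lem:7braceletispca} are in hand: in any circular-arc representation, removing all arcs meeting $A_v$ leaves arcs confined to the complementary arc of the circle, so $G-N[v]$ is an interval graph and therefore chordal; your hole-chasing is, however, the right tool for lanterns and $6$-rings, where no such representation is established in the paper.
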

\begin{proof}
This is routine to verify.
\end{proof}

It is well known that the maximum weight stable set problem can be solved in $O(m+n)$ time
for a chordal graph with $n$ vertices and $m$ edges~\cite{Ga72}.
Moreover, since the property in Lemma \ref{lem:nonneighborhood} also holds for all induced subgraphs
of the four kinds of graphs, it then follows from Lemma \ref{lem:nonneighborhood}
that we can solve the problem for an induced subgraph of a $(P_7,C_4,C_5)$-free atom
in $O(n_A+m_A)O(n_A)=O(m_An_A)$ time.
Note that for two different atoms $A$ and $B$, the subgraphs of $A$ for which the subproblems
need to be solved are vertex-disjoint from the subgraphs of $B$ for which the subproblems need to be solved.
This implies that it takes $O(n)\sum_{A}O(m_An_A)=O(n)O(mn)=O(n^2m)$ time to
solve all these subproblems, where the summation goes over all atoms of $G$.
So, the total running time is dominated by solving subproblems for atoms, that is, $O(n^2m)$.
\begin{theorem}
The maximum weight stable set problem can be solved in $O(n^2m)$ time for $(P_7,C_4,C_5)$-free graphs.
\end{theorem}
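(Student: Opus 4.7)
My plan is to combine Tarjan's clique-cutset decomposition with the structure theorems for atoms and a ``branch on a vertex'' recursion. First, I would apply Tarjan's algorithm~\cite{Ta85} to build the decomposition tree $T(G)$ in $O(nm)$ time, producing at most $n-1$ atoms. The four-step recipe~(\ref{itm:first})--(\ref{itm:last}) then reduces computing $\alpha_w(G)$ to $O(n)$ subproblems per atom, each of which is itself a maximum weight stable set problem on an induced subgraph of the corresponding atom.

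Second, I would exploit Theorems~\ref{thm:decomposition} and~\ref{thm:decompositionC7-free}: every $(P_7,C_4,C_5)$-free atom $A$ is either a complete graph or the join of a (possibly empty) clique $U$ with a graph $H$ that is a thickened emerald, a $7$-bracelet, a lantern, or a $6$-ring. Since every vertex of $U$ is universal in $A$, any stable set meeting $U$ is a singleton, so $\alpha_w$ on any induced subgraph of $A$ reduces in constant time to $\alpha_w$ on the corresponding induced subgraph of $H$. Thus it suffices to handle induced subgraphs of these four special types. For this I would use the branching identity
\[\alpha_w(H') \;=\; \max\bigl(\{0\}\cup\{w(v)+\alpha_w(H'-N_{H'}[v]) : v\in V(H')\}\bigr),\]
which breaks the problem into $|V(H')|$ subproblems on closed-nonneighborhoods. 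Lemma~\ref{lem:nonneighborhood} guarantees that $H-N_H[v]$ is chordal for every $v$ in each of the four families; since chordality is hereditary and $H'-N_{H'}[v]$ is an induced subgraph of $H-N_H[v]$, this subgraph is chordal too, and the classical algorithm of~\cite{Ga72} finds its maximum weight stable set in $O(n_A+m_A)$ time. Hence one subproblem on $H'$ costs $O(n_A(n_A+m_A))=O(n_A m_A)$.

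Finally, I would tally the running time. Each atom contributes $O(n)$ subproblems of cost $O(n_A m_A)$; bounding $m_A\le m$ and using $\sum_A n_A=O(n)$ yields a total of $O(n)\cdot m\cdot\sum_A n_A = O(n^2m)$, which dominates the $O(nm)$ decomposition step. The main non-trivial ingredient is Lemma~\ref{lem:nonneighborhood}, which is already established by routine case analysis on the four structural families; everything else is bookkeeping, so the only potential obstacle is confirming that the reductions preserve the induced-subgraph hypothesis throughout the recursion, which they do because joins, removals of universal vertices, and closed-nonneighborhoods all interact cleanly with the ``induced subgraph of $H$'' invariant.
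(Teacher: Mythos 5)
Your proposal follows essentially the same route as the paper: Tarjan's clique-cutset decomposition, the reduction of each atom to the join of a clique with one of the four structured families, the branching identity on a vertex $v$, and Lemma~\ref{lem:nonneighborhood} (chordality of $G-N[v]$, which passes to induced subgraphs) together with the linear-time chordal algorithm of~\cite{Ga72}. The running-time bookkeeping is also the same in spirit; note only that the assertion $\sum_A n_A = O(n)$ should be understood, as in the paper, in terms of the pairwise vertex-disjoint subgraphs on which the subproblems actually live rather than the literal atom sizes (which can sum to $\Theta(n^2)$), but this is precisely the ``vertex-disjointness'' observation the paper invokes and your final bound is correct.
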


\subsection{Solving the minimum coloring and maximum weight clique problems}

Let $A$ be a $(P_7,C_4,C_5)$-free atom. It then follows from
Theorems~\ref{thm:decomposition} and~\ref{thm:decompositionC7-free}
that $A$ is either a complete graph, or the join of a (possibly empty) complete graph $U$ with either a thickened emerald, a $7$-bracelet, a lantern, or a $6$-ring.
Note that any vertex in $U$ is a universal vertex of $A$ and thus requires its own color in $A$.
It thus suffices to solve the problem for thickened emeralds, $7$-bracelets, lanterns, and $6$-rings.

We first deal with lanterns and $6$-rings.
\begin{lemma}\label{lem:colorlantern}
Let $R$ be an $r$-lantern with a good partition $(A, B_1,\ldots, B_r, C_1,\ldots,C_r,D)$ 
and clique number $\omega$. 
Then $\chi(R) = \omega$, and furthermore, there is an algorithm to optimally color $R$ in $O(|R|)$ time.
\end{lemma}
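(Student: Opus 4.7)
The plan is to prove $\chi(R) \leq \omega$ by exhibiting an explicit $\omega$-coloring; combined with the trivial bound $\chi(R) \geq \omega(R)$, this yields $\chi(R) = \omega$, and the construction doubles as the $O(|R|)$ algorithm.

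I would first read off $\omega$ from the good partition. Every clique of $R$ is contained in one of $A \cup B_i$, $D \cup C_i$, $B_i \cup C_i$ (for some $i \geq 2$), or a prefix clique $\{b_1^1,\dots,b_k^1\} \cup \{c_1^1,\dots,c_j^1\}$ of $B_1 \cup C_1$ (which is a clique precisely when $b_k^1 c_j^1 \in E(R)$, by the nested neighborhood structure). Writing $a = |A|$, $d = |D|$, $b_i = |B_i|$, $c_i = |C_i|$, and $\omega_1 = \max\{k + j : b_k^1 c_j^1 \in E(R)\}$, one obtains
\[
\omega = \max\bigl\{a + \max_i b_i,\ d + \max_i c_i,\ \max_{i \geq 2}(b_i + c_i),\ \omega_1\bigr\},
\]
and using the prescribed orderings of $B_1, C_1$, both $\omega_1$ and $\omega$ can be computed in $O(|R|)$ time.

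Next I would describe the coloring from the palette $\{1,\dots,\omega\}$: pack $A$ at the bottom and $D$ at the top of the palette, and pack each $B_i$ at the top and each $C_i$ at the bottom. Concretely, list $A$ as $\{a_1,\dots,a_a\}$ in any order and color $a_k$ with $k$; list $D$ as $\{d_1,\dots,d_d\}$ in any order and color $d_k$ with $\omega - d + k$; for each $i \in \{1,\dots,r\}$, color the $k$-th vertex $b_k^i$ of $B_i$ with $\omega - k + 1$ and the $k$-th vertex $c_k^i$ of $C_i$ with $k$, using the prescribed good orderings of $B_1$ and $C_1$ when $i = 1$. Thus $A$ occupies the colors $[1,a]$, $D$ occupies $[\omega - d + 1, \omega]$, each $B_i$ occupies $[\omega - b_i + 1, \omega]$, and each $C_i$ occupies $[1, c_i]$, all contained in $\{1,\dots,\omega\}$.

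Finally I would verify properness edge by edge. Within each of $A, D, B_i, C_i$ the colors are pairwise distinct by construction. For the adjacencies between $A$ and $B_i$, the intervals $[1,a]$ and $[\omega - b_i + 1, \omega]$ are disjoint since $a + b_i \leq \omega$; analogously, $D$ and $C_i$ use disjoint intervals because $d + c_i \leq \omega$, and, for $i \geq 2$, $B_i$ and $C_i$ use disjoint intervals because $b_i + c_i \leq \omega$. The only delicate case is the nested block $B_1 \cup C_1$: an edge $b_k^1 c_j^1$ witnesses the prefix clique of size $k + j$, so $k + j \leq \omega_1 \leq \omega$, and hence $\omega - k + 1 \neq j$, so the two colors differ. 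All remaining pairs of vertices are anticomplete, so repeated colors do no harm. The main obstacle I anticipate is handling this nested $B_1, C_1$ block; it is resolved by tying the color formulas to $k + j$, so that the nested-clique characterization of $\omega_1$ automatically forces distinct colors on every edge in the block. Since every color is given by a closed-form expression in the partition data, the coloring is constructed in $O(|R|)$ time.
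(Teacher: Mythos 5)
Your proof is correct and follows essentially the same approach as the paper's: both construct an explicit $\omega$-coloring by packing $A$ and the $B_i$'s at opposite ends of the palette $\{1,\dots,\omega\}$, doing the same for $D$ and the $C_i$'s, and handling the lone nested block $B_1 \cup C_1$ via the observation that an edge $b_k^1 c_j^1$ forces a clique of size $k+j$, so $k+j \leq \omega$ keeps the two color formulas apart. The only cosmetic differences are that your palette assignment is the mirror image of the paper's (you put $A$ at the bottom, the paper puts it at the top), and that you fold the $O(|R|)$ computation of $\omega$ into the proof itself, whereas the paper delegates that to a separate lemma (Lemma~\ref{lem:clique number}).
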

\begin{proof}
Recall that the vertices in $B_1$ and $C_1$ can be ordered as
$B_1 = \{b_1^1,\dots,b_{|B_1|}^1\}$ and $C_1 = \{c_1^1,\dots,c_{|C_1|}^1\}$
so that $N_R[b_{|B_1|}^1] \cap C_1 \subseteq \dots \subseteq N_R[b_1^1] \cap C_1 = C_1$
and $N_R[c_{|C_1|}^1] \cap B_1 \subseteq \dots \subseteq N_R[c_1^1] \cap B_1 = B_1$.
We greedily color the vertices of $R$ using colors $1,2,\ldots,\omega$ as follows.
\begin{itemize}
\item Assign colors $1,2,\ldots,|B_1|$ to vertices $b_1^1,b_2^1,\ldots, b_{|B_1|}^1$, respectively.
\item Assign colors $\omega,\omega-1,\ldots,\omega-|C_1|+1$ to vertices $c_1^1,c_2^1\ldots, c_{|C_1|}^1$, respectively.
\item Assign colors $\omega,\omega-1,\ldots$ and $1,2,\ldots$ to vertices in $A$ and $D$, respectively.
\item For each $i\ge 2$, assign colors  $1,2,\ldots$ and $\omega,\omega-1,\ldots$ to vertices in $B_i$ and $C_i$, respectively.
\end{itemize}
We show that this is a proper coloring of $R$. Suppose not. Then there exist two adjacent vertices $x$ and $y$
that have the same color, say $i$, under this coloring. Clearly, $x$ and $y$ are in different sets in the good partition.
Suppose first that $x\in B_1$ and $y\in C_1$.  Then $x=b_i^1$ and $y=c_{\omega-i+1}^1$ due to the definition of our coloring.
It follows from the definition of $B_1$ and $C_1$ that $\{b_1^1,\ldots,b_i^1,c_1^1,\ldots,c_{\omega-i+1}^1\}$ is a clique
of size $\omega+1$, a contradiction. The remaining cases can be proved in a similar way.
Thus, this indeed is a proper coloring of $R$. Clearly, it takes $O(|R|)$ time to obtain this coloring.
This completes our proof.
\end{proof}

\begin{lemma}\label{lem:colorring}
Let $R$ be a $6$-ring with a good partition $(X_0,X_1,\ldots,X_5)$ 
and clique number $\omega$.
Then $\chi(R) = \omega$, and there is an algorithm to optimally color $R$ in $O(|R|)$ time.
\end{lemma}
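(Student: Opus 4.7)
The plan is to imitate the strategy of Lemma~\ref{lem:colorlantern}, exploiting the fact that inside a $6$-ring the cliques $X_0,X_2,X_4$ are pairwise anticomplete, and likewise so are $X_1,X_3,X_5$. This cyclic anticompleteness means the only potentially monochromatic edges lie between an ``even'' clique $X_i$ with $i\in\{0,2,4\}$ and an ``odd'' clique $X_{i\pm 1}$. I will assign ascending colors to the even cliques and descending colors to the odd ones, so that on every such pair the two colors meet in a way controlled by the nested neighborhood structure of the ring.

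Concretely, for each $i\in\{0,2,4\}$ I would color $u_j^i$ by $j$, and for each $i\in\{1,3,5\}$ I would color $u_k^i$ by $\omega-k+1$. Since each $X_i$ is a clique of $R$, we have $|X_i|\le\omega$, so every assigned color lies in $\{1,\ldots,\omega\}$. Each $X_i$ is clearly colored with distinct colors, so the only verification left is that no edge between $X_i$ and $X_{i+1}$ is monochromatic, i.e.\ that $j\ne\omega-k+1$ whenever $u_j^i u_k^{i+1}\in E(R)$.

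The main technical step is a clique-extension argument built directly from the ring axioms. Suppose $u_j^i u_k^{i+1}\in E(R)$. Then $u_k^{i+1}\in N_R[u_j^i]\subseteq N_R[u_{j'}^i]$ for every $j'\le j$ (by nesting), and the symmetric inclusion from the $X_{i+1}$ side gives $u_j^i u_{k'}^{i+1}\in E(R)$ for every $k'\le k$. Combining these two observations yields $u_{j'}^i u_{k'}^{i+1}\in E(R)$ for all $j'\le j$ and $k'\le k$; together with the fact that $X_i$ and $X_{i+1}$ are cliques, this shows that $\{u_1^i,\ldots,u_j^i\}\cup\{u_1^{i+1},\ldots,u_k^{i+1}\}$ is a clique of $R$ of size $j+k$. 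Hence $j+k\le\omega$, which forces $j\ne\omega-k+1$, and the edge is properly colored.

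The resulting $\omega$-coloring gives $\chi(R)\le\omega$; combined with the trivial bound $\chi(R)\ge\omega(R)=\omega$ we conclude $\chi(R)=\omega$. No step is a serious obstacle; the clique-extension propagation is the only delicate point, and it is a routine consequence of the ring axioms. Assuming the orderings $X_i=\{u_1^i,\ldots,u_{|X_i|}^i\}$ from the good partition are given, the construction is a single linear pass through the $X_i$'s, so it runs in $O(|R|)$ time.
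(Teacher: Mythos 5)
Your proposal is correct and takes essentially the same approach as the paper: you use the identical alternating coloring scheme (ascending colors on even-indexed cliques, descending on odd-indexed ones), and the clique-extension argument you spell out via nested neighborhoods is precisely the step the paper dispatches with the phrase ``it follows from the definition of $R$.''
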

\begin{proof}
We greedily color the vertices of $R$ using colors $1,2,\ldots,\omega$ as follow.
\begin{itemize}
\item Assign colors $1,2,\ldots$ to vertices $u_1^i, u_2^i,\ldots$ for even $i$
and assign colors $\omega,\omega-1,\ldots$ to vertices $u_1^i, u_2^i,\ldots$ for odd $i$.
\end{itemize}
We show that this is a proper coloring of $R$. Suppose not.
Then there exist two adjacent vertices $x$ and $y$
that have the same color, say $i$, under this coloring. Clearly, $x$ and $y$ are in adjacent sets in the good partition,
say $x\in X_0$ and $y\in X_1$.
Then $x=u_i^0$ and $y=u_{\omega-i+1}^1$ due to the definition of our coloring.
It follows from the definition of  $R$ that $\{u_1^0,\ldots,u_i^0,u_1^1,\ldots,u_{\omega-i+1}^1\}$ is a clique
of size $\omega+1$, a contradiction.
Clearly, it takes $O(|R|)$ time to obtain this coloring.
This completes our proof.
\end{proof}

To apply the above greedy algorithm, however, we need to first obtain a good partition of a lantern and of a $6$-ring, and to find the clique number of such graphs. 

To recognize $6$-rings and find a good partition, we use the algorithm in~\cite{CliqueCutsetsBeyondChordalGraphs}, which
takes $O(n^2)$ time to  find a good partition of  a graph if the graph is a $6$-ring or determine that the graph is not a $6$-ring.
Now we show how to recognize lanterns.
For any graph $G$, we say that a pair $\{u,v\}$ of vertices are {\em twins} if $N_G[v]=N_G[u]$.
Note that being twins defines an equivalence relation on $V(G)$, and so $V(G)$ can be partitioned into equivalence classes
of twins $T_1,\ldots,T_r$. The {\em skeleton} $G'$ of $G$ is the subgraph induced by $\{t_1,\ldots,t_r\}$ where $t_i\in T_i$.
We first determine the equivalence classes $T_1,\ldots, T_r$ of twins and
the skeleton $G'$ in $O(m+n)$ time~\cite{CliqueCutsetsBeyondChordalGraphs}.
Note that if $G$ is a $r$-lantern with a good partition $(A,B_1,\ldots,B_r,C_1,\ldots,C_r,D)$,
then $A$, $D$, $B_i$ and $C_i$ for $2\le i\le r$ become singletons $\{a\}$, $\{d\}$, $\{b_i\}$ and $\{c_i\}$ in $G'$.
Moreover, $b_i$ and $c_i$ have degree 2 in $G'$ and $a$ and $d$ have degree $r\ge 3$ in $G'$.
So, we can scan the adjacency list of each vertex $v\in G'$ to separate vertices of low degree
from those of high degree: if $d_G'(v)=2$, then we put $v$ in $L$; otherwise we put $v$ in $H$.
If we cannot find a vertex $u\in L$ such that one of its two neighbors has degree 2 and the other neighbor has degree $r$,
then $G$ is not a lantern. Otherwise, let $u\in L$ such that one of its neighbors $w$ has degree 2 and the other neighbor $v$ has degree $r$.
Then scanning the adjacency list of each vertex in $G'$ starting from $v$ (this vertex must be $a$ or $d$ if $G$ is a lantern)
will either find the skeleton of a $r$-lantern or show that
$G$ is not a lantern. All we do in the algorithm is to scan the adjacency lists twice and therefore the running time is $O(m+n)$. 

\begin{lemma}\label{lem:recognize lantern}
Given a graph $G$ with $n$ vertices and $m$ edges, there exists an algorithm that
determines whether or not $G$ is a lantern, and if it is a lantern, finds a good partition of $G$
in $O(n^2)$ time.
\end{lemma}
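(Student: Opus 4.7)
The plan is to justify the algorithmic sketch given immediately above the lemma statement. First I would compute the equivalence classes of twins together with the skeleton $G'$ in $O(m+n)$ time using the algorithm of~\cite{CliqueCutsetsBeyondChordalGraphs}. If $G$ is an $r$-lantern with good partition $(A,B_1,\ldots,B_r,C_1,\ldots,C_r,D)$, then $A$, $D$, and each of $B_i,C_i$ for $2\le i\le r$ are homogeneous cliques of $G$ and so collapse to single vertices $a$, $d$, $b_i$, $c_i$ in $G'$; by the nested-neighborhood ordering built into the definition of a lantern, $B_1$ and $C_1$ collapse in $G'$ to two chains of twin classes whose representatives are totally ordered by neighborhood inclusion.

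Next I would exploit vertex degrees in $G'$. For $2\le i\le r$, both $b_i$ and $c_i$ have degree exactly $2$ in $G'$, while $a$ and $d$ have degree at least $r\ge 3$. Partitioning $V(G')$ in $O(m+n)$ time into $L$ (degree $2$) and $H$ (degree at least $3$), I search for a vertex $u\in L$ whose two neighbors $v,w$ satisfy $d_{G'}(v)\ge 3$ and $d_{G'}(w)=2$; if no such $u$ exists then $G$ is not a lantern. Otherwise, $v$ is a candidate for the vertex of $G'$ representing $A$, and I then explore $G'$ starting from $v$: each neighbor of $v$ leading through a length-two degree-$2$ path to a common high-degree vertex $d'$ is recorded as a candidate pair $(b_i,c_i)$, $d'$ is the candidate for the vertex representing $D$, and the unique remaining branch out of $v$ must encode $B_1\cup C_1$. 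Pulling back the twin classes then yields a candidate good partition of $V(G)$; verifying all axioms of an $r$-lantern on this candidate (completeness, anticompleteness, and the ordering condition on $B_1,C_1$) is a direct $O(n^2)$ check.

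The main obstacle is correctly handling the $B_1\cup C_1$ branch, which, unlike the other pairs $(B_i,C_i)$, can collapse in $G'$ to arbitrarily many twin classes. The algorithm must (a) confirm that this branch decomposes into exactly two chains of twin classes, namely those adjacent to the candidate $a$ and those adjacent to the candidate $d$, and (b) verify that the twin classes of the putative $B_1$ can be ordered so that their neighborhoods in the putative $C_1$ form a nested chain, and symmetrically for $C_1$. Both tasks reduce to scanning, and then sorting, the adjacency lists of the vertices in the candidate $B_1$ and $C_1$; since there are at most $n$ such vertices and each has at most $n$ neighbors in $V(G)$, this check runs in $O(n^2)$ time. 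Combining the $O(m+n)$ preprocessing with the $O(n^2)$ verification yields the claimed overall running time of $O(n^2)$.
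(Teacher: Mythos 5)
Your proposal follows the same approach as the paper's (informal) justification preceding the lemma: compute twin classes and the skeleton $G'$ in linear time, use the fact that in a lantern's skeleton the sets $A$, $D$, and $B_i,C_i$ for $i\ge 2$ collapse to singletons with predictable degrees, locate candidates for $a$ and $d$ via a degree test, and then reconstruct and verify a good partition; the main difference is that you spell out more carefully the handling of the $B_1\cup C_1$ branch (which may collapse to several twin classes) and the final verification step, both of which the paper glosses over. Your phrasing of the degree condition as $d_{G'}(v)\ge 3$ is also more accurate than the paper's informal statement that the high-degree neighbor has degree exactly $r$, since in the skeleton the vertex representing $A$ (or $D$) can have degree strictly greater than $r$ when $B_1$ (or $C_1$) splits into more than one twin class.
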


We also need to compute the clique number of 6-rings and lanterns before we can use the greedy algorithms.
\begin{lemma}\label{lem:clique number}
Given a 6-ring or a lantern $G$ with $n$ vertices and $m$ edges
and a good partition of $G$, there exists an $O(n+m)$ algorithm to compute $\omega(G)$.
\end{lemma}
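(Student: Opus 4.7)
The plan is to exploit the nested-neighborhood orderings built into both the 6-ring and lantern good partitions. In both cases we will argue that every clique of $G$ is confined to a small union of parts, and that within each such small union the nested structure forces a maximum clique to take a canonical ``prefix'' form; a single linear scan then suffices to read off the answer. We assume throughout that the good partition comes with its associated vertex orderings, since these are part of the definition of a ring and of a lantern.

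For the 6-ring $R$ with good partition $(X_0,\dots,X_5)$ and orderings $X_i=\{u_1^i,\dots,u_{t_i}^i\}$ satisfying $N_R[u_j^i]\subseteq N_R[u_{j-1}^i]$, I would first note that since $X_i$ is anticomplete to $X_{i+2}\cup X_{i+3}\cup X_{i+4}$, every clique of $R$ lies inside $X_i\cup X_{i+1}$ for some $i\in\mathbb{Z}_6$. Hence $\omega(R)=\max_{i\in\mathbb{Z}_6}\omega(R[X_i\cup X_{i+1}])$. By the nested structure, any clique $K\subseteq X_i\cup X_{i+1}$ may be replaced, without decreasing its size, by one of the form $\{u_1^i,\dots,u_p^i\}\cup\{u_1^{i+1},\dots,u_q^{i+1}\}$: each $u_s^i$ in $K$ can be swapped for a lower-indexed $u_{s'}^i$ still having the required adjacencies. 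Setting $f(j):=|N_R(u_j^i)\cap X_{i+1}|$, which by nestedness equals the largest $q$ with $u_j^iu_q^{i+1}\in E(R)$, we obtain
\[
\omega(R[X_i\cup X_{i+1}])=\max\bigl(|X_i|,\ |X_{i+1}|,\ \max_{1\le j\le t_i}(j+f(j))\bigr).
\]
Each value $f(j)$ is computed by scanning the adjacency list of $u_j^i$; summed over $i$ and $j$ this is $O(n+m)$.

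For the lantern with good partition $(A,B_1,\dots,B_r,C_1,\dots,C_r,D)$, I would use that the $B_i\cup C_i$ are pairwise anticomplete and that $A$ is anticomplete to $D$, so every clique lies in $A\cup B_i\cup C_i\cup D$ for some single $i$. For $i\ge 2$, $A$ is complete to $B_i$ and anticomplete to $C_i\cup D$, $D$ is complete to $C_i$ and anticomplete to $A\cup B_i$, and $B_i$ is complete to $C_i$, so the maximum clique in $A\cup B_i\cup C_i\cup D$ has size $\max(|A|+|B_i|,\ |B_i|+|C_i|,\ |C_i|+|D|)$. For $i=1$, the nested orderings $B_1=\{b_1^1,\dots,b_{|B_1|}^1\}$ and $C_1=\{c_1^1,\dots,c_{|C_1|}^1\}$ let the maximum clique in $B_1\cup C_1$ be computed by exactly the $f$-function trick used for the 6-ring, after which the maximum clique in $A\cup B_1\cup C_1\cup D$ is $\max(|A|+|B_1|,\ \omega(G[B_1\cup C_1]),\ |C_1|+|D|)$. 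Taking the overall maximum across $i$ and summing the per-piece costs yields $O(n+m)$ total. The only delicate point—and the main thing to check carefully—is the ``prefix'' reduction in the cobipartite sub-parts $X_i\cup X_{i+1}$ and $B_1\cup C_1$; once this is established the rest is a routine case analysis driven by the partition.
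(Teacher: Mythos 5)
Your argument is correct, and it starts from the same structural decomposition as the paper: in a 6-ring every clique lies in $X_i\cup X_{i+1}$ for some $i$, and in an $r$-lantern every clique lies in $A\cup B_i$, $B_i\cup C_i$, or $C_i\cup D$ for some $i$, so $\omega(G)$ reduces to a handful of cobipartite subproblems. Where you diverge is in how those cobipartite pieces are handled. The paper observes that $G[X_i\cup X_{i+1}]$ (resp.\ $G[B_1\cup C_1]$) is $C_4$-free cobipartite and hence chordal, and then invokes Gavril's linear-time algorithm for chordal graphs as a black box. You instead use directly the nested-neighborhood orderings that the good partition already supplies (for rings, $N_R[u_{t_i}^i]\subseteq\cdots\subseteq N_R[u_1^i]$; for lanterns, the analogous nesting of $N_R[\cdot]\cap C_1$ over $B_1$ and of $N_R[\cdot]\cap B_1$ over $C_1$): an exchange argument shows any clique in a piece is dominated by a prefix clique $\{u_1^i,\dots,u_p^i\}\cup\{u_1^{i+1},\dots,u_q^{i+1}\}$ with $q\le f(p)$, so $\omega$ of the piece is $\max_j\bigl(j+f(j)\bigr)$ together with the two degenerate one-sided terms, computable by one pass over the adjacency lists. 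Your route is more self-contained -- it avoids the chordality detour and does not even need to know that the pieces are $C_4$-free -- at the modest cost of spelling out the prefix-exchange argument; the paper's route is shorter to state but relies on the chordality fact and an external chordal-graph algorithm. Both are valid and yield the claimed $O(n+m)$ bound, with each adjacency list scanned a bounded number of times.
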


\begin{proof}
Suppose first that $G$ is a 6-ring and $(X_0,X_1,\ldots,X_5)$ is a good partition of $G$.
We observe that any maximal clique of $G$ must be contained in $X_i\cup X_{i+1}$ for some $i$.
Therefore, $\omega(G)=\max_{i\in \mathbb{Z}_6}\omega(G[X_i\cup X_{i+1}])$.
Since $G[X_i\cup X_{i+1}]$ is $C_4$-free cobipartite and thus chordal, $\omega(G[X_i\cup X_{i+1}])$
can be found in time linear in the size of $G[X_i\cup X_{i+1}]$~\cite{Ga72}. Hence,
the total running time is $O(m+n)$, since each vertex or edge of $G$ is counted at most twice when summing up the running
time over all $i$.

Now assume that $G$ is an $r$-lantern and $(A,B_1,\ldots,B_r,C_1,\ldots,C_r,D)$ is a good partition of $G$.
Clearly, any maximal clique of $G$ must be contained in $A\cup B_i$, $B_i\cup C_i$ or $C_i\cup D$ for some $i=1,2,\ldots,r$.
Let $a=\max_{1\le i\le r}|A\cup B_i|$,  $d=\max_{1\le i\le r}|D\cup C_i|$
and $q=\max_{2\le i\le r}|C_i\cup B_i|$.  Then $\omega(G)=\max\{a,q,d,\omega(G[B_1\cup C_1])\}$.
Since $(A,B_1,\ldots,B_r,C_1,\ldots,C_r,D)$ is given, all of $a$, $q$, $d$, and $\omega(G[B_1\cup C_1])$ can be found
in linear time (we are using the fact that $G[B_1\cup C_1]$ is $C_4$-free cobipartite, and consequently chordal). This completes the proof.
\end{proof}

We now turn to thickened emeralds and $7$-bracelets.
A graph $G$ is called a {\em circular-arc graph} if it is the intersection graph of arcs of a circle, i.e.,
each vertex $v\in V(G)$ corresponds to an arc $A_v$ on a circle in such a way that
$v$ and $u$ are adjacent if and only if $A_v$ and $A_u$ intersect on the circle.
We say that the family $\{A_v\}_{v\in V(G)}$ is a {\em circular-arc representation} of the graph $G$.
A circular-arc representation is {\em proper} if no arc  is properly contained in another.
If a graph admits a proper circular-arc representation, then it is called a {\em proper circular-arc graph}.
It is known that coloring proper circular-arc graphs can be done in $O(n^{1.5})$ time~\cite{SH89}.
We show that thickened emeralds and 7-bracelets are subfamilies of the class of proper circular-arc graphs and hence
can be colored in $O(n^{1.5})$ time.

We first observe that the emerald is a proper circular-arc graph.
It is routine to verity that the representation given in Figure \ref{fig:pcaemerald} is a proper circular-arc representation of the emerald.
Since creating a twin vertex preserves being a proper circular-arc graph (we can use the same arc to represent a pair of twin vertices),
it follows that thickened emeralds are proper circular-arc graphs. We state this below for future reference. 

\begin{lemma}\label{lem:thickenedemeraldispca}
Every thickened emerald is a proper circular-arc graph.
\end{lemma}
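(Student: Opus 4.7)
The plan is to verify the two ingredients that the paragraph immediately preceding the lemma already hints at. First, I would establish that the emerald itself is a proper circular-arc graph by exhibiting an explicit proper circular-arc representation (the one drawn in Figure~\ref{fig:pcaemerald}) and checking, arc-by-arc, that two arcs in the representation intersect on the circle precisely when the corresponding pair of vertices is adjacent in the emerald, and that no arc is properly contained in any other. Since the emerald has only eight vertices, this is a finite verification that can be carried out directly from the picture.

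Next, I would reduce the general case to the case of the emerald by means of a twin-duplication argument. By definition, a thickened emerald is obtained from the emerald by blowing up each vertex $v$ to a nonempty clique $X_v$; equivalently, it is obtained from the emerald by repeatedly duplicating vertices, where a \emph{duplicate} of a vertex $v$ is a new vertex $v'$ with $N[v'] = N[v]$ (i.e., $v$ and $v'$ are true twins). So it suffices to show that if $H$ is a proper circular-arc graph with representation $\{A_v\}_{v \in V(H)}$, and $H'$ is obtained from $H$ by duplicating a vertex $v$, then $H'$ is also a proper circular-arc graph. For this, assign to the duplicate $v'$ the arc $A_{v'} := A_v$, and leave all other arcs unchanged. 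Then $A_{v'}$ intersects $A_v$ (so $vv' \in E(H')$) and, for every $u \in V(H)\setminus\{v\}$, $A_{v'}$ intersects $A_u$ exactly when $A_v$ does, so the adjacencies match those of $H'$. Since $A_{v'}$ and $A_v$ are equal (hence neither is \emph{properly} contained in the other), and no containment was introduced between $A_{v'}$ and the remaining arcs either, the new representation is still proper.

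Combining the two observations, since every thickened emerald arises from the emerald by a finite sequence of vertex duplications, the emerald's proper circular-arc representation can be transformed into a proper circular-arc representation of the thickened emerald by taking, for each vertex $v$ of the emerald, as many copies of its arc as the size of the clique $X_v$ in the blow-up.

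The only nontrivial piece of work is the first step, namely checking that the representation drawn in Figure~\ref{fig:pcaemerald} really does encode the edges of the emerald and is proper; but this is a routine inspection of a fixed 8-vertex graph and is not conceptually an obstacle. The twin-duplication step is a general lemma about proper circular-arc graphs and follows at once from the definition, so no difficulty is anticipated there.
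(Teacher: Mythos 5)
Your proposal is correct and follows exactly the route the paper takes: verify that the explicit representation in Figure~\ref{fig:pcaemerald} is a proper circular-arc representation of the emerald, then observe that duplicating a vertex (true twin) preserves the property of being a proper circular-arc graph by reusing the same arc. This matches the paper's argument in both structure and detail.
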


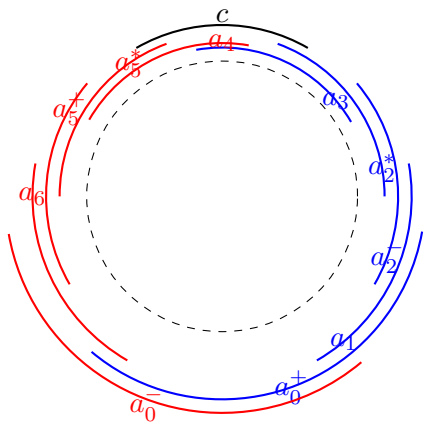
\begin{figure}
\centering
\begin{tikzpicture}[scale=.6]
\draw[dashed] (0,0) circle (3);

\draw[thick] (60:3.8) arc (60:120:3.8);
\node at (0,4) {$c$};

\def \dist{0.3}
\foreach \angle/ \i in {30/1, 0/2, -30/3, -60/4}
{
	\draw[blue, thick] (\angle: 3+\dist*\i) arc (\angle:\angle+70:3+\dist*\i);
}

\node[blue] at (30+10:3+\dist*1) {$a_3$};
\node[blue] at (0+10:3+\dist*2) {$a^*_2$};
\node[blue] at (-30+10:3+\dist*3) {$a^-_2$};
\node[blue] at (-60+10:3+\dist*4) {$a_1$};

\draw[blue, thick] (-130:4.5) arc (-130:-10:4.5);
\node[blue] at (-70:4.5) {$a^+_0$};

\foreach \angle/ \i in {110/2, 140/3, 170/4}
{
	\draw[red, thick] (\angle: 3+\dist*\i) arc (\angle:\angle+70:3+\dist*\i);
}

\draw[red, thick] (80:3.4) arc (80:150:3.4);
\node[red] at (90:3.4) {$a_4$};

\node[red] at (110+15:3+\dist*2) {$a^*_5$};
\node[red] at (140+10:3+\dist*3) {$a^+_5$};
\node[red] at (170+10:3+\dist*4) {$a_6$};

\draw[red, thick] (190:4.8) arc (190:310:4.8);
\node[red] at (250:4.9) {$a^-_0$};

\end{tikzpicture}
\caption{A proper circular-arc representation of the emerald in Figure \ref{fig:emerald}. The dashed circle is the underlying circle.}
\label{fig:pcaemerald}
\end{figure}

We now show that $7$-bracelets are also proper circular-arc graphs.

\noindent {\bf Reduction to canonical $7$-bracelets.}
A cobipartite graph $G=(X,Y)$ is {\em canonical}
if $|X|=|Y|=t$ and vertices in $X$ and $Y$ can be ordered as $x_1,\ldots,x_t$ and $y_1,\ldots,y_t$
such that
\begin{equation*}\label{equ:equ1}
N_Y(x_i)=\{y_1,\ldots,y_i\} \text{ and  } N_X(y_i)=\{x_i,\ldots,x_t\} \text{ for each } i=1,\ldots,t.
\end{equation*}
We refer to $x_i$ and $y_i$ as the {\em $i$th vertex} in $X$ and $Y$, respectively.
Here we view $(X,Y)$ as an ordered pair so that under the neighborhood containment relation
the $i$th vertex in $X$ is the $i$th smallest vertex in $X$ and the $i$th vertex in $Y$ is the $i$th largest vertex in $Y$.
The quantity $t$ is called the {\em order} of  $G$. A canonical cobipartite graph
of order $4$ is drawn in Figure \ref{fig:canonical}.

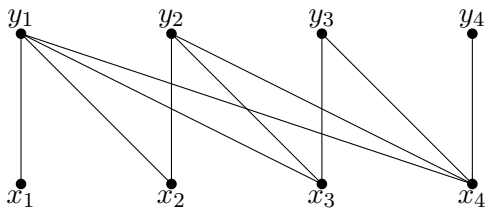
\begin{figure}
\centering
\begin{tikzpicture}[scale=2]
\foreach \i in {1,...,4}
{
	\path (\i,0) coordinate (X\i);
	\fill (X\i) circle (1pt);
	\node at (\i,-.1) {$x_{\i}$};
	
}

\foreach \j in {1,...,4}
{
	\path (\j,1) coordinate (Y\j);
	\fill (Y\j) circle (1pt);
	\node at (\j,1.1) {$y_{\j}$};
}

\foreach \i in {1,...,4}
{
	\foreach \j in {1,...,\i}
	{
		\draw (X\i) -- (Y\j);
	}
}
\end{tikzpicture}
\caption{A canonical cobipartite graph $(X,Y)$ of order $4$. For clarity, the edges with both ends in
$X$ or $Y$ are not drawn.}\label{fig:canonical}
\end{figure}

Let $B$ be a $7$-bracelet with good pair $(\{A_i\}_{i\in \mathbb{Z}_7},i^*)$ as shown in Figure \ref{fig:7bracelet-details}.
We may assume by symmetry that $i^*=1$.
We say that $B$ is a {\em canonical} $7$-bracelet if it satisfies the following two conditions.

\begin{enumerate}[\bfseries (1)]
\item Each of the sets $A_3$, $A_4$, $A_0^*$, $A_1^*$, $A_2^*$, $A_5^*$ and $A_6^*$ is a singleton.
\item The pairs $(A_5^+, A_0^-)$ , $(A_0^+,A_2^-)$ and $(A_6^+,A_1^-)$
all induce canonical cobipartite graphs of the same order $t$ with some integer $t\ge 1$.
The quantity $t$ is called the {\em order} of  $B$.
\end{enumerate}

\begin{lemma}\label{lem:reduce to canonical}
Every $7$-bracelet with no twins is an induced subgraph of a canonical $7$-bracelet.
\end{lemma}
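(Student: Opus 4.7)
The plan is to exploit the twin-free hypothesis to collapse the asterisk classes of $B$ to singletons, to show that the neighborhood-nesting axioms force each of the three ``$2$-skip'' cobipartite pairs of $B$ to be canonical, and finally to pad these three pairs to a common order. By the rotational symmetry of the definition of a $7$-bracelet, I may rotate the cyclic labels so that the distinguished index matches the one in the canonical definition: $A_3=A_3^*$, $A_4=A_4^*$, and the three possibly nonempty ``$2$-skip'' pairs are $(A_5^+,A_0^-)$, $(A_0^+,A_2^-)$, and $(A_6^+,A_1^-)$.

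First, I would observe that any two vertices of an asterisk class $A_i^*$ share the same closed neighborhood in $B$: they lie in the clique $A_i$, are complete to $A_{i-1}\cup A_{i+1}$ by axiom (I), and are anticomplete to $A_{i-3}\cup A_{i-2}\cup A_{i+2}\cup A_{i+3}$ by (I) together with (II.a). So the twin-free hypothesis forces $|A_i^*|\le 1$ for every $i\in \mathbb{Z}_7$; in particular, $A_3$ and $A_4$ are already singletons.

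Second, I would analyze each of the three cobipartite pairs, say $(A_5^+,A_0^-)$. By axioms (II.d)--(II.e) the closed neighborhoods of the vertices of $A_5^+$ form a chain under inclusion, and the chain differences lie entirely in $A_0^-$ (because $A_5^+$ is anticomplete to $A_0^+\cup A_0^*$ and its neighborhood outside $A_0$ is identical for every vertex). Twin-freeness strengthens this to a strictly nested chain, and each vertex of $A_5^+$ has at least one neighbor in $A_0^-$ by the definition of $A_5^+$, so $|A_5^+|\le |A_0^-|$; the symmetric argument gives $|A_5^+|=|A_0^-|=:t_1$. The strictly nested chain of $t_1$ nonempty subsets of a $t_1$-element set must then have successive differences of exactly one element, which is the canonical staircase pattern $a_i^{5+}\sim a_j^{0-}\iff j\le i$ after relabeling. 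The same argument realizes $(A_0^+,A_2^-)$ and $(A_6^+,A_1^-)$ as canonical cobipartite graphs of some orders $t_2$ and $t_3$.

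Finally, I would construct the canonical $7$-bracelet $\hat B\supseteq B$ by padding. Set $t:=\max\{t_1,t_2,t_3,1\}$. For each $j\in\{0,1,2,5,6\}$ with $A_j^*=\emptyset$, add one fresh vertex to $A_j^*$, joined to all of $A_{j-1}\cup A_j\cup A_{j+1}$ and to nothing else. For each cobipartite pair of current order $t_i<t$, add $t-t_i$ fresh vertices to each side, extending the canonical staircase on the pair and obeying the $7$-bracelet adjacencies with the other classes (complete to the two neighboring cliques, anticomplete to the remaining classes except for the canonically prescribed neighbors on the partner side of the pair). The main obstacle will be the routine-but-careful verification that $\hat B$ satisfies the $7$-bracelet axioms (I)--(VI) with the same distinguished index and that $B$ sits inside $\hat B$ as an induced subgraph; the induced-subgraph claim is immediate because no new edges are created within $V(B)$, and the only mildly delicate axiom, (II.f), is secured in $\hat B$ by the now-nonempty $A_i^*$-singletons, so that the canonical conditions (1) and (2) hold by construction.
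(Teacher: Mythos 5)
Your proposal is correct and takes essentially the same approach as the paper: observe that the asterisk classes are homogeneous cliques (so twin-freeness collapses them to singletons), use the nesting axioms (II.d)--(II.e) together with twin-freeness and the "every vertex has a neighbor on the other side" property to conclude that each $2$-skip pair induces a canonical cobipartite graph, and pad to a common order $t$. The paper states these steps more tersely (it merely asserts that each $2$-skip pair "induces a canonical cobipartite graph" and that $B$ embeds into a canonical $7$-bracelet of order $t$), so your write-up supplies details the paper elides rather than departing from it.
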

\begin{proof}
Let $B$ be a $7$-bracelet with good pair $(\{A_i\}_{i\in \mathbb{Z}_7},0)$.
We note that by the definition all of the sets $A_3$, $A_4$, $A_0^*$, $A_1^*$, $A_2^*$, $A_5^*$ and $A_6^*$
are homogeneous cliques in $B$, and $(A_0^+,A_2^-)$, $(A_0^-,A_5^+)$ and $(A_6^+,A_1^-)$ are cobipartite graphs.
Since $B$ has no twins, it follows that each of $A_3$, $A_4$, $A_0^*$, $A_1^*$, $A_2^*$, $A_5^*$ and $A_6^*$
has size at most one, and each of $(A_0^+,A_2^-)$, $(A_0^-,A_5^+)$ and $(A_6^+,A_1^-)$ induces a canonical cobipartite graph.
If we let $t$ be the maximum order of the three canonical cobipartite graphs,
then $B$ is an induced subgraph of a canonical $7$-bracelet of  order $t$.
\end{proof}

It follows from Lemma \ref{lem:reduce to canonical} that it suffices to show that canonical $7$-bracelets are proper circular-arc graphs.

\noindent {\bf Reduction to interval representations.}
Let $B$ be a canonical $7$-bracelet with good pair $(\{A_i\}_{i\in \mathbb{Z}_7},0)$ of order $t$.
We can assume that $A_3=\{a_3\}$, $A_4=\{a_4\}$, $A_0^*=\{a_0^*\}$, $A_1^*=\{a_1^*\}$,
$A_2^*=\{a_2^*\}$, $A_5^*=\{a_5^*\}$ and $A_6^*=\{a_6^*\}$,
and each of the pairs $(A_5^+, A_0^-)$ , $(A_0^+,A_2^-)$ and $(A_6^+,A_1^-)$ induces a
canonical cobipartite graph of the same order $t$.
We now show that it is possible to reduce the problem of constructing a proper circular-arc representation
for $B$ to that of constructing a special proper interval representation for $B-a_3a_4$.
\begin{proposition}\label{prop:reduce to interval}
If there is a proper interval representation for $B-a_3a_4$ such that the interval $I_{a_4}$ is the leftmost
and the interval $I_{a_3}$ is the rightmost, then there is a proper circular-arc representation for $B$.
\end{proposition}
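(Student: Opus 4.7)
The plan is to convert the given proper interval representation of $B - a_3 a_4$ into a proper circular-arc representation of $B$ by bending the real line into a circle and making small ``wrap-around'' extensions to the intervals representing $a_3$ and $a_4$, so that precisely these two intervals become overlapping arcs (creating the missing edge $a_3 a_4$) without introducing any other new intersections or any proper containment.

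Concretely, let $\{I_v = [\ell_v,r_v] : v \in V(B)\}$ be the hypothesized proper interval representation of $B - a_3 a_4$. After an affine rescaling I may assume that every $I_v \subseteq [0, 2\pi]$. Because $I_{a_4}$ is the leftmost interval and $I_{a_3}$ is the rightmost, and because in any proper interval representation the orderings of left and right endpoints coincide, we have $\ell_{a_4} < \ell_v$ and $r_{a_4} < r_v$ for every $v \ne a_4$, and symmetrically $\ell_v < \ell_{a_3}$ and $r_v < r_{a_3}$ for every $v \ne a_3$; also $r_{a_4} < \ell_{a_3}$, since $a_3 a_4 \notin E(B - a_3 a_4)$. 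Now identify $0$ with $2\pi$ to obtain a circle, let each $v \notin \{a_3, a_4\}$ keep its interval as its arc $A_v = [\ell_v, r_v]$, and replace $I_{a_3}, I_{a_4}$ by the arcs $A_{a_3} = [\ell_{a_3}, r_{a_3}+\delta]$ and $A_{a_4} = [\ell_{a_4} - \delta, r_{a_4}]$ (both read modulo $2\pi$), for a sufficiently small $\delta > 0$. Any $\delta$ strictly smaller than $\min\{\ell_{a_4},\, 2\pi - r_{a_3}\}$ and than the distance from $r_{a_3}$ to the largest $r_v$ with $v \ne a_3$, and similarly on the other side, will work.

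I will then verify three properties of the resulting family $\{A_v\}_{v \in V(B)}$. First, intersection pattern: for pairs $\{v,w\}$ with $v, w \notin \{a_3, a_4\}$, the arcs $A_v, A_w$ coincide with $I_v, I_w$ and do not cross the point $0 \equiv 2\pi$, so they intersect on the circle iff they intersect on the line, i.e.\ iff $vw \in E(B)$. For a pair $\{a_3, w\}$ with $w \ne a_4$: by the choice of $\delta$, the only new piece of $A_{a_3}$ is a short arc $[2\pi, 2\pi+\delta]$ past the cut point, and no $A_w$ reaches into this arc (since $\ell_w > \ell_{a_4} > \delta - 0$ is false in the wrong variable; more precisely, no $A_w$ extends past $r_{a_3}$ on the right nor appears on the left of $\ell_{a_4}$). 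Hence $A_{a_3} \cap A_w = I_{a_3} \cap I_w$. The pair $\{a_4, w\}$ is symmetric. Finally, $A_{a_3}$ and $A_{a_4}$ now both cover an open neighborhood of the cut point $\theta = 0$, so they intersect, recovering exactly the edge $a_3 a_4$.

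Second, the arcs do not cover the whole circle, because the ``antipodal'' gap $(r_{a_4}, \ell_{a_3})$ is disjoint from every $I_v$ by construction, so it is disjoint from every $A_v$ as well; thus the family is genuinely a family of proper arcs on a circle. Third, properness (no proper containment): for pairs of non-wrap-around arcs properness is inherited from the interval representation; for $A_{a_3}$ vs.\ any $A_v$ with $v \ne a_3, a_4$, we have $\ell_v < \ell_{a_3}$ so $A_v \not\subseteq A_{a_3}$, and $A_{a_3}$ does not lie inside the non-wrapping $A_v$ since $A_{a_3}$ crosses the cut point while $A_v$ does not; the symmetric argument handles $A_{a_4}$, and $A_{a_3}, A_{a_4}$ do not contain one another for any sufficiently small $\delta$. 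The main (only) subtle point in the whole argument is choosing $\delta$ small enough that the wrap-around extensions do not accidentally meet any other arc or produce a containment; this is made possible precisely by the strictness of ``leftmost'' and ``rightmost'' in the hypothesis, which guarantees strictly positive gaps on both sides of the cut point.
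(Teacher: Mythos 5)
Your overall plan is the same as the paper's: wrap the interval representation of $B - a_3 a_4$ into a circle so that the two extreme intervals $I_{a_4}$ and $I_{a_3}$ meet at the seam, creating exactly the one missing edge. The paper's implementation is cleaner -- it simply identifies the left endpoint of $I_{a_4}$ with the right endpoint of $I_{a_3}$ (so the two arcs share precisely one point), with nothing else to check beyond the observation that those two endpoints are each touched by no other interval.

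Your implementation, however, has a sign error in the constraint on $\delta$ that makes the construction fail. You extend $A_{a_3}$ to $[\ell_{a_3}, r_{a_3}+\delta]$ and $A_{a_4}$ to $[\ell_{a_4}-\delta, r_{a_4}]$, and you assert that they ``now both cover an open neighborhood of the cut point $\theta = 0$.'' But you stipulate $\delta < \min\{\ell_{a_4},\, 2\pi - r_{a_3}\}$, which gives $r_{a_3}+\delta < 2\pi$ and $\ell_{a_4}-\delta > 0$: with that choice neither extended arc reaches the cut point, $A_{a_3} \cap A_{a_4}$ is still empty, and the edge $a_3 a_4$ is not represented. To make the arcs meet at the seam you need $\delta$ at least about $\tfrac{1}{2}\bigl((2\pi - r_{a_3}) + \ell_{a_4}\bigr)$, not less than $\min\{\ell_{a_4}, 2\pi - r_{a_3}\}$; and the upper bound you then need to avoid unwanted intersections is governed by $\min_{v\ne a_4}\ell_v - \ell_{a_4}$ and $r_{a_3}-\max_{v\ne a_3} r_v$, not the quantities you list (your phrase ``the distance from $r_{a_3}$ to the largest $r_v$'' is the relevant quantity for the \emph{other} extension). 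The simplest repair is to first rescale so that $\ell_{a_4}=0$ and $r_{a_3}=2\pi$, then identify $0$ with $2\pi$; the two arcs meet at a single point, properness is immediate, and no $\delta$ is needed -- which is exactly the paper's proof.
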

\begin{proof}
Let $\{I_v\}_{v\in B}$ be a proper interval representation for $B-a_3a_4$ as described in the hypothesis.
We then obtain a proper circular-arc representation for $B$ by identifying the left endpoint of $I_{a_4}$
with the right endpoint of $I_{a_3}$.
\end{proof}

\noindent {\bf Interval representation for $B-a_3a_4$.}
It follows from Proposition \ref{prop:reduce to interval} that
it suffices to construct a proper interval representation for $B-a_3a_4$ such that the interval $I_{a_4}$ is the leftmost
and the interval $I_{a_3}$ is the rightmost.
Recall that $t$ is the order of $B$.  Choose a rational number $s$ such that $0<st<1$.

Recall that each of $(A_5^+, A_0^-)$ , $(A_0^+,A_2^-)$ and $(A_6^+,A_1^-)$ induces
a canonical cobipartite graph of order $t$.  For $1\le i\le t$, let $x_i^{A_5^+}$ and
$x_i^{A_0^-}$ be the $i$th vertex in  $A_5^+$ and $A_0^-$, respectively.
The vertices $x_i^{A_0^+}$, $x_i^{A_2^-}$, $x_i^{A_6^+}$, $x_i^{A_1^-}$ are defined analogously.
We give an interval representation $\{I_v\}_{v\in V(B)}$ for $B-a_3a_4$ as follows.

\begin{enumerate}[$\bullet$]
\item $I_{a_4}=[1,4]$, $I_{a_5^*}=[3,6]$,  $I_{a_6^*}=[5,8]$, $I_{a_0^*}=[7,10]$, $I_{a_1^*}=[9,12]$, $I_{a_2^*}=[11,14]$,
$I_{a_3}=[13,16]$.
\item For each $1\le i\le t$, $I_{x_i^{A_5^+}}=[3+is,6+is]$ and $I_{x_i^{A_0^-}}=[6+is,9+is]$.
\item For each $1\le i\le t$, $I_{x_i^{A_0^+}}=[7+is,10+is]$ and $I_{x_i^{A_2^-}}=[10+is,13+is]$.
\item For each $1\le i\le t$, $I_{x_i^{A_6^+}}=[5+is,8+is]$ and $I_{x_i^{A_1^-}}=[8+is,11+is]$.
\end{enumerate}

It can be readily checked that $\{I_v\}_{v\in V(B)}$ is indeed an interval representation of $B-a_3a_4$.
Moreover, each interval has equal length $3$ and therefore the representation is proper. Thus:
\begin{lemma}\label{lem:7braceletispca}
Every $7$-bracelet is a proper circular-arc graph.
\end{lemma}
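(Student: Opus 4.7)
The plan is to assemble the pieces that the paper has already prepared. First I would observe that the class of proper circular-arc graphs is closed under taking induced subgraphs (restrict to the arcs corresponding to the chosen vertices) and under adding a twin to an existing vertex (give the new twin an arc with the same endpoints as the original, or shift it by an arbitrarily small amount while keeping equal length so neither arc properly contains the other). Combining these two observations with Lemma~\ref{lem:reduce to canonical}, it suffices to show that every \emph{canonical} $7$-bracelet is a proper circular-arc graph: given an arbitrary $7$-bracelet $B$, first contract twin classes to obtain a twin-free induced subgraph $B'$, embed $B'$ as an induced subgraph of a canonical $7$-bracelet $\widetilde B$, realize $\widetilde B$ as a proper circular-arc graph, extract the induced representation for $B'$, and finally blow each arc up with twins to recover $B$.

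Next I would invoke Proposition~\ref{prop:reduce to interval}: producing a proper circular-arc representation of the canonical $7$-bracelet $B$ reduces to producing a proper interval representation of $B-a_3a_4$ in which $I_{a_4}$ is the leftmost interval and $I_{a_3}$ is the rightmost. At this point the paper has already written down explicit intervals for every vertex of $B$: the seven ``skeleton'' vertices receive intervals $[1,4],[3,6],\ldots,[13,16]$, and the cobipartite ``ladders'' $(A_5^+,A_0^-)$, $(A_0^+,A_2^-)$, $(A_6^+,A_1^-)$ are realized by $t$ pairs of intervals, indexed by a rational $s$ with $0<st<1$, shifted in lock-step from the appropriate skeleton intervals. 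All intervals have common length $3$, so the representation is automatically proper and the endpoint condition in Proposition~\ref{prop:reduce to interval} is satisfied ($I_{a_4}=[1,4]$ leftmost, $I_{a_3}=[13,16]$ rightmost).

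The bulk of the proof is therefore a verification that the listed intervals encode precisely the edge set of $B-a_3a_4$. I would organize this verification by running through the axioms (I)–(VI) of the definition of a $7$-bracelet with $i^*=1$. For the adjacencies within the skeleton, observe that consecutive $A_i$-intervals in the sequence $a_4,a_5^*,a_6^*,a_0^*,a_1^*,a_2^*,a_3$ overlap by exactly $2$, while intervals two apart overlap only at an endpoint or not at all, matching the complete/anticomplete pattern along the 7-cycle. For the ladder adjacencies, the key point is that because $st<1$, the shifted intervals $[3+is,6+is]$ and $[6+is,9+is]$, etc., remain inside the union of the two relevant skeleton intervals, so one gets exactly the canonical cobipartite adjacency $N(x_i)=\{y_1,\ldots,y_i\}$ inside each ladder, and no spurious edges to other skeleton vertices or to the other two ladders. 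Axioms (IV)–(VI), which forbid certain ``$2$-skips'' at $i^*\pm 1,i^*\pm 2,i^*\pm 3$, correspond exactly to the fact that the $A_3,A_4,A_i^*$ skeleton intervals are disjoint from the ladder intervals that would realize the forbidden edges.

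The main obstacle is this adjacency bookkeeping: there are $21$ unordered pairs of classes to check, each pair potentially containing $2t$ vertices, and one must be careful with the half-open/closed endpoints (intervals meeting only at a single point do produce an edge in the interval graph, and this is what makes the ``chain'' adjacencies at $6+is$, $10+is$, $8+is$ work correctly). Once this case analysis is carried out, Lemma~\ref{lem:7braceletispca} follows by assembling the reductions: canonical $7$-bracelet $\Rightarrow$ proper interval representation of $B-a_3a_4$ with the prescribed extremes $\Rightarrow$ proper circular-arc representation of $B$ via Proposition~\ref{prop:reduce to interval} $\Rightarrow$ arbitrary $7$-bracelet via Lemma~\ref{lem:reduce to canonical} together with closure under induced subgraphs and under twin blow-ups.
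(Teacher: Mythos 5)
Your proposal is correct and follows essentially the same route as the paper: reduce to twin-free (hence, via Lemma~\ref{lem:reduce to canonical}, canonical) $7$-bracelets, pass to the interval representation of $B-a_3a_4$ by Proposition~\ref{prop:reduce to interval}, and verify the paper's explicit equal-length intervals realize the graph, then close under induced subgraphs and twin blow-ups. The only slip is cosmetic: the definition of a $7$-bracelet has axioms (I)--(V), not (I)--(VI), and the ``$2$-skip'' prohibitions you mention are axioms (III)--(V), not (IV)--(VI).
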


We are now ready to present our main result in this section.
\begin{theorem}
There exists an $O(n^3)$ algorithm to find a minimum coloring for any $(P_7,C_4,C_5)$-free graph
with $n$ vertices and $m$ edges.
\end{theorem}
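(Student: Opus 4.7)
The plan is to follow the divide-and-conquer framework already laid out in this section. First I would invoke Tarjan's algorithm~\cite{Ta85} to compute the clique-cutset decomposition tree $T(G)$ in $O(nm) = O(n^3)$ time, producing at most $n-1$ atoms. Once each atom has been optimally colored, the atom colorings can be combined along their shared clique-cutsets in $O(n^2)$ total time to yield a minimum coloring of $G$. Hence the only nontrivial step is coloring each atom, and the crucial requirement is that the aggregate cost over all atoms remain $O(n^3)$.

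For a single atom $A$, Theorems~\ref{thm:decomposition} and~\ref{thm:decompositionC7-free} say that $A$ is either a complete graph or the join of a (possibly empty) clique $U$ with a unique nontrivial anticomponent $H$, where $H$ is a thickened emerald, a 7-bracelet, a lantern, or a 6-ring. The vertices of $U$ are precisely the universal vertices of $A$, each requiring its own color disjoint from any coloring of $H$, so $\chi(A) = |U| + \chi(H)$; a minimum coloring of $A$ is obtained by appending $|U|$ fresh colors to a minimum coloring of $H$. Identifying $U$ takes $O(n_A + m_A)$ time by scanning adjacency lists.

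To color $H$ optimally, I would test each of the four possibilities in turn. If $H$ is a 6-ring or a lantern, I would apply the $O(n_A^2)$ recognition procedures from~\cite{CliqueCutsetsBeyondChordalGraphs} and Lemma~\ref{lem:recognize lantern} respectively to obtain a good partition, compute $\omega(H)$ in linear time via Lemma~\ref{lem:clique number}, and then run the greedy $\omega(H)$-coloring of Lemma~\ref{lem:colorlantern} or Lemma~\ref{lem:colorring} in $O(n_A)$ time. Otherwise $H$ is a thickened emerald or a 7-bracelet, which by Lemmas~\ref{lem:thickenedemeraldispca} and~\ref{lem:7braceletispca} is a proper circular-arc graph, so I can color it optimally in $O(n_A^{1.5})$ time using the algorithm of~\cite{SH89}. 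Across all four cases the per-atom cost is dominated by $O(n_A^2)$.

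What remains is the aggregate complexity analysis, which is really the main bookkeeping obstacle here: one must verify that the $O(n_A^2)$ recognition cost (not just the greedy coloring) sums across atoms to $O(n^3)$. Since $T(G)$ has at most $n-1$ atoms and each atom has at most $n$ vertices, the crude bound $\sum_A n_A^2 \le n \cdot \sum_A n_A \le n \cdot n(n-1) = O(n^3)$ suffices. Combining this with the $O(nm) = O(n^3)$ cost of Tarjan's decomposition and the $O(n^2)$ cost of reassembling the colorings along clique-cutsets yields the claimed overall $O(n^3)$ running time.
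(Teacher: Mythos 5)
Your proof is correct and follows essentially the same route as the paper's: Tarjan's clique-cutset decomposition into $O(n)$ atoms, per-atom classification via Theorems~\ref{thm:decomposition} and~\ref{thm:decompositionC7-free} after stripping the universal clique $U$ (which the paper also does, just in the discussion preceding the theorem rather than inside its proof), greedy coloring of lanterns and 6-rings via Lemmas~\ref{lem:recognize lantern}--\ref{lem:clique number}, the proper-circular-arc coloring of \cite{SH89} for 7-bracelets and thickened emeralds, and the same $O(n^3)$ aggregate bound. The only cosmetic difference is the order of the per-atom tests (the paper first tests whether the atom is a proper circular-arc graph via \cite{DHH96}, then falls back to 6-ring/lantern recognition, while you test the 6-ring/lantern cases first), which does not affect correctness or complexity.
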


\begin{proof}
Let $G$ be a $(P_7,C_4,C_5)$-free graph with $n$ vertices and $m$ edges, and $A$ be an atom of $G$. It suffices to show that
we can color $A$ in $O(n^2)$ time.

We first use the linear-time algorithm in~\cite{DHH96} to test if $A$ is a proper circular-arc graph.
If the answer is yes, then we use the algorithm in ~\cite{SH89} to find an optimal
coloring of $A$ in $O(n^{1.5})$ time. Otherwise $A$ is not a proper circular-arc graph.
Then we use the $O(n^2)$ time algorithm in~\cite{CliqueCutsetsBeyondChordalGraphs} to test if $A$ is a $6$-ring.
If the answer is yes, we then greedily color $A$ in $O(n+m)$ time by Lemmas \ref{lem:colorring} and \ref{lem:clique number}.
Otherwise, $G$ is not a $6$-ring and hence must be a lantern. We
find a good partition of $A$ in $O(n^2)$ time by  Lemma \ref{lem:recognize lantern}
and then color it in $O(n+m)$ time by  Lemmas \ref{lem:colorlantern} and \ref{lem:clique number}.
In any case, we can color $A$ in $O(n^2)$.
\end{proof}

We note that all of our algorithms are robust in the sense that we do
not need to assume that the input graph is in the class of graphs for which the algorithm is guaranteed to work; 
that is, given any graph as input,
each of the above algorithms either finds a correct solution, or correctly determines that the graph is not in the class.

As a byproduct of Lemma~\ref{lem:clique number},
we can also solve the maximum weight clique problem for $(P_7,C_4,C_5)$-free graphs in $O(n^3)$ time,
since the problem can be solved in linear time~\cite{BHH96} for proper circular-arc graphs.

\subsection{$\chi$-Boundedness} 

\begin{theorem} Every $(P_7,C_4,C_5)$-free graph $G$ satisfies $\chi(G) \leq \lfloor \frac{3}{2}\omega(G) \rfloor$. 
\end{theorem}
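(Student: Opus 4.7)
My plan is to invoke the decomposition theorem (Theorem~\ref{thm-P7C4C5-free-decomp}) after two standard reductions. First, if $G$ admits a clique-cutset, then by standard monotonicity $\chi(G)$ and $\omega(G)$ each equal the maximum of the corresponding parameters over the two subgraphs into which $G$ is split, so by induction on $|V(G)|$ we may assume $G$ is an atom. Second, if $U$ denotes the union of the trivial anticomponents of the atom $G$ (a clique complete to $V(G)\setminus U$), then $\chi(G) = \chi(G\setminus U) + |U|$ and $\omega(G) = \omega(G\setminus U) + |U|$, and the easily-verified inequality $\lfloor \tfrac{3}{2}x\rfloor + n \le \lfloor\tfrac{3}{2}(x+n)\rfloor$ (for integers $x, n \ge 0$) reduces the proof to showing $\chi(B)\le\lfloor\tfrac{3}{2}\omega(B)\rfloor$ when $B$ is either a complete graph or one of the five kinds of nontrivial anticomponents listed in Theorem~\ref{thm-P7C4C5-free-decomp}.

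For complete graphs $\chi=\omega$ trivially. For lanterns, Lemma~\ref{lem:colorlantern} gives $\chi(B)=\omega(B)$. For 6-wreaths and 6-crowns, Lemma~\ref{lemma-char-P7free6ring} identifies them as $P_7$-free 6-rings, so Lemma~\ref{lem:colorring} again gives $\chi(B)=\omega(B)$. In each of these four cases $\chi(B)\le\omega(B)\le\lfloor\tfrac{3}{2}\omega(B)\rfloor$, so they are disposed of immediately.

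The remaining cases, when $B$ is a 7-bracelet or a thickened emerald, contain the main work. By Lemma~\ref{lemma-7BTE-P7C4C5-free} both satisfy $\alpha(B)=3$, and by Lemmas~\ref{lem:7braceletispca} and~\ref{lem:thickenedemeraldispca} both are proper circular-arc graphs. I would construct an explicit proper coloring of $B$ using the good partition $\{A_i\}_{i\in\mathbb{Z}_7}$ (and, for a thickened emerald, the central clique $C$). Setting $i^*=0$ by symmetry, the idea is to emulate the standard $3$-coloring $1,2,1,2,1,2,3$ of the underlying $C_7$ by assigning each $A_i$ a block of colors drawn from three cyclically rotating palettes, with consecutive blocks disjoint and total block sizes $|A_i|$. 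The nested-neighborhood chains of axioms (II.d)-(II.e) guarantee that the partial bipartite edges $A_i^+\to A_{i+2}$ and $A_i^-\to A_{i-2}$ can be absorbed within these palettes by assigning colors in staircase order; and for a thickened emerald, the central clique $C$ uses $|C|$ fresh colors, which are absorbed by the corresponding $|C|$-increase in $\omega(B)$ forced by $C$ being complete to the clique $A_3\cup A_4$.

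The main obstacle is verifying the color count in all configurations of the sizes $|A_i^*|, |A_i^+|, |A_i^-|,$ and $|C|$. Since $A_i\cup A_{i+1}$ is a clique of size at most $\omega(B)$, summing this inequality around the seven edges of the underlying cycle yields $\sum_i|A_i|\le\tfrac{7}{2}\omega(B)$, so that $\lceil |V(B\setminus C)|/3\rceil\le\lceil\tfrac{7\omega(B)}{6}\rceil$, and a short numerical check shows $\lceil\tfrac{7\omega}{6}\rceil\le\lfloor\tfrac{3}{2}\omega\rfloor$ for all $\omega\ge 2$. The tight case throughout is $B=C_7$ itself, with $\omega(B)=2$ and $\chi(B)=3=\lfloor 3\rfloor$, which shows that the bound is sharp. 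The bookkeeping for the partial bipartite edges and for the central clique $C$ must be done carefully, but the restrictions in axioms (III)-(V) of the 7-bracelet (in particular that only one index $i^*$ carries both $+$ and $-$ neighborhoods) and the complete/anticomplete constraints on $C$ in the thickened emerald provide exactly enough structural slack for the coloring to fit under $\lfloor\tfrac{3}{2}\omega(B)\rfloor$.
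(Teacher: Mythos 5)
Your overall reduction framework is sound and matches the paper's: reduce to atoms via clique-cutsets, peel off the clique of trivial anticomponents (or equivalently reduce to anticomponents), and then classify the remaining nontrivial anticomponent $B$ by Theorem~\ref{thm-P7C4C5-free-decomp}. Your handling of complete graphs, lanterns, 6-wreaths, and 6-crowns is also correct and coincides with the paper's (Lemmas~\ref{lem:colorlantern} and~\ref{lem:colorring} give $\chi(B)=\omega(B)$ for these).

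The gap is in the 7-bracelet and thickened-emerald cases, which are where all the content lies. You correctly note that these graphs satisfy $\alpha(B)=3$ and (via Lemmas~\ref{lem:thickenedemeraldispca} and~\ref{lem:7braceletispca}) are proper circular-arc graphs, but you then set that observation aside and instead sketch an explicit ``rotating palette'' coloring whose correctness you do not establish. The crucial step---that the partial bipartite ``skip'' edges $A_i^+\!\to A_{i+2}$, $A_i^-\!\to A_{i-2}$ can be absorbed via a staircase assignment within $\lceil |V(B\setminus C)|/3\rceil$ (or $\lfloor \tfrac32\omega\rfloor$) colors, and that the clique $C$ of a thickened emerald can then be accommodated by ``fresh'' colors charged against an assumed $|C|$-increase in $\omega$---is asserted, not proved. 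In particular, $\alpha(B)=3$ gives only the lower bound $\chi(B)\ge |V(B)|/3$; turning $\lceil |V(B\setminus C)|/3\rceil$ into an upper bound on $\chi$ requires exactly the bookkeeping you defer, and for the thickened emerald it is not even clear a priori that $\omega(B)=\omega(B\setminus C)+|C|$ (a maximum clique need not meet $C$). What closes this cleanly, and what the paper does, is to invoke the known theorem of Karapetyan that every proper circular-arc graph $H$ satisfies $\chi(H)\le\lfloor\tfrac32\omega(H)\rfloor$; you have already identified the graphs as proper circular-arc, so citing that result would replace your entire unfinished explicit construction. As written, the proposal is a plausible plan rather than a proof.
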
 
\begin{proof} 
Let $G$ be a $(P_7,C_4,C_5)$-free graph, and assume inductively that every $(P_7,C_4,C_5)$-free graph $G'$ such that $|V(G')| < |V(G)|$ satisfies $\chi(G') \leq \lfloor \frac{3}{2}\omega(G') \rfloor$. We must show that $\chi(G) \leq \lfloor \frac{3}{2}\omega(G) \rfloor$. 

Suppose first that $G$ admits a clique-cutset, and let $(A,B,C)$ be a clique-cut-partition of $G$. Set $G_A = G[A \cup C]$ and $G_B = G[B \cup C]$. By the induction hypothesis, $\chi(G_A) \leq \lfloor \frac{3}{2}\omega(G_A) \rfloor$ and $\chi(G_B) \leq \lfloor \frac{3}{2}\omega(G_B) \rfloor$. Clearly, $\omega(G_A),\omega(G_B) \leq \omega(G)$, and we deduce that $\chi(G_A),\chi(G_B) \leq \lfloor \frac{3}{2}\omega(G) \rfloor$. We properly color $G_A$ and $G_B$ with colors $1,\dots,\lfloor \frac{3}{2}\omega(G) \rfloor$; after possibly permuting colors, we may assume that the two colorings agree on the clique $C$. The union of these two colorings is a proper coloring of $G$ that uses at most $\lfloor \frac{3}{2}\omega(G) \rfloor$ colors, and so $\chi(G) \leq \lfloor \frac{3}{2}\omega(G) \rfloor$. From now on, we assume that $G$ does not admit a clique-cutset. 

If $G$ is complete, then $\chi(G) = \omega(G) \leq \lfloor \frac{3}{2}\omega(G) \rfloor$, and we are done. So assume that $G$ is not complete. 

Next, suppose that $G$ is not anticonnected, and let $K_1,\dots,K_t$ (with $t \geq 2$) be the anticomponents of $G$. Clearly, $\chi(G) = \sum\limits_{i=1}^t \chi(K_i)$ and $\omega(G) = \sum\limits_{i=1}^t \omega(K_i)$. By the induction hypothesis, $\chi(K_i) \leq \lfloor \frac{3}{2} \omega(K_i) \rfloor$ for all $i \in \{1,\dots,t\}$, and we readily deduce that $\chi(G) \leq \lfloor \frac{3}{2}\omega(G) \rfloor$. From now on, we assume that $G$ is anticonnected. 

Suppose that $G$ contains an induced $C_7$. Since $G$ is anticonnected and does not admit a clique-cutset, Theorem~\ref{thm-decomp-P7C4C5-free-with-C7} implies that $G$ is either a 7-bracelet or a thickened emerald. It now follows from Lemmas~\ref{lem:thickenedemeraldispca} and~\ref{lem:7braceletispca} that $G$ is a circular arc graph, and so by~\cite{Karapetyan}, we have that $\chi(G) \leq \lfloor \frac{3}{2}\omega(G) \rfloor$. 

From now on, we assume that $G$ is $C_7$-free; thus, $G$ is $(P_7,C_4,C_5,C_7)$-free. Since $G$ is anticonected, is not complete, and does not admit a clique-cutset, Theorems~\ref{thm-P7C4C5C7-free-contains-Theta-decomp} and~\ref{thm-P7C4C5C7Theta33-free-decomp} and Lemma~\ref{lemma-char-P7free6ring} imply that $G$ is either a lantern or a 6-ring. But now Lemmas~\ref{lem:colorlantern} and~\ref{lem:colorlantern} imply that $\chi(G) = \omega(G) \leq \lfloor \frac{3}{2} \omega(G) \rfloor$, and we are done. 
\end{proof}


\begin{thebibliography}{99}

\bibitem{Alekseev-C4free} 
V.E.~Alekseev, {\em On the number of maximal independence sets in graphs from hereditary classes}, in: V.N. Shevchenko (Ed.), Combinatorial-Algebraic Methods in Applied Mathematics, Gorkiy University Press, Gorky, 1991, 5-–8 (in Russian).


\bibitem{BHH96}
B.~Bhattacharya, P.~Hell, J.~Huang,
{\em A linear algorithm for maximum weight cliques in proper circular-arc graphs}, SIAM Journal on Discrete Mathematics, 9: 274--289, 1996.

\bibitem{CliqueCutsetsBeyondChordalGraphs}
V.~Boncompagni, I.~Penev, K.~Vu\v{s}kovi\'c, {\em Clique-cutsets beyond chordal graphs}, arXiv:1707.03252.

\bibitem{BH07}
A.~Brandst{\"a}dt, C.~T. Ho\`{a}ng,
{\em On clique separators, nearly chordal graphs, and the maximum weight
  stable set problem},
Theoretical Computer Science, 389: 295--306, 2007.


\bibitem{CS}
M.~Chudnovsky, V.~Sivaraman, {\em Perfect divisibility and 2-divisibility}, arXiv:1704.06667.


\bibitem{CouMak2000}
B.~Courcelle, J.A.~Makowsky, U.~Rotics. {\em Linear time solvable
optimization problems on graphs of bounded clique width}, 
Theory of Computing Systems, 33: 125--150, 2000.


\bibitem{DHH96}
X.~Deng, P.~Hell, J.~Huang.
{\em Linear time representation algorithms for proper circular arc graphs
  and proper interval graphs}, SIAM Journal on Computing, 25: 390--403, 1996. 

\bibitem{Dirac61}
G.A.~Dirac, \emph{On rigid circuit graphs}, Abhandlungen aus dem Mathematischen Seminar der Universit\"{a}t Hamburg, 25: 71--76, 1961.

\bibitem{ErdosGirthChi} 
P.~Erd\"os, {\em Graph Theory and Probability}, Canadian Journal of Mathematics, 11:34--38, 1959.


\bibitem{4K1C4C5FreeColoring}
D.J.~Fraser, A.M.~Hamel, C.T.~Ho\`ang, K.~Holmes, T.P.~LaMantia, {\em Characterizations of $(4K_1,C_4,C_5)$-free graphs}, Discrete Applied Mathematics, 231: 166--174, 2017.

\bibitem{Farber}
M.~Farber, {\em On diameters and radii of bridged graphs}. Discrete Mathematics, 73(3): 249-–260, 1989.

\bibitem{FulkersonGrossSimplicialElimOrd}
D.R.~Fulkerson and O.A.~Gross, {\em Incidence matrices and interval graphs}, Pacific Journal of Mathematics, 15(3): 835--855, 1965.

\bibitem{GH}
S.~Gaspers, S.~Huang.
{\em Linearly $\chi$-bounding $(P_6,C_4)$-free graphs}, Workshop on Graph-theoretic Concepts in Computer Science WG'17,
LNCS 10520: 263--274, 2017.


\bibitem{Ga72}
F.~Gavril,
{\em Algorithms for minimum coloring, maximum clique, minimum covering by cliques, and maximum independent set of a chordal graph},
SIAM Journal on Computing, 1: 180--187, 1972.


\bibitem{PnFreeChiBound}
S.~Gravier, C.T.~Ho\`ang, F.~Maffray, {\em Coloring the hypergraph of maximal cliques of a graph with no long path}, Discrete Mathematics, 272(2-–3):285--290, 2003. 


\bibitem{Gyarfas}
A.~Gy\'arf\'as, 
{\em Problems from the world surrounding perfect graphs}, Zastosowania Matematyki, 19(3--4):413--441, 1987.


\bibitem{Shenwei}
S.~Huang,
{\em Improved complexity results on $k$-coloring $P_t$-free graphs}.
European Journal of Combinatorics, 51: 336--346, 2016. 

\bibitem{Karapetyan}
I.A.~Karapetyan, {\em On coloring of arc graphs}, Akademiia nauk Armianskoi SSR, Doklady (in Russian), 70:306–-311, 1980. 

\bibitem{KobRot2003}
D.~Kobler, U.~Rotics, {\em Edge dominating set and colorings on
graphs with fixed clique-width}, Discrete Applied
Mathematics, 126: 197–-221, 2003.

\bibitem{KKTW}
D.~Kral, J.~Kratochvil, Z.~Tuza, G.J.~Woeginger, 
{\em Complexity of coloring graphs without forbidden induced subgraphs}, 
Workshop on Graph-Theoretic Concepts in Computer Science WG'01, LNCS 2204: 254--262, 2001.

\bibitem{LVV}
D.~Lokshantov, M.~Vatshelle, Y.~Villanger,  
{\em Independent set in $P_5$-free graphs in polynomial time}, Proceedings of the Twenty-Fifth Annual ACM-SIAM Symposium on Discrete Algorithms, 570--581, ACM, New York, 2014.


\bibitem{MakinoUno}
K.~Makino, T.~Uno, {\em New algorithms for enumerating all maximal cliques}, Lecture Notes in Compute Science, 3111:260–-272, 2004. 


\bibitem{SH89}
W.~Shih, W.~Hsu. {\em An ${O}(n^{1.5})$ time algorithm to color proper circular arcs}, 
Discrete Applied Mathematics, 25: 321--323, 1989.

\bibitem{Ta85}
R.~Tarjan, \emph{Decomposition by clique separators}, Discrete Mathematics, 55: 221--232, 1985.

\bibitem{TIAS}
S.~Tsukiyama, M.~Ide, H.~Ariyoshi, I.~Shirakawa. {\em A new algorithm for generating all the maximal independent sets}, SIAM Journal on Computing, 6:505–-517, 1977. 

\end{thebibliography}
\end{document}